\theoremstyle{plain}
\newtheorem{thm}{Theorem}[section]
\newtheorem*{thm*}{Theorem}
\newtheorem{lem}[thm]{Lemma}
\newtheorem{defin}[thm]{Definition}
\newtheorem{prop}[thm]{Proposition}
\newtheorem*{prop*}{Proposition}
\newtheorem{cor}[thm]{Corollary}
\theoremstyle{definition}
\newtheorem{rmk}[thm]{Remark}
\newtheorem*{claim*}{Claim}
\newtheorem{ques}{Question}
\newcommand{\NN}{\mathbb{N}}
\newcommand{\ZZ}{\mathbb{Z}}
\newcommand{\RR}{\mathbb{R}}
\newcommand{\Mcg}{\mathrm{Mod}}
\newcommand{\PML}{\mathcal{PMF}}
\newcommand{\ML}{\mathcal{MF}\setminus\{0\}}
\newcommand{\MF}{\mathcal{MF}\setminus\{0\}}
\newcommand{\PMF}{\mathcal{PMF}}
\newcommand{\UE}{\mathcal{UE}}
\newcommand{\Cob}{\mathcal{COB}}
\begin{document}
\title[Path-connectivity of $\UE$]{Path-connectivity of the set of uniquely ergodic and cobounded foliations}
\author{Jon Chaika and Sebastian Hensel}
\begin{abstract}
  We show that if $S$ is a closed surface of genus $g\geq 5$ or a
  surface of genus $g\geq 2$ with at least $p\geq 1$ marked points,
  then the set of uniquely ergodic foliations and the set of cobounded
  foliations is path-connected and locally path-connected.
\end{abstract}
\maketitle

\section{Introduction}
Projective measured foliations play a prominent role in Teichm\"uller
theory, dynamics and the study of mapping class groups. In addition to
the structure of individual foliations, the set $\PMF(S)$ of all
foliations on a given finite type surface $S$ has particular
importance. $\PMF(S)$ carries a natural
(weak-$\ast$) topology and is homeomorphic to a sphere of dimension
$6g+2p-7$ if $S$ has genus $g$ and $p$ punctures. One reason for its
importance stems from the fact that 
$\PMF(S)$ can be identified with both the sphere of directions, and
the boundary of infinity of Teichm\"uller space. One can also use
$\PMF(S)$ to describe the Gromov boundary of the curve graph.

In this article we study global topological properties of two
dynamically motivated subsets of $\PML(S)$. The first is the set
$\UE(S)$ of \emph{uniquely ergodic} foliations, where a foliation $F$
is called uniquely ergodic if it admits a unique transverse measure up
to scale.  The second is the set $\Cob(S)$ of \emph{cobounded
  foliations}, where $F$ is called cobounded if a Teichm\"uller
geodesic ray with vertical foliation $F$ projects into a compact set
of the moduli space of Riemann surfaces.

These sets have been intensely studied from a dynamical point of view,
owing to their importance in Teichm\"uller theory.  As a starting
point, by a theorem of Masur \cite{Masur-Criterion}, any cobounded
foliation is uniquely ergodic, and we therefore have
\[ \Cob(S) \subset \UE(S) \subset \PMF(S) \] Both $\Cob(S)$ and
$\UE(S)$ are dense in $\PMF(S)$ (but the same is also true for their
complements). Masur and Veech
\cite{Masur-Criterion, VeechFullMeasure} show that $\UE(S)$ has full
measure in $\PMF(S)$. In contrast, the set $\Cob(S)$ has measure zero.

It is known that there are many embedded circles in $\Cob(S)$
\cite{LS-thick}. On the other hand, Masur and Smillie \cite{MasurSmillie} have
shown that the complement $\PMF(S)\setminus\UE(S)$ has Hausdorff
dimension stictly bigger than $\dim\PMF(S)-1$, and hence one
cannot expect to naively locally deform paths in order to avoid
$\PMF(S)\setminus\UE(S)$ by general position arguments.

\smallskip Our main result shows that paths are nevertheless abundant in $\Cob(S)$ and $\UE(S)$:
\begin{thm}\label{thm:main-intro}
  Let $S$ be a closed surface of genus at least $5$, or a surface of
  genus at least $2$ with at least $1$ puncture. Then the
  subsets $\UE(S), \Cob(S)$ are path-connected, and locally path-connected. Moreover for any finite set $F$ we have that $\UE(S)\setminus F$, and $ \Cob(S) \setminus F$ are path connected.
\end{thm}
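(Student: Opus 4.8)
The plan is to combine the dynamical descriptions of $\Cob(S)$ and $\UE(S)$ with a reduction to pseudo-Anosov foliations. Recall that the attracting foliation $\mu_\phi^{+}$ of a pseudo-Anosov $\phi$ lies in $\Cob(S)$, since the axis of $\phi$ projects to a closed (hence compact) geodesic in moduli space. Thus it suffices to establish: (a) any two pseudo-Anosov attracting foliations can be joined by a path inside $\Cob(S)$, and (b) every $F\in\UE(S)$ can be joined by a path inside $\UE(S)$ — and inside $\Cob(S)$ when $F$ is itself cobounded — to some $\mu_\phi^{+}$. Indeed, concatenating the paths from (a) and (b) and using $\Cob(S)\subset\UE(S)$ yields path-connectivity of both sets; local path-connectivity would follow by carrying out (a) and (b) with explicit neighbourhood control on the paths produced.

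For (a) I would use \emph{motions of thick geodesics}. Fix $\epsilon>0$ small enough that the $\epsilon$-thick part $\mathcal{T}_\epsilon$ of Teichm\"uller space is connected; a bi-infinite Teichm\"uller geodesic lying entirely in $\mathcal{T}_\epsilon$ has both endpoints in $\Cob(S)$, and the axis of a pseudo-Anosov is such a geodesic. I would construct a continuous family of bi-infinite thick geodesics deforming the axis of $\phi$ to that of $\psi$, so that reading off one endpoint traces a path in $\Cob(S)$ from $\mu_\phi^{+}$ to $\mu_\psi^{+}$. One cannot do this by ``rotating the direction'' at a fixed basepoint, since the set of cobounded directions is nowhere dense and need not vary continuously; instead the family is assembled from a bounded number of geodesic segments supported in product regions of a fixed subsurface decomposition $S=Y_1\cup\cdots\cup Y_k$, changing the active piece one at a time. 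This is precisely where the topological hypotheses enter: one needs a decomposition into enough pieces, each complex enough to carry the pseudo-Anosov behaviour used to navigate, which forces genus at least $5$ in the closed case and genus at least $2$ with at least $3$ punctures otherwise. For (b), given $F\in\UE(S)$ I would approximate it by foliations carried by a fixed complete train track $\tau$, move within the weight simplices of a splitting sequence of $\tau$ toward a pseudo-Anosov invariant train track, and pass to the limit; a Masur-type recurrence estimate imposed at every splitting stage keeps every foliation on the resulting path uniquely ergodic (and cobounded if $F$ is).

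The main obstacle is the Masur--Smillie phenomenon recalled above: because $\PMF(S)\setminus\UE(S)$ has Hausdorff codimension strictly less than $1$, no transversality or generic-perturbation argument is available, and one must verify directly that \emph{every} foliation occurring on a constructed path is uniquely ergodic, respectively cobounded. In the thick-geodesic construction this amounts to controlling the flat geometry — subsurface projections, equivalently recurrence of the associated Teichm\"uller geodesics to compact subsets of moduli space — uniformly across a one-parameter family of geodesics rather than for a single geodesic; in the train-track construction it amounts to propagating a quantitative recurrence estimate through infinitely many splitting stages and across the full weight simplex at each stage. A secondary technical point, needed for local path-connectivity, is the continuity and properness of the endpoint map on the space of thick geodesics, which converts local control of the geodesic family into local control of the resulting path in $\PMF(S)$; this, together with the combinatorial bookkeeping that keeps the number of interpolation segments bounded, is where I expect most of the work to lie.
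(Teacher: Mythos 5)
Your high-level architecture --- join a distinguished dense family of cobounded foliations to one another, then reach an arbitrary $F\in\UE(S)$ by a limiting argument along a train track splitting sequence, with quantitative neighbourhood control giving local path-connectivity --- does match the shape of the argument in the paper. But the theorem lives or dies on the \emph{mechanism} that certifies that every point of a one-parameter family of foliations is cobounded, and your proposal does not supply one. You correctly identify the Masur--Smillie obstruction and then say one ``must verify directly'' coboundedness along the paths; that verification is the content of the proof, not a technical afterthought. Your proposed mechanism for step (a) --- a continuous family of thick bi-infinite geodesics ``assembled from geodesic segments supported in product regions of a fixed subsurface decomposition'' --- does not work as stated: a Teichm\"uller geodesic whose vertical data is concentrated in a proper subsurface $Y_i$ is exactly one along which $\partial Y_i$ becomes short, i.e.\ such segments live in the \emph{thin} part, and the associated foliations are not even arational. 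Even granting a genuinely thick family, you would still need continuity of the endpoint map into $\PMF$ and uniform recurrence across the family, neither of which is addressed. Similarly, in step (b), ``moving within the weight simplices'' of a splitting sequence while ``imposing a Masur-type recurrence estimate at every stage'' is a condition to be checked, not a construction: the non-uniquely-ergodic locus meets every weight simplex in a set of Hausdorff codimension less than one, so there is no soft reason any chosen path avoids it. Finally, the genus bounds in the theorem do not come from a subsurface decomposition.

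For comparison, the paper's certifying mechanism is arithmetic rather than geometric: one realizes $(S,\mathbf z)$ as a branched cover of a torus branched over one point, pulls back a flat structure whose vertical flow is badly approximable and whose marked points project to pairwise badly approximable points (Schmidt games give density of such configurations), and then \emph{slides the marked points along a clean curve at constant horizontal speed}. Bad approximability is preserved (up to a controlled loss) under this sliding, so every differential along the isotopy is uniformly thick; concatenating these point-push paths and conjugating by high twist powers produces ``peak-and-push'' paths of cobounded foliations converging to stable foliations of point-push pseudo-Anosovs. The limiting step then uses curve-graph hyperbolicity (Klarreich's boundary, quasi-axes, Gromov product estimates) to force an infinite concatenation of such building blocks into the nested train track neighbourhoods $U_n(\tau,F)$, so that it converges to $F$; the closed case is reduced to the punctured case by lifting through involutions $\sigma$ with $\Sigma_g/\sigma$ of genus at least $2$, which is where $g\geq 5$ enters. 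To repair your proposal you would need to replace both the thick-geodesic deformation and the ``recurrence estimate'' by concrete constructions of this kind.
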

In fact, the proof shows something slightly stronger: any two points
in $\UE(S)$ can be joined by a continuous path which is contained in
$\Cob(S)$ except possibly at its endpoints.

\smallskip Our result can also be used to show that through any finite
number of points in $\UE$ or $\Cob$ there is an embedded circle in $\UE$ or $\Cob$. To ensure that the circle is embedded, one has to use the proof of Theorem~\ref{thm:main-intro} rather than just the statement. We omit details, as the claim is not central to our discussion.

\subsection*{Proof Strategy and Structure of this Article} 
To build our paths, we will connect a `nice' 
(in the case of surfaces with punctures: stable foliation for a point-push Pseudo-Anosov) $p$ to an arbitrary uniquely ergodic foliation 
$\lambda$ by a sequence of paths $\gamma_0, \, \gamma_1,...$ so that 
\begin{enumerate}
\item the initial point on $\gamma_0$ is $p$.
\item The initial point on $\gamma_{j+1}$ is the terminal point of $\gamma_j$.
\item For every $\epsilon>0$ there exists $k$ so that $\gamma_j$ is contained in an $\epsilon$-neighborhood of $\lambda.$
\item $\bigcup_j\gamma_j \subset \Cob$.
\end{enumerate}
These conditions give that the concatenation of the $\gamma_j$ extends to a path from $p$ to $\lambda$ (in particular it is continuous at $\lambda$). 

We now highlight two main ingredients to accomplish this. On the one hand, we will develop in
Section~\ref{sec:pointpushs} a robust mechanism to construct paths of
cobounded foliations in the sphere of projective measured foliations
of a punctured surface. This construction was heavily inspired by the
work in \cite{LS-connectivity} (who showed that
 there is a dense path connected set of arational foliations in $\PMF$), and our main new contribution here is
to use bad approximability of points under straight line flows on tori
to certify coboundedness and to improve the paths built in
\cite{LS-connectivity} to consist of cobounded foliations. This will
be done in Section~\ref{sec:pointpushs}. 

Our second ingredient is to show how to link these paths to arbitrary uniquely ergodic foliations.
Here, we use train track splitting
sequences to define mapping class group sequences that exhibit
contracting behaviour on $\PMF$, extending the contraction of the polyhedra of measures along the splitting sequence. 
The main technical work to make this
work happens in Section~\ref{sec:train-track-north-south}, and uses
the hyperbolic geometry of curve graphs to show that these sequences
act on $\PMF$ in a contracting way.

Section~\ref{sec:islands-point-push} then combines these two parts and
shows the path-connectivity statement in Theorem~\ref{thm:main-intro}
for punctured surfaces. This is also the prerequisite for
Section~\ref{sec:closed-case}, in which the path-connectivity
statement of Theorem~\ref{thm:main-intro} is proved for closed surfaces.

Finally, in Section~\ref{sec:localpath} we show how to leverage the
constructions of paths to show local path-connectivity.

\subsection*{Further Questions}
Finally, we want to highlight a few questions for further research
suggested by Theorem~\ref{thm:main-intro} and its proof.

\begin{ques}
  Are $\UE(S)$ and $\Cob(S)$ simply connected, if the genus of $S$ is sufficiently large?
\end{ques}

\begin{ques}[{Gabai \cite{Gabai}}]
  Is the set $\mathcal{AF}(S)\supset\mathcal{UE}(S)$ of arational foliations path-connected?
\end{ques}
This question came up in Gabai's analysis of connectivity properties
of the Gromov boundary of the curve graph (which is the quotient of
$\mathcal{AF}(S)$ by the map which ``forgets'' the measure on the
foliation). Gabai proves that this boundary is path-connected, but his
methods does not apply to $\mathcal{AF}(S)$ directly.  Leininger and
Schleimer \cite{LS-connectivity} proved that the set $\mathcal{AF}(S)$
of arational foliations is connected, and contains a dense
path-connected subset, but it is not clear that these paths can be
extended to the closure. We suspect that our curve graph methods can
recover Gabai's result that ending lamination space is path connected
in the case of a surface of genus at least 5. Partly because such
genus bounds would not be optimal, we do not prove this here. However,
we want to remark that this kind of strategy is used in \cite{BCH} to
prove path connectivity and local path connectivity of the boundary of
the free factor graph.

Our methods are at the moment also unable to deal with the case of
arational foliations, mainly because the contraction properties in
Section~\ref{sec:train-track-north-south}. This is due to the fact
that in order to certify contraction we use the curve graph boundary,
which is unable to distinguish different measures supported on a
topological foliation.

\medskip Next, one could consider more restrictive subsets of $\Cob(S)$. Namely,
suppose we fix a constant $\epsilon>0$. Call a foliation $F$
$\epsilon$-cobounded if a Teichm\"uller ray with vertical foliation
$F$ eventually stays in the $\epsilon$-thick part of Teichm\"uller
space. 
\begin{ques}\label{ques:cob}
  Is the set $\Cob_\epsilon(S)$ of $\epsilon$-cobounded foliations
  path-connected for any choice of small enough $\epsilon$?
\end{ques}
Our methods do not yield this, since the paths (both in
Section~\ref{sec:islands-point-push} and~\ref{sec:closed-case}) need
to degenerate very close to simple closed curves in order to apply the
methods from Section~\ref{sec:train-track-north-south}. However, the
basic paths from Section~\ref{sec:pointpushs} can be guaranteed to
have uniform thickness.

\medskip Finally, one motivating reason for studying paths of
cobounded paths in the sphere of projective measured foliation stems
from one of the central open questions in the study of mapping class
groups and Teichm\"uller theory. Namely, Farb--Mosher
\cite{FarbMosherConvex} define \emph{convex cocompact subgroups} in
analogy to such Kleinian groups. At this time, all known examples of
such groups are virtually free, and it is not clear if any other
examples can exist. One touchstone question is therefore: is there a
convex cocompact subgroup of the mapping class group, which is
isomorphic to the fundamental group of a higher genus surface. Such a
group $G$ would give rise to a $G$--invariant circle in $\Cob(S)$.
\begin{ques}
  Are there embedded circles in $\Cob(S)$ which are invariant under groups
  that are not free?
\end{ques}
Most likely, this question requires significant new tools.  A 
weaker version of this question arises if we relax the invariance condition, e.g.
\begin{ques}
  Is there a finite subset $F\subset\Mcg(S)$, and $P \subset F^2$ so that for $x$ in Teichm\"uller space
we have that the limit in $\PML$ of
\[\{s_n...s_1x: (s_1...s_n)\in F^n \text{ and } (s_i,s_{i+1})\in P
\text{ for all }i<n\}_{n\in \mathbb{N}}\] is a circle in $\Cob(S)$.  That is, is there a ``convex
cocompact shift of finite type'' which has a circle limit set of cobounded foliations in $\PML$?  
\end{ques}
One could also ask a similar question for semigroups.
  
\section{Contractions on $\PMF$}
\label{sec:train-track-north-south}

We denote by $\PMF$ the sphere of projective measured foliations.
Recall that a foliation is called \emph{minimal}, if every regular
leaf is dense. As mentioned in the introduction, we call a foliation $F$
\emph{uniquely ergodic}, if $F$ admits a unique transverse measure
up to scale. We call a foliation $F$ \emph{cobounded} if a Teichm\"uller
ray with vertical foliation $F$ is contained in some thick part of 
Teichm\"uller space. By Masur's criterion \cite{Masur-Criterion}, cobounded foliations are uniquely ergodic,
and it is well known that uniquely ergodic foliations are minimal.

Throughout this article, we will use the notion of 
measured foliations, although most literature on train tracks uses 
measured geodesic laminations instead. We refer the reader to \cite{Levitt} for an excellent
dictionary between foliations and laminations on surfaces. 
Most of the time this will not be cause for confusion. We only want to 
emphasise that a minimal foliation in our sense corresponds to a  
minimal \emph{and filling} lamination. In particular, there are no simple 
closed curves which have intersection $0$ with a minimal foliation.

\subsection{From splitting sequences to mapping classes}
This section sets out the framework connecting mapping class group
elements and train track splitting sequences. We refer the reader to
\cite{PennerHarer} for a detailed treatment of the basic theory of
train tracks, and \cite{MasurMinsky} for some other concepts we use.

If $\tau$ is a train track and $F$ is a foliation, we write
$F\prec\tau$ if \emph{$F$ is carried by $\tau$} (compare
\cite[Section~1.6]{PennerHarer}, noting that in \cite{PennerHarer} the
notion of measured geodesic laminations is used in place of
foliations). We denote
by $P(\tau) \subset \MF$ the set of measured foliations which are
carried by $\tau$. When it does not cause confusion, we will often
identify $P(\tau)$ with the subset of the sphere $\PML$ of projective
measured foliations it defines. The set $P(\tau)$ naturally has the structure
of a closed polyhedron, whose faces correspond to the polyhedra
$P(\eta)$ of subtracks $\eta$ of $\tau$.

A train track is called \emph{recurrent}, if for every branch there is
a train path which traverses it. It is called \emph{birecurrent} if in
addition there is a multicurve hitting the train track efficiently (i.e. without 
generating bigons)
which intersects every branch (compare \cite[Section~1.3]{PennerHarer}
for details on these definitions). From now on, we will usually assume without
mention that all train tracks we use are birecurrent.  We say that a train
track is \emph{large} if every complementary component is simply
connected, and \emph{maximal}, if every complementary component is a
triangle (which implies largeness).

For maximal, birecurrent train tracks $\tau$, the interior of
$P(\tau)$ defines an open set in $\PML$
\cite[Lemma~3.1.2]{PennerHarer}. For other train tracks this need not be
the case. By the \emph{interior $\mathrm{int}\, P(\tau)$ of
  $P(\tau)$} we will always mean the subset of $P(\tau)$ formed by all those
measures which assign a positive weight to each branch. We stress
again that, in general, this is different from the topological
interior of $P(\tau)$ as a subset of $\PML$ or $\ML$.

Given a train track $\tau$, a branch $b$ is \emph{large}, if every
train path through either of its endpoints runs through $b$.
Recall that we can perform a \emph{left, right
  or central split} at a large branch to obtain a new train track
$\tau'$. Compare \cite[\S 2.1]{PennerHarer} for details on this
construction. We recall that a left or right split does not affect
the number and type of complementary components of the train track, while
a central split can join two complementary components into one.

Let $\tau$ be a fixed maximal, birecurrent train track. As noted above,
the polyhedron $P(\tau)$ defines an open set in $\PML$.  We let
$\mathcal{T}(\tau)$ be the set of all large birecurrent train tracks
which can be obtained from $\tau$ by any number of splits (left,
right, or central). The
set $\mathcal{T}(\tau)$ can be stratified in the following way. Put
$\mathcal{T}_0(\tau) = \{\tau\}$, and inductively define
$\mathcal{T}_{n+1}(\tau)$ to be the set of large train tracks obtained from
each $\sigma\in\mathcal{T}_n(\tau)$ by splitting each large branch
once (in one of the up to three possible ways). Note that a central
split need not yield a large train track, so not all three possibilities are
always allowed.

A large branch $b$ of a large birecurrent train track $\sigma$ defines a
hyperplane $H$ in $P(\sigma)$ cutting $P(\sigma)$ into subpolyhedra
$P_l, P_r$, which are exactly the polyhedra of the left and right
splits of $\sigma$. The polyhedron of the central split of $\sigma$ at $b$
is the hyperplane $H$ \cite[Proposition~2.2.2]{PennerHarer}.  Hence,
the interiors of the polyhedra $P(\sigma), \sigma \in
\mathcal{T}_n(\tau)$ define a decomposition of $P(\tau)$ into disjoint 
subpolyhedra.

Now, let $F\in\mathrm{int}\, P(\tau)$ be given, and
let
\[ \mathcal{T}(\tau, F) = \{ \sigma \in \mathcal{T}(\tau),
  F \prec \sigma\} \] be the subset of all those train tracks in
$\mathcal{T}(\tau)$ which carry $F$.  We let
$\mathcal{T}_n(\tau, F)$ be the set of all those
$\sigma \in \mathcal{T}_n(\tau)$ which carry $F$.
For the next lemma, we use the notion of \emph{diagonal extension}. If $\tau$ is a 
train track, then we say that $\eta$ is a diagonal extension of $\tau$ if $\eta$ is obtained by
adding branches inside simply connected complementary components. See \cite[Section~4.1]{MasurMinsky} for details.
\begin{lem}\label{lem:diagonal}
  The sets $\mathcal{T}_n(\tau, F)$ only contain diagonal
  extensions of the (large) train track $\eta_n\in\mathcal{T}_n(\tau, F)$ with
  the fewest complementary components.
\end{lem}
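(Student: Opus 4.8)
The plan is to understand precisely how the complementary regions evolve under splitting and to show that among the tracks in $\mathcal{T}_n(\tau,F)$ carrying $F$, the "topological type" (i.e. the underlying large train track obtained after collapsing all inessential diagonal branches) is forced to be unique. First I would recall that a left or right split never changes the number or type of complementary regions, while a central split merges two complementary triangles into a single once-punctured monogon / square type region, thereby strictly decreasing the number of complementary components. Consequently, along any splitting sequence the number of complementary components is non-increasing, and the large train track $\eta_n$ with the fewest complementary components among elements of $\mathcal{T}_n(\tau,F)$ is well-defined; the claim is that every $\sigma\in\mathcal{T}_n(\tau,F)$ is a diagonal extension of this $\eta_n$.

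The key steps, in order, would be: (1) Show that whenever a large branch $b$ of some $\eta\in\mathcal{T}_k(\tau,F)$ has the property that all three of its left, right and central splits carry $F$, the foliation $F$ actually lies in the hyperplane $P(\text{central split})$, i.e. $F$ assigns weight $0$ to the branch $b$ after splitting, so that in the left and right splits the new small branch created by the split is inessential (gets zero weight) for $F$. This is exactly the dichotomy already exploited in the proof of Lemma~\ref{lem:splitting-neighbourhoods}: either exactly one of left/right carries $F$ in its interior, or $F$ lies on the splitting hyperplane and all three carry it. (2) Using $F\in\mathrm{int}\,P(\tau)$ and tracking weights, argue inductively that the set of branches to which $F$ assigns positive weight, in any $\sigma\in\mathcal{T}_n(\tau,F)$, forms a sub-train-track $\sigma_F$ carrying $F$ in its interior, and that this $\sigma_F$ is independent of which sequence of splits was chosen — because at each stage the "interior" splitting move (the one that keeps $F$ in the interior) is uniquely determined by $F$ when it exists, and when it does not exist $F$ already sits on a face and no branch-weights change. (3) Identify $\sigma_F$ with $\eta_n$: since central splits are exactly the moves that reduce complementary components, and $\sigma_F$ is obtained from the "maximal-split" track along $F$, one checks $\sigma_F$ realizes the minimum number of complementary components in $\mathcal{T}_n(\tau,F)$, hence $\sigma_F=\eta_n$ (this also uses that diagonal extensions do not change complementary components, only subdivide them, which is the content of the definition of diagonal extension from \cite[Section~4.1]{MasurMinsky}). (4) Conclude: any $\sigma\in\mathcal{T}_n(\tau,F)$ has $\sigma_F=\eta_n$ as its positive-weight subtrack, and the remaining branches of $\sigma$ lie inside simply connected complementary regions of $\eta_n$ (they carry no measure from $F$, so they were created by left/right splits that did not merge regions), which is precisely the statement that $\sigma$ is a diagonal extension of $\eta_n$.

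The main obstacle I expect is Step~(2)–(3): making rigorous the assertion that the "positive-weight subtrack" $\sigma_F$ is genuinely independent of the order and choice of splits, and that it coincides with the minimal-complementary-component track $\eta_n$. One has to rule out the possibility that two different splitting sequences of length $n$ carrying $F$ produce tracks whose positive-weight parts differ by more than a diagonal extension — e.g. that one sequence performs a central split "early" that another performs "late," landing at genuinely different large tracks. The resolution should be a confluence/commutation argument: splits at disjoint large branches commute, and when $F$ lies on a splitting hyperplane the central split is forced, so the sequence of central splits performed (ignoring the order, and ignoring the cosmetic left/right choices at branches where $F$ has zero weight) is determined by $F$ alone. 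Packaging this cleanly — perhaps by induction on $n$, peeling off one split and invoking the inductive hypothesis for $\mathcal{T}_{n-1}(\tau,F)$ together with Lemma~\ref{lem:splitting-neighbourhoods}'s dichotomy — is where the real work lies; the complementary-region bookkeeping in Steps~(1) and~(4) is routine once this is in place.
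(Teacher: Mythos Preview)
Your approach is essentially the same idea as the paper's: isolate, inside each $\sigma\in\mathcal{T}_n(\tau,F)$, the subtrack on which $F$ has strictly positive weight, and argue that this subtrack is the common $\eta_n$. The paper, however, executes this more cleanly and avoids exactly the confluence problem you flag in Steps~(2)--(3). Rather than proving that the positive-weight subtrack $\sigma_F$ is independent of the splitting history, the paper \emph{constructs} a specific sequence $(\eta_k)$ by always choosing the central split when $F$ allows it, so that $F$ fills each $\eta_k$ by construction. It then proves, by a one-line induction, that some $\eta_k$ sits as a subtrack of every $\sigma\in\mathcal{T}_n(\tau,F)$: if $F$ fills a subtrack $\eta\subset\sigma$ and $\sigma$ splits to $\sigma'$, then either $\eta$ or a split of $\eta$ is a subtrack of $\sigma'$. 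This sidesteps any commutation argument entirely; you never need to compare two splitting histories, only to drag the canonical $\eta_k$ along inside an arbitrary one.

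One point you gloss over that the paper makes explicit: to pass from ``$\eta_k$ is a subtrack of $\sigma$'' to ``$\sigma$ is a diagonal extension of $\eta_k$'' you need $\eta_k$ to be large, so that all its complementary regions are simply connected. The paper obtains this from minimality of $F$ (no simple closed curve is disjoint from $F$, hence any track carrying $F$ in its interior is large). Your justification in Step~(4) --- that zero-weight branches ``were created by left/right splits that did not merge regions'' --- is heuristically right but does not by itself force those branches into simply connected regions without knowing $\eta_n$ is large; you should invoke minimality here.
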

\begin{proof}
  Consider the sequence $\eta_k$ of train tracks obtained by splitting
  $\tau$ in the direction of $F$ and always choosing a central split
  when possible.  These have the property that they always carry $F$,
  and additionally, the weight defined by $F$ is positive on every
  branch of $\eta_k$ for all $k$ ($F$ \emph{fills} $\eta_k$).  Note
  that for any $\sigma\in \mathcal{T}_n(\tau)$ there exists (at least
  one) $\eta_k$ (depending on $\sigma$) so that $\eta_k$ is a subtrack
  of $\sigma$ (this follows inductively, since if a foliation is
  carried by, and fills, a subtrack $\eta$ of $\sigma$ and $\sigma$
  splits to $\sigma'$, then either $\eta$ or a split of $\eta$ is a
  subtrack of $\sigma'$). Since $F$ is minimal, and therefore there is
  no simple closed curve that doesn't intersect $F$, it can only be
  carried by large train tracks. Therefore, $\sigma$ is a diagonal extension
  of $\eta_k$. The lemma now follows, since any set
  $\mathcal{T}_n(\tau, F)$ contains at most one $\eta_k$, and the
  number of complementary components in a split decreases only during
  central splits.
\end{proof}
We put
\[ U_n(\tau, F) = \bigcup_{\sigma \in \mathcal{T}_n(\tau,
    F)} \mathrm{int}\, P(\sigma) \]
\begin{lem}\label{lem:splitting-neighbourhoods}
  $U_n(\tau, F)$ is an open neighborhood of $F$ in $\PMF$ for every $n$.
\end{lem}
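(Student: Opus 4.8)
The plan is to recognise $U_n(\tau,F)$ as an \emph{open star} in the polyhedral decomposition of $P(\tau)$ induced by $\mathcal{T}_n(\tau)$, and then to transfer this from $P(\tau)$ to $\PMF$ using the fact that $\mathrm{int}\,P(\tau)$ is open in $\PMF$ because $\tau$ is maximal and birecurrent. Concretely, the polyhedra $P(\sigma)$ with $\sigma\in\mathcal{T}_n(\tau)$, together with the polyhedra $P(\eta)$ of their subtracks, form a polyhedral complex $K$ with underlying space $P(\tau)$; this is essentially the decomposition recorded just before the statement, and every cell of $K$ has the form $P(\eta)$ for some (sub)track $\eta$ carried by $\tau$. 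Let $c_F$ be the unique cell of $K$ whose relative interior contains $F$. Unpacking the definition, $U_n(\tau,F)$ is the union of the relative interiors of the cells of $K$ that contain $F$; since a cell contains $F$ exactly when it has $c_F$ as a face, this is the open star $\mathrm{st}(c_F)$.

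The first step is that $\mathrm{st}(c_F)$ is relatively open in $P(\tau)$. This is the standard fact that open stars are open: the complement $P(\tau)\setminus\mathrm{st}(c_F)$ is the union of the closed cells of $K$ that do not have $c_F$ as a face; since a face of such a cell again fails to have $c_F$ as a face, this collection is a subcomplex of $K$, hence closed in $P(\tau)$.

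The second step is to show $\mathrm{st}(c_F)\subseteq\mathrm{int}\,P(\tau)$, where $F\in\mathrm{int}\,P(\tau)$ as in our standing setup. The key point is a dichotomy: for any subtrack $\eta$ carried by $\tau$, whether a foliation carried by $\eta$ with all branch weights positive also assigns positive weight to every branch of $\tau$ depends only on whether the carrying map $\eta\to\tau$ traverses every branch of $\tau$, not on the chosen weights; hence $\mathrm{int}\,P(\eta)$ is either contained in, or disjoint from, $\mathrm{int}\,P(\tau)$. Now any cell occurring in $\mathrm{st}(c_F)$ is some $P(\eta)$ having $c_F$ as a face, so $F\in P(\eta)=\overline{\mathrm{int}\,P(\eta)}$; since $\mathrm{int}\,P(\tau)$ is relatively open in $P(\tau)$ and contains $F$, it must meet $\mathrm{int}\,P(\eta)$, and the dichotomy then forces $\mathrm{int}\,P(\eta)\subseteq\mathrm{int}\,P(\tau)$. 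Taking the union over all such cells yields $\mathrm{st}(c_F)\subseteq\mathrm{int}\,P(\tau)$.

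Finally, since $\tau$ is maximal and birecurrent, $\mathrm{int}\,P(\tau)$ is open in $\PMF$ by \cite[Lemma~3.1.2]{PennerHarer}. Therefore $U_n(\tau,F)=\mathrm{st}(c_F)$, being relatively open in $P(\tau)$ and contained in the $\PMF$-open set $\mathrm{int}\,P(\tau)$, is open in $\PMF$; and it contains $F$, since $F$ lies in the relative interior of $c_F$. I expect the only genuine obstacle to lie in the structural bookkeeping already implicit in the paragraph preceding the statement: verifying that the polyhedra $P(\sigma)$ and their faces really do assemble into a polyhedral complex, and that this complex refines the face structure of $P(\tau)$ so that $c_F$ cannot slip into $\partial P(\tau)$. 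This is where the effect of left, right, and (dimension-dropping, possibly non-large) central splits from \cite[\S 2]{PennerHarer} has to be invoked with some care; everything else is soft point-set and PL topology.
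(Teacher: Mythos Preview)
Your approach is genuinely different from the paper's. The paper argues by a short induction on $n$: for $n=0$ one has $U_0(\tau,F)=\mathrm{int}\,P(\tau)$, which is open by \cite[Lemma~3.1.2]{PennerHarer}; and $U_{n+1}(\tau,F)$ is obtained from the open set $U_n(\tau,F)$ by cutting along the central-split hyperplanes and keeping every piece that contains $F$. If $F$ lies off every hyperplane this is clearly still open, while if $F$ lies on one then all three of the left, right, and central splits carry $F$, so all three pieces are retained and the union stays open around $F$.

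Your open-star reformulation is conceptually attractive, and the arguments that $\mathrm{st}(c_F)$ is relatively open in $P(\tau)$ and that $\mathrm{st}(c_F)\subseteq\mathrm{int}\,P(\tau)$ (via your dichotomy) are fine. The gap is earlier: the assertion ``unpacking the definition, $U_n(\tau,F)$ is the union of the relative interiors of the cells of $K$ that contain $F$'' is not just unpacking. By definition $U_n(\tau,F)$ ranges only over $\sigma\in\mathcal{T}_n(\tau)$ carrying $F$, whereas the cells of your complex $K$ containing $F$ are polyhedra $P(\eta)$ with $\eta$ an arbitrary \emph{subtrack} of some $\sigma\in\mathcal{T}_n(\tau)$ and $F\prec\eta$. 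So you have $U_n(\tau,F)\subseteq\mathrm{st}(c_F)$ for free, but for equality you must show that every such $\eta$ already belongs to $\mathcal{T}_n(\tau)$. This is true (under the implicit hypothesis that $F$ is minimal, so any track carrying $F$ is large; then removing a diagonal branch from a track in $\mathcal{T}_n(\tau)$ corresponds to replacing one left/right split by a central split and hence gives another element of $\mathcal{T}_n(\tau)$), but proving it amounts to the same inductive bookkeeping the paper carries out directly. Without this step, you have only shown that the possibly larger set $\mathrm{st}(c_F)$ is open, not $U_n(\tau,F)$ itself. The ``structural bookkeeping'' you flag at the end is therefore not only about $K$ being a complex, but also about this identification.
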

\begin{proof}
  We prove the lemma by induction. For $n=0$ this is simply openness
  of $P(\tau)$ (\cite[Lemma~3.1.2]{PennerHarer}).
  Suppose now that $U_n(\tau, F)$ is an open neighborhood of
  $F$. From the description of the effect of splits on polyhedra given
  above we conclude that $U_{n+1}(\tau, F)$ is obtained from $U_n(\tau, F)$
  by cutting at hyperplanes (corresponding to central splits) and retaining
  those polyhedra which contain $F$. If none of these hyperplanes contain $F$,
  it is clear that $U_{n+1}(\tau, F)$ is still an open neighbourhood of $F$.
  However, suppose that one of them does contain $F$. This corresponds to the
  situation in which a track $\eta \in \mathcal{T}_n$ has a large branch so that
  all three of the left,right and central splits of $\eta$ along that branch carry $F$ -- and therefore all three of these splits will contribute to $U_{n+1}(\tau, F)$, guaranteeing that the latter is still an open neighbourhood of $F$.
\end{proof}

A \emph{splitting sequence $\tau_i$ in the direction of $F$} is
a sequence $\tau_i$ of train tracks with $\tau_0 = \tau$ and so that
each $\tau_i$ carries $F$, and $\tau_{i+1}$ is obtained from $\tau_i$
by splitting exactly one large of $\tau_i$ branch once. A \emph{full splitting sequence} instead
requires splitting each large branch of $\tau_i$ once when passing from $\tau_i$ to $\tau_{i+1}$.
Hence, if $\tau_i$ is a full splitting sequence in the direction of $F$ starting in $\tau$, then
$\tau_i \in \mathcal{T}_i(\tau)$ for all $i$.

If $F$ is a foliation in the minimal stratum (i.e. each singularity is
$3$--pronged, and there are no saddle connections\footnote{This means that the corresponding lamination has only triangles as its complementary components.}), then each split in
a splitting sequence $\tau_i$ is a left or a right split, and
furthermore the type is uniquely determined by $F$.  If
$F$ has saddle connections or $k$--prong singularities for $k > 3$, then it is possible that for some $n$,
$F$ is carried by the left, right and central split of
$\tau_n$. This is furthermore the last time $F$ is carried in
the interior of a maximal train track $\tau_n$ along the splitting sequence.

Splitting sequences in the direction of minimal foliations have good
contracting properties. In the following theorem, and below, we denote
by $\Delta(F)$ the (closed) simplex of projective measured foliations
which are topologically equivalent to $F$.
\begin{thm}[{compare e.g. \cite[Theorem~5.1.1]{Mosher}}]\label{thm:mosher}
  Suppose that $F$ is a minimal foliation 
  and that $\tau_i$ is any splitting sequence in the direction of $F$. Then
  \[ \bigcap_{i=1}^{\infty} P(\tau_i) = \Delta(F). \]
\end{thm}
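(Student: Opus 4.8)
The plan is to prove the two inclusions $\bigcap_i P(\tau_i) \supseteq \Delta(F)$ and $\bigcap_i P(\tau_i) \subseteq \Delta(F)$ separately. The first inclusion is the easy one: every $G \in \Delta(F)$ is topologically equivalent to $F$, and since $F$ is carried by each $\tau_i$ and carrying a foliation is a property of the underlying topological foliation (it only records which branches the leaves run through, not the transverse measure), $G$ is carried by each $\tau_i$ as well; hence $\Delta(F) \subseteq P(\tau_i)$ for all $i$, so $\Delta(F) \subseteq \bigcap_i P(\tau_i)$. One should be slightly careful here that ``splitting in the direction of $F$'' is compatible with all measures on the topological foliation of $F$ — but since $F$ is minimal (hence fills every large track carrying it, by the reasoning in Lemma~\ref{lem:diagonal}), each split type is forced by the topological foliation alone, so this is fine.

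For the reverse inclusion, the plan is to take $G \in \bigcap_i P(\tau_i)$ and show $G$ is topologically equivalent to $F$. First I would reduce to the minimal-stratum case: by Lemma~\ref{lem:diagonal} and the discussion following it, after finitely many splits the tracks $\tau_i$ become diagonal extensions of a canonical sequence $\eta_k$ along which $F$ fills, and the $\eta_k$ are an honest splitting sequence of large tracks in which $F$ lies in the interior. Replacing $\tau$ by $\eta_N$ for large $N$, one may assume $F$ fills $\tau$. Now the key geometric input: as $i \to \infty$, the polyhedra $P(\tau_i)$ shrink. Concretely, I would argue that $\mathrm{diam}\, P(\tau_i) \to 0$ in $\PMF$ — or, what amounts to the same thing, that any two foliations carried by $\tau_i$ have transverse measures (= branch-weight vectors) that are projectively close, uniformly as $i \to \infty$. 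This is the standard ``the cones $P(\tau_i)$ collapse onto a single ray'' phenomenon, and the cleanest way to see it is via the switch conditions: each split imposes a new linear inequality, and along a splitting sequence in the direction of a filling foliation the nested cones of nonnegative weight vectors satisfying all switch conditions converge to the ray spanned by $F$'s weight vector (this uses recurrence/birecurrence to rule out escape to a proper face). Since $G \in P(\tau_i)$ for all $i$, the ray of $G$ must equal the ray of $F$, giving $G = F$ projectively — in particular $G \in \Delta(F)$.

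The main obstacle is precisely this contraction statement $\mathrm{diam}\, P(\tau_i)\to 0$, which is the whole content of the theorem; the set-theoretic bookkeeping around it is routine. The subtlety is that $P(\tau_i)$ can fail to shrink to a point when $F$ is not in the minimal stratum — if $F$ has higher-order singularities or saddle connections, the limit is a genuine simplex $\Delta(F)$ of positive dimension, corresponding to the several ways of distributing measure near the non-generic singularities, and one must check that the nested cones converge to exactly the cone of weight vectors compatible with the topological foliation of $F$ and no larger. I would handle this by the reduction above: split maximally (always centrally when possible) until one reaches the track $\eta_\infty$ on which $F$ fills and which is minimal but possibly non-maximal; then $P(\eta_\infty)$ is already canonically identified with $\Delta(F)$ (its weight space is the space of transverse measures on the topological foliation of $F$), and all further splits in the direction of $F$ only pass to diagonal extensions, which do not change the carried simplex. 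Alternatively, one may simply cite \cite[Theorem~5.1.1]{Mosher} for the contraction estimate and assemble the two inclusions as above; since the theorem is stated with a ``compare'' attribution, a proof that reduces to that reference plus the two easy inclusions is entirely appropriate here.
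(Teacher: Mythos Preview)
The paper does not prove this theorem; it simply states it with a citation to Mosher. So your closing fallback --- cite \cite{Mosher} and move on --- is exactly what the paper does, and is appropriate.

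That said, your proposed self-contained argument has a genuine conceptual error. You identify the positive-dimensionality of $\Delta(F)$ with $F$ lying outside the minimal stratum (``if $F$ has higher-order singularities or saddle connections, the limit is a genuine simplex $\Delta(F)$ of positive dimension, corresponding to the several ways of distributing measure near the non-generic singularities''). This is not right: $\dim\Delta(F)>0$ is precisely the statement that $F$ is \emph{not uniquely ergodic}, which is a global dynamical property of the foliation and is completely independent of the local singularity structure. A minimal foliation with only $3$--prong singularities can perfectly well admit several mutually singular ergodic transverse measures. Consequently your claim that $\mathrm{diam}\,P(\tau_i)\to 0$ after reducing to the case ``$F$ fills $\tau$'' is false in general --- filling does not imply unique ergodicity, and when $F$ is not uniquely ergodic the nested polyhedra genuinely do \emph{not} shrink to a point.

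Your alternative route via a limiting track ``$\eta_\infty$'' whose weight polyhedron is already $\Delta(F)$ is also problematic: no finite-stage track $\eta_N$ has $P(\eta_N)=\Delta(F)$ (that is exactly the content of the theorem --- you need the infinite intersection), and there is no well-defined limit track in the category of train tracks on $S$. The correct argument (as in Mosher) really does require an honest estimate showing that the nested cones converge to the cone of transverse measures on the topological foliation of $F$, and this is not a matter of bookkeeping around singularities. The easy inclusion $\Delta(F)\subseteq\bigcap_iP(\tau_i)$ you give is fine.
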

As an immediate corollary, we have
\begin{cor}\label{cor:u-nest}
  Let $F \in\mathrm{int}\, P(\tau)$ be minimal. Then
  \[ \bigcap_{n\geq 0} U_n(\tau,F) = \Delta(F). \]
\end{cor}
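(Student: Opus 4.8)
The plan is to deduce the corollary from Theorem \ref{thm:mosher} by relating the sets $U_n(\tau, F)$ to the polyhedra appearing in a suitable splitting sequence. First I would observe the easy inclusion $\Delta(F) \subseteq \bigcap_n U_n(\tau, F)$: since $F$ is minimal and $F \in \mathrm{int}\,P(\tau)$, every foliation topologically equivalent to $F$ is carried by the same train tracks along any splitting in the direction of $F$ (they fill the same tracks), and by Lemma \ref{lem:diagonal} together with Lemma \ref{lem:splitting-neighbourhoods} the union $U_n(\tau, F)$ contains an honest neighbourhood of $F$ in $\PMF$, in particular contains $\Delta(F)$ when $\Delta(F)$ is close enough — more carefully, one checks directly that each $G \in \Delta(F)$ lies in $\mathrm{int}\, P(\sigma)$ for the track $\sigma \in \mathcal{T}_n(\tau,F)$ with the fewest complementary components, by the filling argument in the proof of Lemma \ref{lem:diagonal}.

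For the reverse inclusion $\bigcap_n U_n(\tau, F) \subseteq \Delta(F)$, the key step is to sandwich $U_n(\tau, F)$ between terms of a splitting sequence. Pick any full splitting sequence $\tau_i$ in the direction of $F$ starting at $\tau$; then $\tau_i \in \mathcal{T}_i(\tau)$, and in fact $\tau_i$ is (up to diagonal extension) the distinguished track $\eta_i$ of Lemma \ref{lem:diagonal}, so that every $\sigma \in \mathcal{T}_n(\tau, F)$ is a diagonal extension of $\tau_n$. Since a diagonal extension $\sigma$ of $\tau_n$ satisfies $P(\sigma) \subseteq P(\tau_n')$ for some full splitting $\tau_n'$ a bounded number of further splits past $\tau_n$ — or more simply, since $\mathrm{int}\,P(\sigma) \subseteq P(\tau_n)$ is false in general but $P(\sigma)$ only differs from $P(\tau_n)$ by the carried foliations using the diagonal branches — I would instead argue: any $G \in U_n(\tau,F)$ lies in $P(\sigma)$ for some $\sigma \in \mathcal{T}_n(\tau, F)$, and such $\sigma$ carries $F$, is large, and has $\tau_{k}$ as a subtrack for some $k \le$ (the number of large branches of $\tau$ times $n$), hence $G \in P(\sigma)$ forces $G \in P(\tau_k)$ only after passing to the non-diagonal part — the honest statement being that $G$ is carried by a track in the splitting sequence of $F$. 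Thus $\bigcap_n U_n(\tau, F) \subseteq \bigcap_k P(\tau_k) = \Delta(F)$ by Theorem \ref{thm:mosher}.

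The main obstacle I anticipate is the bookkeeping around diagonal extensions: $\mathcal{T}_n(\tau, F)$ consists of diagonal extensions of $\eta_n$, and the polyhedron of a diagonal extension is strictly larger than $P(\eta_n)$, so one cannot naively say $U_n(\tau,F) \subseteq P(\tau_n)$. The clean fix is to note that since $F$ is minimal it is \emph{filling}, so $F$ (and everything in $\Delta(F)$) is carried with positive weights only by the large subtrack $\eta_n$ and never genuinely uses the diagonal branches; more precisely, I would show that if $G \in \mathrm{int}\,P(\sigma)$ for a diagonal extension $\sigma$ of $\eta_n$ and $G$ lies in the intersection over all $n$, then the diagonal weights must tend to zero, forcing $G \in \bigcap_n P(\eta_n)$, which is a full splitting sequence and hence equals $\Delta(F)$ by Theorem \ref{thm:mosher}. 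I expect this filling argument to be the only genuinely non-formal point; everything else is combinatorics of the stratification $\mathcal{T}_n(\tau)$ already set up above.
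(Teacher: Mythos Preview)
Your plan has a genuine gap in the reverse inclusion. You fix a single full splitting sequence $(\eta_n)$ towards $F$ and attempt to show $\bigcap_n U_n(\tau,F)\subseteq\bigcap_n P(\eta_n)$, and this is precisely what manufactures the diagonal-extension obstacle: if $G\in\mathrm{int}\,P(\sigma)$ for a proper diagonal extension $\sigma$ of $\eta_n$, then $G$ assigns positive weight to a branch not in $\eta_n$ and so is simply not carried by $\eta_n$. Your proposed fix --- that ``the diagonal weights must tend to zero'' --- is asserted but not argued, and since each $P(\eta_n)$ is closed you would actually need these weights to be \emph{zero} at every level, not merely small (setting small diagonal weights to zero need not even produce a valid measure on $\eta_n$, as switch conditions can fail). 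I do not see how to obtain that without already knowing $G\in\Delta(F)$, which makes the argument circular.

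The paper records the corollary as immediate from Theorem~\ref{thm:mosher}, and the direct argument sidesteps your obstacle entirely by allowing the splitting sequence to depend on $G$. Given $G\in\bigcap_n U_n(\tau,F)$, for each $n$ there is a \emph{unique} $\sigma_n\in\mathcal{T}_n(\tau,F)$ with $G\in\mathrm{int}\,P(\sigma_n)$, since the interiors $\mathrm{int}\,P(\sigma)$ for $\sigma\in\mathcal{T}_n(\tau)$ are pairwise disjoint. If $\sigma'\in\mathcal{T}_n(\tau)$ is the track of which $\sigma_{n+1}$ is a full split, then $\mathrm{int}\,P(\sigma_{n+1})\subseteq\mathrm{int}\,P(\sigma')$ (this holds for left, right and central splits alike, by the hyperplane description preceding Lemma~\ref{lem:splitting-neighbourhoods}), so uniqueness forces $\sigma'=\sigma_n$; and $\sigma'$ carries $F$ because $\sigma_{n+1}$ does. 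Hence $(\sigma_n)$ is itself a full splitting sequence in the direction of $F$, and Theorem~\ref{thm:mosher} gives $G\in\bigcap_n P(\sigma_n)=\Delta(F)$. No comparison with a fixed $\eta_n$ is needed, and Lemma~\ref{lem:diagonal} plays no role in this inclusion. Your handling of the easy inclusion $\Delta(F)\subseteq\bigcap_n U_n(\tau,F)$ via the filling property is correct.
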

We now describe how to connect splitting sequences to sequences in the
mapping class group. The first step is the following lemma.
\begin{lem}\label{lem:track-to-mcg}
  There is a finite number of sets
  \begin{equation}\label{eq:basic patches}
    \mathcal{T}^{(1)}, \ldots, \mathcal{T}^{(M)} 
  \end{equation}
  so that for each maximal train track $\tau$, each minimal $F$, 
  and each $n$ there is a number $k_{\tau, F, n}$ and a mapping
  class $f_{\tau, F, n}$ with
  \[ \mathcal{T}_n(\tau, F) =
    f_{\tau,F,n}\left(\mathcal{T}^{(k_{\tau, F, n})}\right). \] The number $k_{\tau, F, n}$ is unique. The mapping class $f_{\tau, F, n}$ is unique up to a
  finite indeterminacy.
\end{lem}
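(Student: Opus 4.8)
The plan is to show that, modulo the action of $\Mcg(S)$, the set $\mathcal{T}_n(\tau,F)$ records only a bounded amount of combinatorial data, uniformly in $\tau$, $F$ and $n$, and then to take as the models $\mathcal{T}^{(1)},\dots,\mathcal{T}^{(M)}$ one representative for each value this data can take. Throughout I assume, as in the discussion preceding the lemma, that $F\in\mathrm{int}\,P(\tau)$.

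First I would assemble the standard facts about train tracks on a fixed surface. A birecurrent train track on $S$ has a number of branches bounded by a constant $C=C(S)$ depending only on $\chi(S)$, so there are only finitely many combinatorial isomorphism types of such a track, and hence finitely many types of a \emph{configuration} consisting of at most $C$ birecurrent train tracks on $S$ recorded together with all incidences ``$\eta$ is a subtrack of $\eta'$'' and ``$\eta'$ is a diagonal extension of $\eta$''. Moreover the change-of-coordinates principle applies: a combinatorial isomorphism between two such configurations embedded in $S$ is realised by an element of $\Mcg(S)$ (for a single train track this is the classical fact that it determines its fibered neighbourhood up to isotopy; for the configurations we need it is a formal consequence, since all tracks involved will be diagonal extensions of one fixed track, so once that track is matched up by a homeomorphism the remaining data is local to its complementary polygons and is matched automatically). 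Finally, forming $\mathcal{T}_n(\tau)$, the polyhedral subdivision of $P(\tau)$, and the carrying relation $F\prec\sigma$ are all $\Mcg(S)$-equivariant, i.e.\ $\mathcal{T}_n(g\tau,gF)=g\,\mathcal{T}_n(\tau,F)$ for all $g\in\Mcg(S)$.

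Granting this, the argument is short. By Lemma~\ref{lem:diagonal} every element of $\mathcal{T}_n(\tau,F)$ is a large diagonal extension of the single track $\eta_n\in\mathcal{T}_n(\tau,F)$ with fewest complementary regions; since $\eta_n$ is large, its complementary regions are (possibly once-punctured) disks with boundedly many sides, so $\eta_n$ admits at most $C(S)$ diagonal extensions and $\mathcal{T}_n(\tau,F)$ is a configuration of at most $C(S)$ birecurrent train tracks. It therefore realises one of finitely many isomorphism types; choose one representative configuration $\mathcal{T}^{(k)}$ embedded in $S$ for each of these $M$ types, taken in pairwise distinct types. Set $k_{\tau,F,n}$ equal to the type of $\mathcal{T}_n(\tau,F)$; it is uniquely determined because the $\mathcal{T}^{(k)}$ lie in distinct types. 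By change of coordinates there is $f_{\tau,F,n}\in\Mcg(S)$ with $\mathcal{T}_n(\tau,F)=f_{\tau,F,n}\bigl(\mathcal{T}^{(k_{\tau,F,n})}\bigr)$, and any two valid choices of $f_{\tau,F,n}$ differ by an element of $\mathrm{Stab}_{\Mcg(S)}(\mathcal{T}^{(k)})$. This stabiliser permutes the finitely many tracks of $\mathcal{T}^{(k)}$, so a finite-index subgroup of it fixes the distinguished (fewest-complementary-regions) track $\eta^{(k)}$, which is large; but the stabiliser of a large train track in $\Mcg(S)$ is finite, since such a track has (possibly once-punctured) disk complementary regions, hence a rigid ribbon/cell structure, so a mapping class fixing it induces one of finitely many combinatorial automorphisms and one inducing the trivial automorphism is isotopic to the identity (the Alexander trick on the complementary disks, together with triviality of Dehn twists about peripheral curves). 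Hence $\mathrm{Stab}(\mathcal{T}^{(k)})$ is finite, which is the asserted finite indeterminacy of $f_{\tau,F,n}$.

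The real work lies in the combinatorial bookkeeping of the first part: fixing the precise notion of a ``configuration'' and its isomorphism type so that it is $\Mcg(S)$-invariant, so that isomorphic configurations in $S$ are genuinely related by a mapping class, and --- crucially --- so that the list of types is finite \emph{uniformly in $n$}. This last point is exactly where Lemma~\ref{lem:diagonal} is indispensable: without it there is no obvious uniform bound on $|\mathcal{T}_n(\tau,F)|$ or on the complexity of the incidences within it, and the number of models could a priori grow with $n$. Everything else is standard train-track combinatorics plus the change-of-coordinates principle.
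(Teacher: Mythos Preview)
Your proof is correct and follows essentially the same route as the paper's: both use Lemma~\ref{lem:diagonal} to reduce $\mathcal{T}_n(\tau,F)$ to a set of diagonal extensions of a single large track $\eta_n$, invoke the fact that $\Mcg(S)$ acts with finitely many orbits on (isotopy classes of) train tracks, bound the number of diagonal extensions by Euler characteristic, take orbit representatives as the models, and deduce finite indeterminacy from finiteness of the stabiliser of a large train track. The paper's version is more telegraphic (citing \cite[Lemma~4.2]{Ursula-TrainTrackGraph} for the finite-stabiliser fact rather than sketching it), while you package things via a ``configuration'' framework and the change-of-coordinates principle; these are the same argument in different words. One minor simplification: since $\eta^{(k)}$ is the \emph{unique} track in $\mathcal{T}^{(k)}$ with fewest complementary regions, any element of $\mathrm{Stab}(\mathcal{T}^{(k)})$ already fixes it, so the finite-index detour is unnecessary.
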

We call the set of $\mathcal{T}^{(i)}$ \emph{standard neighborhood models}
and we call the number $k_{\tau, F, n}$ the \emph{type} of $\mathcal{T}_n(\tau, F)$.
\begin{proof}[Proof of Lemma~\ref{lem:track-to-mcg}]
  By Lemma~\ref{lem:diagonal}, 
   $\mathcal{T}_n(\tau, F)$
 consists of train tracks which are diagonal
  extensions of some large train track $\eta_n$. Since the mapping class
  group $\Mcg(S_g)$ acts on the set of (isotopy classes of) train
  tracks on $S_g$ with finitely many orbits, there are finitely many
  choices for such a train track $\eta_n$ up to the mapping class
  group action. Since the number of complementary components of $\eta_n$
  can be bounded from the Euler characteristic of $S$ alone, there are a finite number of diagonal extensions of $\eta_n$. This implies that the 
  mapping class group also
  acts on the sets $\mathcal{T}_n(\tau, F)$ (over all
  $\tau, F, n$) with finitely many orbits. We can therefore
  choose the sets $\mathcal{T}^{(i)}$ to be orbit representatives of
  this action. This shows both the desired existence of
  $k_{\tau, F, n}$ and $f_{\tau, F, n}$, as well as the
  uniqueness of $k_{\tau, F, n}$. The (coarse) uniqueness of the
  $f_{\tau, F, n}$ follows since the set of mapping classes
  which fix a given train track is finite (compare
  e.g. \cite[Lemma~4.2]{Ursula-TrainTrackGraph}), and so the element
  $f_{n,F}$ is also determined up to a finite choice.
\end{proof}
Let $(\tau_i)$ be a full splitting sequence starting in a maximal
train track $\tau$ towards some minimal foliation $F$. Then
each $\tau_i \in \mathcal{T}(\tau, F)$, and in fact $\tau_i \in
\mathcal{T}_i(\tau, F)$.
We then get an \emph{associated $\Mcg$-sequence} $(f_i,k_i)$ by applying
Lemma~\ref{lem:track-to-mcg} to  $\mathcal{T}_i(\tau, F)$ for each $i$.
In particular, we then have
\[ \mathcal{T}_n(\tau, F) = f_n(\mathcal{T}^{(k_i)}). \] As
before, the numbers $k_i$ are uniquely determined by the splitting
sequence, and the mapping classes $f_n$ are determined up to a finite
choice. We call the number $k_n$ \emph{the type of the index
$n$}.

Let $\mathcal{U}^{(k)}$ be the neighborhoods associated to
our standard models $\mathcal{T}^{(k)}$, i.e.
\begin{equation}\label{eq:def cal U}
 \mathcal{U}^{(k)} = \bigcup_{\sigma\in\mathcal{T}^{(k)}} \mathrm{int }P(\sigma). 
\end{equation}
We call the $\mathcal{U}^{(k)}$ the \emph{standard
  neighbourhoods}\footnote{Note that the model neighbourhoods
  $\mathcal{U}^{(k)}$ need not be contained in the polyhedra
  $P(\tau_i)$ along the splitting sequence.}.  By the defining
property of the associated sequence $(f_n, k_n)$ we can then relate
the standard neighbourhoods to the neighbourhoods of $F$ given by the
splitting sequence in the following way:
\begin{equation}
  \label{eq:relation-u-up-down}
  U_n(\tau, F) = f_n\left(\mathcal{U}^{(k_n)}\right).
\end{equation}
The next lemma collects two crucial properties of the associated sequence.
\begin{lem}\label{lem:split-means-finite}
  There is a finite set $M \subset \Mcg(S_g)$ with $M=M^{-1}$ and so
  that the following holds.  Suppose that $f_n, f_{n+1}$ are two
  consecutive terms of an associated $\Mcg$-sequence.
  Then we have
  \begin{equation}
    \label{eq:finite-increments}
    f_n^{-1}f_{n+1} \in M.
  \end{equation}
  Furthermore, 
  \begin{equation}
    \label{eq:crossnesting}
    f_n^{-1}f_{n+1}\left(\mathcal{U}^{(k_{n+1})}\right) \subset \mathcal{U}^{(k_n)}.
  \end{equation}
  \end{lem}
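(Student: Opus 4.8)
The plan is to exploit the finiteness built into Lemma~\ref{lem:track-to-mcg} together with the combinatorial relation between consecutive terms of a full splitting sequence. First I would recall that passing from $\tau_n$ to $\tau_{n+1}$ in a full splitting sequence means splitting every large branch of $\tau_n$ exactly once, so $\mathcal{T}_{n+1}(\tau,F)$ is obtained from $\mathcal{T}_n(\tau,F)$ by a purely \emph{local} modification whose combinatorial type depends only on the type $k_n$ of the index $n$ (and the finitely many choices of which splits carry $F$). By Lemma~\ref{lem:track-to-mcg} we have $\mathcal{T}_n(\tau,F) = f_n(\mathcal{T}^{(k_n)})$ and $\mathcal{T}_{n+1}(\tau,F) = f_{n+1}(\mathcal{T}^{(k_{n+1})})$, hence
\[
 f_n^{-1}f_{n+1}\left(\mathcal{T}^{(k_{n+1})}\right) = f_n^{-1}\left(\mathcal{T}_{n+1}(\tau,F)\right),
\]
and the right-hand side is one of finitely many subsets of the (finitely many) train tracks that are diagonal extensions of a split of the large track underlying $\mathcal{T}^{(k_n)}$. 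So $f_n^{-1}f_{n+1}$ maps one standard model onto a set drawn from a finite list, up to the finite indeterminacy in the choice of $f_n, f_{n+1}$ allowed by Lemma~\ref{lem:track-to-mcg}.

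Next I would turn this finiteness into membership in a fixed finite set $M$. For each pair $(k, k')$ of types that can occur as $(k_n, k_{n+1})$ for \emph{some} splitting sequence, and for each of the finitely many combinatorial ways the full split can be performed, the element $f_n^{-1}f_{n+1}$ is constrained to lie in a single double coset of the (finite) stabilizers of $\mathcal{T}^{(k)}$ and $\mathcal{T}^{(k')}$ — and since those stabilizers are finite (by \cite[Lemma~4.2]{Ursula-TrainTrackGraph}, as used in the proof of Lemma~\ref{lem:track-to-mcg}), each such double coset is finite. Taking the union of all these finitely many finite sets over all admissible $(k,k')$ and all combinatorial split patterns produces a finite set; symmetrizing by replacing it with its union with its inverse gives the desired $M = M^{-1}$. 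This establishes \eqref{eq:finite-increments}.

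For \eqref{eq:crossnesting}, I would argue directly from the nesting of the $U_n$. Recall $U_n(\tau,F) = \bigcup_{\sigma\in\mathcal{T}_n(\tau,F)}\mathrm{int}\,P(\sigma)$, and by Lemma~\ref{lem:splitting-neighbourhoods} each $U_n(\tau,F)$ is an open neighbourhood of $F$; moreover a single full splitting step refines the decomposition, so every polyhedron $P(\sigma')$ with $\sigma'\in\mathcal{T}_{n+1}(\tau,F)$ is contained in some $P(\sigma)$ with $\sigma\in\mathcal{T}_n(\tau,F)$, and on interiors this gives $U_{n+1}(\tau,F)\subset U_n(\tau,F)$. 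Now apply \eqref{eq:relation-u-up-down}: $U_n(\tau,F) = f_n(\mathcal{U}^{(k_n)})$ and $U_{n+1}(\tau,F) = f_{n+1}(\mathcal{U}^{(k_{n+1})})$, so
\[
 f_{n+1}\left(\mathcal{U}^{(k_{n+1})}\right) \subset f_n\left(\mathcal{U}^{(k_n)}\right),
\]
and pushing forward by $f_n^{-1}$ yields \eqref{eq:crossnesting}.

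The main obstacle I anticipate is not \eqref{eq:crossnesting}, which is essentially formal once one knows $U_{n+1}\subset U_n$, but rather being careful in the proof of \eqref{eq:finite-increments} that the set of \emph{admissible} pairs $(k_n,k_{n+1})$ — and the combinatorial split patterns realizing the transition — is genuinely finite and independent of $\tau$, $F$, and $n$. This requires checking that the ``local model'' of a full split depends only on the type $k_n$ and finitely many additional binary choices (one per large branch, recording which splits carry $F$), together with the observation that the number of large branches is bounded by the Euler characteristic; once that bookkeeping is in place, the double-coset finiteness argument goes through and $M$ can be extracted uniformly.
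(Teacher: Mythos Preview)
Your proposal is correct and follows essentially the same approach as the paper. The paper's proof of \eqref{eq:finite-increments} is slightly more concrete---it explicitly defines $T$ as the set of full splits of tracks in $\cup_i\mathcal{T}^{(i)}$ and lets $M_0$ be the mapping classes sending tracks in $T$ to tracks in $\cup_j\mathcal{T}^{(j)}$---but this is exactly the finiteness you extract via the double-coset argument; and your proof of \eqref{eq:crossnesting} via $U_{n+1}(\tau,F)\subset U_n(\tau,F)$ and \eqref{eq:relation-u-up-down} is identical to the paper's.
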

\begin{proof}
  Let $T$ be the (finite) set of all those train tracks which can be
  obtained from one of the train tracks in $\cup_i\mathcal{T}^{(i)}$ by full
  splits, and let $M_0$ be the set of all those mapping classes which map train 
  tracks $\sigma \in T$ to train tracks in any $\cup_j\mathcal{T}^{(j)}$. Note
  that since $T$ is finite, $M_0$ is finite by
  Lemma~\ref{lem:track-to-mcg}. We put $M = M_0 \cup M_0^{-1}$.

  To see that it has property~(\ref{eq:finite-increments}), observe that if $f_n,
  f_{n+1}$ are consecutive terms of an associated $\Mcg$-sequence, there are train tracks $\eta_n
  \in \mathcal{T}^{(i_n)}, \eta_{n+1} \in \mathcal{T}^{(i_{n+1})}$, so that
  $f_{n+1}\eta_{n+1}$ is a full split of $f_n\eta_n$. This implies that
  $f_n^{-1}f_{n+1}\eta_{n+1}$ is a full split of $\eta_n$.

  In other words, $f_n^{-1}f_{n+1}$ maps a train track in $\mathcal{T}^{(i_{n+1})}$ to
  one in $T$, and is therefore an element of $M$ by definition.

  \smallskip Equation~(\ref{eq:crossnesting}) follows immediately from the following:
  \[ f_{n+1}\left(\mathcal{U}^{(k_{n+1})}\right) = U_{n+1}(\tau, F) \subset U_n(\tau, F) = f_n\left(\mathcal{U}^{(k_n)}\right).\]
\end{proof}
The \emph{type-$k$ subsequence} is the maximal subsequence
$f^{(k)}_s = f_{r_s}$ so that $k_{r_s} = k$. We say that type $k$ is
\emph{essential for the splitting sequence $(\tau_i)$}, if the
subsequence $f^{(k)}_s$ is an infinite sequence. At least one type is
essential, but we suspect that
the type of the initial train track need not repeat infinitely often. 

By Lemma~\ref{lem:split-means-finite} we have
that $f_{i+1}f_i^{-1} \in M$ for all $i$; but we warn the reader that
the elements $f^{(k)}_{s+1}\left(f^{(k)}_{s}\right)^{-1}$ are not constrained to
a finite set in the mapping class group.

\subsection{Minimal Foliations and the Curve Graph}
\label{subsec:curve-graph-machine}
In this section we will prove that large terms in the associated
$\Mcg$-sequence for a uniquely ergodic foliation send certain subsets
of $\PML$ into small neighborhoods of the foliation, and will use this
to prove contracting properties for associated $\Mcg$-sequences.
Intuitively, we will show that all curves (and non-minimal foliations)
are attracted to the foliations $F$ guiding the splitting sequence,
and we will show that the speed of attraction can be controlled for
certain geometrically constrained sets of curves.

\smallskip We begin by rephrasing the contraction exhibited by train
track polyhedra under splitting sequences (Theorem~\ref{thm:mosher})
in terms of associated $\Mcg$-sequences.
\begin{cor}\label{cor:south-dynamics}
  Let $\tau_i$ be a splitting sequence towards a minimal foliation $F$
  and let $(f_i, k_i)$ be an associated $\Mcg$-sequence.
  For any essential type $k$ we have that
  \[ \bigcap_s f^{(k)}_s(\mathcal{U}^{(k)}) = \Delta(F). \] 
\end{cor}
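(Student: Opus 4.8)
The plan is to deduce Corollary~\ref{cor:south-dynamics} from Corollary~\ref{cor:u-nest} by translating the nested-neighbourhood statement through the mapping classes $f_n$. Recall from~(\ref{eq:relation-u-up-down}) that $U_n(\tau,F) = f_n(\mathcal{U}^{(k_n)})$, so if $k$ is an essential type and $f^{(k)}_s = f_{r_s}$ is the type-$k$ subsequence, then $f^{(k)}_s(\mathcal{U}^{(k)}) = f_{r_s}(\mathcal{U}^{(k_{r_s})}) = U_{r_s}(\tau,F)$. Thus the sets $f^{(k)}_s(\mathcal{U}^{(k)})$ are precisely the subsequence $\{U_{r_s}(\tau,F)\}_s$ of the nested family $\{U_n(\tau,F)\}_n$.

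First I would observe that the sequence $U_n(\tau,F)$ is nested decreasing: this is the containment $U_{n+1}(\tau,F) \subset U_n(\tau,F)$ already used in the proof of Lemma~\ref{lem:split-means-finite}, which follows because each $\mathcal{T}_{n+1}(\tau,F)$ refines $\mathcal{T}_n(\tau,F)$ (splitting a track carrying $F$ yields a track still carrying $F$, and the interiors of the polyhedra of $\mathcal{T}_{n+1}(\tau,F)$ sit inside those of $\mathcal{T}_n(\tau,F)$, modulo the hyperplane-cutting behaviour analysed in Lemma~\ref{lem:splitting-neighbourhoods}). Since the family is nested and $k$ is essential (so $r_s \to \infty$), any subsequence has the same intersection as the whole family:
\[ \bigcap_s f^{(k)}_s(\mathcal{U}^{(k)}) = \bigcap_s U_{r_s}(\tau,F) = \bigcap_{n\geq 0} U_n(\tau,F). \]

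Then I would apply Corollary~\ref{cor:u-nest}, which gives $\bigcap_{n\geq 0} U_n(\tau,F) = \Delta(F)$ whenever $F \in \mathrm{int}\,P(\tau)$ is minimal — exactly the hypotheses in force here (a splitting sequence towards $F$ starts at a maximal $\tau$ carrying $F$ in its interior, and $F$ is minimal). Combining the two displays yields $\bigcap_s f^{(k)}_s(\mathcal{U}^{(k)}) = \Delta(F)$, as desired.

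I do not expect a serious obstacle: the corollary is genuinely an immediate consequence of Corollary~\ref{cor:u-nest} plus the bookkeeping in~(\ref{eq:relation-u-up-down}). The only point requiring a word of care is the passage from the full intersection to the subsequential one, which needs the monotonicity of $U_n(\tau,F)$ in $n$ together with essentiality of $k$ (to ensure the indices $r_s$ are cofinal in $\NN$); both are already available. One might also note explicitly that $U_n(\tau,F)$ makes sense for a full splitting sequence since then $\tau_i \in \mathcal{T}_i(\tau,F)$, so the associated $\Mcg$-sequence and its types $k_i$ are defined, but this is part of the setup preceding the statement rather than something to prove.
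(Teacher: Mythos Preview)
Your proof is correct and follows essentially the same approach as the paper: both arguments use Equation~(\ref{eq:relation-u-up-down}) to identify $f^{(k)}_s(\mathcal{U}^{(k)})$ with $U_{r_s}(\tau,F)$, then reduce to Corollary~\ref{cor:u-nest} via the observation that the essential-type subsequence is cofinal in the nested family $\{U_n(\tau,F)\}_n$. If anything, you are slightly more explicit than the paper about why monotonicity plus cofinality gives the subsequential intersection equal to the full one.
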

\begin{proof}
  By Corollary~\ref{cor:u-nest}, we have that
  \[ \bigcap_{n\geq 0}U_n(\tau, F) = \Delta(F), \]
  and therefore, by definition of essential type,
  \[ \bigcap_{n\geq 0, k_n=k}U_n(\tau, F) = \Delta(F). \]
  Now, using Equation~(\ref{eq:relation-u-up-down}) we see that
  $U_n(\tau, F) = f_n(\mathcal{U}^{(k)})$ if the index $n$ is of
  type $k$, and therefore
  \[ \bigcap_{n\geq 0, k_n=k}U_n(\tau, F) = \bigcap_s f^{(k)}_s(\mathcal{U}^{(k)}),\]
  which shows the corollary.
\end{proof}
In other words, the mapping classes $f^{(k)}_s$ eventually 
contract $\mathcal{U}^{(k)}$ to a small neighborhood of $\Delta(F)$. 
The rest of this section is concerned with studying the contraction
properties of the mapping classes $f_i$ outside the open sets
$\mathcal{U}^{(k)}$. 

To this end, we use the geometry of the curve graph. Recall that the
\emph{curve graph} $\mathcal{C}(S)$ of a surface is the graph whose
vertex set is the set of isotopy classes of essential simple closed
curves on $S$, with edges between classes that admit representatives
with intersection $0$. We denote by $d_{\mathcal{C}(S)}$ be the resulting metric on $\mathcal{C}(S)$. The core feature of the geometry of the curve
graph we need is the following.
\begin{thm}[{Masur-Minsky \cite{MasurMinsky}}]\label{thm:cg-hyperbolicity}
  If $S$ is a non-exceptional surface (i.e. $\mathcal{C}(S)$ is connected), then the
  curve graph is hyperbolic in the sense of Gromov.
\end{thm}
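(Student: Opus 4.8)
The plan is to bypass the Teichm\"uller-theoretic argument of the original source and instead give a direct combinatorial proof via \emph{unicorn paths}. The first step is to replace $\mathcal{C}(S)$ by the \emph{arc graph} $\mathcal{A}(S)$, whose vertices are isotopy classes of essential simple arcs (with endpoints at the punctures of $S$, or equivalently working on the surface with boundary obtained by removing disk neighbourhoods of the punctures) and whose edges join disjoint classes. Sending an arc $\alpha$ to a component of the boundary of a regular neighbourhood of $\alpha$ together with the boundary gives a coarsely Lipschitz, coarsely surjective map $\mathcal{A}(S)\to\mathcal{C}(S)$, and it is a standard fact (Korkmaz--Papadopoulos, and implicit in \cite{MasurMinsky}) that this exhibits $\mathcal{A}(S)$ and $\mathcal{C}(S)$ as quasi-isometric; so it suffices to prove $\mathcal{A}(S)$ is hyperbolic. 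For closed $S$ there is no arc graph on $S$ itself; one instead fixes a once-punctured model $S'$, proves hyperbolicity of $\mathcal{C}(S')$, and transfers it to $\mathcal{C}(S)$ using that the puncture-forgetting comparison map is coarsely well behaved --- or simply quotes the corresponding reduction from \cite{MasurMinsky}.

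Next I would construct, for each ordered pair of vertices $a,b\in\mathcal{A}(S)$ realised in minimal position, a distinguished path between them. For an intersection point $\pi\in a\cap b$ such that the initial segment of $a$ from a fixed endpoint to $\pi$ and the terminal segment of $b$ from $\pi$ to a fixed endpoint meet only at $\pi$, the concatenation of these two segments is (after discarding the inessential cases) an embedded essential arc, the \emph{unicorn arc} $\{a,b\}_\pi$. Ordering the admissible points along $a$ yields a sequence $a=c_0,c_1,\dots,c_m=b$, and an elementary check shows consecutive $c_i,c_{i+1}$ can be realised disjointly, so this is an edge-path $P(a,b)$ in $\mathcal{A}(S)$ from $a$ to $b$.

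The heart of the matter is the \emph{uniform thin-triangle estimate}: for any $a,b,c$, every unicorn arc on $P(a,b)$ lies within distance $1$ in $\mathcal{A}(S)$ of $P(a,c)\cup P(c,b)$. This is proved by surgery: a unicorn arc $\{a,b\}_\pi$ is made of an initial segment of $a$ and a terminal segment of $b$; depending on whether and how these segments meet $c$, one locates an intersection point from which a unicorn arc of $P(a,c)$ or of $P(c,b)$ can be surgered that is disjoint from $\{a,b\}_\pi$, the remaining case being that $\{a,b\}_\pi$ is already disjoint from $c$, which sits as an endpoint on both of the other two paths. Together with the observation that consecutive unicorn arcs are adjacent and that changing $a$ or $b$ by a single edge changes $P(a,b)$ by bounded Hausdorff distance, this puts us in position to apply Bowditch's ``guessing geodesics'' criterion: a connected graph carrying a family of connected subgraphs joining every pair of vertices, varying coarsely continuously with the endpoints and satisfying a uniform thin-triangle inequality, is $\delta$-hyperbolic with $\delta$ depending only on the constants. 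Applying this to $\{P(a,b)\}$ gives (uniform) hyperbolicity of $\mathcal{A}(S)$, and hence of $\mathcal{C}(S)$ via the quasi-isometry above, proving Theorem~\ref{thm:cg-hyperbolicity}.

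The main obstacle is the thin-triangle estimate. The combinatorics is genuinely delicate: one has to track precisely how the initial and terminal segments of the three arcs overlap, identify exactly which surgered intersection point yields a disjoint unicorn arc, and dispose of the degenerate cases where surgered arcs become inessential or boundary-parallel or coincide with arcs already present. A secondary, more routine difficulty is the curve-graph--to--arc-graph comparison in the closed case, where one must import the electrification/puncture-forgetting estimates rather than argue purely combinatorially on $S$.
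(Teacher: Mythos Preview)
The paper does not prove Theorem~\ref{thm:cg-hyperbolicity}; it is quoted as a result of Masur--Minsky \cite{MasurMinsky} and used as a black box throughout Section~\ref{subsec:curve-graph-machine}. There is therefore no proof in the paper to compare your proposal against.

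Your approach is essentially the unicorn-path argument of Hensel--Przytycki--Webb, which is a valid and by now standard alternative to the original Teichm\"uller-theoretic proof. However, your reduction step contains a genuine error: the arc graph $\mathcal{A}(S)$ and the curve graph $\mathcal{C}(S)$ are \emph{not} quasi-isometric. The map you describe is Lipschitz, but it has no coarse inverse---the two graphs have different Gromov boundaries (for $\mathcal{A}(S)$ one sees all ending laminations on the bordered surface, a strictly larger space). What Korkmaz--Papadopoulos actually establish is that the \emph{arc-and-curve graph} $\mathcal{AC}(S)$ is quasi-isometric to $\mathcal{C}(S)$. The correct HPW route is to show that unicorn paths yield slim triangles in $\mathcal{AC}(S)$ (using a Lipschitz retraction $\mathcal{AC}(S)\to\mathcal{A}(S)$ to reduce curve endpoints to arc endpoints), and then transfer hyperbolicity along the genuine quasi-isometry $\mathcal{AC}(S)\simeq\mathcal{C}(S)$.

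Your treatment of the closed case is likewise not workable as stated: the puncture-forgetting map $\mathcal{C}(S')\to\mathcal{C}(S)$ is not a quasi-isometry, since the point-pushing subgroup acts with unbounded orbits on $\mathcal{C}(S')$ but trivially on $\mathcal{C}(S)$, so hyperbolicity does not transfer along it. The HPW argument handles closed surfaces by a direct surgery construction on curves (or, in later variants, via bicorn curves), not by reducing to the punctured case.
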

We will need two methods to produce quasigeodesics in the curve
graph. The first one is the method employed to show hyperbolicity in
\cite{MasurMinsky}.
\begin{thm}\label{thm:tm-shadows}
  Let $S$ be a surface of finite type. Then there are numbers
  $K,K'$, depending on $S$ with the following property: suppose
  that $\rho:\mathbb{R}\to\mathcal{T}(S)$ is a Teichm\"uller geodesic,
  and suppose that for each $t\in \mathbb{R}$ the curve $\alpha_t$ has
  smallest possible extremal length\footnote{See e.g. \cite{Ahlfors} for a definition. The precise definition of this does not matter too much to understand the theorem; it would remain true also for e.g. the shortest hyperbolic geodesic on $\rho(t)$.} on $\rho(t)$. Then the assigment
  \[ t \to \alpha_t \] is an unparametrised $K$--quasigeodesic in the
  curve graph. In particular, for any $t<s$, the set
  $\{\alpha_r, t\leq r\leq s\}$ has Hausdorff distance at most $K'$ from
  a curve graph geodesic joining $\alpha_t$ to $\alpha_s$.
\end{thm}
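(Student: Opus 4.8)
The plan is to prove Theorem~\ref{thm:tm-shadows} in two stages: first establish that $t\mapsto\alpha_t$ is a reparametrized quasigeodesic, then deduce the Hausdorff-distance statement from stability of quasigeodesics in the hyperbolic space $\mathcal{C}(S)$ (Theorem~\ref{thm:cg-hyperbolicity}). For the first stage I would follow the argument of Masur--Minsky. The key geometric input is that along a Teichm\"uller geodesic the function recording the (log of the) extremal length of the shortest curve behaves like a piecewise-linear function with controlled slopes, and more importantly that if a curve $\gamma$ has small extremal length at time $t$, it stays short on a definite-length subinterval $[t-c,t+c]$ of the geodesic, while being long (extremal length bounded below) outside a slightly larger interval. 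This is the standard ``active interval'' picture. From this one extracts: (i) consecutive curves $\alpha_t,\alpha_s$ with $|t-s|$ small have bounded intersection number, hence bounded curve-graph distance, so the map is coarsely Lipschitz after reparametrization; and (ii) a lower bound $d_{\mathcal{C}(S)}(\alpha_t,\alpha_s)\succ |t-s|/C - C$ coming from the fact that a curve short near time $r$ cannot be disjoint from (or have small intersection with) a curve short near a time far from $r$, since their active intervals are disjoint and extremal length grows exponentially. Combining (i) and (ii) gives that $t\mapsto\alpha_t$ is an unparametrized $K$-quasigeodesic for a constant $K$ depending only on the topological type of $S$ (the constants $c$, $C$ above depend only on $S$ via the Euler characteristic and the collar lemma / Minsky's product regions theorem).

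For the second stage, I would invoke the Morse/stability lemma for quasigeodesics in Gromov-hyperbolic spaces: in a $\delta$-hyperbolic space, any unparametrized $K$-quasigeodesic from $x$ to $y$ lies within Hausdorff distance $K'$ of every geodesic from $x$ to $y$, where $K'=K'(\delta,K)$. Since $\delta$ depends only on $S$ by Theorem~\ref{thm:cg-hyperbolicity} and $K$ depends only on $S$ by the first stage, $K'$ depends only on $S$, which is exactly the claimed uniformity. Applying this with $x=\alpha_t$, $y=\alpha_s$ and the quasigeodesic $\{\alpha_r: t\le r\le s\}$ yields the ``in particular'' clause. One subtlety to address is the meaning of \emph{unparametrized} quasigeodesic and the fact that the stability lemma is usually stated for genuine (parametrized, coarsely Lipschitz) quasigeodesics; the standard fix is to pass to a monotone reparametrization of the index set $[t,s]$ that makes the map coarsely Lipschitz while keeping the lower bound, which is possible precisely because of estimate (i), i.e. because the image never jumps by more than a bounded amount and one can subdivide to make steps uniformly bounded below as well.

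The main obstacle is the first stage, specifically the quantitative control on how long a curve stays short along a Teichm\"uller geodesic and the exponential growth of extremal length once the active interval is left. The honest way to get this is via Minsky's product regions theorem and the relation between extremal length and hyperbolic length in thin parts, together with Kerckhoff's formula comparing extremal lengths at the two endpoints of a Teichm\"uller segment; quoting these is legitimate here since the paper cites \cite{MasurMinsky} and \cite{Ahlfors}. The remaining steps --- extracting the Lipschitz bound from bounded intersection numbers, and the lower bound from disjointness of active intervals --- are then bookkeeping. I would also remark that the choice of ``smallest extremal length'' curve is not unique, but any two minimizers at a given time have bounded intersection (both being short), so the coarse statement is insensitive to the choice, and the same argument applies verbatim to the shortest hyperbolic geodesic, as noted in the footnote.
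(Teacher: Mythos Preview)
Your first stage contains a genuine error. The lower bound in (ii),
\[
d_{\mathcal{C}(S)}(\alpha_t,\alpha_s)\succ |t-s|/C - C,
\]
is simply false. A Teichm\"uller geodesic can spend arbitrarily long time in the thin part of moduli space, during which the systole $\alpha_t$ does not move at all in the curve graph; hence curve-graph distance is not comparable to Teichm\"uller time. Your ``active interval'' heuristic breaks down for the same reason: the active interval of a curve $\gamma$ along a Teichm\"uller geodesic has length controlled only if the \emph{minimum} extremal length of $\gamma$ along the ray is bounded below, and that minimum can be arbitrarily small. So disjointness of active intervals does not follow from $|t-s|$ being large. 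In effect you have argued for a \emph{parametrised} quasigeodesic, which is strictly stronger than what the theorem asserts and is not true. Your later remark about reparametrisation does not fix this, since you treat the reparametrisation as needed only for the upper bound while ``keeping the lower bound''; it is precisely the lower bound that fails in the Teichm\"uller parameter.

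The paper takes an entirely different route and does not attempt any direct distance estimate. It quotes two results from \cite{MasurMinsky}: their Theorem~2.6 establishes that the family of systole paths in $\mathcal{C}(S)$ satisfies a \emph{contraction property} (roughly, balls far from the path have bounded-diameter nearest-point projection), and their Theorem~2.3 shows that any coarsely transitive path family with the contraction property consists of uniform unparametrised quasigeodesics. The Hausdorff-distance clause then comes for free. Your second stage (the Morse lemma) is fine and is essentially what Theorem~2.3 of \cite{MasurMinsky} packages, but the substantive content --- the contraction property --- is what replaces your incorrect estimate (ii), and it is not something you can extract from active-interval combinatorics alone.
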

\begin{proof}
   Theorem~2.3 of \cite{MasurMinsky} states
   that a coarsely transitive path family with the contraction property
   in a geodesic metric space consists of uniform unparametrised
   quasigeodesics (for the definitions, compare Section~2.4 of
   \cite{MasurMinsky}).  Theorem~2.6 of \cite{MasurMinsky} then shows
   that the family of paths in the curve graph obtained by taking
   shortest extremal length curves has the contraction property (that
   these paths are coarsely transitive is easy to see).
\end{proof}
The second, related construction of quasigeodesics uses train tracks.
It is proven in \cite[Theorem~1.3]{MM3}, see also
\cite[Corollary~2.6]{Ursula-boundary}:
\begin{prop}\label{prop:split-shadows}
  Let $S$ be a surface of finite type. Then there are numbers $K,K'$,
  depending on $S$ with the following property: suppose that
  $(\tau_i)_i$ is a splitting sequence and suppose that for each
  $i\in \mathbb{N}$ the curve $\alpha_t$ is a vertex cycle\footnote{See e.g. \cite{PennerHarer} or \cite[Section~4.1]{MasurMinsky} for a definition of vertex cycle. Again, the precise definition of this does not matter too much; the theorem would remain true for e.g. the shortest train path on $\tau_i$.} on
  $\tau_i$. Then the assigment
  \[ i \to \alpha_i \] is an unparametrised $K$--quasigeodesic in the
  curve graph. In particular, for any $t<s$, the set
  $\{\alpha_r, t\leq r\leq s\}$ has Hausdorff distance at most $K'$ from
  a curve graph geodesic joining $\alpha_t$ to $\alpha_s$.
\end{prop}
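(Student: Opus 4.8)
The plan is to reduce this to the same machinery that underlies Theorem~\ref{thm:tm-shadows}, namely the Masur--Minsky criterion (Theorems~2.3 and~2.6 of~\cite{MasurMinsky}) that a coarsely transitive path family with the contraction property consists of uniform unparametrised quasigeodesics. First I would recall that the relevant statement is already in the literature: \cite[Theorem~1.3]{MM3} (or \cite[Corollary~2.6]{Ursula-boundary}) shows precisely that the sequence of vertex cycles along a full splitting sequence, read off one track at a time, shadows a quasigeodesic in $\mathcal{C}(S)$. So the bulk of the argument is a matter of citing this and then checking that the version we need --- for \emph{arbitrary} (not necessarily full) splitting sequences, and allowing any vertex cycle rather than a canonical one --- follows by a coarse comparison.

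The key steps, in order, are as follows. (1) Observe that the diameter in $\mathcal{C}(S)$ of the set of all vertex cycles of a fixed birecurrent train track $\tau$ is bounded by a constant depending only on $S$: any two vertex cycles of $\tau$ are carried by $\tau$, hence have intersection number bounded in terms of the combinatorics of $\tau$, of which there are finitely many types up to the mapping class group, so the choice of vertex cycle $\alpha_i$ on $\tau_i$ only matters up to uniformly bounded error. (2) Pass between an arbitrary splitting sequence and a full splitting sequence: any splitting sequence $(\tau_i)$ can be refined to (equivalently, is a subsequence of, after reparametrisation) a full splitting sequence, and conversely a full splitting sequence is obtained from consecutive single splits; since a single split changes the set of vertex cycles by bounded intersection number, the two associated sequences of curves in $\mathcal{C}(S)$ are at bounded Hausdorff distance and one is a reparametrisation of the other up to bounded error. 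Thus it suffices to prove the statement for full splitting sequences with canonical vertex cycles, which is exactly \cite[Theorem~1.3]{MM3}. (3) Finally, apply the standard fact that an unparametrised $K$--quasigeodesic in a $\delta$--hyperbolic space (here using Theorem~\ref{thm:cg-hyperbolicity}) has image within Hausdorff distance $K' = K'(K,\delta)$ of any geodesic joining its endpoints; this gives the ``in particular'' clause.

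The main obstacle --- or rather the one place requiring genuine care rather than bookkeeping --- is step (2), making precise the passage between splitting sequences and full splitting sequences while keeping all constants uniform over all starting tracks $\tau$. One has to know that a split at one large branch changes vertex cycles by a bounded amount, uniformly; this again rests on the finiteness of train track types modulo $\Mcg(S)$, together with the observation that vertex cycles of $\tau_{i+1}$ are carried by $\tau_i$ and vice versa up to a bounded combinatorial operation. Everything else is either a direct citation (\cite{MM3}, \cite{Ursula-boundary}, or the Masur--Minsky contraction criterion) or the standard hyperbolic-geometry lemma comparing quasigeodesics to geodesics.
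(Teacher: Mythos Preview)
Your proposal is correct and matches the paper's approach: the paper does not give an independent proof of this proposition at all, but simply cites it as \cite[Theorem~1.3]{MM3} (see also \cite[Corollary~2.6]{Ursula-boundary}) immediately before the statement. Your write-up identifies exactly these references and adds some helpful bookkeeping (uniform bound on the diameter of vertex cycles, passage between single splits and full splits, and the Morse lemma for the ``in particular'' clause) that the paper leaves implicit.

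One small remark on your step~(2): the claim that an arbitrary splitting sequence is a reparametrised subsequence of a full splitting sequence is not literally true in general, since a splitting sequence need not cycle through all large branches before returning to one. What is true, and sufficient for your purposes, is that the cited results in \cite{MM3} and \cite{Ursula-boundary} already treat general splitting (and sliding) sequences directly, so no reduction to full splitting sequences is actually needed. Your observation that a single split moves vertex cycles by a uniformly bounded amount in $\mathcal{C}(S)$ is the right coarse fact and is all that is needed to absorb the ambiguity in the choice of $\alpha_i$.
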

For a Gromov hyperbolic space, one can define a boundary at
infinity, see e.g. \cite[III.H.3]{BH} for details.
If $\alpha_0$ is some basepoint, recall the \emph{Gromov product}
\[ (x\cdot y)_{\alpha_0} = \frac{1}{2}(d(\alpha_0, x) + d(\alpha_0, y)
- d(x,y)).\] A sequence $(x_i)_i$ of points in $X$ \emph{converges at
  infinity} if $(x_i\cdot x_j)_{\alpha_0} \to \infty$ as $i,j\to
\infty$. The Gromov boundary is then defined as a set of equivalence
classes of sequences converging to infinity, where two sequences
$(x_i),(y_j)$ are equivalent if $(x_i\cdot y_j)_{\alpha_0} \to \infty$
as $i,j\to \infty$; see \cite[III.H.3.12]{BH} for details.

Also note that the Gromov product extends from the space
to the boundary at infinity \cite[III.H.3.15]{BH}. The Gromov product has the
property that
\begin{equation}
  \label{eq:gromov-product-triangle}
   | (x\cdot y)_{\alpha_0} - (x'\cdot y)_{\alpha_0} | \leq d(x,x')
\end{equation}
for any point $y$ and (finite) points $x,x'$.

\medskip In the case of the curve graph, the Gromov boundary can be
identified explicitly with a different space.  We define the set
$\mathcal{EL}(S)$ to be the set of minimal foliations with the
measure-forgetting topology. That is, we consider the subset
$\mathcal{M} \subset \PML$ of all minimal foliations, and let
$\mathcal{EL}(S)$ be the quotient topological space $\mathcal{M}/\thicksim$ under the
equivalence relation which lets $F\thicksim F'$ if $F, F'$ are
topologically equivalent.

\begin{thm}[{\cite[Theorems~1.2,~1.3~and~1.4]{Klarreich}}]\label{thm:klarreich}
  \begin{enumerate}[i)]
	\item The Gromov boundary of $\mathcal{C}(S)$ is homeomorphic to the space
	$\mathcal{EL}(S)$.
	\item A sequence $\alpha_i$ of curves (interpreted as points in the curve
	graph) converges to the point at infinity defined by a minimal
	foliation $F$ if and only if every accumulation point of $\{\alpha_i, i\in\NN\}$
	in $\PML$ is contained in $\Delta(F)$.
	\item 
	Suppose that $\rho$ is a  Teichm\"uller geodesic ray whose vertical foliation is a minimal
	foliation $F$, and that for every $t$, the curve $\alpha_t$ is a curve of smallest extremal length on
	$\rho(t)$. Then the curves $\alpha_t$ (interpreted as points in the curve graph)
	converge to $F$ (interpreted as a point in the Gromov boundary)
  \end{enumerate}
\end{thm}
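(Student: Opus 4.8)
We sketch an approach; the statement is Klarreich's theorem \cite{Klarreich}, but it is worth seeing how it flows from the machinery already assembled. The plan is to build a pair of mutually inverse maps between $\mathcal{EL}(S)$ and the Gromov boundary $\partial\mathcal{C}(S)$ of Theorem~\ref{thm:cg-hyperbolicity}, passing between measured foliations and curve-graph quasigeodesics via the two shadowing results (Theorem~\ref{thm:tm-shadows}, Proposition~\ref{prop:split-shadows}) and controlling the resulting $\PML$-limits with Mosher's convergence theorem (Theorem~\ref{thm:mosher}). All three items then reduce to one technical input, which is essentially item~(ii):

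\emph{Key Lemma.} A sequence of simple closed curves escaping every bounded set of $\mathcal{C}(S)$ has all of its $\PML$-accumulation points contained in $\Delta(F)$ for a single \emph{minimal} foliation $F$; conversely, a sequence of curves all of whose $\PML$-accumulation points lie in one such $\Delta(F)$ converges in $\partial\mathcal{C}(S)$, and two sequences of this latter kind attached to the same $F$ have the same limit.

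I would first construct $\Phi\colon\mathcal{EL}(S)\to\partial\mathcal{C}(S)$: given $[F]$, pick a representative $F$, a maximal birecurrent train track $\tau$ with $F\in\mathrm{int}\,P(\tau)$, a splitting sequence $(\tau_i)$ in the direction of $F$, and vertex cycles $\alpha_i$ of $\tau_i$. By Proposition~\ref{prop:split-shadows} the sequence $(\alpha_i)$ is an unparametrised $K$-quasigeodesic, and it escapes every bounded set of $\mathcal{C}(S)$ (no point of $\Delta(F)$ is a simple closed curve, so by Theorem~\ref{thm:mosher} no curve recurs infinitely often among the $\alpha_i$, and an unparametrised quasigeodesic hitting each vertex finitely often has unbounded image); hence it converges to a boundary point $\Phi([F])$. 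Granting the Key Lemma, $\Phi$ is well defined, item~(ii) is its content (note that by Theorem~\ref{thm:mosher} the splitting-sequence construction above realizes the ``point at infinity defined by $F$''), and the map $\Psi\colon\partial\mathcal{C}(S)\to\mathcal{EL}(S)$ sending $\xi$ to the topological type of any $\PML$-accumulation point of any curve sequence converging to $\xi$ is well defined; the identities $\Psi\circ\Phi=\mathrm{id}$ and $\Phi\circ\Psi=\mathrm{id}$ then follow formally, giving a continuous bijection, and continuity of $\Phi^{-1}$ (hence item~(i)) comes from approximating $F$ in $\PML$, using openness and nesting of the neighbourhoods $U_n(\tau,F)$ (Lemma~\ref{lem:splitting-neighbourhoods}, Corollary~\ref{cor:u-nest}) to place the approximants inside common train track polyhedra deep along a splitting sequence, and comparing endpoints via Proposition~\ref{prop:split-shadows}.

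For item~(iii): Theorem~\ref{thm:tm-shadows} makes $t\mapsto\alpha_t$ an unbounded unparametrised quasigeodesic, hence convergent to some $\xi\in\partial\mathcal{C}(S)$, and by item~(ii) it suffices to show that every $\PML$-accumulation point $G$ of the curves $\alpha_t$, as $t\to\infty$, lies in $\Delta(F)$. Writing $\alpha_{t_n}=c_nG_n$ with $t_n\to\infty$ and $G_n\to G$ in $\PML$ (normalized, say, by $i(\cdot,\delta)=1$ for a fixed filling multicurve $\delta$), finiteness of the set of simple closed curves of bounded length forces $c_n\to\infty$, since $\alpha_{t_n}\to\infty$ in $\mathcal{C}(S)$. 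Now $\mathrm{Ext}_{\rho(t)}(F)=e^{-2t}\mathrm{Ext}_{\rho(0)}(F)\to 0$ while the smallest extremal length $\mathrm{Ext}_{\rho(t)}(\alpha_t)$ stays bounded above by a constant depending only on $S$; so Minsky's inequality $i(\alpha_t,F)^2\le\mathrm{Ext}_{\rho(t)}(\alpha_t)\cdot\mathrm{Ext}_{\rho(t)}(F)$ gives $i(\alpha_{t_n},F)\to 0$, whence $c_n\,i(G,F)\to 0$ and therefore $i(G,F)=0$; since $F$ is minimal and filling, this forces $G\in\Delta(F)$.

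The main obstacle is the Key Lemma, and inside it the assertion that the $\PML$-accumulation points of a curve sequence escaping to infinity in $\mathcal{C}(S)$ are \emph{filling} (minimality and uniqueness of the topological type are closely linked to this). The crude tool one might hope to use — the bound $d_{\mathcal{C}(S)}(\alpha,\beta)=O(\log i(\alpha,\beta))$ — is too weak: the intersection numbers of the $\alpha_i$ with a fixed curve can grow, but far more slowly than their ``size'' in $\PML$, so no contradiction is produced. One genuinely needs the finer geometry of $\mathcal{C}(S)$: either subsurface-projection estimates \cite{MM3}, or — more in keeping with the present section — the nested polyhedra $P(\tau_i)$ along a splitting sequence together with the quasigeodesic property (Proposition~\ref{prop:split-shadows}), the point being that a curve carried by a train track far along a splitting sequence towards $F$ is forced far out along the associated quasigeodesic, so that only ``directions'' pointing at a filling lamination can reach $\partial\mathcal{C}(S)$. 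This is precisely the analysis carried out in \cite{Klarreich}; compare also the train-track treatment of the boundary in \cite{Ursula-boundary}.
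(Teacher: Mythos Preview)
The paper does not prove this theorem; it is quoted directly from Klarreich \cite{Klarreich} (as the attribution in the theorem heading indicates) and used as a black box. There is therefore no proof in the paper to compare your proposal against.

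That said, your sketch is a reasonable outline of how such a result is established, and you correctly identify the essential difficulty: showing that $\PML$-accumulation points of a curve sequence escaping to infinity in $\mathcal{C}(S)$ must be minimal and filling, which genuinely requires the finer geometry you point to. Your approach via splitting sequences and Proposition~\ref{prop:split-shadows} is close in spirit to Hamenst\"adt's treatment in \cite{Ursula-boundary}, while the extremal-length argument for item~(iii) follows Klarreich's original line. Since the paper only invokes the theorem, your sketch goes well beyond what the paper itself supplies; but as you yourself acknowledge, the Key Lemma is left as a pointer to the literature rather than proved, so the proposal is an outline rather than a self-contained proof.
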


As a consequence of Theorem~\ref{thm:klarreich} we have the following characterization of neighborhoods in $\PML$ using the curve graph.
\begin{lem}\label{lem:gromov-product-criterion}
  Suppose that $F$ is a minimal foliation, and $U$ is an open
  neighborhood of $\Delta(F)$ in $\PML$. Let $\gamma$ be an arbitrary simple closed curve.
  Then there is a number $K$ with the following property: suppose that $\beta$ is a simple
  closed curve so that (as a point in the curve graph) we have
  \[ (F\cdot\beta)_\gamma > K. \]
  Then $\beta$ (seen as a projective measured foliation) is contained in $U$.
\end{lem}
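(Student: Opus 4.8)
The plan is to argue by contradiction, combining compactness of $\PML$ with Klarreich's identification of the Gromov boundary of the curve graph (Theorem~\ref{thm:klarreich}). Suppose the lemma fails for some minimal $F$, some open neighbourhood $U\supseteq\Delta(F)$ in $\PML$, and some simple closed curve $\gamma$. Then for each $i$ there is a simple closed curve $\beta_i$ with $(F\cdot\beta_i)_\gamma>i$ but $\beta_i\notin U$, where $\beta_i$ is viewed as a projective measured foliation. Since $\PML$ is compact and $U$ is open, after passing to a subsequence I may assume $\beta_i\to\mu$ in $\PML$ with $\mu\in\PML\setminus U$; in particular $\mu\notin\Delta(F)$.

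First I would use that $\mathcal{C}(S)$ is Gromov hyperbolic (Theorem~\ref{thm:cg-hyperbolicity}) together with the basic fact that, in a Gromov hyperbolic space, a sequence of points $\beta_i$ converges to a boundary point $\xi$ precisely when the extended Gromov product $(\xi\cdot\beta_i)_{\gamma}$ tends to infinity (see \cite[III.H.3]{BH}; the choice of basepoint $\gamma$ is irrelevant since Gromov products based at different points differ by a bounded amount, compare \eqref{eq:gromov-product-triangle}). Applying this with $\xi$ the boundary point determined by $F$, the hypothesis $(F\cdot\beta_i)_\gamma>i\to\infty$ says exactly that $\beta_i\to F$ in $\partial\mathcal{C}(S)$, where under the homeomorphism $\partial\mathcal{C}(S)\cong\mathcal{EL}(S)$ of Theorem~\ref{thm:klarreich}(i) this boundary point is the topological equivalence class of $F$.

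Then I would invoke Theorem~\ref{thm:klarreich}(ii): convergence $\beta_i\to F$ in $\partial\mathcal{C}(S)$ is equivalent to the statement that every accumulation point of $\{\beta_i\}$ in $\PML$ lies in $\Delta(F)$. Since $\mu$ is such an accumulation point, this forces $\mu\in\Delta(F)$, contradicting $\mu\notin U\supseteq\Delta(F)$. This contradiction proves the lemma.

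The one point that needs care is the characterisation of convergence to a boundary point used in the second step: because $\mathcal{C}(S)$ is locally infinite, hence not proper, one cannot define $\partial\mathcal{C}(S)$ via equivalence classes of geodesic rays, and must instead work with the description via sequences with pairwise diverging Gromov products (as flagged in the footnote to Theorem~\ref{thm:klarreich}), together with the associated neighbourhood system $\{\eta\in\overline{\mathcal{C}(S)}:(F\cdot\eta)_\gamma>n\}$ of the boundary point $F$. With that setup in place the equivalence invoked above is immediate, and everything else is a soft compactness argument; I do not expect a genuine obstacle here.
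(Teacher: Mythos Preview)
Your proof is correct and follows essentially the same approach as the paper: argue by contradiction, obtain a sequence $\beta_i$ with diverging Gromov product with $F$ yet $\beta_i\notin U$, use the Gromov product condition to conclude $\beta_i\to F$ in $\partial\mathcal{C}(S)$, and then invoke Theorem~\ref{thm:klarreich}(ii) to force an accumulation point in $\Delta(F)\subset U$. The only cosmetic difference is that you explicitly pass to a convergent subsequence in $\PML$, while the paper phrases the conclusion directly in terms of the measure-forgetting topology.
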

\begin{proof}
  Suppose that the claim were false. Then we would find a
  sequence $(\beta_i)$ with $(\beta_i \cdot F)_{\gamma} > i$
  but $\beta_i \notin U$. By the Gromov product condition,
  $(\beta_i)$ would then be a sequence converging at infinity to the
  boundary point $F$. So, by Theorem~\ref{thm:klarreich}~ii), the
  sequence $\beta_i$ converges in the measure forgetting topology to
  $F$. Since $U$ is an open neighborhood of
  $\Delta(F)$ this is impossible as $\beta_i \notin U$.
\end{proof}
We also need the following partial converse.
\begin{lem}\label{lem:gromov-product-criterion-2}
  There is a number $k_0$, depending only on the topological type of the surface, 
  with the following property.
  Suppose that $F$ is a minimal foliation, $\alpha$ is a simple closed curve,
  and 
  \[ (F\cdot\alpha)_\gamma > B \]
  If $\mu_i$ is a sequence of minimal foliations converging to $\alpha$ in $\PML$, then
  \[ (F\cdot\mu_i)_\gamma > B-k_0 \]
  for all large $i$.
\end{lem}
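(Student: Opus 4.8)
The plan is to exploit two basic facts: first, that the Gromov product, as a function on pairs of points, is coarsely continuous when one point moves a bounded distance in the curve graph (this is exactly inequality~(\ref{eq:gromov-product-triangle})), and second, that if $\mu_i \to \alpha$ in $\PML$ then, by Klarreich's description of the Gromov boundary (Theorem~\ref{thm:klarreich}), the points $\mu_i$ eventually lie ``near'' $\alpha$ in the curve graph in a suitable coarse sense. The catch is that $\mu_i$ is a minimal foliation, hence a point in the Gromov boundary $\mathcal{EL}(S)$ rather than a vertex of $\mathcal{C}(S)$, while $\alpha$ is a simple closed curve, hence an interior vertex; so convergence $\mu_i \to \alpha$ in $\PML$ does \emph{not} mean convergence in $\mathcal{C}(S)\cup\partial\mathcal{C}(S)$, and one cannot naively apply~(\ref{eq:gromov-product-triangle}) with $x = \alpha$, $x' = \mu_i$.

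The way around this is to produce, for each $i$, an honest simple closed curve $\beta_i$ which is ``compatible'' with $\mu_i$ — for instance a vertex cycle of a train track carrying $\mu_i$ from a splitting sequence directed at $\mu_i$, or the shortest extremal-length curve on a point far along a Teichm\"uller ray with vertical foliation $\mu_i$ — and which converges, as a point in $\PML$, to $\alpha$ as $i\to\infty$ provided we choose $\beta_i$ sufficiently deep in the relevant sequence. Here I would first fix, for each $i$, a splitting sequence in the direction of $\mu_i$ (or a Teichm\"uller ray), use Proposition~\ref{prop:split-shadows} (or Theorem~\ref{thm:tm-shadows}) together with Theorem~\ref{thm:klarreich}~iii) to see that the associated curves converge to $\mu_i$ in the curve graph, and then — since $\mu_i$ is close to $\alpha$ in $\PML$ and $\alpha$ is itself a curve — use Theorem~\ref{thm:klarreich}~ii) in the form: one can pick $\beta_i$ along this sequence so that $\beta_i$ is within curve-graph distance $1$ of $\alpha$ (because in fact $\alpha$ will appear, or appear up to an edge, along a splitting sequence / Teichm\"uller ray whose endpoint approximates it closely, by properness of these constructions), while simultaneously $d_{\mathcal{C}(S)}(\beta_i, \mu_i)$ stays bounded by a uniform constant; the bound on $d_{\mathcal C(S)}(\alpha,\mu_i)$ coming from the fact that $\alpha$ is a fixed curve and the sequence converges to it.

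Granting such $\beta_i$, the argument is then immediate: by~(\ref{eq:gromov-product-triangle}) applied twice (once replacing $\alpha$ by $\beta_i$, once replacing $\mu_i$ by $\beta_i$, or more precisely once comparing the boundary point $\mu_i$ with the vertex $\beta_i$ — which is legitimate since $\beta_i$ lies within bounded distance of a curve-graph quasigeodesic ray to $\mu_i$, so the Gromov product $(\mu_i\cdot x)_\gamma$ and $(\beta_i\cdot x)_\gamma$ differ by a uniformly bounded amount by $\delta$-hyperbolicity) we get
\[
|(F\cdot\mu_i)_\gamma - (F\cdot\beta_i)_\gamma| \leq C_1,
\qquad
|(F\cdot\alpha)_\gamma - (F\cdot\beta_i)_\gamma| \leq C_1
\]
for a constant $C_1 = C_1(S)$ depending only on the hyperbolicity constant of $\mathcal{C}(S)$ and the quasigeodesic constants of Proposition~\ref{prop:split-shadows}. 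Setting $k_0 = 2C_1$ and combining with the hypothesis $(F\cdot\alpha)_\gamma > K$ yields $(F\cdot\mu_i)_\gamma > K - k_0$ for all large $i$. The main obstacle, and the step that needs care, is the construction of $\beta_i$ and the verification that $d_{\mathcal{C}(S)}(\alpha,\beta_i)$ is uniformly bounded: one must be sure that $\mathcal{C}(S)$-convergence of the approximating curves to $\mu_i$, combined with $\PML$-closeness of $\mu_i$ to the \emph{curve} $\alpha$, really does force the approximating curves to eventually intersect $\alpha$ in bounded fashion — this is where Klarreich ii) (convergence in the measure-forgetting topology) is doing the real work, and it relies on $\alpha$ being a fixed curve so that ``all accumulation points lie in $\Delta(\alpha) = \{\alpha\}$'' is a genuine constraint.
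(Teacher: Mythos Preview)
Your overall strategy matches the paper's: produce, for each $i$, a uniform quasigeodesic in $\mathcal{C}(S)$ from $\gamma$ to the boundary point $\mu_i$ which passes through (or within bounded distance of) the curve $\alpha$, and then finish with the hyperbolic inequality $(F\cdot\mu_i)_\gamma \geq \min\bigl((F\cdot\alpha)_\gamma,(\alpha\cdot\mu_i)_\gamma\bigr)-O(\delta)$. The paper does exactly this, taking $\Phi\circ\rho_i$ as the quasigeodesic, where $\rho_i$ is the Teichm\"uller ray from a fixed basepoint $X_0$ (with $\Phi(X_0)=\gamma$) in the direction $\mu_i$.

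The gap is in your justification that this quasigeodesic actually hits $\alpha$. Your appeal to Theorem~\ref{thm:klarreich}~ii) is misplaced: that statement concerns convergence of curves to a \emph{minimal foliation}, and says nothing about curves converging to the simple closed curve $\alpha$ in $\PML$; there is no ``$\Delta(\alpha)$'' in the Klarreich framework. The phrase ``by properness of these constructions'' is also not a proof: a splitting sequence towards $\mu_i$ starts at an arbitrary train track and need not pass near $\alpha$ at all, and for splitting sequences there is no obvious continuous dependence on the target that would force this. Finally, ``$d_{\mathcal{C}(S)}(\beta_i,\mu_i)$ stays bounded'' is meaningless since $\mu_i$ is a boundary point.

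The paper's missing ingredient is a continuity argument in Teichm\"uller space, not in the curve graph: since $\mu_i\to\alpha$ in $\PML$, the rays $\rho_i$ converge to the ray $\rho_\infty$ with vertical foliation $\alpha$ \emph{uniformly on compact subsets of $\mathcal{T}(S)$}. Because $\alpha$ is a simple closed curve, there is some $T_0$ with $\Phi(\rho_\infty(t))=\alpha$ for all $t\geq T_0$; uniform convergence on compacta then forces $\Phi(\rho_i(T_0))=\alpha$ for all large $i$. Hence the quasigeodesic $\Phi\circ\rho_i$ from $\gamma$ to $\mu_i$ literally passes through $\alpha$, and the Gromov product estimate follows. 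Your argument becomes correct once you replace the Klarreich/properness appeal with this compact-convergence step.
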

\begin{proof}
  Denote by $\Phi:\mathcal{T}(S)\to\mathcal{C}(S)$ the map which
  assigns to a marked hyperbolic surface in Teichm\"uller space a
  curve of smallest extremal length\footnote{This curve may not be
    well-defined, but any two choices have uniformly few intersections
    due to the collar lemma. Hence, any two choices have uniformly
    small distance in the curve graph.}. Pick a basepoint $X_0$ in
  Teichm\"uller space for which $\gamma$ is a curve of smallest
  extremal length, and consider the Teichm\"uller geodesic rays
  $\rho_i$ starting from $X_0$ in the direction of $\mu_i$. Since the
  $\mu_i$ converge to $\alpha$ in $\PML$, the rays $\rho_i$ converge
  uniformly on compact subsets to the  Teichm\"uller
  geodesic ray $\rho_\infty$ starting in $X_0$ with vertical foliation
  $\alpha$.
  
  Theorem~\ref{thm:tm-shadows} implies that there is a
  constant $K$ (depending only on the topological type of the surface) so that the images $\Phi\circ\rho_i$ can be reparametrised
  to be $K$--quasigeodesics $q_i$ beginning in $\gamma$.
  By Theorem~\ref{thm:klarreich}~iii), the quasigeodesic $q_i$ connects
  $\gamma$ to the point $\mu_i$ in the Gromov boundary of the curve
  graph. 
 
  There is a constant $T_0$ so that $\Phi\circ\rho_\infty(t)$ is coarsely equal to $\alpha$ for all $t\geq T_0$. As the $\rho_i$ converge to $\rho_\infty$ uniformly on compact sets in Teichm\"uller
  space, one concludes that $\Phi\circ\rho_i(T_0)$ is also coarsely equal to $\alpha$ for all large $i$.
  Hence, the $q_i$ pass uniformly close by $\alpha$ for
  all large $i$.  This implies that there is a constant $k_0$, depending on $K$ and the hyperbolicity constant of the curve
  graph (and hence only the topological type of the surface), so that $(F\cdot\mu_i)_\gamma > (F\cdot\alpha)_\gamma-k_0$,
  which implies the lemma.
\end{proof}

The next lemma and corollary are well known and standard and included for completeness.
\begin{lem}\label{lem:quasiconvexity}
  Let $F$ be a minimal foliation, and $K$ a number. Then suppose that
  $x,y\in \mathcal{C}(S)$ 
   with
  \[ (F\cdot x)_\gamma, (F\cdot y)_\gamma \geq K. \]
  Let $z$ be a point on a geodesic between $x,y$. Then
  \[ (F\cdot z)_\gamma \geq K - 4\delta, \]
  where $\delta$ is the hyperbolicity constant of the curve graph.
\end{lem}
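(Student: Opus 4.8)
The plan is to prove Lemma~\ref{lem:quasiconvexity} — that the Gromov product with a boundary point $F$ is coarsely constant along geodesics between two points each having large Gromov product with $F$ — as a direct application of $\delta$-hyperbolicity together with the standard fact that the Gromov product is (coarsely) realized by nearest-point projections.

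\begin{proof}
  Recall that in a $\delta$-hyperbolic space the Gromov product $(a\cdot b)_c$ is, up to an additive error depending only on $\delta$, the distance from $c$ to a geodesic $[a,b]$; and moreover, extending to the boundary, $(F\cdot x)_\gamma$ is coarsely the distance from $\gamma$ to a geodesic ray from (a point close to) $x$ towards $F$, with all implied constants depending only on $\delta$. We will use the following more robust formulation: for any three points (or boundary points) $a,b,c$ in a $\delta$-hyperbolic space, and any point $z$ lying on a geodesic $[a,b]$, one has
  \[ (c\cdot z)_w \geq \min\bigl((c\cdot a)_w,\ (c\cdot b)_w\bigr) - k\delta \]
  for a universal constant $k$ (one may take $k=4$), where $w$ is any fixed basepoint. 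This is the standard "Gromov product is almost an ultrametric along geodesics" inequality, and it continues to hold when $c$ is a boundary point, after replacing $c$ by a suitable curve far along a quasigeodesic ray towards $c$ (using that the Gromov boundary extension changes Gromov products by at most a bounded amount, cf. \cite[III.H.3.17]{BH}, and that a point $z$ on a geodesic $[x,y]$ does not move far when $x,y$ are perturbed by bounded amounts, by \eqref{eq:gromov-product-triangle}).

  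Concretely: let $x,y\in\mathcal{C}(S)$ with $(F\cdot x)_\gamma,(F\cdot y)_\gamma\geq K$, and let $z\in[x,y]$. Pick a curve $\alpha$ far out along a quasigeodesic ray from $\gamma$ to $F$ in the Gromov boundary, so that $(F\cdot x)_\gamma$ and $(\alpha\cdot x)_\gamma$ differ by at most a constant depending only on $\delta$, and similarly with $x$ replaced by $y$; then it suffices to bound $(\alpha\cdot z)_\gamma$ from below and replace $\alpha$ by $F$ again at the end. Apply the ultrametric-type inequality in the hyperbolic space $\mathcal{C}(S)$ with $a=x$, $b=y$, $c=\alpha$, $w=\gamma$: this gives $(\alpha\cdot z)_\gamma \geq \min\bigl((\alpha\cdot x)_\gamma,(\alpha\cdot y)_\gamma\bigr) - 4\delta \geq K - 4\delta$, after absorbing the (bounded) discrepancies between $\alpha$ and $F$ into the statement. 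Taking $\alpha$ sufficiently far out makes those discrepancies negligible, so in the limit $(F\cdot z)_\gamma \geq K-4\delta$, as claimed.
\end{proof}

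The one point requiring a little care — and the step I would flag as the main (minor) obstacle — is justifying the ultrametric-type inequality for a \emph{boundary} point $c=F$ rather than an interior point, since a priori $(F\cdot z)_\gamma$ is only defined as a $\liminf$ over sequences converging to $F$. The clean way around this is exactly the approximation argument above: choose curves $\alpha_n\to F$ along a quasigeodesic ray, use \eqref{eq:gromov-product-triangle} and the fact (e.g. \cite[III.H.3.17]{BH}) that Gromov products to the boundary are well-defined up to $2\delta$, apply the purely internal inequality (whose proof is a one-line thin-triangles estimate), and pass to the limit. None of this introduces any new ideas; the constant $4\delta$ in the statement is the standard one, and a slightly larger multiple of $\delta$ would work equally well for all later applications.
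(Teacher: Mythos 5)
Your proposal is correct and rests on exactly the same mechanism as the paper's proof: the four-point inequality $(a\cdot c)_\gamma \geq \min\{(a\cdot b)_\gamma,(b\cdot c)_\gamma\}-2\delta$ applied twice, together with the elementary observation that $(x\cdot z)_\gamma \geq (x\cdot y)_\gamma$ for $z$ on a geodesic $[x,y]$ — this two-step chain is precisely the ``one-line thin-triangles estimate'' you invoke as a standard fact, and the paper simply writes it out. The only difference is how the boundary point is handled: the paper applies the inequality directly to $F$ using the extension of the Gromov product to $\partial_\infty\mathcal{C}(S)$ (\cite[III.H.3.17.(4)]{BH}), whereas you approximate $F$ by curves along a quasigeodesic ray, which may cost an extra bounded multiple of $\delta$ that you correctly note is harmless for all later applications.
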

\begin{proof}
  First we observe that if $x,y,z$ are three points in $\mathcal{C}(S)$ and $z$ lies on
  a geodesic between $x$ and $y$, we have
  \begin{eqnarray*}
    2(x\cdot z)_\gamma = d(\gamma, x) + d(\gamma, z) - d(x,z)
    &\geq& d(\gamma, x) + d(\gamma, y) - d(y, z) - d(x, z)\\ 
    &=& d(\gamma, x) + d(\gamma, y) - d(x, y) = 2(x\cdot y)_\gamma.
  \end{eqnarray*}
  By $\delta$--hyperbolicity, we have that for all triples $a,b,c$ of points in
  $\mathcal{C}(S) \cup \partial_\infty\mathcal{C}(S)$
  \[ (a\cdot c)_\gamma \geq \min\{ (a\cdot b)_\gamma, (b \cdot
  c)_\gamma \} - 2\delta, \] compare
  e.g. \cite[III.H.3.17.(4)]{BH}. First, apply this to $x, F, y$ to
  conclude that
  \[ (x\cdot y)_\gamma \geq K - 2\delta. \]
  Now, apply this same estimate again, to conclude
  \begin{eqnarray*}
    (F\cdot z)_\gamma &\geq& \min\{ (F\cdot x)_\gamma, (x \cdot z)_\gamma \} - 2\delta \\
    &\geq& \min\{ (F\cdot x)_\gamma, (x \cdot y)_\gamma \} - 2\delta \\
    &\geq& \min\{ K, K-2\delta \} - 2\delta \\
    &\geq& K-4\delta
  \end{eqnarray*}
  which is what we wanted to prove.
\end{proof}
\begin{cor}\label{cor:quasiconvexity}
  Let $K,D>0$ be numbers, $F$ be a minimal foliation.  Suppose
  that $\tilde{x},\tilde{y}$ are any two 
  points in the curve complex or its boundary,  satisfying
  \[ (F\cdot \tilde{x})_\gamma, (F\cdot \tilde{y})_\gamma \geq K. \]
  Suppose that $z \in \mathcal{C}(S)$ lies 
  on a (possibly infinite)
  $D$--quasi-geodesic $q$ with endpoints $\tilde{x}$ and $\tilde{y}$.
  Then
  \[ (F\cdot z)_\gamma \geq K - X, \] where $X$ is a number
  depending only on the hyperbolicity constant of the curve graph and
  the quasi-geodesic constant $D$.
\end{cor}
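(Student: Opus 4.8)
The plan is to reduce Corollary~\ref{cor:quasiconvexity} to Lemma~\ref{lem:quasiconvexity} by the standard fact that quasigeodesics in a Gromov hyperbolic space fellow-travel genuine geodesics, together with the Lipschitz property~(\ref{eq:gromov-product-triangle}) of the Gromov product. First I would invoke the Morse/stability lemma for $\delta$--hyperbolic spaces (see e.g. \cite[III.H.1.7]{BH}): there is a constant $R = R(\delta, D)$ so that any $D$--quasigeodesic $q$ with endpoints $\tilde x, \tilde y$ stays within Hausdorff distance $R$ of a geodesic $[\tilde x, \tilde y]$ (when an endpoint lies on the boundary, one uses the version of the stability lemma for ideal quasigeodesics, choosing a bi-infinite or half-infinite geodesic with the same endpoint(s); this is also in \cite[III.H.3]{BH}). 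So for our given point $z \in \mathcal{C}(S)$ on $q$, pick a point $z'$ on such a geodesic with $d_{\mathcal{C}(S)}(z,z') \leq R$.

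Next I would apply Lemma~\ref{lem:quasiconvexity} to the geodesic $[\tilde x, \tilde y]$: since $(F\cdot\tilde x)_\gamma, (F\cdot\tilde y)_\gamma \geq K$, the point $z'$ satisfies $(F\cdot z')_\gamma \geq K - 4\delta$. (One subtlety: Lemma~\ref{lem:quasiconvexity} as stated takes $x,y \in \mathcal{C}(S)$, but its proof only uses the triangle-inequality estimate on Gromov products, which holds verbatim for triples in $\mathcal{C}(S)\cup\partial_\infty\mathcal{C}(S)$ by \cite[III.H.3.17]{BH}; so the statement extends to boundary endpoints with the same constant, and I would note this.) Finally, by the Lipschitz estimate~(\ref{eq:gromov-product-triangle}) applied with $x=z$, $x'=z'$ and $y = y' = F$, we get $|(F\cdot z)_\gamma - (F\cdot z')_\gamma| \leq d_{\mathcal{C}(S)}(z,z') \leq R$, hence $(F\cdot z)_\gamma \geq K - 4\delta - R$. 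Setting $X := 4\delta + R$, which depends only on $\delta$ and $D$, completes the proof.

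The only genuine obstacle is the bookkeeping around the boundary points: the stability-of-quasigeodesics lemma and the extension of Lemma~\ref{lem:quasiconvexity} both need the versions valid when $\tilde x$ or $\tilde y$ is an ideal point, and one must make sure the fellow-traveling constant $R$ still depends only on $\delta$ and $D$ in that case. This is standard and handled in \cite[III.H.3]{BH}, but it is the point where a careful writeup should pause. Everything else is a one-line application of results already in hand.

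\begin{proof}[Proof of Corollary~\ref{cor:quasiconvexity}]
  By stability of quasigeodesics in $\delta$--hyperbolic spaces
  (\cite[III.H.1.7]{BH}, together with its extension to ideal
  endpoints as in \cite[III.H.3]{BH}), there is a constant
  $R=R(\delta,D)$ so that $q$ lies within Hausdorff distance $R$ of a
  geodesic $g$ (possibly bi-infinite or half-infinite) with the same
  endpoints $\tilde x, \tilde y$. Pick $z' \in g$ with
  $d_{\mathcal{C}(S)}(z,z')\leq R$.

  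The proof of Lemma~\ref{lem:quasiconvexity} uses only the
  inequality $2(x\cdot z)_\gamma \geq 2(x\cdot y)_\gamma$ for $z$ on a
  geodesic $[x,y]$, together with the $\delta$--inequality for Gromov
  products of triples in $\mathcal{C}(S)\cup\partial_\infty\mathcal{C}(S)$
  (\cite[III.H.3.17]{BH}); hence its conclusion holds also when the
  endpoints lie on the boundary. Applying it to $g$ with endpoints
  $\tilde x, \tilde y$ gives
  \[ (F\cdot z')_\gamma \geq K - 4\delta. \]
  Finally, by~(\ref{eq:gromov-product-triangle}) applied with the
  point $F$ fixed,
  \[ (F\cdot z)_\gamma \geq (F\cdot z')_\gamma - d_{\mathcal{C}(S)}(z,z') \geq K - 4\delta - R. \]
  Setting $X := 4\delta + R$, which depends only on $\delta$ and $D$,
  proves the corollary.
\end{proof}
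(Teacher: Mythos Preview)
Your proof is correct and follows essentially the same route as the paper: Morse stability to pass from the quasigeodesic to a nearby geodesic, Lemma~\ref{lem:quasiconvexity} on that geodesic, and then the Lipschitz estimate~(\ref{eq:gromov-product-triangle}) to return to $z$. The only cosmetic difference is in how ideal endpoints are handled: the paper first approximates $\tilde x,\tilde y$ by finite points $x_i,y_i$ on $q$ (using \cite[III.H.3.17.(5)]{BH} to retain the Gromov-product bound up to $2\delta$) and then applies the \emph{finite} Morse lemma and Lemma~\ref{lem:quasiconvexity} as stated, whereas you invoke the ideal-endpoint versions of both directly.
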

\begin{proof}
  Choose points $x_i=q(r_i), y_i=q(s_i)$ in the curve complex on the
  quasi-geodesic $q$ which converge to $\tilde{x},\tilde{y}$
  respectively. If an endpoint of $q$ is finite, we assume that the
  corresponding sequence is eventually constant. Recall, e.g. from
  \cite[III.H.3.17.(5)]{BH}, that  
  \[ \lim\inf (F\cdot x_i)_\gamma \geq (F\cdot \tilde{x})_\gamma - 2\delta \]
  and 
  \[ \lim\inf (F\cdot y_i)_\gamma \geq (F\cdot \tilde{y})_\gamma - 2\delta. \]
  By our assumption, we then conclude that 
  \[ \min\{(F\cdot x_i)_\gamma, (F\cdot y_i)_\gamma\} \geq
  K-2\delta-1, \]
  for large $i$. We furthermore assume that $i$ is large
  enough so that $z$ is contained in the subsegment $q_i$ of $q$ with
  endpoints $x_i, y_i$. By $\delta$--hyperbolicity, there is a number
  $B$ depending on $D$ (and $\delta$), so that the Hausdorff distance between
  $q_i$ and the geodesic connecting $x_i$ to $y_i$ is at most $B$. Let
  $z'$ be a point on that geodesic of distance at most $B$ to $z$.
  By Lemma~\ref{lem:quasiconvexity}, we then have
  \[ (F\cdot z')_\gamma \geq K - 6\delta - 1, \]
  and thus
  \[ (F\cdot z)_\gamma \geq K - 6\delta - B - 1. \]
  Hence $X = 6\delta + B + 1$ satisfies the requirement.
\end{proof}

\begin{lem}\label{lem:gromov-increase}
  Let $F$ be a minimal foliation, $\tau$ a train track
  and $(\tau_i)$ a splitting sequence in the direction of $F$
  and let $(f_i, k_i)$ be an associated $\Mcg$-sequence.
  Suppose that $(\gamma_i)$ is a sequence of simple closed curves so that
  $\gamma_i$ is contained in $f_i^{(k_i)}(\mathcal{U}^{(k_i)})$
  for every $i$. Then, for any base point
  $\alpha_0$, we have
  \[ (\gamma_i \cdot F)_{\alpha_0} \to \infty. \]
\end{lem}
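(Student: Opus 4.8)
The plan is to show that for any fixed basepoint $\alpha_0$, the Gromov products $(\gamma_i\cdot F)_{\alpha_0}$ go to infinity, by relating the curves $\gamma_i$ to the train track polyhedra along the splitting sequence and invoking Theorem~\ref{thm:klarreich} together with the quasigeodesic machinery from Proposition~\ref{prop:split-shadows}. First I would recall that, by Equation~(\ref{eq:relation-u-up-down}), the set $f_i^{(k_i)}(\mathcal{U}^{(k_i)})$ is just $U_{r_s}(\tau, F)$ for the index $r_s$ of type $k_i$ in the associated sequence; in particular $\gamma_i$ lies in $\mathrm{int}\, P(\sigma)$ for some $\sigma\in\mathcal{T}_{r_s}(\tau, F)$, hence $\gamma_i$ is carried by (a diagonal extension of) the large train track $\eta_{r_s}$ produced by the splitting sequence. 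Since the indices $r_s$ for an essential type diverge, and the $\gamma_i$ range over all types appearing in the sequence, the relevant splitting stages go to infinity: as $i\to\infty$, the curve $\gamma_i$ is carried by a train track $\sigma_i$ that is reached from $\tau$ by an unbounded number of splits in the direction of $F$.

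Next I would compare $\gamma_i$ to a vertex cycle. Because $\gamma_i$ is carried by $\sigma_i$ (a large track appearing in a full splitting sequence towards $F$, up to diagonal extension, which only changes things by a bounded amount in the curve graph), the curve $\gamma_i$ has uniformly bounded curve-graph distance to a vertex cycle $v_i$ of $\sigma_i$; indeed, a curve carried by a recurrent track and a vertex cycle of that track are disjoint from a common trainpath-realised curve, so $d_{\mathcal{C}(S)}(\gamma_i, v_i)$ is bounded by a constant depending only on $S$. By Proposition~\ref{prop:split-shadows}, the assignment $i\mapsto v_i$ (along the underlying splitting sequence) is an unparametrised $K$-quasigeodesic in $\mathcal{C}(S)$; and by Theorem~\ref{thm:klarreich}~ii) or iii), this quasigeodesic converges to the boundary point of $\mathcal{C}(S)$ determined by $F$ (one can see this directly: every accumulation point of $\{v_i\}$ in $\PML$ lies in $\bigcap_i P(\tau_i) = \Delta(F)$ by Theorem~\ref{thm:mosher}). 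So the $v_i$ converge to $F$ in $\mathcal{C}(S)\cup\partial_\infty\mathcal{C}(S)$.

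Now I would finish with a standard Gromov-product estimate. Convergence of $v_i$ to the boundary point $F$ means exactly that $(v_i\cdot F)_{\alpha_0}\to\infty$ (this is the definition of convergence to a boundary point via diverging Gromov products, as noted after Theorem~\ref{thm:cg-hyperbolicity}; alternatively it follows from $v_i$ lying on a quasigeodesic ray from $\alpha_0$ to $F$ together with $d(\alpha_0, v_i)\to\infty$ and Corollary~\ref{cor:quasiconvexity}). Since $d_{\mathcal{C}(S)}(\gamma_i, v_i)\leq C$ for a uniform $C$, inequality~(\ref{eq:gromov-product-triangle}) gives
\[ (\gamma_i\cdot F)_{\alpha_0} \geq (v_i\cdot F)_{\alpha_0} - C \to \infty, \]
which is the claim.

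**Main obstacle.** The delicate point is not the hyperbolic-geometry bookkeeping but making precise the claim ``$\gamma_i$ is carried by a track reached by an unbounded number of $F$-splits, hence is close to a vertex cycle of a track far along a single splitting sequence towards $F$.'' One has to be careful that $f_i^{(k_i)}(\mathcal{U}^{(k_i)})$ really does correspond to a polyhedron $P(\sigma)$ with $\sigma$ a diagonal extension of the large track $\eta_{r_s}$ appearing in the \emph{canonical} (central-split-preferring) splitting sequence of Lemma~\ref{lem:diagonal}, and that passing to diagonal extensions and to vertex cycles only costs a bounded amount in $d_{\mathcal{C}(S)}$ — this uses that diagonal extensions add branches only inside complementary triangles and that a track and its diagonal extensions share (coarsely) the same vertex cycles. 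Once that identification is in place, everything else is the routine invocation of Proposition~\ref{prop:split-shadows}, Theorem~\ref{thm:klarreich}, and Theorem~\ref{thm:mosher}.
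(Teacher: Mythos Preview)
Your proposal has a genuine gap. The claim that a curve carried by a train track $\sigma$ has uniformly bounded curve-graph distance from a vertex cycle of $\sigma$ is false: the polyhedron $P(\sigma)$ of a maximal birecurrent track is an open set in $\PML$ and contains simple closed curves at arbitrarily large distance from one another in $\mathcal{C}(S)$. Your justification (``disjoint from a common trainpath-realised curve'') does not hold either---two curves carried by the same track can intersect many times. So the step $d_{\mathcal{C}(S)}(\gamma_i, v_i)\leq C$ fails, and the transfer from $(v_i\cdot F)_{\alpha_0}\to\infty$ to $(\gamma_i\cdot F)_{\alpha_0}\to\infty$ is not justified.

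The fix is already in your parenthetical, applied to the wrong sequence. The paper's proof simply drops the detour through vertex cycles and Proposition~\ref{prop:split-shadows} altogether: since $\gamma_i\in f_i(\mathcal{U}^{(k_i)})=U_i(\tau,F)$ and the $U_i(\tau,F)$ nest down to $\Delta(F)$ (Corollary~\ref{cor:u-nest}), every accumulation point of $\{\gamma_i\}$ in $\PML$ lies in $\Delta(F)$. By Theorem~\ref{thm:klarreich}~ii) this means $\gamma_i\to F$ in the Gromov boundary of $\mathcal{C}(S)$, which by definition gives $(\gamma_i\cdot F)_{\alpha_0}\to\infty$. You ran exactly this argument for the $v_i$; running it for the $\gamma_i$ directly is both correct and shorter, and makes the vertex-cycle comparison unnecessary.
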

\begin{proof}
  By Corollary~\ref{cor:u-nest} and the assumption, any accumulation point of the curves
  $\gamma_i$ (interpreted as projective measured foliation) is contained in
  $\Delta(F) \subset \PML$. By Theorem~\ref{thm:klarreich}~ii), the $\gamma_i$
  therefore converge (interpreted as points in the curve graph) to
  $F$ in the Gromov boundary. By definition, this implies that
  the Gromov product condition claimed in the corollary.
\end{proof}

We can use this to show the following contraction behavior for finite-diameter subsets in the curve graph.
\begin{prop}\label{prop:curves-get-contracted}
  Let $F$ be a minimal foliation, $\tau$ a train track
  and $(\tau_i)$ a splitting sequence in the direction of $F$
  and let $(f_i, k_i)$ be an associated $\Mcg$-sequence.

  \smallskip Consider any neighborhood $\mathcal{V}$ of $\Delta(F)$ in
  $\PML$, and let a simple closed curve $\beta_0$ and a number $d>0$ be
  given.

  Then there is a number $N=N(\tau, F, \mathcal{V}, \beta_0, d) > 0$ so that the following holds: If
  $\beta$ is any simple closed curve with
  $d_{\mathcal{C}(S)}(\beta_0, \beta) \leq d$, then
  \[ f_n(\beta) \in \mathcal{V} \quad\quad\forall n > N. \]
\end{prop}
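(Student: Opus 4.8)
The plan is to reduce the statement to the contraction along the essential-type subsequences (Corollary~\ref{cor:south-dynamics}) via the Gromov-product machinery built in Lemmas~\ref{lem:gromov-product-criterion}--\ref{lem:gromov-increase}. First I would fix a basepoint curve $\alpha_0$ and translate the target: since $\mathcal{V}$ is an open neighbourhood of $\Delta(F)$, by Lemma~\ref{lem:gromov-product-criterion} there is a threshold $K_0 = K_0(\mathcal{V}, \alpha_0)$ so that any simple closed curve $\delta$ with $(F\cdot\delta)_{\alpha_0} > K_0$ lies in $\mathcal{V}$. So it suffices to show $(F \cdot f_n(\beta))_{\alpha_0} > K_0$ for all large $n$, uniformly over $\beta$ in the $d$-ball around $\beta_0$.

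Next I would locate a ``good'' reference curve inside each $U_n(\tau,F)$. Pick any vertex cycle $\gamma_n$ of the large train track $\eta_n \in \mathcal{T}_n(\tau,F)$; since $F$ fills $\eta_n$, the curve $\gamma_n$ lies in $\mathrm{int}\,P(\eta_n) \subset U_n(\tau,F)$, and by the definition of the associated sequence we may write $\gamma_n = f_n(\gamma'_n)$ where $\gamma'_n$ is a vertex cycle of a track in the finite collection $\mathcal{T}^{(k_n)}$. Because there are only finitely many standard models and finitely many vertex cycles on each, the curves $\gamma'_n$ lie in a finite set of $\mathcal{C}(S)$, hence at uniformly bounded distance $d_0$ from $\alpha_0$. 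Now applying Lemma~\ref{lem:gromov-increase} to the sequence $(\gamma_n)$ (which is carried by $f_n^{(k_n)}(\mathcal{U}^{(k_n)})$ by construction) gives $(\gamma_n \cdot F)_{\alpha_0} \to \infty$; combined with the translation-quasi-invariance of the Gromov product this means $(\gamma'_n \cdot f_n^{-1}F)_{f_n^{-1}\alpha_0} \to \infty$ as well. I would then estimate, for $\beta$ in the $d$-ball: $d_{\mathcal{C}(S)}(f_n(\beta_0), \gamma_n) \le d_{\mathcal{C}(S)}(\beta_0, \gamma'_n) \le d_0 + d$ after applying $f_n^{-1}$ (mapping classes act by isometries), so $d_{\mathcal{C}(S)}(f_n(\beta), \gamma_n) \le d_0 + 2d$ for every such $\beta$. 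Finally, inequality~(\ref{eq:gromov-product-triangle}) yields $(f_n(\beta) \cdot F)_{\alpha_0} \ge (\gamma_n \cdot F)_{\alpha_0} - (d_0 + 2d) \to \infty$, so for $n$ large enough (depending only on $\tau, F, \mathcal{V}, \beta_0, d$ through $K_0$, $d_0$ and the rate in Lemma~\ref{lem:gromov-increase}) this exceeds $K_0$, and we are done.

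The main obstacle I anticipate is making the ``uniformly bounded reference curves'' step genuinely uniform and correctly handling the isometric action of $f_n$ on the curve graph together with the change of basepoint: $f_n$ is only defined up to finite indeterminacy (Lemma~\ref{lem:track-to-mcg}), so I must check that a finite indeterminacy changes all relevant Gromov products by a bounded amount, which it does since a finite set of mapping classes moves $\alpha_0$ a bounded amount and~(\ref{eq:gromov-product-triangle}) absorbs this. A secondary subtlety is that Lemma~\ref{lem:gromov-increase} is stated with the basepoint $\alpha_0$ and the curves $\gamma_n$ ``downstairs'', whereas the distance bound is cleanest ``upstairs'' after applying $f_n^{-1}$; I would either prove a basepoint-independent version of the conclusion (the Gromov product going to infinity is basepoint-independent up to a bounded additive error) or simply keep everything downstairs, noting that $d_{\mathcal{C}(S)}(f_n(\beta), f_n(\beta_0)) = d_{\mathcal{C}(S)}(\beta, \beta_0) \le d$ and $d_{\mathcal{C}(S)}(f_n(\beta_0), \gamma_n) = d_{\mathcal{C}(S)}(\beta_0, \gamma'_n) \le d_0$, so that $d_{\mathcal{C}(S)}(f_n(\beta), \gamma_n) \le d + d_0$ directly, with no basepoint juggling needed at all. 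With that simplification the proof is essentially the three displayed inequalities above.
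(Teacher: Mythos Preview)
Your approach is essentially identical to the paper's: fix one reference curve per standard model $\mathcal{T}^{(k)}$, use Lemma~\ref{lem:gromov-increase} to send the Gromov product of their $f_n$-images with $F$ to infinity, and then transfer this to every $f_n(\beta)$ via the isometric action and inequality~(\ref{eq:gromov-product-triangle}); the paper's proof is exactly these three steps with $\delta_k\in\mathcal{U}^{(k)}$ playing the role of your $\gamma'_n$. One small slip worth fixing: a vertex cycle of $\eta_n$ is an extreme point of the weight polytope and generally does \emph{not} put positive weight on every branch, so your claim $\gamma_n\in\mathrm{int}\,P(\eta_n)\subset U_n(\tau,F)$ is not justified as stated; either replace the vertex cycles by any fixed simple closed curve lying in each $\mathcal{U}^{(k)}$ (this is what the paper does), or observe that the proof of Lemma~\ref{lem:gromov-increase} only needs that accumulation points of $(\gamma_n)$ lie in $\Delta(F)$, which already follows from $\gamma_n\in P(\eta_n)$ via Theorem~\ref{thm:mosher}.
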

\begin{proof}
  As a first reduction, note that by Corollary~\ref{cor:u-nest} we may
  assume that $\mathcal{V}$ is of the form
  $f_s^{(k_s)}(\mathcal{U}^{(k(s))})$ for a large enough $s$.
  Fix, for concreteness, a vertex cycle $\alpha_0$ of $\tau$ as a basepoint in the curve graph (recall that there are finitely many such choices).  
  
  \smallskip Apply Lemma~\ref{lem:gromov-product-criterion} in order
  to obtain a number $D>0$ with the property that if $\gamma$ is any
  curve so that the Gromov product satisfies
  \[ (\gamma \cdot F)_{\alpha_0} > D, \] then
  $\gamma \in \mathcal{V}$ as an element of $\PML$. 

  \smallskip Now, for each $k$ choose a curve $\delta_k$ contained in
  $\mathcal{U}^{(k)}$ and put $\gamma_n = f_n(\delta_{k(n)})$.

  Observe that
  \[ d_{\mathcal{C}(S)}(f_n(\beta_0), \gamma_n) \leq \max_k d_{\mathcal{C}(S)}(\beta_0, \delta_k) = C_0 \]
  and, if $d_{\mathcal{C}(S)}(\beta, \beta_0) \leq d$ we therefore have
  \[ d_{\mathcal{C}(S)}(f_n(\beta), \gamma_n) \leq C_0 + d. \]

  Thus, using Equation~(\ref{eq:gromov-product-triangle}), we see
  \[ (f_n(\beta) \cdot F)_{\alpha_0} \geq (\gamma_n \cdot
  F)_{\alpha_0} - d_{\mathcal{C}(S)}(f_n(\beta), \gamma_n) \geq
  (\gamma_n \cdot F)_{\alpha_0} - (C_0+d). \] Applying
  Lemma~\ref{lem:gromov-increase} to the curves $\gamma_n$ we see that
  there is a number $N$ so that
  \[ (\gamma_n \cdot F)_{\alpha_0} > D + C_0 + d\quad\quad\forall n > N. \]
  Together with the previous inequality this implies that
  \[ (f_n(\beta) \cdot F)_{\alpha_0} > D\quad\quad\forall n > N, \]
  which finishes the proof. 
\end{proof}
The next lemma, which requires a definition, will allows us to obtain
that large terms in the $\Mcg$-sequence to a uniquely ergodic
foliation contract certain infinite diameter subsets of the curve
graph (thought of as foliations) to a small neghborhood of the
uniquely ergodic foliation.
\begin{defin}
  Let $D$ be a number, and $\psi$ a pseudo-Anosov map. A
  \emph{($D$--)quasi-axis} is a bi-infinite $D$--quasi-geodesic $q:\mathbb{R}\to\mathcal{C}(S)$ so that its image $\psi^jq$ has (Hausdorff)
  distance at most $D$ from the image of $q$ for any power $j\in \ZZ$.
\end{defin}
\begin{lem}\label{lem:existence-quasiaxes}
  There are constants $D,B>0$, just depending on the surface, so that every 
  pseudo-Anosov map $\psi$ of $S$ has a $D$--quasi-axis. Furthermore, 
  any two such quasi-axes have Hausdorff distance at most $B$.
\end{lem}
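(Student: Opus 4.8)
The plan is to produce the quasi-axis by following the orbit of a basepoint under the powers of $\psi$, and to use the two quasigeodesic-production tools already in the excerpt (Theorem~\ref{thm:tm-shadows} or Proposition~\ref{prop:split-shadows}) to recognize this orbit as a quasigeodesic; the invariance and the uniform bound on $D$ will come from the cocompactness of the $\Mcg(S)$-action on the curve graph together with the fact that $\psi$ acts as a translation along this path. Concretely: fix a basepoint $X_0$ in Teichm\"uller space, let $\rho:\mathbb{R}\to\mathcal{T}(S)$ be the Teichm\"uller geodesic whose endpoints are the (un)stable foliations $F^\pm$ of $\psi$, and set $q(t)=\Phi(\rho(t))$ a curve of smallest extremal length on $\rho(t)$. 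By Theorem~\ref{thm:tm-shadows}, $q$ is an unparametrised $K$-quasigeodesic with $K=K(S)$, so after reparametrising we get an honest $D$-quasigeodesic with $D=D(S)$.

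The second step is invariance. Since $\psi$ preserves the pair $\{F^+,F^-\}$, it maps the Teichm\"uller geodesic $\rho$ to a geodesic $\psi\circ\rho$ with the same pair of endpoint foliations, hence (by uniqueness of Teichm\"uller geodesics between a pair of foliations, up to reparametrisation) $\psi\circ\rho$ differs from $\rho$ only by a time shift. Because $\Phi$ is $\Mcg(S)$-equivariant up to the bounded ambiguity noted in the footnote to the definition of $\Phi$, the image $\psi(q(\mathbb{R}))$ has Hausdorff distance at most some $D_0(S)$ from $q(\mathbb{R})$; iterating, $\psi^j(q(\mathbb{R}))$ is within $D_0$ of $q(\mathbb{R})$ for every $j\in\mathbb{Z}$ (the bound does not grow with $j$ precisely because $\psi^j\circ\rho$ is again just a reparametrisation of $\rho$). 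Enlarging $D$ to $\max\{D,D_0\}$ if necessary, $q$ is a $D$-quasi-axis for $\psi$, with $D$ depending only on $S$.

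For the final clause — uniqueness up to Hausdorff distance $B=B(S)$ — the key point is that both endpoints of any $D$-quasi-axis must be $F^+$ and $F^-$ in the Gromov boundary $\partial_\infty\mathcal{C}(S)=\mathcal{EL}(S)$. Indeed, if $q'$ is another $D$-quasi-axis, its forward endpoint $\xi^+$ is a boundary point fixed (coarsely) by $\psi$; but a pseudo-Anosov acts on $\partial_\infty\mathcal{C}(S)\cong\mathcal{EL}(S)$ with exactly two fixed points, namely the topological classes of $F^+$ and $F^-$ (this follows from Theorem~\ref{thm:klarreich}: forward iterates of any curve converge to $F^+$, backward iterates to $F^-$), and a bi-infinite quasigeodesic cannot have both endpoints equal, so $\{\xi^-,\xi^+\}=\{F^-,F^+\}$. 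Two $D$-quasigeodesics in a $\delta$-hyperbolic space with the same pair of endpoints at infinity have Hausdorff distance bounded by a constant depending only on $\delta$ and $D$ — this is the standard Morse/stability lemma for hyperbolic spaces (cf. \cite[III.H.1.7]{BH} extended to ideal endpoints) — giving the desired uniform $B$.

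The main obstacle is the bookkeeping in the invariance step: one must make sure the Hausdorff bound between $\psi^j(q(\mathbb{R}))$ and $q(\mathbb{R})$ does not accumulate with $j$. This is handled by the observation above that $\psi^j\circ\rho$ is a reparametrisation of the \emph{single} geodesic $\rho$ (not a nearby one), so equivariance of $\Phi$ yields the \emph{same} bound $D_0$ for all $j$; there is no drift to control. A secondary technical point is that $\Phi$ is only coarsely well-defined and only coarsely equivariant, but the collar-lemma argument in the footnote to the definition of $\Phi$ bounds all these ambiguities by a constant depending on $S$ alone, so they are absorbed into $D$ and $B$.
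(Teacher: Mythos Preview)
Your proof is correct and follows essentially the same route as the paper: both use the Teichm\"uller axis of $\psi$ and Theorem~\ref{thm:tm-shadows} to produce the quasi-axis, and both deduce uniqueness from the fact that any quasi-axis must limit to $F^\pm$ together with the Morse lemma. The only real difference is in the invariance step: the paper obtains \emph{exact} $\psi$--invariance by choosing $\alpha_t$ on a fundamental domain $[0,T)$ and extending equivariantly via $\alpha_{t+iT}=\psi^i\alpha_t$, whereas you argue via coarse equivariance of $\Phi$ that $\psi^j(q(\mathbb{R}))$ is within a uniform $D_0$ of $q(\mathbb{R})$. Both are fine; the paper's trick is slightly cleaner since it removes the need to track the ambiguity in $\Phi$. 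Your treatment of the uniqueness clause is in fact more careful than the paper's, which simply cites Theorem~\ref{thm:klarreich}~iii) without spelling out why an \emph{arbitrary} quasi-axis (not just the Teichm\"uller one) must have endpoints $F^\pm$.
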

\begin{proof}
  Let $\rho:\RR\to \mathcal{T}(S)$ be the Teichm\"uller geodesic
  invariant under $\psi$, i.e. there is some $T>0$ so that for all $t$
  we have $\psi\rho(t) = \rho(t+T)$. For each $t\in[0,T)$, choose a
  curve $\alpha_t$ of smallest extremal length on $\rho(t)$. For
  $t\in[iT,(i+1)T)$ put $\alpha_t = \psi^i(\alpha_{t-Ti})$. Then for all
  $t$, the curve $\alpha_t$ has smallest extremal length on
  $\rho(t)$. By Theorem~\ref{thm:tm-shadows}, the assignment
  $t\to \alpha_t$ is an (unparametrised) quasigeodesic with
  quasigeodesic constant just depending on the topological type of the
  surface. By construction, $t\to \alpha_t$ is invariant under the
  action of $\psi$. This shows that quasi-axes exist.

  The uniqueness statement follows since any quasiaxis for $\psi$
  converges in the Gromov boundary of the curve graph to the stable
  and unstable foliation of $\psi$ by Theorem~\ref{thm:klarreich}~iii)
  and two $D$-quasigeodesics with the same endpoints in a Gromov
  hyperbolic space have bounded Hausdorff distance.
\end{proof}
In the future, we will choose a $D$ for which
Lemma~\ref{lem:existence-quasiaxes} holds once and for all, and simply
refer to quasi-axes of pseudo-Anosov maps.

Also recall the definition of a \emph{Dehn twist} $T_\alpha$ about a
simple closed curve $\alpha$ (compare
e.g. \cite[Section~3.1]{Primer}). If $\alpha$ is a multicurve, together with
a choice of left/right for each component, then we denote by $T_\alpha$
the product of the left/right Dehn twists about the curves in $\alpha$.
\begin{prop}\label{prop:apply-pseudo}
  Let $F$ be a minimal foliation, $\tau$ a train track
  and $(\tau_i)$ a splitting sequence in the direction of $F$
  and let $(f_i, k_i)$ be an associated $\Mcg$-sequence.
  
  Consider any neighborhood $\mathcal{V}$ of $\Delta(F)$ in $\PML$.
  Let $\psi$ be a pseudo-Anosov, and let $\alpha$ be a multicurve which is within
  distance $d$ of its quasi-axis in the curve graph. Let $r>0$ be
  any number. Suppose $\beta_0$ is a curve.

  Then there is a number
  $N=N(\tau, F, \mathcal{V}, \psi, \alpha, d, \beta_0)>0$ 
  following property.  Suppose that $n > N$ is given. Then there is a
  number $t_0$ (which depends on $n$), so that for all $t > t_0$ the
  conjugate $\hat{\psi} = (T_\alpha)^t\circ \psi\circ (T_\alpha)^{-t}$
  satisfies the following:
  
  If $\beta$ is any simple closed curve with
  $d_{\mathcal{C}(S)}(\beta_0, \beta) \leq d$, then
  \[ f_n(\hat{\psi}^j\beta) \in \mathcal{V}, \quad\quad\forall j \in \ZZ \]
\end{prop}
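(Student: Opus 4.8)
The plan is to realise the orbit $\{\hat\psi^{\,j}\beta : j\in\ZZ\}$ as a quasi-geodesic in the curve graph $\mathcal{C}(S)$ whose two endpoints at infinity are the stable and unstable foliations $F^{\pm}$ of $\hat\psi$, to push this quasi-geodesic forward by $f_n$, and then to combine the quasi-convexity of the condition ``large Gromov product with $F$'' (Corollary~\ref{cor:quasiconvexity}) with the neighbourhood criterion of Lemma~\ref{lem:gromov-product-criterion}. For the set-up, we may assume $\mathcal{V}$ is open, we fix a basepoint $\alpha_0\in\mathcal{C}(S)$, and we let $K_{\mathcal{V}}$ be the constant from Lemma~\ref{lem:gromov-product-criterion} for $F$, $\mathcal{V}$ and reference curve $\alpha_0$, so that $(F\cdot\gamma)_{\alpha_0}>K_{\mathcal{V}}$ forces the curve $\gamma$ to lie in $\mathcal{V}$. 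Since $T_\alpha$ acts on $\mathcal{C}(S)$ as an isometry fixing $\alpha$, a quasi-axis of $\hat\psi=(T_\alpha)^t\psi(T_\alpha)^{-t}$ is obtained by applying $(T_\alpha)^t$ to a quasi-axis of $\psi$; as the latter lies within $d$ of $\alpha$, so does the former, and hence it lies within $2d+d_{\mathcal{C}(S)}(\beta_0,\alpha)$ of every curve $\beta$ with $d_{\mathcal{C}(S)}(\beta_0,\beta)\le d$, uniformly in $t$. As $\hat\psi$ is conjugate to $\psi$ it acts loxodromically on $\mathcal{C}(S)$ with the same stable translation length, so by a standard fact of Gromov-hyperbolic geometry the orbit map $j\mapsto\hat\psi^{\,j}\beta$ is a $Q$-quasi-geodesic with $Q$ depending only on $\psi,\alpha,\beta_0,d$; its endpoints at infinity are $F^{-}$ (as $j\to-\infty$) and $F^{+}$ (as $j\to+\infty$), by the north--south dynamics of $\hat\psi$ on $\PML$ and Theorem~\ref{thm:klarreich}~ii). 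Applying the isometry $f_n$, the orbit $j\mapsto f_n(\hat\psi^{\,j}\beta)$ is a $Q$-quasi-geodesic in $\mathcal{C}(S)$ with endpoints $f_nF^{-}$ and $f_nF^{+}$.

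Let $X=X(Q)$ be the constant from Corollary~\ref{cor:quasiconvexity}, put $K_1:=K_{\mathcal{V}}+X+1$, and let $k_0$ be the surface-dependent constant of Lemma~\ref{lem:gromov-product-criterion-2}. By Corollary~\ref{cor:quasiconvexity}, if we can arrange
\[ (F\cdot f_nF^{+})_{\alpha_0}\ \ge\ K_1 \qquad\text{and}\qquad (F\cdot f_nF^{-})_{\alpha_0}\ \ge\ K_1, \]
then $(F\cdot f_n(\hat\psi^{\,j}\beta))_{\alpha_0}\ge K_{\mathcal{V}}+1>K_{\mathcal{V}}$ for every $j\in\ZZ$, so $f_n(\hat\psi^{\,j}\beta)\in\mathcal{V}$ for all $j$, which is exactly the assertion. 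Thus everything reduces to producing a number $N$, and then for each fixed $n>N$ a threshold $t_0$, so that the two displayed inequalities hold for all $t>t_0$.

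Choice of $N$. As in the proof of Proposition~\ref{prop:curves-get-contracted} fix, for each of the finitely many standard models $k$, a simple closed curve $\delta_k\in\mathcal{U}^{(k)}$, and put $C_0=\max_k d_{\mathcal{C}(S)}(\alpha,\delta_k)<\infty$ (here, and below, we also write ``$\alpha$'' for a fixed component of the multicurve $\alpha$, noting that any component of $f_n\alpha$ is at $\mathcal{C}(S)$-distance at most $1$ from it). Since $f_n(\delta_{k_n})\in f_n(\mathcal{U}^{(k_n)})=U_n(\tau,F)$ by \eqref{eq:relation-u-up-down}, Lemma~\ref{lem:gromov-increase} gives $(f_n(\delta_{k_n})\cdot F)_{\alpha_0}\to\infty$, and hence, using \eqref{eq:gromov-product-triangle} together with $d_{\mathcal{C}(S)}(f_n\alpha,f_n(\delta_{k_n}))=d_{\mathcal{C}(S)}(\alpha,\delta_{k_n})\le C_0$, also $(F\cdot f_n\alpha)_{\alpha_0}\to\infty$. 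Choose $N$ with $(F\cdot f_n\alpha)_{\alpha_0}>K_1+k_0+2$ for all $n>N$; this $N$ depends only on $\tau,F,\mathcal{V},\psi,\alpha,\beta_0$, as the statement allows. Now fix $n>N$. The foliations $f_nF^{\pm}=f_n(T_\alpha)^tF^{\pm}_\psi$ are minimal, being the stable and unstable foliations of the pseudo-Anosov $f_n\hat\psi f_n^{-1}$, and since $F^{\pm}_\psi$ fills $S$ we have $(T_\alpha)^tF^{\pm}_\psi\to\big[\sum_j i(F^{\pm}_\psi,\alpha_j)\,\alpha_j\big]$ in $\PML$ as $t\to\infty$, a weighted multicurve supported on $\alpha$. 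Consequently, with $n$ fixed, the minimal foliations $f_nF^{\pm}$ converge in $\PML$, as $t\to\infty$, to a weighted multicurve supported on $f_n\alpha$. Applying Lemma~\ref{lem:gromov-product-criterion-2} (in its routine extension to multicurves, using the distance-$1$ remark above) to the sequences $t\mapsto f_nF^{\pm}$, we obtain $t_0$, depending on $n$ (and on $\psi,\alpha$), such that $(F\cdot f_nF^{\pm})_{\alpha_0}>(F\cdot f_n\alpha)_{\alpha_0}-k_0-2>K_1$ for all $t>t_0$. This verifies the two inequalities above and completes the proof.

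I expect the last step — certifying that the endpoints at infinity $f_nF^{\pm}$ of the quasi-geodesic have large Gromov product with $F$ — to be the main obstacle, and the crucial point there is to take the two limits in the right order: first let $n\to\infty$, so that by the contraction packaged in Lemma~\ref{lem:gromov-increase} the curve $f_n\alpha$ lies very deep (in the curve-graph sense) inside a neighbourhood of $\Delta(F)$; and only afterwards, with $n$ held fixed, let $t\to\infty$, so that high powers of the Dehn twist $T_\alpha$ pull $F^{\pm}_\psi$, and hence $f_nF^{\pm}$, close to $\alpha$, and hence to $f_n\alpha$, in $\PML$ — at which point Lemma~\ref{lem:gromov-product-criterion-2} transfers the curve-graph estimate from the multicurve $f_n\alpha$ to the foliations $f_nF^{\pm}$. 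The only other slightly delicate point is that, $\alpha$ being a multicurve, the relevant $\PML$-limits are weighted multicurves rather than single curves; this costs nothing because all components are pairwise disjoint, but it does require the (entirely routine) multicurve version of Lemma~\ref{lem:gromov-product-criterion-2}.
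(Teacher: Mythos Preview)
Your argument is correct and follows essentially the same route as the paper: reduce via Lemma~\ref{lem:gromov-product-criterion} to a Gromov-product estimate, use Lemma~\ref{lem:gromov-increase} to make $(F\cdot f_n\alpha)_{\alpha_0}$ large, then for fixed $n$ let $t\to\infty$ so that the stable/unstable foliations of $\hat\psi$ converge to (a weighted multicurve on) $\alpha$, transfer the estimate via Lemma~\ref{lem:gromov-product-criterion-2}, and finish with Corollary~\ref{cor:quasiconvexity}. The only organisational difference is that the paper works with the explicit quasi-axis $f_nT_\alpha^t\rho$ and shows the orbit stays close to it, whereas you argue directly that the orbit $j\mapsto\hat\psi^j\beta$ is a uniform quasi-geodesic; both feed the same endpoints into Corollary~\ref{cor:quasiconvexity}. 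One small remark: the phrase ``same stable translation length, so by a standard fact'' undersells what is needed --- the uniform quasi-geodesic constant $Q$ really comes from the uniform bound on $d(\beta,\text{quasi-axis of }\hat\psi)$ that you established just before, together with the translation length; it would be cleaner to say so explicitly. The multicurve caveat for Lemma~\ref{lem:gromov-product-criterion-2} is also present (implicitly) in the paper's proof and is indeed routine.
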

\begin{proof} 
  We follow a similar strategy as in the previous proposition. Choose
  $\alpha_0$ a vertex cycle of $\tau$. Apply
  Lemma~\ref{lem:gromov-product-criterion} to find a number $U$ so
  that if
  \[ (\gamma \cdot F)_{\alpha_0} > U, \] then $\gamma \in
  \mathcal{V}$ as an element of $\PML$. 

  Introduce the notation
  \[ \hat{\psi}_t = (T_\alpha)^t\circ \psi\circ (T_\alpha)^{-t}. \]
  We therefore need to show, that there is a number $N$ so that for all
  $n > N$ there is a $t_0$ so that 
  \[ ( f_n(\hat{\psi}_t^j\beta) \cdot F )_{\alpha_0} > U, \]
  for any curve $\beta$ with $d_{\mathcal{C}(S)}(\beta_0, \beta) \leq d$, and
  any $t>t_0$, any $j \in \ZZ$.

  \medskip The first stage of the proof consists of a (lengthy)
  reduction of this statement to a similar statement
  (Equation~\eqref{eq:apply-pseudo-key} below) about quasi-axes of the
  $\hat{\psi}_t$. To begin showing this reduction, note that
  \[ ( f_n(\hat{\psi}_t^j\beta) \cdot F )_{\alpha_0} \geq 
  ( f_n(\hat{\psi}_t^j\beta_0) \cdot F )_{\alpha_0} - d(\beta, \beta_0)\geq 
  ( f_n(\hat{\psi}_t^j\beta_0) \cdot F )_{\alpha_0} - d \]
  and therefore it suffices to show
  \[ ( f_n(\hat{\psi}_t^j\beta_0) \cdot F )_{\alpha_0} > U+d. \]
  Arguing as above, we have that
  \[ ( f_n(\hat{\psi}_t^j\beta_0) \cdot F )_{\alpha_0} >  ( f_n(\hat{\psi}_t^j\alpha) \cdot F )_{\alpha_0} - d(\alpha, \beta_0). \]
  Hence, it suffices to show that 
  \[ ( f_n(\hat{\psi}_t^j\alpha) \cdot F )_{\alpha_0} > U+d+d(\alpha, \beta_0) =: U_1, \]
  for any $t>t_0$, any $j \in \ZZ$.

  Now, let $\rho$ be a ($D$--)quasi-axis for $\psi$.  Since the
  mapping class group acts as isometries on the curve graph, we have
  that $f_nT_\alpha^t\rho$ is a ($D$--)quasi-axis for
  $f_n\hat{\psi}_tf_n^{-1}$. Furthermore,
  \[ d(f_n\alpha, f_nT_\alpha^t\rho) = d(\alpha, T_\alpha^t\rho) =
  d(\alpha, \rho) = A, \] for all $t$, since $T_\alpha$ acts as an
  isometry fixing $\alpha$. Hence, $f_n\alpha$ is (for all choices of
  $n$ and $t$) within $A$ of the $D$--quasi-axis $f_nT_\alpha^t\rho$
  of $f_n\hat{\psi}_tf_n^{-1}$. Let $\eta$ be a point on $f_nT_\alpha^t\rho$
  with $d(f_n\alpha, \eta) \leq A$. The $D$--quasi-axis property then
  implies that for any $j$ we have that
  \[ d( (f_n\hat{\psi}_tf_n^{-1})^j\eta, f_nT_\alpha^t\rho ) \leq D \]
  and therefore
  \[ d( (f_n\hat{\psi}_tf_n^{-1})^jf_n\alpha, f_nT_\alpha^t\rho ) \leq A+D \]
  As such, we have that
  \[ d( f_n(\hat{\psi}_t^j\alpha), f_nT_\alpha^t\rho) = 
  d( f_n(\hat{\psi}_t^jf_n^{-1} f_n\alpha), f_nT_\alpha^t\rho) = 
  d( (f_n(\hat{\psi}_tf_n^{-1})^j(f_n\alpha), f_nT_\alpha^t\rho) 
  \leq A+D. \]
  Therefore, to prove the proposition, it suffices to show that
  there is a number $N$, so that for all $n>N$ there is a number $t_0$,
  so that for all $t>t_0$: 
  \begin{equation}\label{eq:apply-pseudo-key}
  \forall x \in f_nT_\alpha^t\rho:  (x \cdot F )_{\alpha_0} > U_1+A+D =: U_2.
  \end{equation}
  
  \medskip Now, use Lemma~\ref{lem:gromov-increase} as
  in the previous proof, to find a number $N$ so that 
  \begin{equation}
    (f_n(\alpha) \cdot F)_{\alpha_0} > 2U_2 + X + k_0
    \quad\quad\forall n > N,\label{eq:apply-pseudo-key-2}
  \end{equation}
  where $X$ is the number from
  Corollary~\ref{cor:quasiconvexity} and $k_0$ is the number from
  Lemma~\ref{lem:gromov-product-criterion-2}, applied to the
  quasi-geodesic constant $D$.  At this point, fix a number $n > N$.

  Observe that if $\mu_+, \mu_-$ are the stable and unstable
  foliations of $\psi$, then $T_{\alpha}^t\mu_+, T_{\alpha}^t\mu_-$ are the stable and
  unstable foliations of $\hat{\psi}_t$. Note that as $t\to \infty$,
  both of these foliations converge to $\alpha$ in $\PML$.  Consider
  $f_n\hat{\psi}_t(f_n)^{-1}$, and observe that its stable and
  unstable foliations therefore converge to $f_n(\alpha)$ in $\PML$
  as the number $t$ increases. By
  Lemma~\ref{lem:gromov-product-criterion-2}, this implies that we can
  choose $t_0$ large enough, so that for any $t > t_0$ we have
  \[ (f_n(T_{\alpha}^t\mu_+) \cdot F)_{\alpha_0} > U_2 + X \]
  \[ (f_n(T_{\alpha}^t\mu_-) \cdot F)_{\alpha_0} > U_2 + X \]
  Let now $z$ be any point on a $D$--quasi-geodesic with endpoints
  $f_n(T^t\mu_+), f_n(T^t\mu_-)$. Then Corollary~\ref{cor:quasiconvexity} implies
  that 
  \[ (z \cdot F)_{\alpha_0} > U_2 \] Since the quasi-axis
  $f_nT_\alpha^t\rho$ is such a $D$--quasi-geodesic, the proposition follows.
\end{proof}

In the proof of local path-connectivity, we require  uniform control
over the constants $N$ appearing in the previous two results (Propositions~\ref{prop:curves-get-contracted} and~\ref{prop:apply-pseudo})
 Before
stating the corresponding lemma, suppose that $(\tau_i)_i$ is a full splitting
sequence in the direction of some minimal foliation $F$.

Then consider, in Proposition~\ref{prop:curves-get-contracted} or~\ref{prop:apply-pseudo}, a neighbourhood $\mathcal{V} = U_k(\tau,
F)$, and observe that it is also a neighbourhood of
$\Delta(E)$ for all minimal $E \in U_i(\tau, F), i\geq k$.
Additionally, $E$ determines a full splitting sequence starting in $\tau$,
whose first $i$ terms are identical with the one defined by $F$.

Hence, it makes sense to apply
Proposition~\ref{prop:curves-get-contracted} or~\ref{prop:apply-pseudo} for this neighbourhood $\mathcal{V}$, and
$E$ in place of $F$ with its full splitting sequence starting
in $\tau$. The following lemma shows a boundedness of the resulting numbers
$N$ that these propositions produce. 
\begin{lem}\label{lem:following-sequences}
  Suppose that $(\tau_i)_i$ is a full splitting sequence in the
  direction of some minimal foliation $F$ with $\tau_1 = \tau$. Put $\mathcal{V} =
  U_k(\tau, F)$ for some $k$.

  Suppose we are given either
  \begin{enumerate}
  \item A curve $\beta_0$ and a number $d>0$, or
  \item A pseudo-Ansosov $\psi$, a curve $\alpha$, a number $r>0$ and
    a curve $\beta_0$.
  \end{enumerate}
  Then there are numbers $M,N > 0$ with the property that the number
  \begin{enumerate}
  \item $N(\tau, E, \mathcal{V}, \beta_0, d)$ from
    Proposition~\ref{prop:curves-get-contracted}, or
  \item $N(\tau, E, \mathcal{V}, \psi, \alpha, d, r, \beta_0)$ from
    Proposition~\ref{prop:apply-pseudo}  
  \end{enumerate}
  can be chosen to be smaller than $N$ for all minimal $E \in U_M(\tau, F)$.
\end{lem}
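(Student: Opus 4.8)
The plan is to re-examine the proofs of Propositions~\ref{prop:curves-get-contracted} and~\ref{prop:apply-pseudo} and isolate exactly how the constant $N$ depends on $E$ and its splitting sequence. In both proofs, the key input is Lemma~\ref{lem:gromov-increase}, which is where $N$ is actually produced: one needs $N$ large enough that the curves $\gamma_n = f_n(\delta_{k(n)})$ (respectively $f_n(\alpha)$) satisfy $(\gamma_n \cdot E)_{\alpha_0} > D + C_0 + d$ for all $n > N$, where $D$, $C_0$, $d$ depend only on the data $\beta_0$, $d$ (resp.\ $\psi$, $\alpha$, $r$, $\beta_0$) and on $\mathcal{V}$, but \emph{not} on $E$. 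So the whole lemma reduces to showing: there is a single $N$ so that for all minimal $E \in U_M(\tau, F)$ (with $M$ to be chosen), $(\gamma_n \cdot E)_{\alpha_0}$ exceeds a fixed threshold $D'$ for all $n > N$, where here $\gamma_n$ is built from the $\Mcg$-sequence of $E$'s splitting sequence (which agrees with $F$'s for the first $M$ terms).

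First I would observe, as the paragraph preceding the lemma already notes, that if $E \in U_M(\tau, F)$ then the full splitting sequence of $E$ starting at $\tau$ shares its first $M$ train tracks, hence its first $M$ associated $\Mcg$-sequence terms $(f_i, k_i)$, with that of $F$; so for $n \leq M$ the curves $\gamma_n$ are literally the same for $E$ and for $F$, and in particular the types $k(n)$ and the chosen reference curves $\delta_{k(n)}$ (resp.\ the point $f_n(\alpha)$) agree. Now run Lemma~\ref{lem:gromov-increase} for $F$ itself with the target threshold $D'$: this gives an $N_0$ so that $(\gamma_n \cdot F)_{\alpha_0} > D'$ for all $n > N_0$. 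Choose $M$ with $M > N_0$. Then for every minimal $E \in U_M(\tau, F)$ we automatically have, for $n = M$ (and in fact for all $N_0 < n \leq M$), that $(\gamma_n \cdot E)_{\alpha_0} = (\gamma_n \cdot F)_{\alpha_0} > D'$ \emph{for the $E$-sequence}, since the $n$-th data coincide — but this only handles $n$ up to $M$, not $n > M$, so one more idea is needed.

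The additional idea is quasiconvexity of the ``large Gromov product'' region along quasigeodesics, exactly as packaged in Corollary~\ref{cor:quasiconvexity}, combined with Proposition~\ref{prop:split-shadows}. The sequence of vertex cycles of the splitting sequence $(\tau_i^E)$ of $E$ is a $K$-quasigeodesic in $\mathcal{C}(S)$ converging to the boundary point $E$; its vertex cycle at time $M$ is a bounded curve-graph distance from $\gamma_M$ (since both are carried by, or close to, $\tau_M$), which we have just shown satisfies $(\gamma_M \cdot E)_{\alpha_0}$ large, and one endpoint is $E$ itself with $(E \cdot E)_{\alpha_0} = \infty$. Hence by Corollary~\ref{cor:quasiconvexity} every vertex cycle on the tail of this quasigeodesic past time $M$ — and by Proposition~\ref{prop:split-shadows} applied to $E$, these are coarsely the same curves as the $\gamma_n$ for $n > M$ — has $(\cdot \, E)_{\alpha_0} \geq D' - X$ for the universal constant $X$ of that corollary. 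Re-running the argument with target $D' + X + (\text{the fixed slack } C_0 + d \text{ etc.})$ in place of $D'$ then yields a single $N$ (indeed $N = M$ works) valid for all such $E$. The main obstacle is the bookkeeping in the second alternative (Proposition~\ref{prop:apply-pseudo}): there the relevant curve is $f_n(\hat\psi_t^j\alpha)$ and one must check that the reduction carried out in that proof — down to Equation~\eqref{eq:apply-pseudo-key} about the quasi-axis $f_n T_\alpha^t \rho$, and the choice of $t_0$ via Lemma~\ref{lem:gromov-product-criterion-2} — goes through with constants depending only on $\psi, \alpha, r, \beta_0, \mathcal{V}$ and not on $E$; this is true because every constant there ($A$, $D$, $X$, $k_0$, $U_1$, $U_2$) is either a surface constant or depends only on the listed data, and the only $E$-dependent quantity, namely the required lower bound $(f_n(\alpha) \cdot E)_{\alpha_0} > 2U_2 + X + k_0$, is handled uniformly by the quasiconvexity argument above exactly as in the first case.
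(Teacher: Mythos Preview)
Your overall strategy---reduce both propositions to a uniform Gromov-product threshold and then control that threshold via Proposition~\ref{prop:split-shadows} and Corollary~\ref{cor:quasiconvexity}---is exactly the paper's approach, and your reduction of Proposition~\ref{prop:apply-pseudo} to the single estimate~\eqref{eq:apply-pseudo-key-2} is correct. However, there is a concrete error in the middle. You assert that for $N_0 < n \leq M$ one has $(\gamma_n \cdot E)_{\alpha_0} = (\gamma_n \cdot F)_{\alpha_0}$ ``since the $n$-th data coincide''. This is false: the \emph{curves} $\gamma_n$ agree in this range (because the two splitting sequences share their first $M$ terms), but the Gromov product also depends on the boundary point, and $E \neq F$. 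There is no reason for the Gromov products of a fixed curve with two distinct boundary points to coincide. You then feed this equality into Corollary~\ref{cor:quasiconvexity} as the hypothesis ``$(\gamma_M \cdot E)_{\alpha_0}$ large'', so the gap is load-bearing.

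The repair is short and uses only tools you already invoke. From $(\gamma_M \cdot F)_{\alpha_0} > D'$ you get only $d_{\mathcal{C}(S)}(\alpha_0, \gamma_M) > D'$, since a Gromov product is bounded above by the distance to the basepoint. But $\gamma_M$ also lies, up to a uniformly bounded error, on the $K$-quasigeodesic ray from Proposition~\ref{prop:split-shadows} for the $E$-splitting sequence, which starts near $\alpha_0$ and converges to $E$. For any point $z$ on such a ray one has $(z \cdot E)_{\alpha_0} \geq d_{\mathcal{C}(S)}(\alpha_0, z) - c$, with $c$ depending only on $K$ and the hyperbolicity constant (the standard fact that along a quasigeodesic ray from the basepoint to $\xi$, the Gromov product with $\xi$ coarsely equals distance from the basepoint). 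Combining these gives $(\gamma_M \cdot E)_{\alpha_0} > D' - c$, and now your application of Corollary~\ref{cor:quasiconvexity} goes through verbatim with the threshold lowered by $c$. This is essentially what the paper does: its key step is the sentence introducing ``some uniform constant $c$ (depending on the quasi-geodesic constant \ldots\ and the hyperbolicity constant of the curve graph)''.
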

\begin{proof}
  We will describe the case of Proposition~\ref{prop:apply-pseudo} in detail,
  the corresponding argument for
  Proposition~\ref{prop:curves-get-contracted} is similar and simpler.

  Recall from the proof of Proposition~\ref{prop:apply-pseudo} that
  what one needs to show is the estimate in
  (\ref{eq:apply-pseudo-key}). This in turn is implied by
  (\ref{eq:apply-pseudo-key-2}), which is purely a statement about
  Gromov product growth of images of $\alpha$ under the associated
  mapping class group sequence $f_n$ of the given splitting sequence. Hence, to finish the proof, we will 
  argue that the number $N$ in
  (\ref{eq:apply-pseudo-key-2}) can be uniformly bounded for the associated mapping class 
  sequences $f_n$ defined by splitting sequences  arising from any minimal $E\in U_M(\tau,F)$ independent of $E$ itself.

  If $E \in U_M(\tau, F)$, then by definition the first $M$
  terms of the associated $\Mcg$-sequence for $E$ and $F$
  agree.  Hence, to show this lemma, we have to show that the existence of a number $N$
  making (\ref{eq:apply-pseudo-key-2}) true can already be guaranteed
  by knowing a large initial segment of the associated $\Mcg$-sequence. The remainder of
  this proof is concerned with showing that.

  \smallskip Similar to the proof of
  Proposition~\ref{prop:curves-get-contracted}, choose for each $k$ a
  curve $\alpha_k$ which is carried by each $\sigma \in
  \mathcal{T}^{(k)}$ as a vertex cycle.
  By Proposition~\ref{prop:split-shadows}, the path $n \mapsto f_n\alpha_{k(n)}$ is then 
  uniformly Hausdorff close to a uniform quasi-geodesic in the curve graph which
  converges to $F$.
  
  In particular, this implies that for any $K_0$ there is an $N$ with the property
  that
  \[ (f_n\alpha_{k(n)}\cdot F)_\gamma > K_0 \quad\mbox{for all
    }n>N. \] If now $F' \in U_N(\tau, F)$ and $(f'_i)$ is an associated $\Mcg$-sequence for $F'$,
  then we may assume $f_i' = f_i$ for all $i \leq N$ by definition. Thus,
  for some uniform constant $c$ (depending on the quasi-geodesic constant $k_1$ Proposition~\ref{prop:split-shadows} and the hyperbolicity constant of the curve graph)
  we have that
  \[ (f'_n\alpha_{k'(n)}\cdot F)_\gamma > K_0-c \quad\mbox{for all
    }n>N. \]
  Since the distance between the curve $\alpha$ and the (finitely many) $\alpha_k$
  is bounded,
  there is a further constant $d$ so that
  \[ (f'_n\alpha \cdot F)_\gamma > K_0-c-d \quad\mbox{for all
  }n>N. \] Choosing $K_0-c-d > 2U_2+X+k_0$ then yields that the corresponding
  $N$ works in (\ref{eq:apply-pseudo-key-2}) for the sequences of all
  $F'\in U_N(\tau, F)$, proving the lemma.
\end{proof}

\section{Paths by pushing points}
\label{sec:pointpushs}
In this section we will construct many
special paths of cobounded foliations for punctured surfaces, which
will serve as building blocks for all subsequent constructions. The
paths we will eventually use to connect uniquely ergodic foliations
will be concatenations of paths of this form, except possibly at a
countable set of points which will be stable foliations of
pseudo-Anosovs (or the endpoints).

The construction described in this section is crucially inspired by the work of Leininger and Schleimer in \cite{LS-connectivity}, where they build paths of minimal foliations. Our main contribution is that we modify their construction to produce paths of uniquely ergodic 
(and in fact cobounded) foliations, and obtain some extra control over how these paths
follow a ``combinatorial skeleton" given by a finite set of curves.

\subsection{Preliminaries on Covers, and on Adding Points}
\label{sec:cover-prelims}
Our notation follows \cite{LS-connectivity} and we refer the reader 
to that article for a very good and
readable source for background information on the methods used here.

A \emph{smooth surface} will denote a smooth, connected, compact, oriented
$2$--manifold without boundary. All maps between smooth surfaces will
be assumed to be smooth unless specified. By a slight abuse of
notation, a \emph{(holomorphic) Abelian differential on $S$} is a
smooth $1$--form $\omega$ which is holomorphic with respect to some
complex structure on $S$ (compatible with orientation and smooth
structure).  We denote by $d_\omega$ the (singular) flat metric on the
surface defined by integrating $\omega$.

We let $\widetilde{\Omega}(S)$ be the set of all such Abelian
differentials. Note that $\widetilde{\Omega}(S)$ is a path-connected
set (in fact, a vector bundle over a contractible base; compare
\cite[Section~2.6]{LS-connectivity}). 

The quotient
\[ \Omega(S) = \widetilde{\Omega}(S) / \mathrm{Diff}_0(S) \]
is the Hodge bundle of Abelian differentials over Teichm\"uller space of $S$. We need
a variant for surfaces with marked points (which is, crucially, the point of this whole discussion).
Namely, if $\mathbf{z} \subset S$ is a finite, ordered set of distinct points, we let
$\mathrm{Diff}_0(S,\mathbf{z})$ denote the group of diffeomorphisms of $S$, fixing each 
point in $\mathbf{z}$, which are homotopic to the identity through such maps. We let
\[ \Omega(S, \mathbf{z}) = \widetilde{\Omega}(S) / \mathrm{Diff}_0(S,
\mathbf{z}) \] As in \cite{LS-connectivity}, the central idea is that
any Abelian differential $\omega\in\widetilde{\Omega}(S)$ defines projections 
$\hat{\omega}\in\Omega(S, \mathbf{z})$ and $\bar{\omega}\in\Omega(S)$ (in the notation of
\cite{LS-connectivity}).

There is an action of $\mathrm{SL}_2(\mathbb{R})$ on $\widetilde{\Omega}(S)$
defined in the usual way (e.g. by postcomposing canonical flat charts) which
descends to the usual $\mathrm{SL}_2(\mathbb{R})$--action on $\Omega(S)$. We
denote by $g_t$ the action of diagonal matrices, i.e. Teichm\"uller geodesic 
flow.

\subsection{Torus Covers and Badly Approximable Points}
In this section, we begin to construct Abelian differentials with desirable
horizontal foliations.

To begin, we say that a Abelian differential $\omega\in
\widetilde{\Omega}(S)$ is \emph{(eventually) $\epsilon$--thick} if
there exists $N$ so that for all $t>N$ we have that every essential
simple closed curve on $S$ has length $\geq \epsilon$ with respect to
the singular flat metric $g_t\omega$. We say that $\omega$ is
\emph{strongly (eventually) $\epsilon$--thick with respect to
  $\mathbf{z}$} if the same is true for any arc with endpoints in
$\mathbf{z}$. 
Note that (strong) eventual thickness is invariant under the $\mathrm{Diff}_0(S,\mathbf{z})$--action, and therefore the notion also makes sense 
for differentials in $\Omega(S,\mathbf{z})$. 

The purpose of this section is to give a robust
criterion that we will use to construct many paths of thick Abelian
differentials. 

We make the following (slightly idiosyncratic) definitions, which will be one of the core mechanisms in our construction.
\begin{defin}
  \begin{enumerate}[i)]
  \item Let $(X,d)$ be a metric space and $T:X \to X$ be a dynamical
    system. We say a pair of points $(x,y) \in X$ is \emph{B-badly
      approximable} if there exists $N$ so that $k\cdot d(T^kx,y)\geq
    B$ for all $k\geq N$ and moreover $T^kx \neq y$ for all $k \neq
    0$. We may also say that the point $y$ \emph{$B$-badly
      approximates $x$}.
\item We say a rotation $R_\alpha$ of the circle is
    \emph{$B$-badly approximable} if the pair $(x,x)$ is $B$-badly
    approximable for some (equivalently every) $x \in \mathbb{R}$ for the dynamical system
    \[ R_\alpha : \mathbb{R}/\mathbb{Z} \to \mathbb{R}/\mathbb{Z}, \quad\quad z \mapsto z + \alpha \; \mathrm{mod }\,\ZZ\]
   Note that the set of $\alpha$ that are $B$-badly badly approximable for some $B>0$ agrees with the usual definition of the set of badly approximable $\alpha$. 
  \item Similarly, if
    $F^t:X\to X$ is a measurable flow of a metric space, we say a pair
    of points $(x,y)$ is \emph{B-badly approximable} if there exists
    $N$ so that $t\cdot d(F^tx,y)\geq B$ for all $t\geq N$ and moreover, $F^tx\neq y$ for all $t\neq 0$. We say a straight line flow
    on a torus is \emph{B-badly approximable} if the pair $(x,x)$ is
    $B$-badly approximable for some $x$.
  \end{enumerate}
\end{defin}
The following lemma shows why we are interested in badly approximable points.
\begin{lem}\label{lem:saddle-growth}
  If $q$ and $q'$ are distinct $B$-badly approximable points on a torus then any
  trajectory $\gamma$ from $q$ to $q'$ has $|g_t\gamma|\geq \sqrt{B}$ for all
  large enough $t$.
\end{lem}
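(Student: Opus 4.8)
The plan is to reduce the statement to the definition of $B$-badly approximable pairs for the straight-line flow on the torus, using the fact that Teichmüller geodesic flow expands the horizontal direction by $e^t$ and contracts the vertical direction by $e^{-t}$. First I would set up coordinates: normalize the torus so that the relevant straight-line flow $F^s$ is, after applying an element of $\mathrm{SL}_2(\mathbb{R})$, the horizontal flow; equivalently, work directly with the flat structure and let $\gamma$ be a geodesic trajectory (saddle connection of the flat metric, or more generally any straight segment) from $q$ to $q'$. Writing $\gamma$ as a vector $v = (a, b)$ in the developing coordinates, the key elementary computation is that $|g_t \gamma|$, the $d_{g_t\omega}$-length of the image segment, equals $\sqrt{e^{2t} a^2 + e^{-2t} b^2}$. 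So $|g_t\gamma| \geq e^{-t}|b|$ always, and $|g_t\gamma| \geq e^{t}|a|$ always; the lemma will follow by playing these two bounds against each other using bad approximability in the flow direction.

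The main step is to connect the pair $(a,b)$ to the flow. Since $q$ and $q'$ are both $B$-badly approximable, in particular the pair $(q, q')$ — or rather, using the definition for the flow, we get that the $F^s$-orbit of $q$ stays quantitatively away from $q'$: there is $N$ with $s \cdot d(F^s q, q') \geq B$ for all $s \geq N$. Now the trajectory $\gamma$ from $q$ to $q'$, when we flow its initial point forward, realizes $q'$ as lying near $F^{|a|} q$ up to the transverse displacement $|b|$; more precisely, after rescaling so that the flow speed is $1$ in the horizontal direction, the segment of length $|a|$ in the flow direction carries $q$ to a point whose distance to $q'$ is comparable to $|b|$ (the transverse coordinate), and the elapsed "time" is comparable to $|a|$. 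Feeding this into the bad approximability inequality, for $|a|$ large enough we get $|a| \cdot |b| \gtrsim B$, and hence $\sqrt{|a|\,|b|} \gtrsim \sqrt{B}$. Combined with the two one-sided bounds above, $|g_t\gamma|^2 = e^{2t}a^2 + e^{-2t}b^2 \geq 2|a||b| \gtrsim B$ by AM–GM, for all $t$ (and for the finitely many short trajectories with small $|a|$, which there are only finitely many of up to the flow, one handles them directly since $F^s q \neq q'$ for all $s$ forces $b \neq 0$, and one can absorb them by enlarging $N$, i.e. the statement is only claimed for large $t$). This is where I expect to need to be slightly careful: matching the metric $d$ on the torus with the transverse coordinate $|b|$, and matching the flow time with $|a|$, only up to multiplicative constants coming from the chosen flat structure; but since the conclusion is only "$\geq \sqrt{B}$ for all large $t$" and $B$ is a parameter, these constants can be absorbed (or the statement interpreted with the implicit normalization that makes the torus have area one and the flow unit speed, which is presumably the convention set up around Lemma \ref{lem:saddle-growth}).

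The hard part will be bookkeeping the identification between "a trajectory from $q$ to $q'$" and "an almost-return of the flow orbit of $q$ to $q'$": one must argue that any straight segment from $q$ to $q'$ decomposes, in the flow coordinates, as "$|a|$ units of flow time plus $|b|$ units of transverse jump", and that on the torus the transverse jump of size $|b|$ contributes distance $\asymp |b|$ (true once $|b|$ is small, which it is for the trajectories that matter since those with large $|b|$ already have $|g_t\gamma| \geq e^{-t}|b|$ large). Everything else is the two-line AM–GM estimate $e^{2t}a^2 + e^{-2t}b^2 \geq 2|ab|$ and the definition of $B$-bad approximability. I would also remark that "distinct $B$-badly approximable points" is used precisely to guarantee $q \neq q'$ so that $(a,b) \neq (0,0)$, and the "$F^s q \neq q'$ for all $s \neq 0$" clause guarantees $a \neq 0$ or $b \neq 0$ in the non-degenerate way needed (in fact it rules out $\gamma$ being purely in the flow direction with $q' $ on the orbit, forcing $b\neq 0$).
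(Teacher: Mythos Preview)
Your proposal is correct and follows essentially the same route as the paper: decompose $\gamma$ into a flow-direction component $a$ and a transverse component $b$, use bad approximability to get $|a||b|\geq B$ once $|a|$ is large, and then apply AM--GM (equivalently, the paper's remark that the shortest vector with fixed product of components has angle $\pi/4$) to conclude $|g_t\gamma|^2\geq 2|a||b|\geq B$. Your caution about multiplicative constants is unnecessary: on a flat torus the identity $d(F^{a}q,q')\leq |b|$ is exact (the transverse segment of length $|b|$ witnesses it), and the inequality goes in the direction you need, so no ``$\gtrsim$'' is required and the bound $\sqrt{B}$ comes out on the nose.
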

\begin{proof}Let $t_0$ satisfy $d(F^Lq,q')>\frac BL$ for all $L\geq
  t_0$.  Because $q$ and $q'$ are not in the same orbit, by the
  definition of $B$-badly approximable, $\underset{t \to \infty}{\lim}\,
  |g_t\gamma|=\infty$ for every $\gamma$ a trajectory from $q$ to $q'$. Thus, 
  we may restrict our attention to the cofinite set of such $\gamma$
  with vertical component at least $t_0$.  Let $\gamma$ be  such a
  geodesic from $q$ to $q'$. Because the torus is flat,
  if the vertical component of $\gamma$ is $a$ and the horizontal
  component is $b$ we have that $d(F^aq,q')=b$. Since we assume that
  $(q,q')$ are $B$-badly approximable, $b$ is at least $\frac{B}a$ if
  $a$ is large enough. Since the product of the horizontal and
  vertical components of curves are preserved by $g_t$, we have
  $|g_t\gamma|$ is at least $\sqrt{2ab}\geq\sqrt{B}$ for all $t$. (We
  are also using the elementary fact that the shortest vector in the
  positive cone in $\mathbb{R}^2$ with fixed product of horizontal and
  vertical components has angle $\frac \pi 4$.)
\end{proof}

\begin{defin}
  Let $S$ be a closed surface of genus $g \geq 2$. An Abelian
  differential $\omega \in \widetilde{\Omega}(S)$ is called 
  \emph{$(\epsilon,B)$--torus good with respect to marked points $q_1, \ldots, q_k$}
  if there is a regular branched cover, branched over one point,
  \[ p:S \to T \]
  of $S$ to a torus $T$ and an Abelian differential $\omega_T$ on $T$ so that
  \begin{enumerate}
  \item $\omega_T$ is eventually $\epsilon$--thick.
  \item $\omega$ is the pullback of $\omega_T$.
  \item The images $p(q_i)$ of $q_i$ in $T$ are pairwise $B$--badly approximable with
    respect to the flat structure defined by $\omega_T$.
  \end{enumerate}
  The \emph{associated data} to the $(\epsilon,B)$--torus good $\omega$ comprise
  the cover $p$ and the base differential $\omega_T$.
\end{defin}
The notion of being torus good is invariant under the action of
$\mathrm{Diff}_0(S,\{q_1,\ldots, q_k\})$ by pulling back differentials, and therefore is also
defined for differentials in $\Omega(S,\{q_1,\ldots, q_k\})$.

The following connects the above definition to Teichm\"uller dynamics.
\begin{prop}\label{prop:good-pairs-long-segments}
  For any $(\epsilon,B)$ and $S$ there is a number $\delta>0$ with the
  following property. If $\omega$ is $(\epsilon,B)$--torus good with
  respect to marked points $q_1, \ldots, q_k$, then $\omega$ is
  eventually strongly $\delta$--thick with respect to $\mathbf{z}=(q_1,\ldots, q_k)$.
  
  In particular, the horizontal foliation of $\omega$ is cobounded as a foliation on 
  $(S,\mathbf{z})$.
\end{prop}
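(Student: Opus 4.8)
The plan is to transfer every length estimate on $(S,g_t\omega)$ to the flat torus $(T,g_t\omega_T)$ via the covering map $p$, treating essential simple closed curves on the closed surface $S$ separately from arcs joining two marked points. The basic input is that, although $p$ is only a local isometry away from the single branch point $b$, it preserves the length of every path: in the local model $z\mapsto z^{e}$ of the branched cover near a ramification point one has $|p^{*}(dw)|=|d(z^{e})|$, so $\ell_{g_t\omega}(c)=\ell_{g_t\omega_T}(p\circ c)$ for each $t$ and each path $c$ in $S$ (the $\mathrm{SL}_2(\mathbb{R})$--action commutes with pullback). Moreover, since $T$ is a torus $\omega_T$ has no zeros, so the zeros of $\omega$ are exactly the ramification points $p^{-1}(b)$; and $\omega_T$ being eventually $\epsilon$--thick yields an $N_0$ with every nonzero period of $\omega_T$ of $g_t$--length $\ge\epsilon$ for $t>N_0$.

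For simple closed curves I would argue as follows. Fix $t>N_0$ and an essential simple closed curve $\gamma$ on $S$; let $\gamma^{*}$ be a shortest representative of its free homotopy class for $g_t\omega$, so $\gamma^{*}$ is a closed geodesic. If $\gamma^{*}$ meets the zeros of $\omega$, it is a cyclic concatenation of geodesic segments between zeros; each such segment projects to a geodesic loop based at $b$ on $T$, hence has holonomy in $\Lambda_T\setminus\{0\}$ and so $g_t\omega$--length $\ge\epsilon$. If $\gamma^{*}$ avoids the zeros it is a smooth closed geodesic lying in a maximal flat cylinder on which $p$ is a local isometry; its image is a smooth closed geodesic on $T$, which cannot be null-homotopic (its lift to $\mathbb{R}^{2}$ would be a closed straight line, i.e.\ a point), hence represents a nonzero period and again has length $\ge\epsilon$. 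In every case $\ell_{g_t\omega}(\gamma)\ge\epsilon$.

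For arcs I would use Lemma~\ref{lem:saddle-growth}. Assuming the $p(q_i)$ are pairwise distinct (the only case arising in applications; the coincident case reduces similarly, treating the branch point as an extra point that must also be badly approximated), an arc $a$ from $q_i$ to $q_j$ projects to a path from $p(q_i)$ to $p(q_j)$ on $T$, so $\ell_{g_t\omega}(a)\ge d_{g_t\omega_T}(p(q_i),p(q_j))$, and this distance is attained by a trajectory whose holonomy lies in the coset $\Lambda_{ij}=p(q_j)-p(q_i)+\Lambda_T$. Lemma~\ref{lem:saddle-growth} says each such trajectory $\gamma$ has $|g_t\gamma|\ge\sqrt B$ for all large $t$; the point requiring care is uniformity in $\gamma$. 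For this: for any $R$ there are only finitely many $v\in\Lambda_{ij}$ with $|v|<R$, so the corresponding trajectories are all long for $t$ past a single threshold; and for the remaining trajectories one reads off from $|g_tv|^{2}=e^{2t}v_h^{2}+e^{-2t}v_v^{2}\ge 2|v_hv_v|$ that the ($t$--independent) product $|v_hv_v|$ is $\ge B$ once the contracting coordinate is large (this is the badly approximable estimate), while no trajectory is purely in the contracting direction (this is exactly the condition $F^{t}x\ne y$) -- so none of these becomes short either. Hence for $t$ past some ($\omega$--dependent) threshold every arc between two marked points has $g_t\omega$--length $\ge\sqrt B$.

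Taking $\delta=\min\{\epsilon,\sqrt B\}$ then witnesses eventual strong $\delta$--thickness of $\omega$ with respect to $\mathbf{z}$. The concluding ``in particular'' is standard: eventual strong $\delta$--thickness says exactly that the flat surfaces $g_t\omega$ -- all of the same, $g_t$--invariant area -- have flat systole bounded below and marked points uniformly separated for all large $t$, and the usual comparison of flat and hyperbolic metrics confines the corresponding points of $\mathcal{M}(S,\mathbf{z})$ to a compact set; as the remaining initial segment of the Teichm\"uller ray is trivially compact, the whole ray is cobounded, i.e.\ the horizontal foliation of $\omega$ is cobounded on $(S,\mathbf{z})$. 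I expect the arc step to be the only genuinely delicate point -- upgrading Lemma~\ref{lem:saddle-growth} from a fixed trajectory to a bound uniform over all arcs and all large $t$ -- while the simple closed curve case reduces transparently to the torus systole.
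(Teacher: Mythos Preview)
Your proof is correct and follows the same approach as the paper: project to the torus via $p$, use eventual $\epsilon$--thickness of $\omega_T$ for simple closed curves, invoke Lemma~\ref{lem:saddle-growth} for arcs between marked points, and take $\delta=\min\{\epsilon,\sqrt{B}\}$. You are in fact more careful than the paper in two places---the case analysis for closed geodesics (through singularities vs.\ not) and the uniformity of the arc bound over all trajectories---but the paper handles the latter implicitly via the ``cofinite set'' reduction already present in the proof of Lemma~\ref{lem:saddle-growth}, so the core argument is identical.
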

\begin{proof}
To prove that $\omega$ is eventually strongly $\delta$-thick with respect to $\mathbf{z}$, by definition we have to show that there is a $t_0$ so that: 
\begin{itemize}
\item if $\gamma$ is a simple closed curve on $\omega$ then $|g_t\gamma|\geq \delta$ for all $t>t_0$ and 
\item if $\gamma$ is a trajectory from $q_i$ to $q_j$ with $j \neq i$ we have that $|g_t\gamma|>\delta$ for all $t>t_0$.
\end{itemize}
The first condition follows for any $\delta\leq \epsilon$ because we
are assuming that $\omega_T$ is eventually $\epsilon$-thick and any simple closed
curve on $\omega$ projects to a closed curve of the same length
on $\omega_T$ because we are branched over a single point. Similarly
we have that $\pi(\gamma)$ is a trajectory from $\pi(q_i)$ to
$\pi(q_j)$ and $\pi(g_t\gamma)=g_t\pi(\gamma)$ and so any such
trajectory has length at least $B$ by the Lemma~\ref{lem:saddle-growth}.  
This implies the two conditions above, and therefore eventual strong $\delta$--thickness of $\omega$. 

To see the second claim, note that as $t\to \infty$, the differentials
$g_t\omega$ all lie in a compact set of the moduli space of flat
surfaces by the first part.  This in turn implies that Teichm\"uller
flow in the direction of the horizontal foliation of $\omega$ also
only defines Riemann surfaces which lie in a compact set of the moduli
space of $S-\mathbf{z}$. This shows the proposition. 
\end{proof}

Next, we will show that these torus good differentials are in fact
dense in the set of all differentials.  The proof of this uses Schmidt games,
a technique from Diophantine approximation, which we briefly define
and discuss in the next section.
\subsection{Schmidt game digression}
 Suppose we are given a set $E\subset \mathbb{R}^n$.
Suppose two players Bob and Alice take turns choosing a sequence of closed Euclidean balls  $$B_0\supset A_1\supset B_1\supset A_2\supset B_2\ldots $$ (Bob choosing the $B_i$ and Alice the $A_i$) whose diameters satisfy, for fixed $0<\alpha,\beta<1$,  and all $i>0$
\begin{equation}\label{def:Schmidt}
   |B_i|=\beta |A_i| \quad\text{and}\quad |A_{i+1}|=\alpha |B_i|.  
\end{equation}
The only requirement on $B_0$ is that it has positive diameter.
Following Schmidt \cite{schmidt} we make the following definition.
\begin{defin}
We say $E$ is an $(\alpha,\beta)$-{\em winning set} if Alice has a strategy so that no matter what Bob does, $\cap_{i=1}^\infty B_i\in E$. It is $\alpha$-{\em winning} if it is $(\alpha,\beta)$-winning for all $0<\beta<1$.  $E$ is a {\em winning set for Schmidt game} if it is $\alpha$-winning for some $\alpha>0$.
\end{defin}
 

Because Bob's first move is unconstrained we have:
\begin{lem}\label{lem:dense} If $S$ is an $(\alpha,\beta)$ winning set for any $\alpha,\beta$ then $S$ is dense in $X$.
\end{lem}
\begin{lem}(\cite[Theorem 2]{schmidt}])\label{lem:intersecting}If $S_1,...,S_k$ are $\alpha$-winning sets then $\cap_{i=1}^k S_i$ is $\alpha$-winning.
\end{lem}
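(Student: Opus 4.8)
The plan is to recognize this as Schmidt's theorem \cite[Theorem~2]{schmidt} and to reprove it by the standard interleaving trick. Since $\alpha$-winning means $(\alpha,\beta)$-winning for \emph{all} $\beta\in(0,1)$, I would fix an arbitrary $\beta$ and aim to show that $\bigcap_{i=1}^k S_i$ is $(\alpha,\beta)$-winning. The number to bring into play is $\beta' := \alpha^{k-1}\beta^k$, chosen so that $\alpha\beta' = (\alpha\beta)^k$: this equates the contraction of one round of an auxiliary $(\alpha,\beta')$-game with that of $k$ consecutive rounds of the $(\alpha,\beta)$-game. For each $i$, fix a strategy $\sigma_i$ with which Alice wins the $(\alpha,\beta')$-game with target set $S_i$; such $\sigma_i$ exist precisely because $S_i$ is $\alpha$-winning and $0<\beta'<1$.

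Alice's strategy in the real $(\alpha,\beta)$-game is then to group the rounds into consecutive blocks of $k$ rounds and, within each block, devote the $i$-th round to the set $S_i$. When it is the $i$-th round of a block, she consults $\sigma_i$ on the history of the $i$-th auxiliary game recorded so far and plays her move so as to realize the ball that $\sigma_i$ prescribes; the remaining $k-1$ rounds of the block are used only to shrink balls into position. The identity $\alpha\beta' = (\alpha\beta)^k$ guarantees that the subsequence of balls so recorded forms an honest play of the $(\alpha,\beta')$-game in which Alice follows $\sigma_i$, so its nested intersection is a point of $S_i$. Since that subsequence is cofinal in the real game, this point equals $\bigcap_n B_n$, the unique point of the real game. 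As this holds for every $i=1,\dots,k$, we get $\bigcap_n B_n\in\bigcap_{i=1}^k S_i$, i.e.\ Alice wins; and since $\beta$ was arbitrary, $\bigcap_{i=1}^k S_i$ is $\alpha$-winning.

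The main obstacle — and the place where I would in practice simply refer to \cite{schmidt} — is the geometric bookkeeping that makes this interleaving legitimate. One has to check that at the $i$-th round of each block Alice can actually place a ball realizing $\sigma_i$'s prescribed move \emph{inside} the current ball of the real game, and that after the remaining rounds of the block the real ball has shrunk sufficiently and lies in the correct position to count as the next (legal, relative size $\alpha$) move of the $i$-th auxiliary game. This is exactly where the extra room afforded by working with the smaller parameter $\beta'$ in the auxiliary games is spent; everything else in the argument is formal.
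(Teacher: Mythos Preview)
The paper does not give its own proof of this lemma; it simply states it with the citation to \cite[Theorem~2]{schmidt} and moves on. Your outline is exactly Schmidt's original interleaving argument, and it is correct.

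One comment on your final paragraph: the ``geometric bookkeeping'' you flag as the main obstacle is actually a non-issue once the parameters are chosen as you have them. With $\beta'=\alpha^{k-1}\beta^k$, the $i$-th auxiliary $(\alpha,\beta')$-game can be taken to be a \emph{literal} subsequence of the real $(\alpha,\beta)$-game: let Alice's $j$-th auxiliary move be the real move $A_{i+(j-1)k}$ and the preceding auxiliary Bob move be the real ball $B_{i+(j-1)k-1}$. Then $|A_{i+(j-1)k}|=\alpha\,|B_{i+(j-1)k-1}|$ holds by the real-game rules, and a direct count gives
\[
|B_{i+(j-1)k-1}| \;=\; \beta(\alpha\beta)^{k-1}\,|A_{i+(j-2)k}| \;=\; \beta'\,|A_{i+(j-2)k}|,
\]
so the auxiliary $(\alpha,\beta')$ size constraints are satisfied on the nose. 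The intermediate $k-1$ Alice moves and $k$ Bob moves between two consecutive $S_i$-rounds are simply absorbed as a single Bob move in the auxiliary game; there is nothing further to fit or shrink into position, and no extra room needs to be spent.
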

In fact the previous lemma is true for countable intersections as well, but we do not need this stronger statement.
\begin{thm}{\cite[first line of Section 2]{Tseng}} Let $\xi \in [0,1)$, $R$ denote rotation by $\xi$ and $x \in [0,1)$. The set of $y$ so that $x,y$ is $(\frac{\alpha\beta}{ 4})^3$-badly approximable (for $R$) is a $(\alpha,\beta)$ winning set.
\end{thm}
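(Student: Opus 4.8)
The plan is to run Schmidt's game directly, following the template of Schmidt's original proof that the badly approximable numbers form a winning set, but with the orbit points $k\xi \bmod 1$ playing the role of the rationals $p/q$, and with the three-distance theorem controlling how these points cluster.

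First I would reduce to the case $x=0$. Writing $\|\cdot\|$ for the distance to the nearest integer, one has $d(R^kx,y)=\|k\xi-y'\|$ where $y'=y-x$, and $R^kx=y$ exactly when $y'\equiv k\xi$; hence the set in question is the translate by $x$ of
\[ E \;=\; \{\, y'\in\RR/\ZZ \;:\; \exists N\ \forall k\ge N\ \; k\,\|k\xi-y'\|\ge B \,\} \;\setminus\; \{\,k\xi : k\neq 0\,\}. \]
Winning sets are translation invariant, so it suffices to show $E$ is $(\alpha,\beta)$-winning. The removed countable set $\{k\xi:k\ne0\}$ is harmless: it is automatically disjoint from the first set whenever $B<\liminf_m m\|m\xi\|$ (in particular whenever $\xi$ has unbounded partial quotients), and otherwise one intersects with the complement of a countable set, which is winning; since only density of $E$ (Lemma~\ref{lem:dense}) is needed downstream, one may in any case restrict to $\alpha<\tfrac12$. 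From now on the target is $E_0=\{y': k\|k\xi-y'\|\ge B\text{ for all large }k\}$, whose complement is the union of the open arcs $J_k$ of radius $B/k$ centered at $k\xi$.

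Next I would set up Alice's strategy. Bob produces nested balls with $|B_i|=(\alpha\beta)^i|B_0|$ and $|A_i|=\alpha(\alpha\beta)^{i-1}|B_0|$. The guiding idea is that Alice arranges, at every stage, that her ball is disjoint from every arc $J_k$ that is \emph{active at the current scale}, i.e. with $|J_k|$ comparable to or larger than $|B_i|$; then $\bigcap_iB_i$ lies outside all $J_k$ with $k$ large. Passing from stage $i-1$ to stage $i$, the newly-active arcs are the $J_k$ with $|J_k|\in[\alpha\beta\,|B_{i-1}|,|B_{i-1}|)$, equivalently with $k$ in a window $(K,K/(\alpha\beta)]$, $K\asymp B/|B_{i-1}|$, and only those meeting $B_{i-1}$ matter. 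Alice's one move $A_i\subset B_{i-1}$ must be placed to miss all of these (Bob's $B_i\subset A_i$ then inherits disjointness), so it is enough that their total length inside $B_{i-1}$ is less than $(1-2\alpha)|B_{i-1}|$ and that $A_i$ can be tucked into a gap — and this is exactly what the value $B=(\alpha\beta/4)^3$ is calibrated to allow.

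The crux — and the main obstacle — is a bound, uniform in $\xi$, on how many newly-active arcs can simultaneously meet the tiny ball $B_{i-1}$. If two such arcs $J_{k_1},J_{k_2}$ meet $B_{i-1}$ then $\|(k_1-k_2)\xi\|\lesssim|B_{i-1}|$, so any clustering of the $J_k$ near $B_{i-1}$ is caused by a small ``resonant'' difference $m=|k_1-k_2|$ — precisely the phenomenon controlled by the three-distance theorem (equivalently the Ostrowski/continued-fraction description of $\{k\xi\}$), which says the first $N$ orbit points split the circle into arcs of at most three lengths, so that near any ball the active arcs sit in a controlled, self-similar pattern keyed to consecutive convergents of $\xi$. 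Feeding this structure into the argument, and carefully tracking the multiplicative losses $\alpha,\beta$ through the recursion over the convergents, shows Alice can always place $A_i$ as required, so that $\bigcap_iB_i\in E_0$ whenever the badly-approximable constant is at most $(\alpha\beta/4)^3$. The genuinely delicate points are therefore (i) making the three-distance bookkeeping uniform over all $\xi$, including those with large partial quotients where the $k\xi$ cluster severely, and (ii) chasing the constants to land on the stated exponent.
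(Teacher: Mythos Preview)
The paper does not prove this statement at all: it is quoted as a black box from \cite{Tseng} (note the citation in the theorem header), and is used only via its corollary to obtain dense sets of mutually badly approximable points. So there is no ``paper's own proof'' to compare against.

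Your sketch is a reasonable outline of how such results are typically proved (and is in the spirit of Tseng's argument): reduce to $x=0$, play Schmidt's game so that Alice dodges the arcs $J_k$ of radius $B/k$ about $k\xi$, and use the gap structure of $\{k\xi\}$ (three-distance / continued fractions) to bound how many active arcs can meet Bob's ball at each scale. Two small remarks. First, your aside about the countable set $\{k\xi:k\neq0\}$ is garbled --- ``$B<\liminf_m m\|m\xi\|$, in particular whenever $\xi$ has unbounded partial quotients'' is backwards, since unbounded partial quotients give $\liminf=0$; the clean way to dispose of this set is simply that the complement of any countable set is $\alpha$-winning, and one intersects. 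Second, as you yourself flag, the real content is the uniform bookkeeping in step (i)--(ii), and your write-up stops short of carrying it out; what you have is a plan rather than a proof, which is appropriate given that the paper itself defers to the reference.
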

From the previous two results we obtain:
\begin{cor} Given any rotation $R$ and a finite number of points $p_1,...,p_r$ in $[0,1)$ we have that the set of $q$ so that $p_i,q$ are $B$-badly approximable for some $B>0$ are $\alpha$-winning for some $\alpha>0$. 
\end{cor}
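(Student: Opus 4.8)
The plan is to obtain the corollary from Tseng's theorem above together with a round-robin interleaving of Schmidt strategies, i.e. the mechanism behind the proof of Schmidt's Theorem~2 (Lemma~\ref{lem:intersecting}). I would not quote Lemma~\ref{lem:intersecting} verbatim, since for a \emph{fixed} badness constant $B$ the set of $q$ that $B$-badly approximates a given point is only $(\alpha,\beta)$-winning for $\beta$ bounded away from $0$ (the dangerous arcs around $R^k x$ have radius $\sim B/k$ and eventually fail to fit inside Alice's ball once $\beta$ is too small), hence such a set need not be $\alpha$-winning. Instead I would fix once and for all $\alpha=\beta=\tfrac18$, put $\gamma := \alpha^{r-1}\beta^{r}$, and apply Tseng's theorem \emph{to the pair of game parameters} $(\alpha,\gamma)$. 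This shows that for each $i$ the set
\[ E_i \;:=\; \bigl\{\, q\in[0,1) \;:\; (p_i,q)\ \text{is}\ (\alpha\gamma/4)^3\text{-badly approximable for }R \,\bigr\} \]
is $(\alpha,\gamma)$-winning. It then remains to show that $\bigcap_{i=1}^r E_i$ is $(\alpha,\beta)$-winning, and to compare the resulting badness constant with the one in the statement.

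For the interleaving step, play an $(\alpha,\beta)$-game $B_0\supset A_1\supset B_1\supset A_2\supset\cdots$ and partition the rounds $\{1,2,3,\dots\}$ into the $r$ residue classes modulo $r$. The point is that for each residue class $c$ the \emph{thinned} sequence
\[ \widetilde B_0:=B_{c-1},\qquad \widetilde A_k:=A_{c+(k-1)r},\qquad \widetilde B_k:=B_{c+kr-1}\quad(k\geq 1) \]
is itself a legal $(\alpha,\gamma)$-game: one step of the original game gives $|\widetilde A_k|=\alpha|\widetilde B_{k-1}|$, while iterating the diameter relations $|A_{j}|=\alpha|B_{j-1}|$ and $|B_j|=\beta|A_j|$ over a full block of $r$ rounds gives $|\widetilde B_k|/|\widetilde A_k| = (\alpha\beta)^r/\alpha = \gamma$. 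Since the move Alice makes at round $m$ depends only on the current Bob-ball $B_{m-1}$ and on the residue of $m$, she can run her $(\alpha,\gamma)$-winning strategy for $E_c$ along residue class $c$, simultaneously for every $c=1,\dots,r$, with no conflicts. The unique point $z=\bigcap_i B_i$ equals $\bigcap_{k} \widetilde B_k$ for each $c$, hence lies in $E_c$ for every $c$, i.e. $z\in\bigcap_{c=1}^r E_c$. Thus $\bigcap_{c=1}^r E_c$ is $(\alpha,\beta)$-winning; this is precisely the content of Lemma~\ref{lem:intersecting}, recorded in the refined form that keeps track of how $\gamma$ is determined by $\beta$ and $r$.

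It remains to match constants. With $\alpha=\beta=\tfrac18$ one gets $\gamma=2^{-6r+3}$ and $(\alpha\gamma/4)^3=2^{-18r-6}\leq 2^{-18r}=\bigl(\tfrac1{4\cdot4\cdot4}\bigr)^{3r}$. Since the defining inequality $k\cdot d(R^k p_i,q)\geq B$ only becomes easier to satisfy as $B$ decreases, every pair that is $(\alpha\gamma/4)^3$-badly approximable is also $\bigl(\tfrac1{4\cdot4\cdot4}\bigr)^{3r}$-badly approximable; hence $E_i$ is contained in the set $E_i'$ of $q$ that $\bigl(\tfrac1{4\cdot4\cdot4}\bigr)^{3r}$-badly approximate $p_i$, so $\bigcap_i E_i\subseteq\bigcap_i E_i'$. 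As any superset of an $(\alpha,\beta)$-winning set is again $(\alpha,\beta)$-winning (Alice's strategy still forces the nested intersection into the larger set), the set $\bigcap_i E_i'$ is $(\tfrac18,\tfrac18)$-winning, which is the assertion of the corollary; via Lemma~\ref{lem:dense} it is in particular dense, which is how it will be used.

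The only real obstacle I foresee is the bookkeeping in the interleaving step: one must check with care that the thinned subsequence is an admissible Schmidt game with exactly the parameter $\gamma=(\alpha\beta)^r/\alpha$, and that the $r$ parallel strategies genuinely never interfere — both reduce to the observation that a player's move depends only on the immediately preceding ball and on which residue class the round belongs to. A secondary, genuinely minor point concerns the clause ``$R^k p_i\neq q$ for all $k\neq 0$'' in the definition of badly approximable: it is already built into the Tseng sets $E_i$, it follows from the badness inequality once $|k|$ is large, and for rational rotation angles the whole statement is trivial because then each $E_i$ is cofinite.
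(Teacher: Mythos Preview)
Your approach is exactly what the paper's one-line ``From the previous two results we obtain'' is pointing at, and your observation that Lemma~\ref{lem:intersecting} cannot be quoted verbatim (because Tseng only gives $(\alpha,\beta)$-winning with a $\beta$-dependent badness constant, not $\alpha$-winning for a fixed constant) is well taken: the interleaving you spell out is precisely the mechanism hiding behind that citation.

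There is, however, a slip in the final constant comparison. With $\alpha=\beta=\tfrac18$ you correctly compute $(\alpha\gamma/4)^3=2^{-18r-6}$, which is \emph{smaller} than $(1/64)^{3r}=2^{-18r}$. But then monotonicity runs the other way: if $B'<B$ then $B$-badly approximable implies $B'$-badly approximable, not conversely. So in fact $E_i'\subset E_i$, and you have only shown that a \emph{superset} of the target $\bigcap_i E_i'$ is $(\alpha,\beta)$-winning, which does not yield the conclusion.

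The fix is painless: take $\alpha=\beta=\tfrac14$ instead. Then $\gamma=(1/4)^{2r-1}$, $\alpha\gamma/4=2^{-4r-2}$, and $(\alpha\gamma/4)^3=2^{-12r-6}\geq 2^{-18r}$ for every $r\geq 1$, so now genuinely $E_i\subset E_i'$ and your superset argument applies. (Since the corollary is only used to feed density into Corollary~\ref{cor:schmidt} and then Proposition~\ref{prop:dense-ba}, where all that matters is \emph{some} constant depending only on $r$, the precise value is immaterial anyway.)
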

By iterating the previous result and Lemma \ref{lem:dense} we get:
\begin{cor}\label{cor:schmidt} Given any rotation on $[0,1)$, the set of $p_1,...,p_k$ that are pairwise $B$-badly approximable for some $B>0$ is dense in $[0,1)^k$. 
\end{cor}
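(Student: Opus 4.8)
The plan is to deduce the statement from the previous corollary by an inductive ``one point at a time'' argument, combined with the fact that winning sets are dense (Lemma~\ref{lem:dense}). Fix a rotation $R$ on $[0,1)$ and write $c_j = (\tfrac{1}{4\cdot 4\cdot 4})^{3j}$ for the badly-approximable constant that the previous corollary produces when we have $j$ previously-chosen points. I would prove the statement by induction on $k$: the base case $k=1$ is vacuous (a single point is pairwise badly approximable with respect to nothing), so suppose we have already produced $p_1,\dots,p_{k-1}$ that are pairwise $c_{k-2}$-badly approximable (hence a fortiori pairwise $c_{k-1}$-badly approximable, since smaller exponents in a base $<1$ give a larger constant, and a $B$-badly approximable pair is $B'$-badly approximable for $B' \le B$).

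The key step is then to choose $p_k$. First I would observe that once $p_1,\dots,p_{k-1}$ are fixed, the previous corollary applied with $r = k-1$ tells us that the set $W$ of all $q \in [0,1)$ such that $p_i, q$ is $c_{k-1}$-badly approximable for each $i = 1,\dots,k-1$ is an $(\alpha,\beta)$-winning set for some $\alpha,\beta$ depending only on $k-1$. By Lemma~\ref{lem:dense}, $W$ is dense in $[0,1)$. Hence for any target point $p_k^{0}$ and any $\varepsilon > 0$ we may pick $p_k \in W$ with $|p_k - p_k^{0}| < \varepsilon$, and then $p_1,\dots,p_k$ are pairwise $c_{k-1}$-badly approximable (the old pairs were already $c_{k-2}$-badly, hence $c_{k-1}$-badly, approximable; the new pairs $(p_i,p_k)$ are $c_{k-1}$-badly approximable by construction). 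This shows that the set of pairwise $c_{k-1}$-badly approximable $k$-tuples is dense in $[0,1)^k$: given any target tuple $(p_1^0,\dots,p_k^0)$ and $\varepsilon>0$, we iterate, choosing $p_1$ arbitrarily within $\varepsilon$ of $p_1^0$, then $p_2 \in W_1$ within $\varepsilon$ of $p_2^0$ (where $W_1$ is the winning set of points badly approximating $p_1$), and so on, at the $j$-th stage using that the winning set $W_{j-1}$ relative to the already-chosen $p_1,\dots,p_{j-1}$ is dense.

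The main thing to be careful about — rather than a genuine obstacle — is bookkeeping of the constants: at stage $j$ we only get $c_{j-1}$-badly approximable pairs for the newly introduced point, but since $c_{k-1} \le c_{j-1}$ for $j \le k$ (the exponent is larger, the base is $<1$), downgrading everything to $c_{k-1}$ is harmless and makes all $\binom{k}{2}$ pairs uniformly $c_{k-1}$-badly approximable, which is exactly the claim. One should also note that the ``$T^n x \neq y$ for all $n \neq 0$'' clause in the definition of badly approximable is automatically arranged: the winning set of $q$'s badly approximating a given $p_i$ already excludes the (countably many) points in the forward and backward $R$-orbit of $p_i$, so a winning $q$ lies off all these orbits. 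This completes the induction and hence the proof.
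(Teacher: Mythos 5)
Your proposal is correct and is essentially the argument the paper intends: the paper's entire ``proof'' is the phrase ``by iterating the previous result and Lemma~\ref{lem:dense}'', and your induction---choosing $p_j$ one at a time from the dense winning set of points badly approximating $p_1,\dots,p_{j-1}$, then downgrading all pairs to the weakest constant $(\frac{1}{4\cdot4\cdot4})^{3(k-1)}$---is exactly that iteration, carried out with the constant bookkeeping made explicit.
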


\begin{rmk}
Note that the above discussion can be modified to treat fixed $B$, as Lemma \ref{lem:intersecting} can be modified to show the finite intersection of $(\alpha,\beta)$-winning is $(\alpha,\tilde{\beta})$ for some $\tilde{\beta}$ (which depends on $\alpha$, $\beta$ the number of intersections). While this may be useful for some applications (such as answering Question \ref{ques:cob}) it plays no role in this work and so we do not discuss it here.
  
\end{rmk}
\subsection{Density of torus good differentials}
\begin{prop}\label{prop:dense-ba} 
  Let $S$ be a closed surface of genus $g \geq 2$ and
  $q_1, \ldots, q_k$ be a set of marked points. Suppose that 
  we fix a regular branched cover, branched over one point,
  \[ p:S \to T \]
  of $S$ to a torus $T$ and an Abelian differential $\omega_T$ on $T$
  so that $\omega$ is the pullback of $\omega_T$. 
  
  Then for every $\omega_T$, every neighborhood $U$ of $\omega_T$ in
  $\widetilde{\Omega}(T)$, and every $\delta>0$ there exists $\omega'_T \in U$ and points
  $q_i'\in S$ with $d_\omega(q_i,q_i')<\delta$, for all $i$, so that the pullback of
  $\omega_T'$ is $(\epsilon,B)$--torus good with respect to marked
  points $(q'_i)$. In this, $\epsilon$ can be chosen independent of $\omega$
  and $B$ can be chosen to just depend on $k$.
\end{prop}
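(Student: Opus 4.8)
The plan is to keep the topological branched cover $p : S \to T$ fixed and to reduce the three defining conditions of $(\epsilon,B)$--torus goodness to purely Diophantine statements about a single circle rotation, namely the first--return map of the vertical straight--line flow of a flat torus to a transverse simple closed curve; the Schmidt--game results of the previous subsection then supply the required density together with \emph{explicit, uniform} badly approximable constants. Throughout, write $\omega = p^{*}\omega_{T}$, and use freely that an Abelian differential on the fixed smooth torus $T$ is a point of $\widetilde{\Omega}(T)$, that nearby differentials $\omega_{T}'$ have period lattices and vertical slopes close to those of $\omega_{T}$, and that the slope of the vertical foliation is a submersion on $\widetilde{\Omega}(T)$ (one may move it freely by postcomposing flat charts with a small rotation). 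We will use two classical facts beyond the previous subsection: first, the Dani correspondence, in the form that the Teichm\"uller geodesic ray of a flat torus eventually stays in a part of moduli space whose thickness depends only on the badly approximable constant of its vertical slope; and second, that a pair of points which is badly approximable for the first--return rotation to a transverse curve is badly approximable for the vertical flow itself, with a comparable constant (this is the same kind of estimate made in the proof of Lemma~\ref{lem:saddle-growth}).

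\emph{Step 1 (choosing $\omega_{T}'$).} First I would perturb $\omega_{T}$ inside the given neighbourhood $U$ to an $\omega_{T}'$ whose vertical slope $\xi'$ satisfies $n\,\|n\xi'\|\geq c_{0}$ for all $n\geq 1$, where $c_{0}$ is a \emph{universal} constant. This is possible because, by Schmidt's theorem \cite{schmidt}, the set of such $\xi'$ is $(\alpha,\beta)$--winning for a fixed pair $(\alpha,\beta)$ with $c_{0}=c_{0}(\alpha,\beta)$, hence by Lemma~\ref{lem:dense} it meets every interval of slopes, in particular the open interval of vertical slopes realised by differentials in $U$. For such an $\omega_{T}'$, the systole of $g_{t}\omega_{T}'$ is bounded below by some $\epsilon = \epsilon(c_{0}) > 0$ for all sufficiently large $t$ (the threshold time depends on $\omega_{T}'$, but $\epsilon$ does not). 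Thus $\omega_{T}'$ is eventually $\epsilon$--thick with $\epsilon$ independent of $\omega_{T}$, establishing condition (1); condition (2) holds by construction, since $p^{*}\omega_{T}'$ is defined as the pullback.

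\emph{Step 2 (choosing the points $q_{i}'$).} Now fix this $\omega_{T}'$, its (minimal) vertical flow $F^{t}$, and a transverse simple closed curve $\Sigma\cong\RR/\ZZ$ disjoint from the $p(q_{i})$, with first--return rotation $R_{\xi'}$. Flowing $p(q_{i})$ forward a short time to $\Sigma$ gives points $x_{i}^{0}\in\Sigma$, and as the $q_{i}$ range over perturbations with $d_{\omega}(q_{i},q_{i}')<\delta$ the tuple $(x_{1},\ldots,x_{k})$ ranges over a full neighbourhood of $(x_{1}^{0},\ldots,x_{k}^{0})$ in $\Sigma^{k}$ (the map $p$ is a local diffeomorphism away from its finitely many ramification points and remains open there, and the first--return-to-$\Sigma$ map is a submersion near each $p(q_{i})$, so any small motion of $p(q_{i})$ lifts to a $d_{\omega}$--small motion of $q_{i}$). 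By Corollary~\ref{cor:schmidt} applied to $R_{\xi'}$, a dense set of such tuples is pairwise $c_{k}$--badly approximable for $R_{\xi'}$ with $c_{k}$ depending only on $k$; intersecting this dense set with the open dense set of tuples whose coordinates $x_{i}$ are pairwise distinct and with the generic requirement that no $p(q_{i}')$ is the branch point of $p$, we pick $q_{i}'$ realising such a tuple, and then the badly approximable condition further forces the $p(q_{i}')$ to be distinct and to lie on distinct $F^{t}$--leaves. Passing from the cross--section back to the flow --- and normalising the area of $\omega_{T}'$, which is legitimate because the quantity $t\,d(F^{t}x,y)$ is scale invariant --- turns the statement that the $x_{i}$ are pairwise $c_{k}$--badly approximable into the statement that the $p(q_{i}')$ are pairwise $B$--badly approximable for $F^{t}$, with $B$ a universal multiple of $c_{k}$ and hence depending only on $k$. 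This is condition (3), so $p^{*}\omega_{T}'$ is $(\epsilon,B)$--torus good with respect to $(q_{i}')$, with $\epsilon$ independent of $\omega$ and $B$ depending only on $k$.

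\emph{Main obstacle.} I expect the crux to be the two--way passage between the straight--line flow on the torus and the circle rotation: showing that a Diophantine lower bound for the rotation yields one for the flow (the estimate in the proof of Lemma~\ref{lem:saddle-growth} is the prototype), and that along the way the constants degrade only by universal factors --- this is exactly what makes $B$ depend on $k$ alone and $\epsilon$ be independent of $\omega_{T}$. A secondary, more bookkeeping obstacle is the treatment of ramification points of $p$ when lifting perturbations of the $p(q_{i})$, together with checking that all the ``general position'' requirements on the $q_{i}'$ can be imposed simultaneously with the Diophantine condition coming from Corollary~\ref{cor:schmidt}.
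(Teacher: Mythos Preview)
Your proposal is correct and follows essentially the same approach as the paper: perturb the vertical direction to obtain a uniformly badly approximable slope (the paper uses the density of numbers whose continued fraction expansion ends in all $1$'s rather than Schmidt's theorem, but to the same effect), then use Corollary~\ref{cor:schmidt} to move the projected marked points to a pairwise badly approximable tuple for the first-return rotation on a transversal. The obstacle you flag is precisely what the paper isolates: it proves a Sublemma converting $B$--bad approximability for the rotation into $B'$--bad approximability for the flow with $B' < B\cdot T_{0}$, and secures the uniformity of $B'$ by choosing the transversal to be a side of a fundamental domain at a time $t$ when $g_{t}\omega_{T}$ lies in a fixed compact part of moduli space, so that the return times lie in $[\tfrac{1}{C|\gamma|},\tfrac{C}{|\gamma|}]$ for a universal $C$.
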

\begin{proof}
  We will work throughout with the canonical flat charts defined by $\omega$,
  $\omega_T$ realizing $p$ as a holomorphic map. We will then show that we can
  move the $q_i$ by a small amount (in these charts!) and modify 
  $\omega_T$ by a small rotation to obtain an $(\epsilon,B)$--torus good differential. 
  This is enough to show the proposition. 

  Given a straight line flow on a flat torus, there are many
  (geodesic) transversals so that the first return map of the flow to
  the transversal is a rotation. Moreover there exists $C$ so that
  every aperiodic straight line flow on a flat torus of area $1$ has infinitely many
  transversals $\gamma$, so that the first return map to $\gamma$ is a
  rotation and the return times to $\gamma$ are between $\frac
  1{C|\gamma|}$ and $\frac{C}{|\gamma|}$.  To see this, note that
  there is a compact set $K$ in the moduli space of flat tori, so that
  if the orbit $g_t\omega_T$ of a torus $\omega_T$ under Teichm\"uller
  flow does not diverge to infinity (without recurring), 
  then there exist arbitrarily large $t$ so that $g_t\omega_T \in K$.
  In the case of an aperiodic straight line flow the 
  first case does not happen. In the second case, a side of the fundamental domain 
  of the torus  $g_t\omega_T \in K$ will work as a transversal.

\noindent
\textbf{Sublemma:} Let $p,q$ be points on a torus $T$ and $F^t$ a
minimal straight line flow on $T$.  Suppose that $\gamma$ is a
transversal for $F^t$, and let $T_0$ be the minimal first return time
of $F^t$ to the transversal $\gamma$. Assume further that the first
return of $F^t$ defines a rotation $R_\xi$ on $\gamma$.  Suppose that
$s_1, s_2>0$ are minimal so that $F^{s_1}p, F^{s_2}q \in \gamma$, and
that the points $F^{s_1}p, F^{s_2}q$ are $B$-badly approximable for
$R_\alpha$. Then $p$ and $q$ are $B'$-badly approximable for $F^t$ for
any $B' < B\cdot T_0$.
\begin{proof}[Proof of Sublemma]We prove the statement by contradiction. Assume that there exists $\epsilon>0$
  and arbitrarily large $L$ so that
$$d(F^Lp,q)<\frac{BT_0-\epsilon}L.$$
Assume that the straight line flow is vertical. We may assume that
$F^Lp$ is on the same horizontal as $q$. Let $T_1$ be the maximal
return time of the flow to $\gamma$. Then there is some $0\leq
\ell\leq 3T_1$, so that $F^\ell q \in \gamma \setminus \partial
\gamma$ (since at most two returns can hit a boundary point of $\gamma$).
Furthermore, after fixing $\ell$, we have that for all large enough $L$:
\[ d(F^Lp,q)<d(F^\ell q,\partial \gamma) \] Let $h \subset \gamma$ be
the shortest horizontal segment connecting $F^Lp$ to $q$. Then
$F^\ell(h) \subset \gamma$, and it is a horizontal segment of length $d(F^Lp, q)$ joining
$F^{\ell+L}p$ to $F^\ell q$. Since $F^{\ell+L}p$ and $F^{s_1}p$ are in the same $R_\alpha$--orbit,
there is a power $k$ so that $F^{\ell+L}p = R_\alpha^kF^{s_1}p$. Since $s_1$ is the first time that 
the flow line through $p$ hits $\gamma$, we know that $k\leq 3+ \frac L{T_0}$. In other words, for
this $k$ we have:
\[ d(R_\alpha^kF^{s_1}p,F^\ell q)=d(F^Lp,q) \]
Since rotations are isometries, and $F^\ell q$ is in the $R_\alpha$ orbit of
$F^{s_2}q$, there exists some $j\geq 0$ so that \[d(R_\alpha^{k-j}F^{s_1}p,F^{s_2}
q)=d(F^Lp,q).\]
If $L$ is large enough (depending on $\epsilon$), we then have a contradiction of our claim that $F^{s_1}p$, $F^{s_2}q$ are $B$-badly approximable.
\end{proof}

Next, observe that there exists $B>0$ so that the rotation $R_\xi$
for any $\xi$ whose continued fraction expansion terminates in all
1's is $B$-badly approximable. Note that such $\xi$ are dense in the reals.

Now, suppose we are given the torus $\omega_T$. Pick a transversal
$\gamma$ so that the first return map for the vertical straight line
flow on $\omega_T$ defines a rotation on $\gamma$, and furthermore the return time
is between $\frac 1{C|\gamma|}$ and $\frac{C}{|\gamma|}$.

By changing the preferred direction on the torus\footnote{technically,
  postcomposing the flat charts with a rotation matrix in
  $\mathrm{SL}_2(\mathbb{R})$ close to the identity.} we may assume that this rotation (when
rescaling the transversal to have length $1$) is in fact $B$--badly
approximable by the density observation above.
 
These flows are now $\frac{B}C$-badly approximable by the Sublemma.
 
It remains to modify the points. Given $q_1,...,q_k$ we choose $p_1,...,p_k$ that are the first times the vertical flows from the $q_i$ intersect our transversal. By Corollary \ref{cor:schmidt} we may choose $p_1',...,p_k'$ in a $\delta$ neighborhood of these points and on the transversal that are $(\frac{1}{4\cdot 4 \cdot 4})^{3(k-1)}$-badly approximable for the rotation (thought of as being on $[0,1)$). Applying the vertical flow (which is minimal) in the backwards direction, we can obtain $q_1',..,q_k'$, in a $\delta$ neighborhood for $q_1,...,q_k$, which are pairwise $c$-badly approximable for the flow for any $c<(\frac{1}{4\cdot 4 \cdot 4})^{3(k-1)} \frac 1 C$ (by our choice of transversal).
\end{proof}
Finally, we need the following density statement for
$(\epsilon,B)$--torus good $\omega$:
\begin{prop}\label{prop:density-torus} There exists $\epsilon>0$ so that for any $q_1,...,q_k\in S$ there exists $B>0$ so that 
  the set of $(\epsilon,B)$--torus good $\omega$ with respect to $q_1,...,q_k$
  is dense in $\widetilde{\Omega}(S)$. 
\end{prop}
\begin{proof}
  First note that the set of all $\omega$ which are lifts of Abelian
  differentials on tori branched over one point are dense in the space
  $\widetilde{\Omega}(S)$. Namely, this notion is invariant under
  the action of $\mathrm{Diff}(S, \{q_i\})$, and the desired density is true for 
  strata of Abelian differentials in the Hodge bundle over Teichm\"uller space.

  The desired density now follows from
  Proposition~\ref{prop:dense-ba}, since being torus good is invariant
  under pullback by differentials: if $\omega$ is torus good, and $\phi$ is 
  a diffeomorphism, then $\phi^*\omega$ is also torus good.
\end{proof}

\subsection{Point-pushing and torus good differentials}
Next, we describe constructions which allows us to modify a given
$(\epsilon,B)$--torus good $\omega$ in a simple way. In its
description, we think of simple closed curves as actual smooth, nonsingular maps from
$S^1 = \mathbb{R}/\mathbb{Z}$ to $S$, and not their isotopy classes. 
\begin{defin}
  We say that a simple closed curve $\alpha$ on $S$ is \emph{clean} for
  $\omega \in \widetilde{\Omega}(S)$ if
  \begin{enumerate}
  \item $\alpha$ is disjoint from all zeroes of $\omega$.
  \item $\alpha$ is transverse to the horizontal and vertical
    foliation of $\omega$.
  \end{enumerate}
\end{defin}
Observe that if $\alpha$ is clean, it intersects every horizontal or vertical segment (in the metric given by  $\omega$) in at most finitely many points, since the angle to the horizontal or vertical direction is bounded away from zero on the compact curve $\alpha$.

\begin{lem}\label{lem:clean-in-nbhd}
  For every $\omega \in \widetilde{\Omega}(S)$ there is a clean $\alpha$. Given any clean $\alpha$
  there is an open neighbourhood $U_{\omega,\alpha}$ of
  $\omega \in \widetilde{\Omega}(S)$, so that $\alpha$ is clean for
  every $\eta \in U_{\omega,\alpha}$.
\end{lem}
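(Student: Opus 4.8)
The plan is to prove both assertions of Lemma~\ref{lem:clean-in-nbhd} essentially by transversality/genericity arguments, working in the flat coordinates defined by $\omega$. For the \emph{existence} of a clean curve: fix any complex structure compatible with $\omega$. The set of zeroes of $\omega$ is finite, so pick any essential simple closed curve $\alpha_0$ disjoint from it (small perturbation, since zeroes are isolated and $S$ has dimension $2$). Now I want $\alpha_0$ to be transverse to both the horizontal and vertical foliations of $\omega$. The horizontal and vertical foliations are smooth away from the zeroes; a generic smooth curve is transverse to a given smooth foliation (Thom transversality, or simply: the set of tangency directions at each point is codimension $1$, so a generic $C^1$-perturbation avoids tangencies). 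Perturbing $\alpha_0$ slightly within a neighbourhood disjoint from the zeroes, I get $\alpha$ transverse to the horizontal foliation; a further perturbation, keeping the (already open) transversality to the horizontal foliation, achieves transversality to the vertical foliation as well. Since transversality to a foliation is an open condition on compact curves, no conflict arises. This produces a clean $\alpha$.

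For the \emph{stability} statement, let $\alpha$ be clean for $\omega$. Conditions (1) and (2) in the definition of clean are both open in $\omega$, so I argue each separately. For condition (1): the zeroes of an Abelian differential vary continuously with $\omega$ (a zero of order $m$ can split but stays within a small ball), so for $\eta$ close to $\omega$ all zeroes of $\eta$ lie in an arbitrarily small neighbourhood of the zeroes of $\omega$; since $\alpha$ has positive distance $\rho>0$ from the zeroes of $\omega$ in the $d_\omega$-metric, and $d_\eta \to d_\omega$ uniformly on compacta, for $\eta$ sufficiently close $\alpha$ stays disjoint from all zeroes of $\eta$. For condition (2): on the compact curve $\alpha$, the angle between $\alpha'(s)$ and the horizontal direction of $\omega$ is bounded below by some $\theta_0 > 0$ (this is exactly the observation following the definition of clean). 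The horizontal direction of $\eta$ at a point $x$ is $\arg(\eta(x))$ read in a chart, which depends continuously on $\eta$; hence for $\eta$ in a small enough neighbourhood of $\omega$, the horizontal direction of $\eta$ along $\alpha$ differs from that of $\omega$ by less than $\theta_0/2$ uniformly, so $\alpha$ is still transverse to the horizontal foliation of $\eta$. The same argument applies to the vertical foliation. Taking $U_{\omega,\alpha}$ to be the intersection of the three neighbourhoods produced above gives the desired open set.

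I expect the only real subtlety — and hence the main obstacle — to be making precise the statement that the horizontal and vertical foliations, and the zero set, ``vary continuously with $\omega$'' in the correct topology on $\widetilde{\Omega}(S)$. Concretely, one should fix a single smooth atlas on $S$, write $\omega = f(z)\,dz$ in each chart, and observe that $\omega \mapsto \omega'$ small in $\widetilde{\Omega}(S)$ means $f$ is $C^0$-close (indeed $C^\infty$-close) to $f'$ on each chart, from which continuity of the zero set (away from which $f\ne 0$) and of the line field $z\mapsto \mathbb{R}_{>0}\cdot \overline{f(z)}^{-1/2}$ — i.e. the horizontal direction — both follow on the region where $f$ does not vanish. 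Once this bookkeeping is set up, all three openness claims are immediate, and the existence statement is a standard transversality perturbation. I do not anticipate any genuine difficulty beyond this, so I would keep the write-up brief and refer to \cite[Section~2.6]{LS-connectivity} for the structure of $\widetilde{\Omega}(S)$.
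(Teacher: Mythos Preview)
Your proposal is correct; the transversality argument for existence and the openness argument for stability are both sound, and the bookkeeping you flag about continuity of the zero set and line fields in the topology on $\widetilde{\Omega}(S)$ is exactly what needs to be said. The paper's own proof is simply a citation to Lemmas~4.2 and~4.7 of \cite{LS-connectivity}, so there is no independent argument to compare against---your write-up is effectively unpacking what those lemmas contain.
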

\begin{proof}
  This follows from Lemmas~4.2~and~4.7 of \cite{LS-connectivity}.
\end{proof}
\begin{defin}
  Suppose that $\alpha$ is a differentiable, simple closed curve on
  $S$ which is clean for some $\omega \in \widetilde{\Omega}(S)$. We
  say that $\alpha$ is parametrised with \emph{constant horizontal
    speed} if, in the flat charts defined by $\omega$, the horizontal
  deriviative $\alpha(t)$ is constant in $t$.
\end{defin}
Observe that any clean $\alpha$ admits a parametrisation with constant horizontal speed,
since by the definition of clean  $\alpha$ is nowhere vertical in the flat charts. 
\begin{prop}\label{prop:sliding-points}
  Suppose that $\omega$ is $(\epsilon,B)$--torus good with respect to
  marked points $q_1, \ldots, q_k$, and that $\alpha$ is a simple
  closed curve on $S$ with the following properties
  \begin{enumerate}
  \item $\alpha$ is clean for $\omega$, and parametrised with constant horizontal speed.
  \item There are $t_i$ so that $q_i=\alpha(t_i)$.
  \end{enumerate}
  Then for all $B'<B$ and any $s\in \mathbb{R}$, we have that 
  $\omega$ is $(\epsilon,B')$--torus good with respect to the
  marked points $\alpha(t_1+s), \ldots, \alpha(t_k+s)$.
\end{prop}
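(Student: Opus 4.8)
The plan is to show that moving all the marked points by the same horizontal amount $s$ along $\alpha$ can be realized, up to isotopy rel the zeroes of $\omega$, by an ambient diffeomorphism which is the identity outside a neighbourhood of $\alpha$, and — crucially — which can be chosen to arise from applying the horocyclic (or more precisely, a suitable unipotent) flow in a collar, so that after pulling back, the resulting differential is again a branched cover of a torus differential with the images of the new marked points only slightly perturbed on the torus. Since torus-goodness is preserved under pullback by diffeomorphisms (as noted after the definition of torus good, and as used in the proof of Proposition~\ref{prop:density-torus}), it suffices to exhibit such a diffeomorphism and then check that the badly-approximable constant on the torus only degrades from $B$ to any $B' < B$.

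More concretely, I would proceed as follows. First, since $\alpha$ is clean and parametrised with constant horizontal speed, pick a thin embedded annular neighbourhood $A$ of $\alpha$ on which $\omega$ has no zeroes and on which the horizontal foliation is transverse to $\alpha$; in the flat charts, $A$ looks like a union of horizontal strips. On $A$ define a diffeomorphism $\phi_s$ which slides points in the horizontal direction: near the core $\alpha$ it is ``translate horizontally by $s$ (measured along $\alpha$ in the constant-horizontal-speed parametrisation)'', and it tapers off to the identity near $\partial A$ via a bump function in the transverse direction. By construction $\phi_s(q_i)$ is (isotopic rel endpoints to) $\alpha(t_i + s)$, and $\phi_s$ is the identity near all zeroes of $\omega$ and outside $A$. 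Then $\omega$ is torus-good with respect to $q_1,\dots,q_k$ if and only if $\phi_s^*\omega$ is torus-good with respect to $\phi_s^{-1}(q_i)$ — but $\phi_s^*\omega = \omega$ as a differential on the underlying smooth surface only if $\phi_s$ preserves $\omega$, which it does not. So I would instead argue directly on the torus side: $p\circ\phi_s$ descends (away from the single branch point, which we arrange $\alpha$ to avoid) to a self-diffeomorphism of $T$ supported near $p(\alpha)$, and $p(\alpha)$ is a clean curve for $\omega_T$ on the torus. Thus the statement reduces to the torus case: on a flat torus with a $B$-badly approximable straight-line flow, sliding finitely many $B$-badly-approximable points horizontally along a clean curve by a common amount keeps them pairwise $B'$-badly approximable for every $B' < B$.

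For the torus reduction, the key computation is essentially the same as in Lemma~\ref{lem:saddle-growth} and the Sublemma inside the proof of Proposition~\ref{prop:dense-ba}: horizontal displacement by $s$ changes the relevant distances $d(F^L q_i, q_j)$ by at most a bounded amount (bounded in terms of $|s|$ and the geometry of $\alpha$), which is absorbed into the $\tfrac{1}{L}$ factor for all large $L$ — hence $L\cdot d(F^L q_i', q_j') \geq B - o(1) \geq B'$ eventually, giving $B'$-badly approximable for any $B' < B$. One also has to note that the ``$F^t x \neq y$ for all $t \neq 0$'' genericity condition persists: the horizontal slide moves points to a position still not on each other's orbit, since being on a common vertical orbit is a codimension-one condition and can be avoided after an arbitrarily small further horizontal adjustment if necessary — but in fact the cleaner route is to observe that the vertical flow from $\alpha(t_i+s)$ and $\alpha(t_j+s)$ hitting the same leaf would force $\alpha(t_i), \alpha(t_j)$ onto a common leaf shifted by $0$, contradicting that $(q_i,q_j)$ was already $B$-badly approximable (which includes the orbit-disjointness). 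The branched cover, the differential $\omega_T$, and the thickness constant $\epsilon$ are all literally unchanged — only the marked points on $T$ move — so conditions (1) and (2) of torus-goodness are automatic.

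The main obstacle I anticipate is making the horizontal slide $\phi_s$ genuinely well-defined and smooth as a single self-diffeomorphism of $S$ (or of $T$) for \emph{all} $s$ simultaneously, including the issue that $\alpha$ is only an immersed-then-embedded curve in the flat charts and the ``horizontal direction'' is chart-dependent — one must check the slide is consistent across chart overlaps, which works precisely because transition maps of the flat structure are translations (and the fixed rotation from the $\mathrm{SL}_2(\mathbb R)$-tweak) and hence preserve the horizontal direction. A secondary technical point is ensuring $\alpha$ can be taken disjoint from the branch point of $p$; if not, one first isotopes $\alpha$ slightly (keeping it clean, using Lemma~\ref{lem:clean-in-nbhd}) to achieve this, at the cost of replacing $q_i$ by nearby points, which is harmless for the qualitative statement. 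Everything else is a routine estimate borrowed from the badly-approximable toolkit already developed in the paper.
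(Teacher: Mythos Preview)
Your proposal has a genuine gap in the central estimate, and the detour through an ambient diffeomorphism $\phi_s$ is both unnecessary and the source of the confusion.

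The error is in your ``torus reduction'' computation. You write that a horizontal displacement changes $d(F^L q_i, q_j)$ by at most a bounded amount $C$, and that this is ``absorbed into the $\tfrac{1}{L}$ factor.'' But the quantity you need bounded below is $L \cdot d(F^L q_i', q_j')$, and a bounded additive error $C$ in $d$ becomes an error of order $LC$ after multiplying by $L$ --- it is \emph{not} absorbed, it blows up. So as stated your inequality $L\cdot d(F^L q_i', q_j') \geq B - o(1)$ does not follow.

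What actually makes the proposition work, and what the paper's (very short) proof does, is to exploit the hypothesis of \emph{constant horizontal speed} together with the \emph{homogeneity of the torus}. Since $\alpha$ has constant horizontal speed, the horizontal component of $\alpha(t_i+s)-\alpha(t_i)$ is the \emph{same} for every $i$. Projecting to the torus and then translating so that $p(q_1')=p(q_1)$ (legitimate because translation preserves both the flow and the metric), all the horizontal displacements cancel exactly, and each $p(q_j')$ differs from $p(q_j)$ by a purely \emph{vertical} vector of some bounded length $\ell$. Now a vertical displacement can be absorbed into the flow parameter: a trajectory from $q_i'$ to $q_j'$ differs from one from $q_i$ to $q_j$ by a vertical segment, whose $g_t$--length is $e^{-t/2}\ell \to 0$. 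Equivalently, $d(F^L q_i, F^v q_j)=d(F^{L-v}q_i,q_j)$, so $L\cdot d(F^L q_i', q_j') \geq L\cdot \tfrac{B}{L-v} \to B$, hence is $\geq B'$ eventually.

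In short: you never need $\phi_s$ or any chart-patching; you only need to notice that the common horizontal displacement cancels via torus homogeneity, and that the residual vertical displacement is harmless under $g_t$. Your write-up misses precisely this cancellation, which is the whole point of the constant-horizontal-speed hypothesis.
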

\begin{proof} 
  Put $q_i'=\alpha(t_i+s)$. Since the torus is a homogeneous space we
  may assume without loss of generality $q_1=q_1'$ and that therefore, by the choice of our
  parametrization, $q_j'$ is a translate along a vertical leaf from
  $q_j$ for all $j>1$; Let $\gamma'$ be a curve connecting $q_i'$ to $q_j'$ and
  $\gamma$ be the curve connecting $q_i=q_i'$ to $q_j$ by traversing
  $\gamma'$ and then the vertical segment of length $\ell$. Because
  $|g_u\gamma'|\geq |g_u\gamma|-e^{-\frac t 2 }\ell$ we have the the
  proposition.
\end{proof}

Next, we want to re-interpret the families of
$(\epsilon,B)$--torus good differentials constructed in
Proposition~\ref{prop:sliding-points} as paths in $\widetilde{\Omega}(S)$. It will be useful to describe this construction slightly more
generally.

\smallskip To begin, recall from
e.g.\cite[Section~4.2]{LS-connectivity} that associated to a simple
closed curve $\alpha$ there is an isotopy $D_{\alpha,t}:S\to S$ which
``pushes along the curve $\alpha$'', i.e. $D_{\alpha,t}(\alpha(s)) =
\alpha(t+s)$. 
Observe that such a diffeomorphism $D_{\alpha, t}$ preserves the
curve $\alpha$ setwise. 
Furthermore, note that any
diffeomorphism $F:S\to S$ defines by pullback a map $\widetilde{\Omega}(S) \to
\widetilde{\Omega}(S)$, which induces a map
\[ \widetilde{\Omega}(S) \to \widetilde{\Omega}(S) \] that
preserves geometric properties like being (eventually) strongly
$\epsilon$--thick, or having a vertical foliation with all leaves
closed.

Since $D_{\alpha,t}$ is a smoothly varying family of
diffeomorphisms, for any Abelian differential $\omega$, the assignment
\[ t \mapsto D_{\alpha,t}^{-1}\omega \] defines a continuous path
$C(\alpha, \omega)$ of Abelian differentials in
$\widetilde{\Omega}(S)$. Furthermore, this path depends continuously
on the initial differential $\omega$.

\smallskip As $\alpha$ is a closed curve, the endpoint
$D_{\alpha,1}^{-1}$ is actually a diffeomorphism fixing $(q_1,\ldots,
q_k)$. Hence, the endpoint of $C(\alpha, \omega)$ is obtained from the
initial point by pulling back the differential by that diffeomorphism.
This path in $\widetilde{\Omega}(S)$ depends on the choice of the
isotopy $D_{\alpha, t}$. 
Note that the mapping class of $S-\{q_1,\ldots, q_k\}$ defined by
$D_{\alpha,1}^{-1}$ depends only on the homotopy class of $\alpha$
relative to the set $\{q_1,\ldots, q_k\}$ and not the actual curve. We
call this mapping class a \emph{multi-point-push}, and denote it by
$P_\alpha$.  Observe that if $\alpha$ is embedded, then $P_\alpha$ is a
product of Dehn twists about curves to either side of $\alpha$. In
particular, results in Section~\ref{sec:train-track-north-south}
proved for (multi-)Dehn twists also apply for these $P_\alpha$.

We summarize some more basic properties of these paths in the following proposition.

\begin{prop}\label{prop:path-from-slide}
  Suppose that $\omega$ is an Abelian differential on $S$, and
  $\alpha$ is a clean simple closed curve which is parametrised with
  constant horizontal speed. Suppose that $q_i=\alpha(t_i)$ are points
  on the curve. Then there is a continuous path $c:[0,1] \to \Omega(S,
  \{q_1,\ldots, q_k\})$, whose endpoint $c(1)$ is the image of $c(0)$
  under the multi-point-push along $\alpha$. Furthermore, we have
  \begin{enumerate}
  \item If $\omega$ is $(\epsilon,B)$--torus good with respect to $q_1, \ldots, q_k$, then any
    point on $c$ $(\epsilon,B')$-thick for any $B'<B$ and thus it is eventually strongly $\delta$--thick with respect
    to $q_1,...,q_k$.
  \item If $\omega$ has vertical foliation a weighted multicurve, then
    the same is true for every point on $c$ (though the multi-curve may change).
  \item
    The path $c$ depends (for a fixed $\alpha$\footnote{Strictly, the curve is only fixed up to reparametrisation; for any $\omega$ one has to choose a constant horizontal speed parametrisation.}) continuously on
    the initial differential $\omega$.
  \end{enumerate}
\end{prop}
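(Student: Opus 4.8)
The plan is to let $c$ be the image in $\Omega(S,\mathbf{z})$, with $\mathbf{z}=(q_1,\dots,q_k)$, of the path $C(\alpha,\omega)$ already built above. First I would fix, continuously in $\omega$, a constant horizontal speed parametrisation of the (set-wise fixed) curve $\alpha$ --- pinning down, say, the image point $\alpha(0)$ --- which is possible because by Lemma~\ref{lem:clean-in-nbhd} cleanness is an open condition and the rescaling to constant horizontal speed depends smoothly on $\omega$; I would likewise fix a continuously varying choice of the pushing isotopy $D_{\alpha,t}$. The discussion preceding the proposition then produces a continuous path $t\mapsto D_{\alpha,t}^{-1}\omega$ in $\widetilde{\Omega}(S)$ depending continuously on $\omega$; define $c$ to be its composition with the (continuous) quotient map $\widetilde{\Omega}(S)\to\Omega(S,\mathbf{z})$. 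Since $D_{\alpha,0}=\mathrm{id}$ we have $c(0)=\hat{\omega}$, and since $D_{\alpha,1}^{-1}$ fixes every $q_i$ the endpoint $c(1)$ is obtained from $c(0)$ by applying the multi-point-push $P_\alpha$; moreover $c$ depends continuously on $\omega$, since all choices were made continuously and the quotient map is continuous. This establishes the path, its endpoint property, and~(3).

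For~(1), the key observation is an identification of marked-point configurations: the representative $D_{\alpha,t}^{-1}\omega$ of $c(t)$, carrying the fixed marked points $\mathbf{z}$, is exactly the pullback by the diffeomorphism $D_{\alpha,t}^{-1}$ of the configuration $\bigl(\omega,\,D_{\alpha,t}^{-1}(\mathbf{z})\bigr)$, and, by our choice of parametrisation, $D_{\alpha,t}^{-1}(q_i)=\alpha(t_i-t)$ for each $i$. Now fix $B'=B/2<B$ once and for all. Applying Proposition~\ref{prop:sliding-points} with $s=-t$ shows that $\omega$ is $(\epsilon,B')$--torus good with respect to $\alpha(t_1-t),\dots,\alpha(t_k-t)$, with $\epsilon$ and $B'$ \emph{independent of $t$}. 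Since being torus good is preserved under pullback by a diffeomorphism (transporting the marked points along), $c(t)$ is itself $(\epsilon,B')$--torus good with respect to $\mathbf{z}$, and Proposition~\ref{prop:good-pairs-long-segments} then yields a single constant $\delta=\delta(\epsilon,B')>0$, depending on neither $\omega$ nor $t$, with respect to which every $c(t)$ is eventually strongly $\delta$--thick. (The reason for passing from $B$ to $B'$ is solely to make $\delta$ uniform along the path; Proposition~\ref{prop:sliding-points} gives torus-goodness for every $B'<B$, but not for $B$ itself.)

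Property~(2) is then immediate: each $c(t)$ is a diffeomorphic pullback of $\omega$, and such pullbacks preserve the property of having vertical foliation a weighted multicurve (all vertical leaves closed). The step I expect to demand the most care is~(1), not because it is deep but because one has to reconcile the two pictures in play --- a \emph{fixed} differential $\omega$ with \emph{moving} marked points, in which Propositions~\ref{prop:sliding-points} and~\ref{prop:good-pairs-long-segments} are phrased, against the path $c$, which is literally a \emph{moving} differential with the \emph{fixed} points $\mathbf{z}$ --- and verify that the translation between them is precisely pullback by $D_{\alpha,t}^{-1}$, while simultaneously arranging that the thickness constant $\delta$ is uniform in $t$. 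The only other point needing attention is in~(3), namely choosing the constant horizontal speed parametrisation of $\alpha$ and the isotopy $D_{\alpha,t}$ to vary continuously with $\omega$; this is routine.
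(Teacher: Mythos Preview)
Your proposal is correct and follows essentially the same approach as the paper's proof: define $c$ as the projection of $t\mapsto D_{\alpha,t}^{-1}\omega$, derive~(1) from Proposition~\ref{prop:sliding-points} and Proposition~\ref{prop:good-pairs-long-segments}, derive~(2) from the fact that diffeomorphic pullback preserves the multicurve property of the vertical foliation, and derive~(3) from continuity of the pullback action and the quotient map. Your version is in fact more careful than the paper's on two points the paper glosses over: you make explicit the identification between the moving-differential/fixed-points picture and the fixed-differential/moving-points picture (via pullback by $D_{\alpha,t}^{-1}$), and you pass to a fixed $B'<B$ to secure a $\delta$ that is uniform in $t$, whereas the paper simply asserts that $D_{\alpha,t}^{-1}\omega$ is $(\epsilon,B)$--torus good.
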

\begin{proof}
  \begin{enumerate}
  \item If we suppose that $\omega, q_1, \ldots, q_k$ and $\alpha$
    satisfy the requirements of Proposition~\ref{prop:sliding-points}
    then, for any $t$, the differential $D_{\alpha,t}^{-1}\omega$ is
    $(\epsilon,B)$--torus good with respect to $(q_1,\ldots, q_k)$\footnote{The covers certifying torus goodness vary in $t$, by pullback under the $D_{\alpha,t}^{-1}$}, by Proposition~\ref{prop:sliding-points}.
  \item If the vertical foliation of $\omega$ is a multicurve $C$, and $\phi$ is any
  diffeomorphism, then the vertical foliation of $\phi^*\omega$ is $\phi(C)$. In particular,
  the vertical foliation of $\phi^*\omega$ is also a multicurve. This implies the desired
  statement.
  \item This follows because the diffeomorphisms
    $D_{\alpha,t}^{-1}$ act continuously on $\widetilde{\Omega}(S)$, vary smoothly in $t$, and the map
    $\widetilde{\Omega}(S)\to\Omega(S, \{q_1,\ldots, q_k\})$ is continuous.
  \end{enumerate}
\end{proof}

\begin{lem}\label{lem:curve-from-push}
  Suppose that $q_i, \alpha$ are as in
  Proposition~\ref{prop:path-from-slide}, and $\omega$ is an
  Abelian differential whose vertical foliation is a multicurve
  $\delta$.  Consider the path in $\Omega(S, \{q_1,\ldots, q_k\})$
  from Proposition~\ref{prop:path-from-slide}. Then, only a finite
  number of weighted multicurves appear along this path as vertical
  foliations.
\end{lem}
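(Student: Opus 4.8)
The plan is to follow the vertical foliation along the path produced by Proposition~\ref{prop:path-from-slide} while working on the underlying closed surface $S$, and to show that its isotopy class relative to the marked points $\mathbf{z}=(q_1,\dots,q_k)$ is constant outside a finite subset of the parameter interval. Recall that the representative of the path at parameter $t\in[0,1]$ is $D_{\alpha,t}^{-1}\omega$, whose vertical foliation \emph{on $S$} is the image of the weighted multicurve $\delta$ under the ambient diffeomorphism $D_{\alpha,t}$ (the transverse measure, and hence the weights, being transported unchanged); write $\delta_t:=D_{\alpha,t}(\delta)$ for this multicurve. As a point of $\Omega(S,\mathbf{z})$ the differential $D_{\alpha,t}^{-1}\omega$ remembers $\delta_t$ only up to isotopy rel $\mathbf{z}$, so the first step is to observe that the vertical foliation of $c(t)$, as a measured foliation on $(S,\mathbf{z})$, is precisely the isotopy class rel $\mathbf{z}$ of $\delta_t$ together with its (fixed) weights, which is a genuine weighted multicurve for every $t$ by Proposition~\ref{prop:path-from-slide}(2).

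The key step is a finiteness statement for the \emph{exceptional} parameters, i.e.\ those $t$ with $\delta_t\cap\mathbf{z}\neq\emptyset$. Since $q_i\in\delta_t$ exactly when $D_{\alpha,t}^{-1}(q_i)=\alpha(t_i-t)\in\delta$, the exceptional set is
\[ E=\bigcup_{i=1}^{k}\bigl\{\,t\in[0,1] : \alpha(t_i-t)\in\alpha\cap\delta\,\bigr\}. \]
Now $\alpha$ is clean for $\omega$, so it is transverse to the vertical foliation of $\omega$, whose underlying multicurve is $\delta$; hence $\alpha$ and $\delta$ are transverse compact $1$--submanifolds of $S$ and meet in finitely many points. (Concretely: $\delta$ is a union of closed vertical leaves of $\omega$, each covered by finitely many vertical segments, and a clean curve meets each vertical segment finitely often.) Since for each $i$ the assignment $t\mapsto\alpha(t_i-t)$, $t\in[0,1]$, traverses $\alpha$ exactly once, each of the $k$ sets in the union above is finite, and therefore $E$ is finite.

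It then remains to check that the vertical foliation of the path is constant on each connected component of $[0,1]\setminus E$, which is a standard application of the isotopy extension theorem. If $[a,b]$ lies in such a component, then $\bigcup_{t\in[a,b]}\delta_t$ is a compact subset of $S\setminus\mathbf{z}$, so the isotopy $t\mapsto\delta_t$ extends to an ambient isotopy of $S$ which one may take to be supported away from $\mathbf{z}$; its time-$b$ map is then a diffeomorphism of $S$ fixing $\mathbf{z}$ pointwise and isotopic to the identity rel $\mathbf{z}$, carrying $\delta_a$ to $\delta_b$ and matching weights. Hence $\delta_a$ and $\delta_b$ define the same point of $\mathcal{MF}(S-\mathbf{z})$, so the vertical foliation is constant on that component. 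Altogether the vertical foliation along the path takes at most $|E|$ values at the exceptional parameters and at most $|E|+1$ values on the complementary intervals, so only finitely many weighted multicurves occur.

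The only genuinely delicate point is the first step: identifying the abstractly defined vertical foliation of $D_{\alpha,t}^{-1}\omega$ in $\Omega(S,\mathbf{z})$ with the concrete isotopy class rel $\mathbf{z}$ of $\delta_t\subset S$, while keeping track of the fact that $D_{\alpha,t}$ is \emph{not} a homeomorphism of the punctured surface for a generic $t$ --- so one has to pass through the closed surface $S$ and only then cut out $\mathbf{z}$, rather than working on $S-\mathbf{z}$ throughout. Granting that, together with Proposition~\ref{prop:path-from-slide}(2), the remainder is the transversality bookkeeping sketched above.
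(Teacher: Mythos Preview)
Your proof is correct and follows essentially the same approach as the paper's: identify the vertical foliation along the path with the image of the multicurve $\delta$ under the point-pushing isotopy, show that the set of exceptional parameters (where the moving multicurve meets $\mathbf{z}$) is finite because a clean curve meets the vertical leaves in finitely many points, and conclude that the isotopy class rel $\mathbf{z}$ is constant on each complementary interval. The paper argues the last step more briefly by noting that the family $D_{\alpha,t}^{-1}(\cup\gamma_i)$ is a homotopy of multicurves disjoint from $\mathbf{z}$, hence a free homotopy in $S-\mathbf{z}$; your use of the isotopy extension theorem is a slightly more explicit way to say the same thing. One small caution: your identification $\delta_t=D_{\alpha,t}(\delta)$ has the opposite sign from the paper's $D_{\alpha,t}^{-1}(\cup\gamma_i)$, reflecting an ambiguity in what ``$D_{\alpha,t}^{-1}\omega$'' denotes, but this does not affect the argument since the roles of $t$ and $-t$ are symmetric for the finiteness claim.
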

\begin{proof}
  The fact that every vertical foliation along the path is a
  (weighted) multicurve follows from Proposition~\ref{prop:path-from-slide}. 
  Pick regular leaves $\gamma_1, \ldots,
  \gamma_n$, so that the vertical foliation of $\omega$ consists
  exactly of cylinders around the $\gamma_i$ (seen as a foliation on $S-\{q_1,\ldots, q_k\}$). Consider a time $s$ so that
  \[ D_{\alpha,s}^{-1}\left(\cup\gamma_i\right) \cap \{q_1,\ldots,q_k\} \neq \emptyset. \]
  Since $D_{\alpha, s}$ preserves $\alpha$, and all $q_i$ are contained in $\alpha$, for such an $s$ there has to be a point
  $\alpha(t_0) \in \gamma_i$ (for a suitable $i$) so that $\alpha(t_0+s) = q_j$ (for a suitable $j$). Hence, each such time $s$ corresponds to one of the finitely many intersection points
  of $\alpha$ with $\cup\gamma_i$ and a choice of $q_j$ -- in particular there are only finitely many such times, say $s_j,
  j=1, \ldots, J$. Observe that for $t \in (s_j,
  s_{j+1})$ the multicurves
  \[ t \to D_{\alpha,t}^{-1}\left(\cup\gamma_i\right), \quad t \in (s_j, s_{j+1}) \]
  are then all disjoint from the points $q_i$ by definition of the times $s_j$, and
  thus freely homotopic multicurves on the surface $S-\{q_1, \ldots, q_k\}$

  The multicurve defined by the vertical foliation of
  $D_{\alpha,t}^{-1}\omega$ is exactly
  $D_{\alpha,t}^{-1}\left(\cup\gamma_i\right)$, seen as a foliation on
  $S-\{q_1,\ldots, q_k\}$, and it is therefore constant (up to homotopy)
  for all $t \in (s_j, s_{j+1})$.

  This shows that the multicurve defined by the vertical foliation of
  $D_{\alpha,t}^{-1}\omega$ can only change at $t=s_j$ for some $j$,
  and therefore only takes finitely many values.
\end{proof}

\begin{defin}
  A \emph{twisting pair} for an Abelian differential $\omega$ and a
  number of marked points $q_1,\ldots, q_k$ is a pair of curves
  $\alpha, \beta$ so that
  \begin{enumerate}
  \item $\alpha, \beta$ fill $S$.
  \item $\alpha, \beta$ are clean.
  \item There are numbers $t_i, s_i$ so that $q_i=\alpha(t_i)=\beta(s_i)$.
  \end{enumerate}
\end{defin}
Arguing exactly as in the proof of \cite[Lemma~4.2]{LS-connectivity}, we see the following.
\begin{lem}\label{lem:existence-twisting}
  Let $S$ be a closed surface of genus at least $2$, and with some
  number of marked points $q_1,\ldots,q_k$.  For every $\omega \in
  \widetilde{\Omega}(S)$ there is a countable set of twisting pairs $(\alpha,
  \beta)$. Given any twisting pair $(\alpha, \beta)$ there is an open
  neighbourhood $U_{\omega,\alpha,\beta}$ of $\omega \in
  \widetilde{\Omega}(S)$, so that $(\alpha,\beta)$ is a twisting pair
  for every $\eta \in U_{\omega,\alpha,\beta}$.
\end{lem}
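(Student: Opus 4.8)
The plan is to isolate what actually depends on $\omega$. The \emph{openness} part is essentially free: if $(\alpha,\beta)$ is a twisting pair for $\omega$, then condition (1) ($\alpha,\beta$ fill $S$) and condition (3) (the fixed curves $\alpha,\beta$ run through all of $q_1,\dots,q_k$) involve only the curves and the marked points and not $\omega$, hence persist for every $\eta\in\widetilde{\Omega}(S)$. The only $\omega$-dependent requirement is cleanness, and Lemma~\ref{lem:clean-in-nbhd} already supplies open neighbourhoods $U_{\omega,\alpha}$ and $U_{\omega,\beta}$ of $\omega$ on which $\alpha$, respectively $\beta$, stays clean. Taking $U_{\omega,\alpha,\beta}=U_{\omega,\alpha}\cap U_{\omega,\beta}$ settles this half, so the content of the lemma is the existence statement.

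For existence I would build the pair in three stages; throughout, as is implicit, one assumes no $q_i$ is a zero of $\omega$, since no clean curve can pass through a zero. First, choose a topological filling pair of embedded curves $\alpha_0,\beta_0$ on $S$ (these exist on any surface of genus at least $2$, and the presence of marked points is immaterial at this stage). Second, homotope $\alpha_0$ to an embedded curve $\alpha_1$ passing through $q_1,\dots,q_k$ by performing, for each $i$, a thin finger move of $\alpha_0$ into the complementary region containing $q_i$; keeping the fingers pairwise disjoint and disjoint from $\beta_0$ preserves embeddedness of $\alpha_1$ and keeps $\alpha_1\cup\beta_0$ filling, since a finger entering a complementary disk and returning only subdivides it into disks. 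Applying the analogous finger moves to $\beta_0$ so that it reaches each $q_i$ (which now lies on $\alpha_1$) yields an embedded $\beta_1$ through all $q_i$ with $\alpha_1\cup\beta_1$ still filling. Third, perturb $\alpha_1,\beta_1$, keeping them fixed near $\{q_1,\dots,q_k\}$, to make them clean: near each $q_i$ a local homotopy arranges disjointness from the (finite) zero set and transversality to the horizontal and vertical line fields, while away from these neighbourhoods transversality to these smooth line fields and avoidance of the finitely many zeros is achieved by an arbitrarily $C^\infty$-small perturbation supported off the $q_i$. Such a small perturbation changes neither the isotopy class nor the filling property of the pair, so the resulting $(\alpha,\beta)$ is a twisting pair.

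The step I expect to need the most care is the third one: making $\alpha_1$ and $\beta_1$ \emph{simultaneously} clean while holding them fixed on a neighbourhood of the marked points. One must check that the relevant genericity can be realized by a perturbation that neither moves the $q_i$ nor destroys embeddedness or filling. This is precisely the point addressed in \cite[Lemma~4.2]{LS-connectivity}, and the argument there transfers verbatim: it is a relative transversality argument, using that the horizontal and vertical foliations of $\omega$ are smooth away from the finitely many zeros, together with the observation that both ``filling'' and ``running through the $q_i$'' are stable under $C^0$-small changes of the curves. With this in hand, the existence statement follows, and combined with the first paragraph this completes the proof.
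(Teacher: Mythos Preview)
Your proposal is correct and matches the paper's approach: the paper gives no independent argument but simply refers to \cite[Lemma~4.2]{LS-connectivity}, which is exactly the source you invoke for the delicate third step (relative perturbation to cleanness while fixing the curves near the $q_i$). Your write-up in fact supplies more detail than the paper does, correctly isolating that only cleanness depends on $\omega$ and reducing openness to Lemma~\ref{lem:clean-in-nbhd}.
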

Indeed, \cite{LS-connectivity}'s argument involves a sequence of
simple closed curves converging to minimal foliations (which are
obtained by real parts of a rotated $\omega$), and they show that any
simple closed curves sufficiently close to the limits will have the
desired property -- in particular there will be countably many such
possible choices.

As an immediate consequence of Proposition~\ref{prop:path-from-slide}
and the fact that the product of multi-point-pushes around filling
curves are pseudo-Anosov, we have the following result, analogous to
\cite[Lemma~4.5]{LS-connectivity}.
\begin{cor}\label{cor:ppp}
  Let $(q_1, \ldots, q_k)$ be points on $S$. 
  Suppose that $\omega$ is an Abelian differential and 
  $\alpha,\beta$ is a twisting pair for $\omega$. Then, for any $j$, 
  define the diffeomorphism $\psi^{(j)}=P_\alpha^j(P_\alpha P_\beta^{-1})P_\alpha^{-j}$. 
  
  Let $F$ be the vertical foliation of $\omega$. Then, there is a \emph{point push path} in $\mathcal{PMF}$
  \[ P(F, \psi^{(j)} F) \] joining $F$ to $\psi^{(j)} F$. If $\omega$ is
  $(\epsilon,B)$--torus good with respect to $(q_1,\ldots,q_k)$, then any point on $P(F, \psi^{(j)} F)$ is a
  cobounded foliation. If $F$ is a multicurve, then there is a finite
  set of multicurves $F_i$ so that every point on $P(F, \psi^{(j)} F)$
  consists of a weighted multicurve homotopic to one of the $F_i$
  (with varying weights). 
\end{cor}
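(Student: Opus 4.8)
The plan is to realise $\psi^{(j)}$ as an explicit word in the multi-point-pushes $P_\alpha,P_\beta$ and to obtain $P(F,\psi^{(j)}F)$ by concatenating the point-push paths furnished by Proposition~\ref{prop:path-from-slide}. Writing $\psi^{(j)}=P_\alpha^{\,j+1}P_\beta^{-1}P_\alpha^{-j}$ (using that $P_\alpha$ commutes with itself), I would build the path in three legs starting from $\omega$: first push all the marked points $j$ times around $\alpha$ (realising $P_\alpha^{-j}$), then once around $\beta$ (realising $P_\beta^{-1}$), then $j+1$ times around $\alpha$ (realising $P_\alpha^{\,j+1}$). Each leg is a continuous path of Abelian differentials of the shape $s\mapsto D_{\gamma,s}^{-1}(\,\cdot\,)$ with $\gamma\in\{\alpha,\beta\}$, obtained by left-composing the point-push isotopy of the current leg with the diffeomorphism reached at the end of the previous leg; the legs agree at the junctions, and the product of the realised mapping classes is $\psi^{(j)}$. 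Hence the concatenation is a continuous path of differentials whose vertical foliation runs from $F$ to $\psi^{(j)}F$; I define $P(F,\psi^{(j)}F)$ to be the induced path of vertical foliations, which is a path of foliations by Proposition~\ref{prop:path-from-slide}.

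For the coboundedness statement, assume $\omega$ is $(\epsilon,B)$--torus good with respect to $\mathbf{z}=(q_1,\dots,q_k)$. The crucial observation is that each leg slides \emph{all} of the $q_i$ simultaneously along a \emph{single} curve ($\alpha$ or $\beta$) which is clean for $\omega$ and passes through every $q_i$, and that the diffeomorphisms reached at the ends of the legs are point-pushes which complete whole loops along $\alpha$ or $\beta$, hence fix each $q_i$. Consequently every differential $\eta$ along the concatenation is the pullback of $\omega$ under a composition of such whole-loop point-pushes followed by a single partial slide along $\alpha$ or $\beta$; applying this composition to $\mathbf{z}$, the whole-loop factors act trivially and only the partial slide remains, so $\eta$ is the pullback of $\omega$ by a diffeomorphism carrying $\mathbf{z}$ to a configuration $\alpha(t_1+s),\dots,\alpha(t_k+s)$ (or the analogue for $\beta$). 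Proposition~\ref{prop:sliding-points}, applied \emph{to $\omega$ itself}, then shows that $\eta$ is $(\epsilon,B')$--torus good with respect to $\mathbf{z}$ for any fixed $B'<B$, and Proposition~\ref{prop:good-pairs-long-segments} gives that $\eta$ is eventually strongly $\delta$--thick with respect to $\mathbf{z}$. Thus every foliation on $P(F,\psi^{(j)}F)$ is cobounded.

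For the multicurve statement, suppose instead that $F$ is a weighted multicurve; a torus-good differential has no closed vertical leaves, so this does not overlap the torus-good hypothesis and we are free to modify the curves on the later legs. Run the same three-leg recipe, but before the second and third legs replace $\beta$ (respectively $\alpha$) by a representative isotopic rel $\mathbf{z}$ which is transverse to the vertical foliation of the differential then reached---this leaves the point-push mapping class it defines unchanged---so that Lemma~\ref{lem:curve-from-push} applies to each leg in turn (the differential reached at the end of a leg again has multicurve vertical foliation, being a diffeomorphic image of one). Each of the three legs contributes only finitely many homotopy classes of weighted multicurves, so $P(F,\psi^{(j)}F)$ passes through only finitely many $F_i$.

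The step I expect to be the main obstacle is the coboundedness bookkeeping in the second paragraph: one must be careful to invoke Proposition~\ref{prop:sliding-points} for the \emph{original} $\omega$---for which $\alpha$ and $\beta$ are clean and pass through all the $q_i$---rather than for the differential reached at the start of a leg (for which the relevant curve need not be clean), and this is legitimate precisely because the only ``curve-switching'' between legs is effected by complete point-pushes that fix $\mathbf{z}$ pointwise. One should also note that, since the path consists of just three legs independently of $j$ and Proposition~\ref{prop:sliding-points} is always applied to $\omega$, the torus-goodness constant $B'$ can be chosen uniformly and does not degrade with $j$.
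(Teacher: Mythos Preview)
Your approach is essentially the paper's: concatenate point-push segments along $\alpha$ and $\beta$ to realise the word $P_\alpha^{\,j+1}P_\beta^{-1}P_\alpha^{-j}$, then invoke Proposition~\ref{prop:path-from-slide} (and its ingredient Proposition~\ref{prop:sliding-points}) for coboundedness and Lemma~\ref{lem:curve-from-push} for the multicurve claim; the paper reads the word from the left and you from the right, but this is immaterial. The one simplification the paper makes is to regard each later segment as the image under a mapping class of one of the two \emph{basic} segments $P(F,P_\alpha F)$ and $P(F,P_\beta^{-1}F)$ built directly from the original $\omega$---since torus-goodness and the finiteness conclusion of Lemma~\ref{lem:curve-from-push} are mapping-class-invariant, everything can be checked once against $\omega$, which removes the need for your step of re-choosing transverse representatives of $\alpha,\beta$ before the later legs.
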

\begin{proof}
The idea is to apply Proposition~\ref{prop:path-from-slide} $2j+2$
  times to join $\omega$ to $\Psi^{(j)}\omega$ (where $\Psi^{(j)}$ is a
  diffeomorphism defining the multi-point-push $\psi^{(j)}$), and then obtain $P$
  as the associated path of vertical foliations.
  To make this precise, denote by $P_\alpha, P_\beta$ 
  point pushing diffeomorphisms around $\alpha, \beta$, and denote by
  $C(\eta, P_\ast\eta)$ the path of Abelian differentials guaranteed by applying
  Proposition~\ref{prop:path-from-slide}. We now form the concatenated path 
  \begin{equation*}
    \begin{aligned}
     C := C(\omega, P_\alpha \omega) \ast C(P_\alpha \omega,P_\alpha^{2} \omega) \ast
  \dots \\ C(P_\alpha^{j} \omega, P_\alpha^{j+1} \omega) \ast
  C(P_\alpha^{j+1} \omega, P_\alpha^{j+1}P_\beta^{-1} \omega) \ast\\
  C(P_\alpha^{j+1}P_\beta^{-1} \omega,
  P_\alpha^{j+1}P_\beta^{-1}P_\alpha^{-1} \omega) \ast \dots \\
  C(P_\alpha^{j+1}P_\beta^{-1}P_\alpha^{-j+1} \omega,
  P_\alpha^{j+1}P_\beta^{-1}P_\alpha^{-j} \omega). 
    \end{aligned}
  \end{equation*}
  Taking the vertical foliations, we then obtain a path in $\mathcal{PML}$
   \begin{equation}
    \begin{aligned}
  P := P(F, P_\alpha F) \ast P(P_\alpha F,P_\alpha^{2} F) \ast
  \dots \\ P(P_\alpha^{j} F, P_\alpha^{j+1} F) \ast
  P(P_\alpha^{j+1} F, P_\alpha^{j+1}P_\beta^{-1} F) \ast\label{eq:concatenated}\\
  P(P_\alpha^{j+1}P_\beta^{-1} F,
  P_\alpha^{j+1}P_\beta^{-1}P_\alpha^{-1} F) \ast \dots\\
  P(P_\alpha^{j+1}P_\beta^{-1}P_\alpha^{-j+1} F,
  P_\alpha^{j+1}P_\beta^{-1}P_\alpha^{-j} F)      
    \end{aligned}
    \end{equation}
  of foliations joining the vertical foliation $F$ of $\omega$ to $\psi^{(j)}(F)$.
  Proposition~\ref{prop:path-from-slide} says that if $\omega$ was
  $(\epsilon,B)$--torus good, the same is true for any point on the
  path $C$, hence $P$ consists of cobounded foliations. If $F$ was a
  multicurve, the claim follows from Lemma~\ref{lem:curve-from-push}.
\end{proof}

\begin{cor}\label{cor:peak}
  Suppose that $q_i, \omega, \alpha, \beta$ and $\psi^{(j)}$ are as in Corollary~\ref{cor:ppp},
  and suppose that $\omega$ is $(\epsilon, B)$--torus good.
  Then the concatenation
  \[ P(F, \psi^{(j)} F) \ast \psi^{(j)} P(F, \psi^{(j)} F) \ast \left(\psi^{(j)}\right)^2P(F, \psi^{(j)} F) \ast \dots \]
  extends to a continuous path of cobounded foliations connecting $F$ to the
  stable foliation of $\psi^{(j)}$.
\end{cor}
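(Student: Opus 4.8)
The plan is to concatenate the pieces $P := P(F,\psi F)$, $\psi P$, $\psi^2 P,\ldots$ (writing $\psi := \psi^{(j)}$), reparametrise the result on $[0,1)$ --- say placing $\psi^n P$ on $[1-2^{-n},\,1-2^{-n-1}]$, which is legitimate since $\psi^n P$ runs from $\psi^n F$ to $\psi^{n+1}F$, where $\psi^{n+1}P$ begins --- and then show that this path extends continuously to $[0,1]$ by sending $1$ to the stable foliation $F^+$ of $\psi$. Here $\psi$ is pseudo-Anosov, being the conjugate $P_\alpha^j(P_\alpha P_\beta^{-1})P_\alpha^{-j}$ of a product of multi-point-pushes around the filling pair $\alpha,\beta$ as in Corollary~\ref{cor:ppp}. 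Two things must be checked: every foliation along the concatenation (and the added endpoint) is cobounded, and the reparametrised path actually converges to $F^+$ at $t=1$.

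Coboundedness is the soft part. By Corollary~\ref{cor:ppp} every foliation on $P$ is cobounded; coboundedness is invariant under $\Mcg(S-\mathbf{z})$ (Teichm\"uller geodesic rays are equivariant and the projection to moduli space is mapping-class-group invariant, so the rays associated to $G$ and to $\phi G$ have the same image downstairs), so every foliation on each $\psi^n P$ is cobounded too. Finally $F^+$ is cobounded: the Teichm\"uller axis of $\psi$ projects to a closed geodesic in moduli space, both of its ends determine Teichm\"uller rays projecting into that (compact) geodesic, and $F^+$ is --- up to scale --- the vertical foliation of one of those rays. (If one wants uniform thickness, note in addition that every differential produced along the way remains $(\epsilon,B')$--torus good for any $B'<B$, so Proposition~\ref{prop:good-pairs-long-segments} applies with a single $\delta$; but only coboundedness is needed for the statement.)

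For convergence we use the north--south dynamics of $\psi$ on $\PMF(S-\mathbf{z})$: there is an attracting fixed point $F^+$ and a repelling fixed point $F^-$, and $\psi^n\to F^+$ uniformly on compact subsets of $\PMF(S-\mathbf{z})\setminus\{F^-\}$. Since $P$ is compact, once we know $F^-\notin P$ we get, for every neighbourhood $U$ of $F^+$, an $N$ with $\psi^n P\subset U$ for all $n\geq N$; this is exactly the statement that the reparametrised path tends to $F^+$ as $t\to 1$, so declaring its value at $1$ to be $F^+$ gives a continuous path that genuinely ends at $F^+$ (in particular $F\neq F^-$). The entire content of the corollary is therefore the claim $F^-\notin P$ --- indeed that neither invariant foliation $F^\pm$ of $\psi$ lies on $P$ --- and this is the step I expect to carry the weight.

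To prove $F^\pm\notin P$ I would compare the behaviour at the marked points. Every foliation occurring along $P$ is the vertical foliation (on $S-\mathbf{z}$) of one of the Abelian differentials making up the path $C$ of Corollary~\ref{cor:ppp}; these are all obtained from the torus-good $\omega$ by pulling back under diffeomorphisms of $S$, hence are genuine Abelian differentials on the \emph{closed} surface $S$, whose vertical foliations have no $1$--pronged singularities and thus extend over each $q_i$ to a measured foliation on $S$. On the other hand, $\psi=\psi^{(j)}$ lies in the point-pushing subgroup $\ker\big(\Mcg(S-\mathbf{z})\to\Mcg(S)\big)$, so a diffeomorphism representing $\psi$ is isotopic to the identity on the closed surface $S$; were an invariant foliation $F^\pm$ to extend over all the $q_i$ to a measured foliation $\bar F^\pm$ on $S$, we would obtain a diffeomorphism of $S$ isotopic to the identity carrying $\bar F^\pm$ to $\lambda^{\pm1}\bar F^\pm$ with $\lambda\neq 1$, i.e.\ a nonzero measured foliation on the closed surface $S$ isotopic to a nontrivial multiple of itself --- impossible, since isotopic measured foliations have equal intersection numbers with every curve. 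Hence $F^\pm$ has a $1$--pronged singularity at one of the $q_i$, so it cannot equal any foliation along $P$, and in particular $F^-\notin P$. The delicate point is precisely this comparison: recognising that one must exclude $F^-\in P$ rather than merely $F\neq F^-$, and that the structural fact ``$\psi$ is a point push, hence trivial downstairs'' is what forces it; the remainder is routine assembly.
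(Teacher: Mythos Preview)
Your proof is correct and follows essentially the same route as the paper: the key step is to show that the unstable foliation $F^-$ of $\psi^{(j)}$ does not lie on the compact path $P$, after which north--south dynamics does the rest. The paper establishes $F^-\notin P$ by the same singularity comparison you give---the invariant foliations of a point-pushing pseudo-Anosov must have a $1$--pronged (angle-$\pi$) singularity at some marked point (the paper cites \cite[Lemma~2.2]{LS-connectivity} for this, which is exactly your ``trivial downstairs'' argument), whereas every foliation along $P$ arises from an Abelian differential on the closed surface and hence has no such singularity. Your write-up is in fact more complete than the paper's: you spell out the reparametrisation, and you explicitly verify that the added endpoint $F^+$ is itself cobounded (as the stable foliation of a pseudo-Anosov), a point the paper's proof leaves implicit.
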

\begin{proof}
  First observe that $P(F, \psi^{(j)} F)$ is disjoint from the unstable
  foliation of $\psi^{(j)}$ for all $j$. Namely, the unstable foliation of
  $\psi^{(j)}$ has an angle-$\pi$ singularity, since it is a point-pushing
  map (compare \cite[Lemma~2.2]{LS-connectivity}), whereas $F$ (and any point
  push of it) as a lift of a foliation on a torus has no such
  singularities. Now, the corollary is an immediate consequence of the
  fact that pseudo-Anosov maps act on $\PMF$ with north-south
  dynamics.
\end{proof}

\begin{prop}\label{prop:skeleton-segment}
  Let $(q_1, \ldots, q_k)$ be
  points on $S$.  Suppose that $\omega$ is an Abelian differential
  whose vertical foliation is a multicurve, and let $\alpha,\beta$ be
  a twisting pair for $\omega$. Then, for any $j\in \mathbb{Z}$, define the mapping class
  $\psi^{(j)}=P_\alpha^j(P_\alpha P_\beta^{-1})P_\alpha^{-j}$.

  \begin{enumerate}
  \item There is a constant $C=C(\omega,\alpha,\beta)>0$, so that the union of the sets of multicurves
    appearing in paths $P(F, \psi^{(j)} F)$ from
    Corollary~\ref{cor:ppp} (over all $j$) has diameter at most $C$ in the curve graph. 
  \item If $\omega_n$ is a sequence of Abelian differentials
    converging to $\omega$, with vertical foliations $F_n$, then the
    paths $P(F_n, \psi^{(j)} F_n)$ converge to $P(F, \psi^{(j)} F)$.
  \end{enumerate}
\end{prop}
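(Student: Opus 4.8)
The plan is to treat the two parts separately. Part~(2) is essentially a continuity statement. By Corollary~\ref{cor:ppp} (see~\eqref{eq:concatenated}) the path $P(F,\psi^{(j)}F)$ is the sequence of vertical foliations of a concatenation of $2j+2$ slide-paths $C(\eta,P_\ast\eta)$, $\ast\in\{\alpha,\beta\}$, where $\eta$ runs over the prefix differentials $\omega,P_\alpha\omega,\dots,P_\alpha^{j+1}\omega,P_\alpha^{j+1}P_\beta^{-1}\omega,\dots,\psi^{(j)}\omega$. If $\omega_n\to\omega$, then for large $n$ the pair $(\alpha,\beta)$ is still a twisting pair for $\omega_n$ (Lemma~\ref{lem:existence-twisting}), so the analogous concatenation is defined for $\omega_n$. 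Since $P_\alpha,P_\beta$ act continuously on $\widetilde{\Omega}(S)$ the prefix differentials for $\omega_n$ converge to those for $\omega$, and by Proposition~\ref{prop:path-from-slide}(3) each slide-path depends continuously on its initial differential; for a fixed $j$ the finitely many pieces then converge uniformly, so the concatenation converges uniformly in $\widetilde{\Omega}(S,\mathbf{z})$, and applying the continuous vertical-foliation map yields $P(F_n,\psi^{(j)}F_n)\to P(F,\psi^{(j)}F)$. I expect no real difficulty here beyond pinning down the sense of convergence of paths (uniform convergence of maps $[0,1]\to\PMF$) and checking joint continuity of the slides.

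For part~(1) I would work in the curve graph $\mathcal{C}(S-\mathbf{z})$, $\mathbf{z}=\{q_1,\dots,q_k\}$. Fix an annular neighbourhood $N(\alpha)$ of $\alpha$ whose core carries all the $q_i$ and which misses the zeroes of $\omega$, with boundary curves $\alpha_L,\alpha_R$, viewed as vertices of $\mathcal{C}(S-\mathbf{z})$; similarly $N(\beta),\beta_L,\beta_R$. Since $\alpha$ is embedded, the multi-point-push $P_\alpha$ is a product of Dehn twists about $\alpha_L$ and $\alpha_R$, so $P_\alpha$ is supported on $N(\alpha)$, fixes $\alpha_L$ and $\alpha_R$, and $P_\alpha^m(\alpha_L)=\alpha_L$ for all $m$ (and likewise for $\beta$). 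The two facts I plan to use are: (a) a mapping class supported on a subsurface $Y$ moves any curve $\gamma$ to within distance $R(\gamma,Y)$ of $\partial Y$, with $R$ \emph{independent of the mapping class} — so $d_{\mathcal{C}}(P_\alpha^m\gamma,\alpha_L)$ is bounded uniformly in $m$ for any $\gamma$ crossing $\alpha$; and (b) the quantitative refinement that a diffeomorphism $\phi$ supported on the annulus $N(\alpha)$ — hence fixing $\alpha_L$ pointwise, which applies to the \emph{partial} pushes $D_{\alpha,t}$, $t\in[0,1]$, and not only to mapping classes — satisfies $d_{\mathcal{C}}(\phi\gamma,\alpha_L)\le f\big(d_{\mathcal{C}}(\gamma,\alpha_L)\big)$ for a function $f$ depending only on the surface.

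With these in hand, the argument would run as follows. Every multicurve appearing on $P(F,\psi^{(j)}F)$ lies on one of the $2j+2$ slide-paths $C(\eta,P_\ast\eta)$, and by the proof of Lemma~\ref{lem:curve-from-push} equals $D_{\ast,t}^{-1}(\mathrm{vert}\,\eta)$ for some $\ast$ and $t\in[0,1]$. First, each endpoint $\mathrm{vert}\,\eta$ is $g(\mathrm{vert}\,\omega)$ with $g$ a prefix of the word $\psi^{(j)}=P_\alpha^{j+1}P_\beta^{-1}P_\alpha^{-j}$; applying fact~(a) a bounded number of times and using that $P_\alpha^{j+1},P_\beta^{-1}$ are isometries fixing $\alpha_L$, resp.\ $\beta_L$, shows $d_{\mathcal{C}}(\mathrm{vert}\,\eta,\alpha_L)$ is bounded independently of $j$ (e.g.\ $P_\beta^{-1}$ sends a curve close to $\alpha_L$ to one close to $\beta_L$, which $P_\alpha^{j+1}$ then returns to within a uniform distance of $\alpha_L$). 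Second, fact~(b) controls the interior multicurves $D_{\ast,t}^{-1}(\mathrm{vert}\,\eta)$: an $\alpha$-slide keeps them within $f\big(d_{\mathcal{C}}(\mathrm{vert}\,\eta,\alpha_L)\big)$ of $\alpha_L$, already bounded, and the single $\beta$-slide keeps them within $f\big(d_{\mathcal{C}}(\mathrm{vert}\,\eta,\beta_L)\big)$ of $\beta_L$, with $d_{\mathcal{C}}(\mathrm{vert}\,\eta,\beta_L)\le d_{\mathcal{C}}(\mathrm{vert}\,\eta,\alpha_L)+d_{\mathcal{C}}(\alpha_L,\beta_L)$ again uniformly bounded. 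As $d_{\mathcal{C}}(\alpha_L,\beta_L)$ is a fixed constant, all these multicurves (over all $j$) lie in one ball of fixed radius, which is part~(1). The step I expect to be the real obstacle is establishing fact~(b) in exactly this uniform, quantitative form for honest diffeomorphisms supported on an annulus — crucially including the non-mapping-class partial pushes $D_{\alpha,t},D_{\beta,t}$ — since the naive bound (counting the finitely many isotopy-class changes of Lemma~\ref{lem:curve-from-push} along a slide, each costing a bounded amount) is useless on the $\beta$-slide, where $\mathrm{vert}(P_\alpha^{j+1}\omega)$ meets $\beta_L$ in order $j$ points and hence undergoes order $j$ such changes. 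The content of~(b) is precisely that these interior multicurves all agree with $\mathrm{vert}\,\eta$ outside $N(\beta)$ and so cannot leave a fixed neighbourhood of $\beta_L$, however many marked points a strand gets dragged across.
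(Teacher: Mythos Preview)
Your treatment of part~(2) is correct and is exactly what the paper does: continuity of the action of diffeomorphisms on $\widetilde\Omega(S)$, plus Proposition~\ref{prop:path-from-slide}(3).

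For part~(1) your strategy would likely work, but it is substantially harder than necessary, and the obstacle you identify (your ``fact~(b)'' for honest partial pushes of highly twisted differentials) is genuine and would require a nontrivial subsurface--projection argument. The paper avoids this entirely via an equivariance observation you are missing: for any mapping class $g$ one has
\[
  P(gF,\,gP_\ast F)\;=\;g\cdot P(F,\,P_\ast F)
\]
as paths in $\PMF(S-\mathbf z)$. Thus every one of the $2j+2$ slide segments in~\eqref{eq:concatenated} is the image, under a \emph{mapping class}, of one of only three fixed basic paths: $P(F,P_\alpha F)$, $P(F,P_\alpha^{-1}F)$, and $P(P_\alpha F,P_\alpha P_\beta^{-1}F)$. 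Each of these involves only finitely many multicurves by Lemma~\ref{lem:curve-from-push}; call their union $G_0$, a single finite set independent of $j$.

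The diameter bound then follows by tracking $G_0$ under the relevant prefix mapping classes, using only that $P_\alpha$ is a Dehn multitwist and hence an elliptic isometry of the curve graph. For instance $P(P_\alpha^iF,P_\alpha^{i+1}F)=P_\alpha^iP(F,P_\alpha F)$ lands in $P_\alpha^i(G_0)$, bounded uniformly in $i$; and the tail segments equal $P_\alpha^{j+1}P_\beta^{-1}P_\alpha^{-i}\,P(F,P_\alpha^{-1}F)$, where one applies in turn a power of $P_\alpha$ (bounded displacement), one fixed isometry $P_\beta^{-1}$ (bounded displacement), and another power of $P_\alpha$ (bounded displacement). No control on intermediate times $t\in(0,1)$ of a slide is ever needed, because those intermediate multicurves already sit in the fixed finite set $G_0$ \emph{before} any prefix is applied. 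In your setup you instead apply the partial push $D_{\beta,t}^{-1}$ to a differential that has already been twisted $j$ times, which is exactly what forces you into the delicate fact~(b).
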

\begin{proof}
  \begin{enumerate}
  \item 
    We inductively consider the terms used in the proof of
    Equation~(\ref{eq:concatenated}) in Corollary~\ref{cor:ppp}. Let
    $\delta$ be one of the curves in the multicurve $F$.  By Lemma
    \ref{lem:curve-from-push} only finitely many curves appear in
    $P(F, P_\alpha F)$; call that set of curves $G_0$. In the next terms
  \[ P(P_\alpha^i F, P_\alpha^{i+1}F) = P_\alpha^i P(F, P_\alpha F) \]
  the curves that appear are the images of $G_0$ under powers of a
  Dehn multitwist, and as these act on the curve graph by isometries with fixed points
  (\emph{elliptically}), all curves that appear are contained in a
  $C_0$--neighbourhood of $\delta$. The curves appearing in the next
  two terms:
  \[ P(P_\alpha^{j} F, P_\alpha^{j+1} F) \ast P(P_\alpha^{j+1} F, P_\alpha^{j+1}P_\beta^{-1} F) = P_\alpha^{j} (P(F, P_\alpha F) \ast P(P_\alpha F, P_\alpha P_\beta^{-1} F))\]
  are images under $P_\alpha^j$ of the (finitely many) curves appearing
  in $P(F, P_\alpha F) \ast P(P_\alpha F, P_\alpha P_\beta^{-1} F)$, and are
  therefore also contained in some $C_1$--neighbourhood of $\delta$.
  Finally, in the terms of the third type
  \[ 
    P(P_\alpha^{j+1}P_\beta^{-1}P_\alpha^{-i} F, P_\alpha^{j+1}P_\beta^{-1}P_\alpha^{-i-1} F) = P_\alpha^{j+1}P_\beta^{-1}P_\alpha^{-i} P(F, P_\alpha^{-1} F) 
  \]
  we argue similarly. The path $P(F, P_\alpha^{-1} F)$ involves
  finitely many curves, which remain in a $C_3$--neighbourhood of
  $\delta$ by application of any power $P_\alpha^{-i}$. Let $C_4 =
  2C_3 + d(\delta, P_\alpha^{-1}P_\beta\delta)$. Then the image of the
  $C_3$--neighbourhood around $\delta$ is mapped by the pseudo-Anosov
  $P_\alpha^{-1}P_\beta$ into the $C_4$--neighbourhood around
  $\delta$.

 Finally, letting $C_5 = 2C_4 + d(\delta, \alpha)$, 
the $C_4$-neighbourhood around $\delta$ is sent to a
  $C_5$--neighbourhood around $\delta$ by the application of any further power of $P_\alpha^j$. Hence, $C_5$ has the desired property.
\item  This is a consequence of the fact that diffeomorphisms act
  continuously on the space of Abelian differentials; compare Proposition~\ref{prop:path-from-slide}~(3).
  \end{enumerate}
\end{proof}

\section{Paths in the punctured case, and Islands of point-pushes}
\label{sec:islands-point-push}
We now come to the main technical connectivity result for
$(\epsilon, B)$--torus good foliations. Let $S$ be a surface, and fix a finite
set of points $\mathbf{z}\neq\emptyset$.

Suppose $(\epsilon, B)$ are given so that there is a
$(\epsilon,B)$--torus good $\omega$ on $S$ with respect to
$\mathbf{z}$. We begin by defining $\mathcal{TG}$ to be
the set of all vertical foliations on $S$ of all $\omega$ which are
$(\epsilon,B)$--torus good. Since being $(\epsilon,B)$--torus good is
invariant under diffeomorphisms preserving $\mathbf{z}$, $\mathcal{TG}$ is a
mapping-class-group invariant set, and thus dense in
$\mathcal{F}(S,\mathbf{z})$.

Note that we have quantified our base point here a little differently than we did in the introduction (and is often done) when we defined $\epsilon$-cobounded foliations. Here we choose a best possible base point and an $(\epsilon,B)$-torus good foliation will be eventually $\epsilon'$-cobounded for any $\epsilon'<\epsilon$ and in particular it is in $\Cob$.

We begin with the following lemma,  whose proof is similar to the proof of Theorem~1.1 for $\mathbf{z}\neq\emptyset$ in \cite[Section~4.4]{LS-connectivity}.
\begin{lem}\label{lem:island-1}
  Suppose that $F, F' \in \mathcal{TG}$ are arbitrary. Then, there are
  \begin{enumerate}
  \item A finite number of simple closed curves
    $\alpha_i, \beta_i, i=1,\ldots, N-1$,
  \item numbers $\epsilon, B>0$,
  \item Abelian differentials $\omega_1, \ldots, \omega_N$,
  \item Abelian differentials $\hat{\omega}_1, \ldots, \hat{\omega}_N$,
  \item and multicurves $\delta_1, \ldots, \delta_N$, 
  \end{enumerate}
  so that the following hold:
  \begin{enumerate}[i)]
  \item The vertical foliation of $\omega_1$ is $F$, and the vertical
    foliation of $\omega_N$ is $F'$.
  \item The curves $(\alpha_i, \beta_i)$ are a twisting pair for both
    $\omega_i, \hat{\omega}_i$ and $\omega_{i+1}, \hat{\omega}_{i+1}$.
  \item All $\omega_i$ are $(B,\epsilon)$--torus good, given by
    pullbacks of $\omega^i_T$ along a cover $p_i:S\to T$,
  \item all $\hat{\omega}_i$ are pullbacks of Abelian differentials
    $\hat{\omega}^i_T$ whose vertical foliation is a single cylinder.
  \item the multicurves $\delta_i$ are the core curves of the
    vertical cylinders of the $\hat{\omega}_i$.
  \end{enumerate}
  Moreover, we can choose a countable family as above that only overlap as necessary. That is, for each such sequence the first of the $\omega_i$ has vertical foliation $F$ and the last has vertical foliation $F'$, but all other chosen vertical foliations, differentials and curves are distinct.
\end{lem}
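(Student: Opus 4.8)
The plan is to follow the island construction of \cite[Section~4.4]{LS-connectivity}, substituting our density of torus good differentials for the density statements used there. First I would use the definition of $\mathcal{TG}$ to pick an $(\epsilon,B)$--torus good differential $\omega_1$ with vertical foliation $F$ and another one, destined to become $\omega_N$, with vertical foliation $F'$; the number $N$ will be determined below. Since $\widetilde{\Omega}(S)$ is path-connected (it is a vector bundle over a contractible base), there is a continuous path $\gamma\colon[0,1]\to\widetilde{\Omega}(S)$ with $\gamma(0)=\omega_1$ and $\gamma(1)=\omega_N$.

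Next I would produce the twisting pairs by a compactness argument. By Lemma~\ref{lem:existence-twisting}, each $\gamma(t)$ admits a twisting pair which remains one on an open neighbourhood of $\gamma(t)$; using compactness of $[0,1]$ and continuity of $\gamma$ I would subdivide $[0,1]$ into intervals $I_i=[a_{i-1},a_i]$, $i=1,\dots,N-1$, with $a_0=0$ and $a_{N-1}=1$, fine enough that on each $\gamma(I_i)$ some fixed twisting pair $(\alpha_i,\beta_i)$ is clean (Lemma~\ref{lem:clean-in-nbhd}). Declaring the $i$--th \emph{station} to be the parameter $a_{i-1}$, the differential $\gamma(a_{i-1})$ is then clean for $(\alpha_{i-1},\beta_{i-1})$ and for $(\alpha_i,\beta_i)$ simultaneously when $1<i<N$, so both pairs remain twisting pairs on a common open neighbourhood $W_i$ of $\gamma(a_{i-1})$; for $i=1$ and $i=N$ there is only one relevant pair, and $W_i$ is a neighbourhood of $\omega_1$, respectively $\omega_N$, on which it stays a twisting pair.

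Then I would fill in the remaining differentials. At each interior station $i$ I would choose $\omega_i\in W_i$ using density of $(\epsilon,B)$--torus good differentials (Proposition~\ref{prop:density-torus}), and $\hat{\omega}_i\in W_i$ a pullback, under a cover branched over one point, of a torus differential $\hat{\omega}^i_T$ with periodic (i.e. rational) vertical direction, so that $\hat{\omega}^i_T$ has a single vertical cylinder. That such $\hat{\omega}_i$ are dense follows from the first paragraph of the proof of Proposition~\ref{prop:density-torus}, together with the observation that tori with a periodic vertical direction are dense among all tori (rotate the lattice) and that pullback is continuous. At the two ends I keep $\omega_1$ and $\omega_N$, and pick $\hat{\omega}_1\in W_1$, $\hat{\omega}_N\in W_N$ of the same single--cylinder type. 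Taking $\delta_i$ to be the preimage in $S$ of the core curve of the vertical cylinder of $\hat{\omega}^i_T$ yields iv) and v); i) and iii) hold by construction; and ii) holds because $\omega_i,\hat{\omega}_i\in W_i$ while $\omega_{i+1},\hat{\omega}_{i+1}\in W_{i+1}$, and $(\alpha_i,\beta_i)$ is by construction a twisting pair on both $W_i$ and $W_{i+1}$.

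The step requiring the most care is the bookkeeping of the neighbourhoods, exactly as in \cite[Section~4.4]{LS-connectivity}: one must fix the twisting pair of each gap before choosing the four differentials around it, and ensure that every station differential lands in the (open, nonempty) intersection of the stability neighbourhoods of the two adjacent twisting pairs. This is purely combinatorial and uses no idea beyond the cited argument; the only genuinely new input is that our density statements are available for the fixed marked point set $\mathbf{z}$.
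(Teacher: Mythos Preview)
Your proposal is correct and follows essentially the same approach as the paper: choose a path in $\widetilde{\Omega}(S)$ between torus good differentials realising $F$ and $F'$, cover it by finitely many twisting-pair neighbourhoods via compactness and Lemma~\ref{lem:existence-twisting}, then use density of torus good differentials at the overlaps. The only cosmetic difference is that the paper obtains $\hat{\omega}_i$ by keeping the \emph{same} torus cover $p_i$ used for $\omega_i$ and rotating the base differential $\omega_T^i$ to a nearby cylinder direction, whereas you invoke a separate density statement allowing a possibly different cover; both are valid since the lemma does not require the covers to agree.
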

\begin{proof} We first produce a single collection as in (1)--(3) that satisfy i)-iii). 
  Let $F, F' \in \mathcal{TG}$ be given. By definition of
  $\mathcal{TG}$, they are vertical foliations of
  $(\epsilon, B)$--torus good Abelian differentials
  $\omega_1,\omega_N$. Note that these satisfy i) and iii) by choice.
  Choose a path $\gamma(t)$ from $\omega_1$ to $\omega_N$ in
  $\widetilde{\Omega}(S)$. For every $\gamma(t)$ there is a twisting
  pair $\alpha(t), \beta(t)$ for $\gamma(t)$ by
  Lemma~\ref{lem:existence-twisting}. In fact, by the same lemma,
  there is a small open neighbourhood $U_t$ so that the curves
  $\alpha(t), \beta(t)$ are twisting pairs for all differentials in
  $U_t$. By compactness of the path $\gamma$, a finite number
  $U_1, \ldots, U_N$ of such neighborhoods suffice to cover the path
  $\gamma$. We let $\omega_i \subset U_i \cap U_{i-1}$ be
  $(\epsilon, B)$--torus good (which is possible since
  $(\epsilon, B)$--torus good differentials are dense), and
  $p_i:S\to T$ the defining covering.  This implies the existence of
  the desired objects (1) through (3) with properties i) through iii). 
  
  We now inductively produce a countable collection as in (1)--(3) that satisfy i)-iii) and additionally only overlap as necessary. 
  . Note that because Lemma \ref{lem:existence-twisting} produces a countable family of curves, if we are given 
  $$\Big\{\{U_i^{(j)},(\alpha_i^{(j)},\beta_i^{(j)}),\omega_i^{(j)}\}_{i=1}^{N_j}\Big\}_{j=1}^k$$ we can produce a new sequence $U_1^{(k+1)},...U_{N_{j+1}}^{(k+1)}$, $(\alpha_1^{(k+1)},\beta_{1}^{(k+1)}),...,(\alpha_{N_{k+1}}^{(k+1)},\beta_{N_{k+1}}^{(k+1)})$
 and  $\omega_1^{(k+1)},...,\omega_{N_{k+1}}^{(k+1)}$ so that $\omega_i^{(k+1)}$ does not appear in the above list for any $i\notin \{1,N_{k+1}\}$ and neither $\alpha_i^{(k+1)}$ nor $\beta_i^{(k+1)}$ appear in the previous list for any $i$. Indeed for each $\omega \in \gamma(t)$ there are a countable choice of $\alpha$ and $\beta$ and torus good differentials are dense.

  It remains to show the existence of Abelian differentials and curves
  as in (4),(5) with properties iv) and v). Let $\omega_T^i$ be the
  differential so that $\omega_i$ is the pullback of $\omega_T^i$ by
  $p_i$. As the
  $U_i$ are open, and cylinder directions are dense, there is a
  differential $\hat{\omega}_T^i$ on $T$ which has vertical direction
  a simple closed curve $\delta_i'\subset T$, and so that the pullback
  $p_i^*\hat{\omega}_T^i = \hat{\omega}_i$ is also contained in
  $U_i\cap U_{i-1}$. By the definition of that set, $(\alpha_i, \beta_i)$ and
  $(\alpha_{i-1}, \beta_{i-1})$ are then 
  twisting pairs for $\hat{\omega}_i$, so they satisfy
  iii). Furthermore, the vertical foliation of $\hat{\omega}_i$ is the
  multicurve $p_i^{-1}(\delta_i') = \delta_i$. Hence,
  $\hat{\omega}_i, \delta_i$ satisfy properties iv) and v). We can run this argument for each of the $\{U_i^{(j)},(\alpha_i^{(j)},\beta_i^{(j)}),\omega_i^{(j)}\}_{i=1}^{N_j}$ produced above and  using the density of single cylinder surfaces, we can ensure 
     the $\hat{\omega}_i^{(j)}$ only overlap as necessary. 
\end{proof}

Using the output of Lemma~\ref{lem:island-1}, we can construct paths
between torus good foliations in the following way.
\begin{defin}\label{def:peak-and-path}
  Let $\omega_i, (\alpha_i,\beta_i), p_i, \omega_T^i$ be as in Lemma~\ref{lem:island-1}.
For each $i$ choose a number $K_i$ and a corresponding mapping class
\[ \psi^{(K_i)}_i = P_{\alpha_i}^{K_i} (P_{\alpha_i}P_{\beta_i}^{-1})
P_{\alpha_i}^{-K_i} \] which we call the \emph{peak pseudo-Anosovs},
and for each $i=2, \ldots, N-1$ choose a cobounded foliation $F_i$
which is a lift of a foliation under $p_i$, which we call the
\emph{base foliations}, so that the $(\alpha_i, \beta_i)$ are a
twisting pair for that lift. Put $F= F_1, F'= F_N$.
  
  \smallskip The associated \emph{push-and-peak-path} is then the path
  $\gamma$ obtained as a concatenation
  \[ \gamma = \gamma^+_1 \ast \overline{\gamma_2^-} \ast \gamma_2^+\ast \dots \ast
     \gamma_{N-1}^+ \ast \overline{\gamma^-_N}, \] 
    where $\overline{\cdot}$
    denotes the path with opposite orientation, 
  in the following way:
  \begin{enumerate}
  \item $\gamma^+_i$ is the path starting in $F_i$, and ending in the
    stable foliation of $\psi^{(K_i)}_i$ which is obtained as the concatenation
    \[ P(F_i, \psi^{(K_i)}_i F_i) \ast \psi^{(K_i)}_i P(F_i, \psi^{(K_i)}_i F_i) \ast \left(\psi^{(K_i)}_i\right)^2
    P(F_i, \psi^{(K_i)}_i F_i) \ast \ldots \] of images of the point-push path
    $P(F_i, \psi^{(K_i)}_i F_i)$ (compare Corollary~\ref{cor:ppp} and~\ref{cor:peak}) under
    $\psi^{(K_i)}_i$.
  \item $\gamma^-_i$ is the path starting in $F_i$, and ending in the
    stable foliation of $\psi^{(K_{i-1})}_{i-1}$ which is similarly obtained as the concatenation
      \[ P(F_i, \psi^{(K_{i-1})}_{i-1} F_i) \ast \psi^{(K_{i-1})}_{i-1}
      P(F_i, \psi^{(K_{i-1})}_{i-1} F_i) \ast \left(\psi^{(K_{i-1})}_{i-1}\right)^2
      P(F_i, \psi^{(K_{i-1})}_{i-1} F_i) \ast \cdots  \]  
  \end{enumerate}
\end{defin}
Observe that peak-and-push paths are defined using Abelian
differentials, but really depend only on the choice of suitable
$F_1,...,F_N$; $\alpha_1,\beta_1,...,\alpha_N,\beta_N$ and
$K_1,...,K_N$.
\begin{cor}\label{cor:tg-connected}
  Any two points in $\mathcal{TG} \subset \PMF$ can be joined by countably many 
  paths of cobounded foliations which only intersect at the first and last foliations.
\end{cor}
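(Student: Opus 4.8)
The plan is to assemble the pieces built in Section~\ref{sec:pointpushs}. Given $F, F' \in \mathcal{TG}$, I would first apply Lemma~\ref{lem:island-1} to obtain a finite chain of data: twisting pairs $(\alpha_i,\beta_i)$, $(\epsilon,B)$--torus good differentials $\omega_1,\ldots,\omega_N$ with the vertical foliation of $\omega_1$ equal to $F$ and that of $\omega_N$ equal to $F'$, auxiliary cylinder differentials $\hat{\omega}_i$, and multicurves $\delta_i$, arranged so that $(\alpha_i,\beta_i)$ and $(\alpha_{i-1},\beta_{i-1})$ are both twisting pairs for $\omega_i$ and $\hat{\omega}_i$ (this is where the common open sets $U_i \cap U_{i-1}$ from the proof of that lemma are used). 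Using density of torus-good differentials (Proposition~\ref{prop:density-torus}) together with the openness of the twisting-pair condition (Lemma~\ref{lem:existence-twisting}), I would then choose, for each intermediate $i$, a base foliation $F_i$ which is the vertical foliation of some $(\epsilon,B)$--torus good differential for which both $(\alpha_{i-1},\beta_{i-1})$ and $(\alpha_i,\beta_i)$ are still twisting pairs, and set $F_1=F$, $F_N=F'$. Together with a choice of powers $K_i$ and the resulting peak pseudo-Anosovs $\psi_i^{(K_i)}$, this provides exactly the input of Definition~\ref{def:peak-and-path}, and produces a push-and-peak-path $\gamma = \gamma_1^+ \ast \overline{\gamma_2^-} \ast \gamma_2^+ \ast \cdots \ast \gamma_{N-1}^+ \ast \overline{\gamma_N^-}$.

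Next I would check that $\gamma$ is a well-defined continuous path from $F$ to $F'$. By construction $\gamma_i^+$ runs from $F_i$ to the stable foliation of $\psi_i^{(K_i)}$, and $\gamma_{i+1}^-$ runs from $F_{i+1}$ to the stable foliation of $\psi_i^{(K_i)}$; hence the terminal point of $\gamma_i^+$ equals the initial point of $\overline{\gamma_{i+1}^-}$, whose terminal point $F_{i+1}$ is in turn the initial point of $\gamma_{i+1}^+$. So the concatenation is legitimate, and it starts at $F_1=F$ and ends at $F_N=F'$. Continuity of each $\gamma_i^{\pm}$ --- including the fact that the infinite concatenation of $\psi_i^{(K_i)}$--translates of the point-push path $P(F_i,\psi_i^{(K_i)}F_i)$ genuinely limits onto the stable foliation --- is precisely Corollary~\ref{cor:peak}, applied to a torus-good differential with vertical foliation $F_i$ and twisting pair $(\alpha_i,\beta_i)$ (respectively $(\alpha_{i-1},\beta_{i-1})$ for $\gamma_i^-$); this invokes north--south dynamics of pseudo-Anosovs on $\PMF$.

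Then I would verify that every point of $\gamma$ is cobounded. An interior point of a piece $\gamma_i^{\pm}$ lies on some pseudo-Anosov translate of a point-push path $P(F_i,\psi F_i)$, hence is cobounded by Corollary~\ref{cor:ppp} since the relevant differential is torus good; the junctions between a $\gamma^+$ and the following $\overline{\gamma^-}$ are stable foliations of peak pseudo-Anosovs, which are cobounded because Corollary~\ref{cor:peak} exhibits them as endpoints of paths of cobounded foliations; the junctions between a $\overline{\gamma^-}$ and the following $\gamma^+$ are the base foliations $F_i$, chosen cobounded; and the two global endpoints $F,F'$ are torus good, hence cobounded by Proposition~\ref{prop:good-pairs-long-segments}. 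This proves the corollary.

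The one point requiring genuine care, rather than bookkeeping, is the simultaneous compatibility of each base foliation $F_i$ with the twisting pairs on both of its sides, so that every $\gamma_i^{\pm}$ is legitimately an output of Corollary~\ref{cor:peak}. This is exactly why one works inside the common neighbourhoods $U_i \cap U_{i-1}$ of Lemma~\ref{lem:island-1}: there both twisting-pair conditions hold on an open set, and torus-good differentials are dense, so a suitable $F_i$ exists. Everything else --- matching endpoints of concatenated paths, and quoting Corollaries~\ref{cor:ppp} and~\ref{cor:peak} and Proposition~\ref{prop:good-pairs-long-segments} --- is routine.
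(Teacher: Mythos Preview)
Your proposal is correct and follows essentially the same approach as the paper: apply Lemma~\ref{lem:island-1}, assemble the push-and-peak-path of Definition~\ref{def:peak-and-path}, and invoke Corollaries~\ref{cor:ppp} and~\ref{cor:peak} for continuity and coboundedness. You are in fact somewhat more careful than the paper's terse proof in explicitly arranging that each base foliation $F_i$ comes from a torus-good differential admitting \emph{both} adjacent twisting pairs (so that both $\gamma_i^+$ and $\gamma_i^-$ are legitimate instances of Corollary~\ref{cor:peak}); the paper leaves this implicit in the setup of Lemma~\ref{lem:island-1} and Definition~\ref{def:peak-and-path}.
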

\begin{proof}
  Let $F, F' \in \mathcal{TG}$ be given. Apply
  Lemma~\ref{lem:island-1} to obtain $\omega_i, (\alpha_i, \beta_i),
  p_i, \omega_i^T$. Construct the push-and-peak path as in
  Definition~\ref{def:peak-and-path}. First observe that by using
  Corollary~\ref{cor:peak}, we see that this is indeed a continous
  path of cobounded foliations. It joins $F$ to $F'$ by construction. To obtain disjoint paths, we apply Lemma \ref{lem:island-1} to obtain a sequence of different $\omega_i^{(j)}$, and we further stipulate that $\omega_i^{(j)}$ and $\omega_{i'}^{(j')}$ project to different foliations on the torus if $j\neq j'$. This ensures that the the path segments, $\gamma_i^{\pm}$ for different $j$ do not overlap except possibly at the stable foliations of the psuedo-Anosovs. As $K_i$ goes to infinity the stable foliations of the $\psi_i^{(K_i)}$ converge to $\alpha_i$. Given these paths for 
  $\{\{\omega_i^{(j)},(\alpha_i^{(j)},\beta_i^{(j)})\}_{i=1}^{N_j}\}_{j=1}^r$, by choosing $K_i$ large enough given these we can ensure the path we build from 
  $\{\omega_i^{(r+1)},(\alpha_i^{(r+1)},\beta_i^{(r+1)})\}_{i=1}^{N_{r+1}}$ is disjoint from the previous $r$ paths.
\end{proof}

We now use the machinery developed in
Section~\ref{subsec:curve-graph-machine} in order to contract suitable point-push-paths into small neighbourhoods of uniquely ergodic foliations.
We briefly recall the setup from that section. Namely, suppose that $(\tau_n)$ is a full
splitting sequence in the direction of a uniquely ergodic foliation
$E$, and let $(f_m,k_m)$ be an associated $\Mcg$-sequence. 

Recall from Section~\ref{sec:train-track-north-south} (in particular, the discussion around Lemma~\ref{lem:split-means-finite}) that there are
(nested) neighbourhoods $U_n(E, \tau)$, so that
\[\bigcap_{n}U_n(\tau, E) = \{ E \} \]
and finitely many ``model neighbourhoods'' $\mathcal{U}^{(k)}$, so that
\[ f_n\left(\mathcal{U}^{(k(n))}\right) = U_n(\tau, E). \] 

The following theorem is concerned with finding paths $P$ which connect
two points in a model neighbourhood $\mathcal{U}^{(k)}$, and which are also moved
by the $f_n$ into smaller and smaller neighbourhoods of $E$ (even though 
the path $P$ may leave $\mathcal{U}^{(k)}$!).

\begin{thm}\label{thm:contracted-paths}
  Suppose that $(\tau_n)$ is
  a full splitting sequence in the direction of a uniquely ergodic
  foliation $E$, and let $(f_m,k_m)$ be an associated $\Mcg$-sequence.

  Fix an essential type $k$, and let 
  $F, F' \in \mathcal{TG} \cap \mathcal{U}^{(k)}$ be two foliations defined
  by torus good Abelian differentials $\omega, \omega'$. Furthermore let $\delta, \delta'$ be
  lifts of simple closed curves on the base tori.
  Assume that
  \begin{description}
  \item[$(\ast)$] $\mathcal{U}^{(k)}$ contains every foliation which is a
    lift of the torus covers defined by $\omega, \omega'$.
  \end{description}
  Then for any $n$ and $r$ there is a number $m_0$ with the following
  property.  For any $m>m_0$ with $k_m=k$ there are 
  peak-and-push paths
  $\gamma_1,...,\gamma_r$ connecting $F$ to $F'$, which intersect only at $F,F'$ and so that $f_m\gamma_i$ is completely
  contained in $U_n(\tau, E)$.

  Without property $(\ast)$ the conclusion remains true for $F, F'$ which are
  sufficiently close (depending on $m$)
   to the curves $\delta, \delta'$.
\end{thm}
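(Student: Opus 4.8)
Set $\mathcal{V}=U_n(\tau,F)$; since $F$ is uniquely ergodic, $\Delta(F)=\{F\}$, so $\mathcal{V}$ is an \emph{open} neighbourhood of $F$ in $\PMF$ and the nested family $\{U_m(\tau,F)\}_m$ shrinks to $\{F\}$. Apply Lemma~\ref{lem:island-1} to $F,F'$ to obtain the curves $\alpha_i,\beta_i$, the covers $p_i$, the differentials $\omega_i,\hat\omega_i$ and the multicurves $\delta_i$ ($i=1,\dots,N$, with $\omega_1=\omega$, $\omega_N=\omega'$). A push-and-peak path $\gamma$ joining $F$ to $F'$ is determined by a choice of peak exponents $K_1,\dots,K_{N-1}$ and base foliations $F_2,\dots,F_{N-1}$ (set $F_1:=F$, $F_N:=F'$), and is a finite concatenation of arcs $\gamma_i^\pm$; for each $i\in\{1,\dots,N-1\}$ both $\gamma_i^+$ and $\gamma_{i+1}^-$ are infinite concatenations of $(\psi_i^{(K_i)})^{j}$–images ($j\ge 0$) of point–push paths, limiting onto the stable foliation $\mu_i^+$ of the peak pseudo-Anosov
\[ \psi_i^{(K_i)}\;=\;P_{\alpha_i}^{K_i}\bigl(P_{\alpha_i}P_{\beta_i}^{-1}\bigr)P_{\alpha_i}^{-K_i}\;=\;T_{\hat\alpha_i}^{\,K_i}\,\phi_i\,T_{\hat\alpha_i}^{-K_i}, \]
where $\phi_i:=P_{\alpha_i}P_{\beta_i}^{-1}$ is a fixed pseudo-Anosov and $\hat\alpha_i$ the multicurve with $P_{\alpha_i}=T_{\hat\alpha_i}$. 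The plan is to pick the $K_i$ huge and the $F_i$ ($2\le i\le N-1$) very close in $\PMF$ to the multicurves $\delta_i$, and to show that this forces $f_m$ to compress all of $\gamma$ into $\mathcal{V}$: the peaks are fed to Proposition~\ref{prop:apply-pseudo} via the curve–graph skeleton control of Proposition~\ref{prop:skeleton-segment}, while the two arcs based at the immovable endpoints $F,F'$ are confined to $\mathcal{U}^{(k)}$ by hypothesis $(\ast)$.

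\emph{The model path and the choice of $m_0$.} Let $\gamma_{\mathrm{mc}}$ be the ``model'' push-and-peak path obtained by replacing the cobounded $F_i$ ($2\le i\le N-1$) by the multicurves $\delta_i$, keeping $F,F'$ at the two ends. Since $\hat\omega_i,\hat\omega_{i+1}$ have multicurve vertical foliations and $(\alpha_i,\beta_i)$ is a twisting pair for both, Proposition~\ref{prop:skeleton-segment}(1) places all multicurves occurring in $P(\delta_i,\psi_i^{(K)}\delta_i)$ and $P(\delta_{i+1},\psi_i^{(K)}\delta_{i+1})$ into a single ball $B(\beta_0^{(i)},C_i)\subset\mathcal{C}(S)$, with $\beta_0^{(i)},C_i$ independent of $K$. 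Now apply Proposition~\ref{prop:apply-pseudo} at each peak $i$ with $\psi=\phi_i$, $\alpha=\hat\alpha_i$, target $\mathcal{V}$, base curve $\beta_0^{(i)}$ and radius $C_i$ (enlarged past the distance from $\hat\alpha_i$ to the quasi-axis of $\phi_i$); this produces a number $N_i$. Set $m_0:=\max\{n,N_1,\dots,N_{N-1}\}$. Note that $m_0$ depends only on $n$ and on the data produced from $F,F'$, not on the still unchosen $K_i,F_i$, since these do not enter the inputs to Proposition~\ref{prop:apply-pseudo}.

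\emph{Squeezing the model path.} Fix $m>m_0$ with $k_m=k$. Proposition~\ref{prop:apply-pseudo} provides thresholds $t_{0,i}$ (depending on $m$); choose $K_i>t_{0,i}$. Then $f_m\bigl((\psi_i^{(K_i)})^{j}\beta\bigr)\in\mathcal{V}$ for all $j\in\ZZ$ and all curves $\beta$ with $d_{\mathcal{C}(S)}(\beta,\beta_0^{(i)})\le C_i$. Because $(\psi_i^{(K_i)})^{j}P(\delta_i,\psi_i^{(K_i)}\delta_i)=P\bigl((\psi_i^{(K_i)})^{j}\delta_i,(\psi_i^{(K_i)})^{j+1}\delta_i\bigr)$ and the isometry $(\psi_i^{(K_i)})^{j}$ carries the $C_i$–ball about $\beta_0^{(i)}$ onto the $C_i$–ball about $(\psi_i^{(K_i)})^{j}\beta_0^{(i)}$, every multicurve appearing along the interior arcs of $\gamma_{\mathrm{mc}}$ is mapped by $f_m$ into $\mathcal{V}$, and so is each peak limit $\mu_i^+$: the quasi–axis built inside the proof of Proposition~\ref{prop:apply-pseudo} converges in $\partial\mathcal{C}(S)$ to $f_m(\mu_i^+)$ and each of its points has Gromov product with $F$ above the constant of Lemma~\ref{lem:gromov-product-criterion} for $\mathcal{V}$, so $f_m(\mu_i^+)\in\mathcal{V}$ by that lemma and continuity of the Gromov product to the boundary. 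Finally, every foliation along the point–push sub-arcs of the two endpoint arcs $\gamma_1^+,\overline{\gamma_N^-}$ is a point–push image of $F$ (resp.\ $F'$) — because $(\psi_1^{(K_1)})^{j}$ is itself a product of point–pushes — hence by $(\ast)$ lies in $\mathcal{U}^{(k)}$; as $k_m=k$ we get $f_m\bigl(\mathcal{U}^{(k)}\bigr)=U_m(\tau,F)\subset U_n(\tau,F)$, and the shared limit foliations $\mu_1^+,\mu_{N-1}^+$ were already covered by the Proposition~\ref{prop:apply-pseudo} argument at peaks $1$ and $N-1$ (applied to the interior arcs $\gamma_2^-$, $\gamma_{N-1}^+$). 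Thus $f_m(\gamma_{\mathrm{mc}})\subset\mathcal{V}$.

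\emph{Passing to cobounded base foliations, and the addendum.} Keep $m$ and the $K_i$ fixed, and choose $F_2,\dots,F_{N-1}$ to be cobounded lifts under the $p_i$ lying very close to $\delta_i$ in $\PMF$. As $F_i\to\delta_i$, Proposition~\ref{prop:skeleton-segment}(2) gives uniform convergence $P(F_i,\psi_i^{(K_i)}F_i)\to P(\delta_i,\psi_i^{(K_i)}\delta_i)$; applying the homeomorphisms $(\psi_i^{(K_i)})^{j}$ and using that $\psi_i^{(K_i)}$ acts on $\PMF$ with north–south dynamics while, by Corollary~\ref{cor:peak}, all these point–push arcs stay out of a fixed neighbourhood of the unstable foliation $\mu_i^-$, one obtains uniform convergence of the entire arcs $\gamma_i^\pm$ — including their infinitely many tail pieces accumulating on $\mu_i^+$ — to those of $\gamma_{\mathrm{mc}}$; hence $\gamma\to\gamma_{\mathrm{mc}}$ uniformly in $\PMF$. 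Since $f_m(\gamma_{\mathrm{mc}})$ is a compact subset of the open set $\mathcal{V}$, some $\rho$–neighbourhood of it lies in $\mathcal{V}$, and since $f_m$ is a homeomorphism of compact $\PMF$, hence uniformly continuous, taking the $F_i$ close enough to the $\delta_i$ — a closeness allowed to depend on the fixed $m$ — keeps $f_m(\gamma)$ within $\rho$ of $f_m(\gamma_{\mathrm{mc}})$, so $f_m(\gamma)\subset U_n(\tau,F)$, as asserted. In the absence of $(\ast)$ the only difference is that the endpoint arcs cannot be confined to $\mathcal{U}^{(k)}$; instead one takes $\delta=\delta_1$, $\delta'=\delta_N$, runs the model path with these multicurves at the ends as well, applies Proposition~\ref{prop:apply-pseudo} uniformly, and — provided $F,F'$ are taken close enough to $\delta,\delta'$, again with closeness depending on the fixed $m$ — extends the perturbation argument to the two endpoint arcs.

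\emph{The main obstacle.} The essential difficulty is that a peak–and–push path is made of \emph{cobounded} foliations, which do not live in the curve graph, whereas the entire contraction toolkit of Section~\ref{subsec:curve-graph-machine} is formulated there. The bridge is Proposition~\ref{prop:skeleton-segment}(1), which confines the curve–graph skeleton of the multicurve models uniformly in the peak exponents, together with the approximation step above and, for the two arcs anchored at the fixed foliations $F,F'$, hypothesis $(\ast)$ (or, in its absence, the closeness of $F,F'$ to simple multicurves). A secondary technical point is the \emph{uniform}–in–$j$ convergence near the peaks, which rests on the north–south dynamics of the $\psi_i^{(K_i)}$ and the uniform avoidance of their unstable foliations furnished by Corollary~\ref{cor:peak}.
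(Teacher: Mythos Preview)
Your proof follows the same strategy as the paper's: apply Lemma~\ref{lem:island-1} to extract the skeleton $(\alpha_i,\beta_i,\delta_i,\hat\omega_i)$, invoke Proposition~\ref{prop:skeleton-segment}(1) to bound the curve-graph diameter of the multicurve model segments uniformly in the peak exponents, feed this into Proposition~\ref{prop:apply-pseudo} at each peak to produce $m_0$, then for fixed $m>m_0$ choose the $K_i$ above the twist thresholds and the interior base foliations $F_i$ close to the $\delta_i$, handling the endpoint arcs via hypothesis~$(\ast)$. Your device of an explicit model path $\gamma_{\mathrm{mc}}$ followed by a single uniform-convergence perturbation step is equivalent to the paper's version, which instead splits the perturbation into a north--south tail ($j>J$) and finitely many continuity steps ($j\le J$).

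One place where your write-up is looser than the paper's: your argument that the peak foliations satisfy $f_m(\mu_i^+)\in\mathcal V$ appeals to Lemma~\ref{lem:gromov-product-criterion} ``by continuity of the Gromov product to the boundary'', but that lemma is stated only for simple closed curves, and since $\mathcal V$ is merely open, a limit in $\PMF$ of curves lying in $\mathcal V$ need not remain in $\mathcal V$. The paper handles this differently: as $K_i\to\infty$ the stable foliation of $\psi_i^{(K_i)}$ converges in $\PMF$ to (a point supported on) $\alpha_i$, whose $f_m$-image is already in $\mathcal V$ from the $j=0$ case of Proposition~\ref{prop:apply-pseudo}; one then simply enlarges $K_i$ once more so that $f_m(\mu_i^+)$ lands in the open set $\mathcal V$ by continuity of the homeomorphism $f_m$. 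This is a minor repair and does not affect the architecture of your argument.
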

\begin{proof}
  We begin by noting that due to property $(\ast)$, the initial
  segment $\gamma_1^+$ and terminal segment $\gamma_N^-$ are
  automatically contained in $\mathcal{U}^{(k)}$, independent of all
  other choices.  Hence, for any $m>n$ with $k_m=k$, the images of
  these segments under $f_m$ are contained in $U_n(\tau, F)$, by
  Equation~(\ref{eq:relation-u-up-down}) of the
  associated sequence. If $(\ast)$ does not hold, we will argue for
  the initial/terminal segment exactly as below.

  \smallskip We will now explain how to construct the path segments
  $\gamma_i^+$ of the push-and-peak-path; the segments $\gamma_i^-$
  will be constructed analogously. Whenever a constant $K_i$ is
  chosen, it needs to be chosen to be large enough for the
  construction of both $\gamma_i^+$ and $\gamma_{i+1}^-$.
  
  \smallskip 
  \textit{Definition \ref{def:peak-and-path} produces a set of bounded diameter in $\mathcal{C}(S)$:} Let $\omega_i, (\alpha_i, \beta_i), \delta_i$ be the
  objects guaranteed by Lemma~\ref{lem:island-1} applied to $F, F'$.
  Consider the point-pushing pseudo-Anosov map
  \[ \psi_i^{(K_i)} =
  P_{\alpha_i}^{K_i}P_{\alpha_i}P_{\beta_i}^{-1}P_{\alpha_i}^{-K_i}.\]
  By Proposition~\ref{prop:skeleton-segment},  there are numbers $C_i$, depending only on $\alpha_i,\beta_i$ and $\delta_i$
  so that for any $L \in \mathbb{Z}$ 
  every point on the peak-and-push paths $P(\delta_i,
  \psi^{(L)}_i\delta_i)$ corresponds to a multicurve which is
  contained in the $C_i$--neighbourhood of $\delta_i$ in the curve
  graph. Let $G_i$ be the set of all
  multicurves appearing on such paths.  Because $\delta_1,...,\delta_N$ are given and each $G_i$ is 
  	contained in a $C_i$--neighbourhood of $\delta_i$ in the curve graph, the (finite) union
  \[ G = \bigcup_{i=1}^N G_i \]  also has
  finite diameter in the curve graph.  We can therefore choose a number $d$ large enough so that
  for all $i$, every curve in $G$ has distance at most $d$ from
  $\alpha_i$. 

\textit{Obtaining $m$ from Proposition~\ref{prop:apply-pseudo}:}
  By increasing $d$, we may also assume that (for any choice of powers
  $K_i$ in the push-and-peak-paths), the quasi-axes of $\psi^{(K_i)}_i$ pass
  within distance $d$ of $\alpha_i$ as well.
  Indeed, if $\rho$ is a quasi-axis for $\psi_i^{(0)}$ then $P_{\alpha_i}^{K_i}\rho$ is a quasi axis for $\psi_i^{(K_i)}$, and the claim follows since $P_{\alpha_i}$ fixes $\alpha_i$.
  
  \smallskip Apply Proposition~\ref{prop:apply-pseudo} with this $d$ to
  $P_{\alpha_i}P^{-1}_{\beta_i}$ as the pseudo-Anosov, and
  $\mathcal{V} = U_n(\tau, F)$ as the neighbourhood and any
  curve in $G$ as the curve $\beta_0$ for every $i$ to get a constant $N=N_i$.
  Let $m_0$ be the maximum of these constants.

\textit{Choosing $K_i$ large enough to obtain contraction:}
  Let now $m>m_0$ be given. Then Proposition~\ref{prop:apply-pseudo} yields\footnote{noting that since multi-point pushing maps are multitwists, we can apply that Proposition in this situation}
  that if we choose the powers $K_i$ in the definition of
  $\psi^{(K_i)}_i$ large enough, the images of the point-pushing paths
  $\left(\psi^{(K_i)}_i\right)^jP(\delta_i, \psi^{(K_i)}_i\delta_i), \left(\psi^{(K_{i-1})}_{i-1}\right)^jP(\delta_i,
  \psi^{(K_{i-1})}_{i-1}\delta_i)$ under $f_m$ are contained in
  $U_n(\tau, F)$ for all $j$.
  
  As we let $K_i \to \infty$, the stable foliation of $\psi^{(K_i)}_i$
  converges to $\alpha_i$. Hence, we can choose numbers $K_i$ large
  enough, so that the stable foliation of $\psi_i^{(K_i)}$ is
  sent into $U_n(\tau, E)$ by $f_m$ (in addition to the previous
  constraints).
  
  Since the pseudo-Anosov $\psi^{(K_i)}_i$ acts on $\PMF$ with
  north-south-dynamics, and the (compact) path
  $P(\delta_i, \psi^{(K_i)}_i\delta_i)$ does not intersect the unstable
  foliation of $\psi^{(K_i)}_i$ (as the path consists only of multicurves),
  there is a number $\epsilon>0$ and $J>0$ so that the
  $\epsilon$--neighbourhood of $P(\delta_i, \psi^{(K_i)}_i\delta_i)$ is mapped
  into $U_n(\tau, E)$ by $f_m\left(\psi^{(K_i)}_i\right)^j$ for all $j > J$.

  By the continuity of the maps $f_m\psi^{(K_i)}_i,...,f_m \left(\psi^{(K_i)}_i\right)^{J}$, we may
  therefore choose $F_i$ close enough to $\delta_i$ so that in fact the
  path $P(F_i, \psi^{(K_i)}_i F_i)$ is contained in the $\epsilon$--neighbourhood of 
  $P(\delta_i, \psi^{(K_i)}_i\delta_i)$, and therefore 
  \[ f_m\left(\psi^{(K_i)}_i\right)^jP(F_i, \psi^{(K_i)}_i F_i) \]  is contained in $U_n(\tau,
  E)$ for all $j>J$. 

  By the continuity of the maps $\psi^{(K_i)}_i,\left(\psi^{(K_i)}_i\right)^2,...,\left(\psi^{(K_i)}_i\right)^J$ we can
  choose the foliation $F_i$ even closer to $\delta_i$, to ensure that
  for all $j\geq 0$, since the paths $f_m\left(\psi^{(K_i)}_i\right)^jP(\delta_i,
  \psi^{(K_i)}_i\delta_i)$ are all contained in $U_n(\tau,
  F)$. Repeating the same argument for all $i$, and analogously
  for the paths for $\gamma_i^-$ finishes the argument.
  
  \textit{A finite number of curves:} To obtain the result for curves
  $\gamma_1,...,\gamma_r$ we use Lemma \ref{lem:island-1} and
  Corollary \ref{cor:tg-connected} to produce $\{\omega_i^{(k)},
  (\alpha_i^{(k)}, \beta_i^{(k)}), \delta_i^{(k)}\}_{k=1}^r$. We then
  apply the step, \textit{Definition \ref{def:peak-and-path} produces
    a set of bounded diameter in $\mathcal{C}(S)$}, to each
  $\{\omega_i^{(k)}, (\alpha_i^{(k)}, \beta_i^{(k)}),
  \delta_i^{(k)}\}$ to produce $G^{(k)}$. Let $\tilde{G}=\cup_{k=1}^r
  G^{(k)}$ and observe that it is a set of bounded diameter in
  $\mathcal{C}(S)$. So there exists $\tilde{d}$ so that $\tilde{G}$ is
  in a $\tilde{d}$ neighborhood of $\alpha_i^{(j)}$ for all $i,j$. We
  may apply Proposition \ref{prop:apply-pseudo} to obtain a uniform
  $m$ for all possible $P_{\alpha_i^{(k)}}P_{\beta_i^{(k)}}$. Once we
  have this $m$ we can treat each $\gamma_k$ separately in the step
  \textit{Choosing $K_i$ large enough to obtain contraction}. We can
  arrange the disjointness of the paths as in Corollary
  \ref{cor:tg-connected}.
\end{proof}
The following corollary will be used in the next section 
to build paths of foliations on surfaces without punctures.
\begin{cor}\label{cor:passing-to-branched-covers}
  Suppose that $S$ is a surface with marked points, which is a branched cover over a torus. Suppose further that $\hat{S}$ is a closed surface and $p:\hat{S} \to S$ is a properly branched cover, with
  branching set $\mathbf{z}$ equal to the marked points of $S$. Suppose that $\tau_n$ is a splitting
  sequence of train tracks on $\hat{S}$ in the direction of a uniquely
  ergodic foliation $\hat{E}$. Let $f_1, \ldots$ be an associated $\Mcg$-sequence.

  Fix an essential type $k$, and let
  $\hat{F}=p^{-1}(F), \hat{F}'=p^{-1}(F') \in \mathcal{U}^{(k)}$ be lifts under $p$
  of torus good foliations $F, F'$ on $(S,\mathbf{z})$ , defined by
  Abelian differentials $\omega, \omega'$, and let $\delta, \delta'$ be lifts of
  simple closed curves on the base tori. Assume
  that 
  \begin{description}
  \item[$(\ast)$] $\mathcal{U}^{(k)}$ contains every lift under $p$ of
    a foliation on $S$ which is a lift of the torus covers defined by
    $\omega, \omega'$.
  \end{description}
  Then for any $n$ and $r$ there is a number $m_0$ with the following
  property. For any $m>m_0$ with $k_m=k$ there are 
   peak-and-push paths
  $\gamma_1,...,\gamma_r$ connecting $F$ to $F'$, which lifts under $p$ to a paths
  $\hat{\gamma}_1,...,\hat{\gamma}_r$ of cobounded foliations, and so that each $f_m\hat{\gamma}_i$
  is completely contained in $U_n(\tau, F)$. Moreover the $\hat{\gamma}_i$ intersect only at $f_mF$, $f_mF'$.

  Without property $(\ast)$ the conclusion remains true for $\hat{F}, \hat{F}'$ which are
  sufficiently close (depending on $\tau$) to lifts $\hat{\delta}, \hat{\delta'}$ of the curves $\delta, \delta'$.
\end{cor}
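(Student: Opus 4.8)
The plan is to run the proof of Theorem~\ref{thm:contracted-paths} on the closed surface $\hat S$, transporting every object along the cover $p$. First I would build the peak-and-push path $\gamma$ on $(S,\mathbf z)$ exactly as there: apply Lemma~\ref{lem:island-1} to $F,F'$ to produce Abelian differentials $\omega_i$, twisting pairs $(\alpha_i,\beta_i)$ and multicurves $\delta_i$ (with $\delta_1=\delta$, $\delta_N=\delta'$), set $\psi_i^{(K_i)}=P_{\alpha_i}^{K_i}(P_{\alpha_i}P_{\beta_i}^{-1})P_{\alpha_i}^{-K_i}$ with the powers $K_i$ to be fixed later, and choose torus good base foliations $F_i$ close to $\delta_i$, with $F_1=F$, $F_N=F'$. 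Since pulling back foliations along $p$ is continuous, $\hat\gamma:=p^{-1}(\gamma)$ is then a continuous path in $\PMF(\hat S)$ joining $\hat F$ to $\hat F'$, and it is a peak-and-push path on $(S,\mathbf z)$ lifted under $p$, as required by the statement.

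Next I would check that $\hat\gamma$ consists of cobounded foliations. By Corollary~\ref{cor:ppp} (whose point-push paths are built from $(\epsilon,B)$--torus good differentials, the $F_i$ being torus good as they are cobounded lifts of torus foliations) together with Corollary~\ref{cor:peak} for the peak endpoints, every foliation along $\gamma$ is cobounded on $(S,\mathbf z)$; realising such a foliation as the vertical foliation of a torus good $\omega$, this means $g_t\omega$ stays in a compact part of moduli space. Since $p$ is branched exactly over $\mathbf z$, the metric $p^*\omega$ agrees with $\omega$ away from the cone points at $p^{-1}(\mathbf z)$, so a short essential simple closed curve on $\hat S$ in $g_t(p^*\omega)$ would project to a closed curve of the same length on $S$ that is either essential on $S-\mathbf z$ --- impossible by strong $\delta$--thickness (Proposition~\ref{prop:good-pairs-long-segments}) --- or bounds a disk, resp. a once-punctured disk, forcing the curve on $\hat S$ to bound a disk, which is also impossible. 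Hence $g_t(p^*\omega)$ stays compact in moduli space and every foliation on $\hat\gamma$ is cobounded.

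For the contraction estimate I would transport the curve-graph bookkeeping from the proof of Theorem~\ref{thm:contracted-paths} through $p$. The map $c\mapsto p^{-1}(c)$ is coarsely Lipschitz from $\mathcal C(S-\mathbf z)$ to $\mathcal C(\hat S)$ (disjoint curves have disjoint preimages, and the components of $p^{-1}(c)$ are essential and pairwise disjoint), so Proposition~\ref{prop:skeleton-segment} gives that the lifts $p^{-1}\bigl(P(\delta_i,\psi_i^{(K_i)}\delta_i)\bigr)$ stay in a neighbourhood of $p^{-1}(\delta_i)$ of size independent of $K_i$, and the lifted side-curves of $\alpha_i$ stay uniformly close to $p^{-1}(\alpha_i)$. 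I would then apply Proposition~\ref{prop:apply-pseudo} on $\hat S$ with pseudo-Anosov $\widehat{P_{\alpha_i}P_{\beta_i}^{-1}}$, with the multicurve taken to be the lifted side-curves of $\alpha_i$, oriented so that the associated multitwist is $\widehat{P_{\alpha_i}}$ (within bounded distance of both the relevant curves and of a quasi-axis of $\widehat{P_{\alpha_i}P_{\beta_i}^{-1}}$), with $\mathcal V=U_n(\tau,F)$, and with twisting parameter $t=K_i$, so that the resulting conjugate is precisely the lift $\widehat{\psi_i^{(K_i)}}$. This yields a threshold $m_0$; for any $m>m_0$ with $k_m=k$, taking the $K_i$ large enough sends the $f_m$--images of $(\widehat{\psi_i^{(K_i)}})^j\,p^{-1}(P(\delta_i,\psi_i^{(K_i)}\delta_i))$ and of the stable foliation of $\widehat{\psi_i^{(K_i)}}$ into $U_n(\tau,F)$ for all $j$. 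As in Theorem~\ref{thm:contracted-paths}, north--south dynamics of $\widehat{\psi_i^{(K_i)}}$ on $\PMF(\hat S)$ together with continuity upgrade this to an $\epsilon$--neighbourhood of the path, and I would pick $F_i$ close enough to $\delta_i$ that $p^{-1}(P(F_i,\psi_i^{(K_i)}F_i))$ lies in that neighbourhood; the initial and terminal segments lie in $\mathcal U^{(k)}$ by property $(\ast)$, and without $(\ast)$ are treated like the others once $\hat F,\hat F'$ are close enough to $\hat\delta,\hat\delta'$.

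The step I expect to be the main obstacle is verifying that the lift $\widehat{P_{\alpha_i}P_{\beta_i}^{-1}}$ is pseudo-Anosov on $\hat S$, since Proposition~\ref{prop:apply-pseudo}, the existence of quasi-axes (Lemma~\ref{lem:existence-quasiaxes}), and the north--south argument all require this. I would argue that, since $(\alpha_i,\beta_i)$ fill $S$, the two simple closed curves to either side of $\alpha_i$ and of $\beta_i$ form multicurves on $S-\mathbf z$ whose union fills, and a regular neighbourhood of $\alpha_i\cup\beta_i$ is $\pi_1$--surjective in $S-\mathbf z$, so its $p$--preimage is connected; hence the $p$--preimages of the two families of side-curves form a filling, connected (``binding'') pair of multicurves on $\hat S$, and Thurston's construction makes the product of opposite Dehn twists $\widehat{P_{\alpha_i}}\widehat{P_{\beta_i}}^{-1}=\widehat{P_{\alpha_i}P_{\beta_i}^{-1}}$ pseudo-Anosov. (Equivalently: by Corollary~\ref{cor:peak} the stable foliation of $P_{\alpha_i}P_{\beta_i}^{-1}$ is cobounded, hence minimal and filling, on $S-\mathbf z$, and its $p$--preimage is again a minimal filling foliation on $\hat S$ fixed by $\widehat{P_{\alpha_i}P_{\beta_i}^{-1}}$ with stretch factor $>1$.) The remaining differences from the punctured case --- that the quasi-axis of $\widehat{P_{\alpha_i}P_{\beta_i}^{-1}}$ passes uniformly close to $p^{-1}(\alpha_i)$, and that the continuity and north--south arguments survive the lift --- are routine.
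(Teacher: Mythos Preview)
Your proposal is correct and follows the same approach as the paper, whose proof of this corollary is a single sentence: ``This follows exactly like the previous proof, using that the lifting map $\PMF(S) \to \PMF(\hat{S})$ is continuous.'' You have spelled out in detail what running the argument of Theorem~\ref{thm:contracted-paths} on $\hat S$ actually entails, and in particular you correctly flag the one genuine subtlety the paper's one-liner glosses over---that invoking Proposition~\ref{prop:apply-pseudo} on $\hat S$ requires the lift $\widehat{P_{\alpha_i}P_{\beta_i}^{-1}}$ to exist and to be pseudo-Anosov---and give a workable argument for it.
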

\begin{proof}
  This follows exactly like the previous proof, using that the lifting
  map $\PMF(S) \to \PMF(\hat{S})$ is a continuous embedding.
\end{proof}

\subsection{Proof of the main theorem in the punctured case}
As an application of Theorem~\ref{thm:contracted-paths}, we can now
prove the main theorem in the case of punctured surfaces.
\begin{thm}\label{thm:main-punctured}
  Suppose that $\Sigma$ is a surface of genus $g \geq 2$ and with
  $p \geq 1$ punctures.  Then the set of uniquely ergodic foliations
  on $\Sigma$, $\mathcal{UE}(\Sigma)$, is path-connected. Moreover, given any finite set $S$, we have that $\mathcal{UE}(\Sigma) \setminus S$ is path connected. 
\end{thm}
To prove the theorem, the main step is to show that one can connect an
arbitrary uniquely ergodic foliation $E$ to a torus good
foliation. By Corollary~\ref{cor:tg-connected} this will be enough to show 
the theorem.

Before beginning the proof in earnest, let us remark quickly about the case that $\Sigma$ has only a 
	single marked point. In the previous section, we usually thought of the surfaces to have at least two marked points
	(since this is much harder, and the naturally occuring case in the proof of the main theorem in the closed case).
	However, for a single marked point we simply set the ``pairwise badly approximable'' condition in the definition of
	torus good differentials to be empty. A torus good differential in this sense has cobounded
	vertical foliation, and (any) point-push (of the single point) preserves this property. Hence, the desired results
	also hold in this case.
	
In order to prove the main step, we use the connection to splitting sequences
described in Section~\ref{sec:train-track-north-south}.

To this end, let $\tau$ be a maximal train track carrying $E$,
and $\tau_s$ a full splitting sequence in direction of $E$. We
let $(f_n, k_n)$ be an associated $\Mcg$-sequence. First, we need the
following statement, purely about the model neighbourhoods.
\begin{lem}\label{lem:initial-island}
  Given any $k$ there is a torus cover $p_k:\Sigma \to T$, so that the lift of
  every foliation from $T$ via $p_k$ is contained in $\mathcal{U}^{(k)}$.
\end{lem}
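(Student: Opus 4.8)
The plan is to realise $p_k$ as a pseudo-Anosov ``push'' of a fixed torus cover, exploiting the north--south dynamics of pseudo-Anosov maps on $\PMF(\Sigma)$.

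First, two preliminary observations. Since the standard models $\mathcal{T}^{(1)},\ldots,\mathcal{T}^{(M)}$ are chosen as orbit representatives for the mapping class group action on the sets $\mathcal{T}_n(\tau,F)$, for every $k$ the set $\mathcal{U}^{(k)}$ equals $U_{n_0}(\tau_0,F_0)$ for suitable data $\tau_0,F_0,n_0$ (any data with $\mathcal{T}_{n_0}(\tau_0,F_0)=\mathcal{T}^{(k)}$), so by Lemma~\ref{lem:splitting-neighbourhoods} it is a nonempty open subset of $\PMF(\Sigma)$. Second, fix once and for all a branched cover $q_0\colon\Sigma\to T$ of a torus $T$, branched over a single point $x\in T$ whose preimages are disjoint from the punctures of $\Sigma$ --- a cover of exactly the kind used to define $(\epsilon,B)$--torus good differentials, and which exists since such differentials do (Proposition~\ref{prop:density-torus}). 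Let $\mathcal{C}\subset\PMF(\Sigma)$ denote the set of lifts under $q_0$ of all measured foliations on $T$. As lifting is a continuous map $\PMF(T)\to\PMF(\Sigma)$ and $\PMF(T)$ is a circle, $\mathcal{C}$ is compact; being the continuous image of a one-dimensional space in the higher-dimensional sphere $\PMF(\Sigma)$, it has empty interior, hence is closed and nowhere dense.

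The key step is to produce a pseudo-Anosov $\psi$ of the punctured surface $\Sigma$ whose attracting foliation $\mu^{+}$ lies in $\mathcal{U}^{(k)}$ and whose repelling foliation $\mu^{-}$ does \emph{not} lie on $\mathcal{C}$. Granting this, the north--south dynamics of $\psi$ on $\PMF(\Sigma)$, applied to the compact set $\mathcal{C}$ (which avoids $\mu^{-}$) and the open neighbourhood $\mathcal{U}^{(k)}$ of $\mu^{+}$, yields an $n$ with $\psi^{n}(\mathcal{C})\subset\mathcal{U}^{(k)}$. Put $p_k:=q_0\circ\psi^{-n}$. Then $p_k\colon\Sigma\to T$ is again a branched cover of $T$, branched only over $x$ and of the same topological type as $q_0$: its ramification locus upstairs is $\psi^{n}\bigl(q_0^{-1}(x)\bigr)$, which is still disjoint from the punctures because $\psi$ preserves them. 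Moreover, for any measured foliation $E$ on $T$ one has $p_k^{-1}(E)=\psi^{n}\bigl(q_0^{-1}(E)\bigr)$, so the set of lifts of foliations from $T$ via $p_k$ equals $\psi^{n}(\mathcal{C})\subset\mathcal{U}^{(k)}$, which is the conclusion of the lemma.

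It remains to construct $\psi$, and for this I would use north--south dynamics once more. Since $\mathcal{C}$ is closed and nowhere dense, $\mathcal{U}^{(k)}\setminus\mathcal{C}$ is nonempty and open; as pseudo-Anosov foliations are dense in $\PMF(\Sigma)$, there is a pseudo-Anosov $\rho$ of $\Sigma$ whose attracting foliation $\nu$ lies in $\mathcal{U}^{(k)}\setminus\mathcal{C}$. Pick any pseudo-Anosov $\psi_0$ of $\Sigma$ with $\mu^{+}(\psi_0),\mu^{-}(\psi_0)\neq\mu^{-}(\rho)$, and set $\psi=\rho^{n_1}\psi_0\rho^{-n_1}$. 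Its attracting and repelling foliations are $\rho^{n_1}\bigl(\mu^{+}(\psi_0)\bigr)$ and $\rho^{n_1}\bigl(\mu^{-}(\psi_0)\bigr)$; both converge to $\nu$ as $n_1\to\infty$, so for $n_1$ large enough both lie in the open set $\mathcal{U}^{(k)}$ --- in particular the repelling foliation lies off the closed set $\mathcal{C}$, since $\nu\notin\mathcal{C}$. The argument is on the whole fairly soft; the point I expect to require the most care is matching up the notion of ``torus cover'' used here --- a branched cover of the form $q_0\circ\psi^{-n}$ --- with the notion used elsewhere in the paper (regular, branched over a single point, with preimages disjoint from the punctures), and checking that these properties really are inherited by $q_0\circ\psi^{-n}$.
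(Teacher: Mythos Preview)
Your argument is correct and follows essentially the same route as the paper: fix a torus cover, note that the circle of lifted foliations is compact, choose a pseudo-Anosov whose attracting fixed point lies in the open set $\mathcal{U}^{(k)}$ and whose repelling fixed point misses the circle, and push the circle into $\mathcal{U}^{(k)}$ by a large power. The paper's proof is terser---it simply asserts the existence of such a pseudo-Anosov and does not discuss why the precomposed cover is still admissible---whereas you supply an explicit conjugation construction for $\psi$ and verify the cover properties; these additions are sound but not a different strategy.
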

\begin{proof}
  Let $p:\Sigma\to T$ be any branched torus cover, and let $L\subset \PMF$
  be the set of all lifts of foliations on $T$ via $p$. Precomposing
  the cover $p$ by a mapping class $\varphi^{-1}$ replaces $L$ by $\varphi(L)$.

  Choose a pseudo-Anosov $\varphi$ whose attracting foliation is
  contained in the (open) set $\mathcal{U}^{(k)}$, and whose repelling
  foliation is not contained in $L$. As
  pseudo-Anosovs act on $\PMF$ with north-south dynamics, there
  is a power $N$ so that $\varphi^N(L) \subset \mathcal{U}^{(k)}$, which
  shows the existence of the desired cover.
\end{proof}
From now on, we fix for each $k$ covers $p_k$ as in
Lemma~\ref{lem:initial-island}.  Furthermore we choose, once and for
all, torus good foliations $F^{(k)} \in \mathcal{U}^{(k)} \setminus
\Mcg(\Sigma)S$ which are defined by these covers $p_k$. Note that
these exist, since there are uncountably many torus-good foliations.

Recall that the associated $\Mcg$-sequence has the property that 
\[ U_s(\tau, E) = f_s(\mathcal{U}^{(k_s)}).\]
In particular, the (torus good) foliations $f_s\left(F^{(k_s)}\right)$
converge to $E$. Our strategy will be to find paths $\gamma_s$ of cobounded foliations which connect $f_s\left(F^{(k_s)}\right)$
to $f_{s+1}\left(F^{(k_{s+1})}\right)$, so that the concatenated paths
\[ c_n = \gamma_1\ast\gamma_2\ast\cdots\gamma_n \]
converge, as $n\to \infty$, to a path connecting the torus good
foliation $f_1\left(F^{(k_1)}\right)$ to $E$.

Recall from Lemma~\ref{lem:split-means-finite} that there is a finite
set $M$ of mapping classes, so that for all $n$ we have
\[ f_n^{-1}f_{n+1} \in M. \]
The following corollary of Theorem~\ref{thm:contracted-paths} is what
makes our construction of paths work:
\begin{cor}\label{cor:increment-paths}
  Given any $n$, and $r$ there is a number $m$ with the following property: if
  $s > m$, then there are 
  paths $\gamma_s^{(1)},...,\gamma_s^{ (r)}$ with the following properties:
  \begin{enumerate}
  \item $\gamma_s^{(j)}$ joins $f_s\left(F^{(k_s)}\right)$ to
    $f_{s+1}\left(F^{(k_{s+1})}\right)$ for all $1\leq j\leq r$,
  \item $\gamma_s^{(j)}$ consists only of cobounded foliations for all $1\leq j\leq r$, and
  \item $\gamma_s^{(j)} \subset U_n(\tau, E)$ for all $1\leq j\leq r$.
  \item For all $j,\neq j'$, $\gamma_s^{(j)}\cap \gamma_s^{(j')}=\{f_s\left(F^{(k_s)}\right),f_{s+1}\left(F^{(k_{s+1})}\right)\}$  for all $1\leq j\leq r$.
  \end{enumerate}
\end{cor}
\begin{proof}
  Since we only make a claim about large $s$, we may assume without
  loss of generality that every type $k_s$ for $s > m$ is essential.

  We will then find $\gamma_s$ as
  \[ \gamma_s = f_s\iota_s \]
  for a suitable path $\iota_s$. 
In order to satisfy (1), the path
  $\iota_s$ needs to join $F^{(k_s)}$ to
  $f_s^{-1}f_{s+1}\left(F^{(k_{s+1})}\right)$. Note that by the second
  claim of Lemma~\ref{lem:split-means-finite} (Equation~(\ref{eq:crossnesting})) we have
  \[ f_s^{-1}f_{s+1}\left(\mathcal{U}^{(k_{s+1})}\right) \subset 
    \mathcal{U}^{(k_{s})}\]
  and therefore we have that
  \[ F^{(k_s)}, f_s^{-1}f_{s+1}\left(F^{(k_{s+1})}\right) \in
    \mathcal{U}^{(k_{s})}. \] In fact, as the foliations $F^{(k_s)}$
  are defined by the covers from Lemma~\ref{lem:initial-island}, the
  foliations
  $F = F^{(k_s)}, F' = f_s^{-1}f_{s+1}\left(F^{(k_{s+1})}\right)$ are
  defined by Abelian differentials $\omega, \omega'$ which satisfy
  condition $(\ast)$ in Theorem~\ref{thm:contracted-paths} by the comment right after the proof of  Lemma~\ref{lem:initial-island}.

  Hence, for any essential type $k$ and $s$ with $k_s=k$, we can apply
  Theorem~\ref{thm:contracted-paths} to $r$, $U_n(\tau, E)$ and pairs
  of foliations
  $(F^{(k)}, f_s^{-1}f_{s+1}\left(F^{(k_{s+1})}\right))$, to obtain
  thresholds
  $m_0(k, F^{(k)}, f_s^{-1}f_{s+1}\left(F^{(k_{s+1})}\right))$. Note
  that since there are finitely many $F^{(i)}$ and for all $s$,
  $f_s^{-1}f_{s+1}\in M$ for the finite set $M$ from
  Lemma~\ref{lem:split-means-finite}, there is a number
  \[ m = \max m_0(k, F^{(k)},
    f_s^{-1}f_{s+1}\left(F^{(k_{s+1})}\right)). \] We claim that this
  has the desired property. Namely, suppose that $s > m$. Then, let
  $k=k_s$ be the type of the index $s$. By our choice of $m$ the
  foliations $F^{(k_s)}, f_s^{-1}f_{s+1}\left(F^{(k_{s+1})}\right)$
  and the number $s$ then satisfy the prerequisites of
  Theorem~\ref{thm:contracted-paths}, and we can choose $\iota_s$ to
  be the path guaranteed by that theorem. Since peak-and-push-paths
  consist only of cobounded foliations, and this property is invariant
  under the mapping class group, $f_s\iota_s$ then satisfies (1) and
  (2). Property (3) and (4) are  directly guaranteed by
  Theorem~\ref{thm:contracted-paths}. 
  
\end{proof}

\begin{proof}[{Proof of Theorem~\ref{thm:main-punctured}}]
  In order to show the theorem, in light of Corollary~\ref{cor:tg-connected}
  it suffices to show that, any
  uniquely ergodic foliation $E \notin S$ can be joined to a torus good
  foliation by a path that doesn't intersect $S$. We will do this by using the construction outlined above.

  Namely, apply Corollary~\ref{cor:increment-paths} for every $n$ to
  get a sequence $m_n$ of threshold indices. We may assume without
  loss of generality that $m_n$ is increasing in $n$.  For
  $s \leq m_1$, choose $\gamma_s$ to be any path of cobounded
  foliations connecting $f_s\left(F^{(k_s)}\right)$ to
  $f_{s+1}\left(F^{(k_{s+1})}\right)$ that does not intersect $f_{s}^{-1}S$. Indeed, we apply Corollary~\ref{cor:increment-paths} with $r=|S|+1$. 
 For $m_{n+1} \geq s > m_n$, let
  $\gamma_s$ be the result of applying
  Corollary~\ref{cor:increment-paths}. We then have that
  $\gamma_s \subset U_n(\tau, E)$ for $m_{n+1} \geq s > m_n$.

  Consider now the paths
  \[ c_r = \gamma_1 \ast \cdots \ast \gamma_r, \] and note that they
  join the torus good foliation $f_1\left(F^{(k_1)}\right)$ to
  $f_{r+1}\left(F^{(k_{r+1})}\right)$. For any $s < r$, let
  \[ i_{s,r} = \gamma_{s+1} \ast \cdots \ast \gamma_r, \]
  so that
  \[ c_r = c_s \ast i_{s,r}. \]
  By our construction of the $\gamma_s$, we have that for any $n$ there is
  some $m_n$, so that for all $r>s>m_n$:
  \[ i_{s,r} \subset U_n(\tau, E) \]
  As by Corollary~\ref{cor:u-nest} we have that
  \[ \bigcap_n U_n(\tau, E) = \{E\}, \]
  this shows that since $c_r\subset U_n$ for all $r>m_n$, the infinite concatenation
  \[ c_\infty = c_1\ast c_2\ast\cdots \ast c_n \ast\cdots \] 
  extends to a continuous path with
  endpoints $f_1\left(F^{(k_1)}\right), E$, finishing the proof.
\end{proof}

\section{Paths in the closed case, and Islands of branched covers}
\label{sec:closed-case}
\begin{thm}\label{thm:main-closed}
  Suppose that $\Sigma$ is a closed surface of genus $g \geq 5$.  Then
  the set $\mathcal{UE}(\Sigma)$ of uniquely ergodic foliations on $\Sigma$ is
  path-connected. Moreover, for any finite set $S$, $\mathcal{UE}(\Sigma)\setminus S$ is path connected.
\end{thm}
To prove this theorem, we want to run the strategy of the proof of Theorem~\ref{thm:main-punctured}, with the addition of using branched covers
to lift paths from punctured to closed surfaces.

The first ingredient is the following theorem, which follows
from the methods developed in \cite{LS-connectivity}. 
\begin{prop}\label{prop:ls-islands}
  Suppose that $g \geq 5$. Then there is an involution $\sigma$ of the
  closed surface $\Sigma_g$ with the following properties.
  \begin{enumerate}[i)]
  \item $\Sigma_g / \sigma$ is a surface of genus at least $2$ with several marked points.
  \item For any conjugate $\hat{\sigma}$ of $\sigma$ in the mapping class
    group there is a sequence $\sigma_i$ so that
    \[ \sigma = \sigma_1, \ldots, \sigma_n = \hat{\sigma}, \] and for any
    $i$ the group $G_i = \langle \sigma_i, \sigma_{i+1} \rangle$ is a
    finite group so that $\Sigma_g / G_i$ is a torus with four
    marked points. In that case we also say that $\sigma, \hat{\sigma}$ are a \emph{good pair}.
  \end{enumerate}
\end{prop}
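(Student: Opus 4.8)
The plan is to construct $\sigma$ explicitly as a hyperelliptic-type involution coming from a branched cover $\Sigma_g \to T$ of a torus, and then to realize the connectivity of the conjugacy class by changing this cover one ``elementary move'' at a time, exactly as the analogous arguments in \cite{LS-connectivity} do for their island structures. First I would recall that for $g \geq 5$ there is a regular branched double cover $\pi:\Sigma_g \to \Sigma_h$ onto a surface $\Sigma_h$ of genus $h \geq 2$ branched over a positive number $b$ of points, with $2h - 2 + b/2 = g-1$ by Riemann--Hurwitz; the deck transformation is the desired involution $\sigma$, and $\Sigma_g/\sigma = \Sigma_h$ is the surface with $b$ marked points required in i). The genus bound $g \geq 5$ is exactly what one needs to have enough room to also factor certain pairs of such involutions through a common torus quotient (this is where the ``at least $2$'' and ``several marked points'' get pinned down; one wants $\Sigma_g / G_i$ to be a four-times-marked torus, i.e.\ $\chi = -4$, forcing $|G_i| = 4$ and $4\cdot(-4) = \chi(\Sigma_g) = 2-2g$, giving $g = 9$ — so more realistically one takes $G_i$ a Klein four-group and checks the arithmetic constraints, and the ``several'' is whatever number the arithmetic dictates).

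The core of the argument is part ii): given an arbitrary conjugate $\hat\sigma = \phi\sigma\phi^{-1}$, produce a chain of involutions $\sigma = \sigma_1, \ldots, \sigma_n = \hat\sigma$ in which consecutive ones generate a Klein four-group $G_i$ with $\Sigma_g/G_i$ a four-marked torus. The key step is a \emph{single elementary move}: I would show that if $\sigma'$ is any involution of the prescribed topological type, and $\gamma$ is a simple closed curve on the quotient $\Sigma_g/\sigma'$ positioned appropriately relative to the branch locus, then the involution $\sigma'' = T_{\tilde\gamma}^{\pm 1}\,\sigma'\,T_{\tilde\gamma}^{\mp 1}$ obtained by conjugating by a twist about a lift $\tilde\gamma$ is again of the prescribed type, \emph{and} $\langle \sigma', \sigma''\rangle$ is the desired finite group with torus quotient — this is a local computation near $\gamma$ and its lifts, since away from a neighbourhood of $\gamma$ the two involutions agree. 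Then I would invoke the fact (used throughout \cite{LS-connectivity}, and following from the Dehn--Lickorish generation of the mapping class group together with a change-of-coordinates principle) that \emph{any} two involutions of the same topological type are connected by a finite sequence of such elementary twist moves: one writes the conjugating element $\phi$ as a product of Dehn twists $T_{c_1}\cdots T_{c_\ell}$, and checks that each partial product $T_{c_1}\cdots T_{c_j}\,\sigma\,(T_{c_1}\cdots T_{c_j})^{-1}$ differs from the next by exactly one elementary move about $c_{j+1}$.

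The step I expect to be the main obstacle is verifying, for each elementary move, that the group $G_i = \langle \sigma_i, \sigma_{i+1}\rangle$ is genuinely finite with the claimed torus quotient, rather than an infinite group — two involutions generate a dihedral (possibly infinite) group, and one needs the product $\sigma_i\sigma_{i+1}$ to have finite order (here, order $2$, so that $G_i \cong \ZZ/2 \times \ZZ/2$). This forces the curve $\gamma$ used in the move to be chosen very carefully: it must be invariant (up to isotopy) under the relevant quotient symmetry so that $\sigma_i$ and $\sigma_{i+1}$ commute. The technical heart is therefore a case analysis showing that one can always interpolate between $T_{c_1}\cdots T_{c_j}\sigma(\cdots)^{-1}$ and $T_{c_1}\cdots T_{c_{j+1}}\sigma(\cdots)^{-1}$ by inserting finitely many \emph{good} intermediate involutions built from such symmetric curves; this is essentially the content of the relevant lemmas in \cite{LS-connectivity} adapted from the once-marked to the four-marked torus setting, and I would cite that structure rather than redo it in full. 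The genus hypothesis $g \geq 5$ enters precisely to guarantee that enough symmetric curves of the right type exist on $\Sigma_g/\sigma$ to carry out every required interpolation, and to ensure the intermediate quotients $\Sigma_g/G_i$ have the stated type.
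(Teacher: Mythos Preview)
Your high-level reduction --- write the conjugating element as a product of Humphries twists and handle one twist at a time --- matches the paper's strategy, but the execution you sketch has real gaps.

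First, the arithmetic and the structure of $G_i$. You guess that $G_i$ must be a Klein four-group, and your computation $4\cdot(-4)=2-2g$ already tells you this only pins down a single genus. In the paper's construction $G_i$ is \emph{not} Klein four in general: the surface of genus $g=2k+1$ is built as a ring of $2k$ one-holed tori carrying a dihedral $D_{4k}$--action, $\sigma$ is a reflection in this dihedral group, the partner $\sigma'=r\sigma r^{-1}$ is an adjacent reflection, and $\langle\sigma,\sigma'\rangle$ is dihedral of order $2k=g-1$. The quotient is a torus with four order--$2$ cone points because the orbifold Euler characteristic is $(2-2g)/(g-1)=-2$, and this works uniformly in $g$. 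Without this specific construction you have no candidate for the second involution in a good pair.

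Second, and more seriously, your ``elementary move'' does not work as described. You want $\sigma$ and $T_{\tilde\gamma}\sigma T_{\tilde\gamma}^{-1}$ to generate a finite group, and you propose to arrange this by a local computation and a careful choice of $\tilde\gamma$. But two involutions generate a finite group only when their product has finite order, and for a generic twist curve there is no reason $\sigma\cdot T_{\tilde\gamma}\sigma T_{\tilde\gamma}^{-1}$ should have finite order --- this is a global condition, not a local one. The paper avoids this entirely: it never tries to make $\sigma$ and $T_{\alpha_i}\sigma T_{\alpha_i}^{-1}$ directly into a good pair. Instead, for each Humphries curve $\alpha_i$ not fixed by $\sigma$, it produces a mapping class $\phi_i$ \emph{commuting with $\sigma$} which sends a fixed $\sigma'$--invariant curve $\alpha_{j_0}$ to $\alpha_i$, and then uses the length--three chain $\sigma,\ \phi_i\sigma'\phi_i^{-1},\ T_{\alpha_i}\sigma T_{\alpha_i}^{-1}$. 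Both consecutive pairs here are conjugates of the fixed good pair $(\sigma,\sigma')$, so finiteness is inherited for free. The existence of such $\phi_i$ is exactly what the ``halves'' decomposition $S=S^+\cup S^-$ and the nonseparating condition on the $\alpha_i$ buy you, and this is the mechanism you are missing.
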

In the proof we need the notion of \emph{Humphries generators} for the
mapping class group.  We refer the reader to the textbook
\cite[Chapter~4]{Primer} for a detailed discussion, and only recall
the definition for convenience. Namely, a Humphries generating set for
the mapping class group of a genus $g$ surface consists of Dehn twists
about curves\footnote{In the terminology of \cite[Theorem~4.14]{Primer}, also referring to \cite[Figure~4.5]{Primer}, the curves $\alpha_1, \ldots, \alpha_{2g}$ are the curves $m_1, a_1, c_1, a_2, c_2, \ldots, c_{g-1}, a_g$, and the curve $\alpha_{2g+1}$ is the curve $m_2$.} $\alpha_i, i=1,\ldots, 2g+1$ so that
\begin{itemize}
\item $\alpha_1, \ldots, \alpha_{2g}$ form a chain,
i.e. $\alpha_i, \alpha_j$ intersect in one point if $|i-j|=1$, and are
disjoint otherwise.
\item $\alpha_{2g+1}$ is disjoint from all $\alpha_i$ except
  $\alpha_4$, which it intersects in a single point.
\end{itemize}
The crucial result \cite[Theorem~4.14]{Primer} is that Dehn twists about any such set of curves
generate the mapping class group.
\begin{proof}[{Proof of Proposition~\ref{prop:ls-islands}}]
  When $g$ is even this is \cite[Theorem 5.3]{LS-connectivity}. The
  case of odd genus is a fairly straightforward modification which is
  below.

  The strategy is as follows.  We show that for $f_1,\ldots,f_n$ a
  suitably chosen generating set for $\Mcg(\Sigma_g)$ and $\sigma$ a
  suitably chosen involution we have that $\sigma, f_i\sigma f_i^{-1}$
  are a good pair. Since whenever $\sigma, \sigma'$ are good pair,
  $g\sigma g^{-1}$ and $g\sigma' g^{-1}$ are as well, we have that by
  induction of the word length in $f_1,\ldots,f_n$, $\sigma$ can be joined to
  $f\sigma f^{-1}$ for any mapping class $f$.

  \smallskip To construct $\sigma$ and $\sigma'$, we use
  the following setup (compare Figure~\ref{fig:ring}).
  \begin{figure}[h!]
    \centering
    \includegraphics[width=0.6\textwidth]{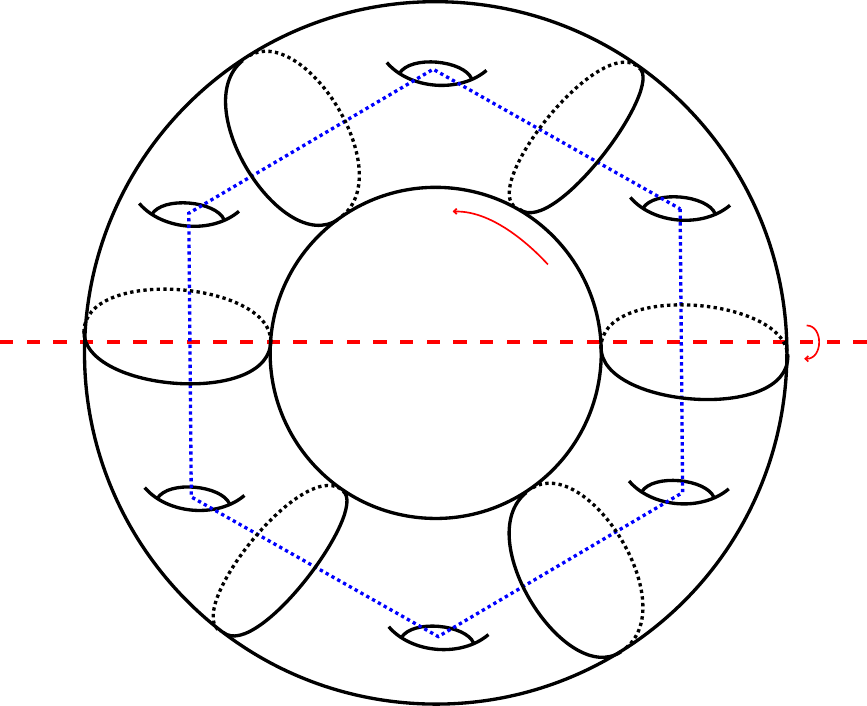}
    \caption{The setup for Proposition~\ref{prop:ls-islands}: realising the dihedral group action}
    \label{fig:ring}
  \end{figure}
  We realise the surface $S$ of genus $2k+1$ as a union
  \[ S = \bigcup_{i=0}^{2k-1} H_i \] where each $H_i$ is a torus with
  two boundary components, and the two boundaries of $H_i$ are glued
  to $H_{i+1}, H_{i-1}$ in a ring (compare
  Figure~\ref{fig:ring}). Denote by $\delta_0, \ldots, \delta_{2k-1}$
  the boundary curves of the $H_i$, so that $\partial H_i = \delta_i
  \cup \delta_{i+1}$. The dihedral group of order $4k$ embeds into the
  mapping class group of $S$, generated by an order $2k$ element $r$
  and an order $2$ element $\sigma$. We have that $r(H_i) =
  H_{i+1}, r(\delta_i) = \delta_{i+1}$ (where indices are taken mod
  $2k$), and $\sigma$ can be described in the following way: 
  the curves $\delta_0, \delta_k$
  cut $S$ into two subsurfaces $S_+, S_-$, each of which has genus
  $(g-1)/2$ and has two boundary components. The involution $\sigma$
  will exchange $S^+$ and $S^-$ and fix both boundary components of
  $S^+$ setwise.

  Intuitively, we imagine $S$ as a symmetric, thickened $2k$-gon in
  three-space, with a torus in each corner. The element $r$ then rotates
  the $2k$-gon by $\pi/k$ around its center, while $\sigma$ rotates by $\pi$
  about an axis through $\delta_0, \delta_k$ (compare Figure~\ref{fig:ring}).

  \smallskip We then define $\sigma' = r\sigma r^{-1}$. 
  We claim that $\Sigma_g/\langle \sigma, \sigma' \rangle$ is
  a torus with four marked points. Indeed, $\langle \sigma, \sigma' \rangle$
  contains $r^2$ (recall that $\sigma, r$ generate a dihedral group),
  and thus
  \[ H_0\cup H_1 \to \Sigma_g/\langle \sigma, \sigma' \rangle \]
  is already surjective. Since $\sigma'$ exchanges $H_0$ and $H_1$,
  even 
  \[ H_0 \to \Sigma_g/\langle \sigma, \sigma' \rangle \] is already
  surjective. In fact, $\Sigma_g/\langle \sigma, \sigma' \rangle$ is
  obtained from $H_0$ by identifying two halves of $\delta_0$ with
  each other (via the action of $\sigma$) and identifying two halves
  of $\delta_1$ with each other (via the action of $\sigma'$).  This
  shows that $\Sigma_g/\langle \sigma, \sigma' \rangle$ is indeed a
  torus with four marked points (coming from the fixed points of
  $\sigma, \sigma'$ in $H_0$).

  \begin{figure}[h!]
    \centering
    \includegraphics[width=0.6\textwidth]{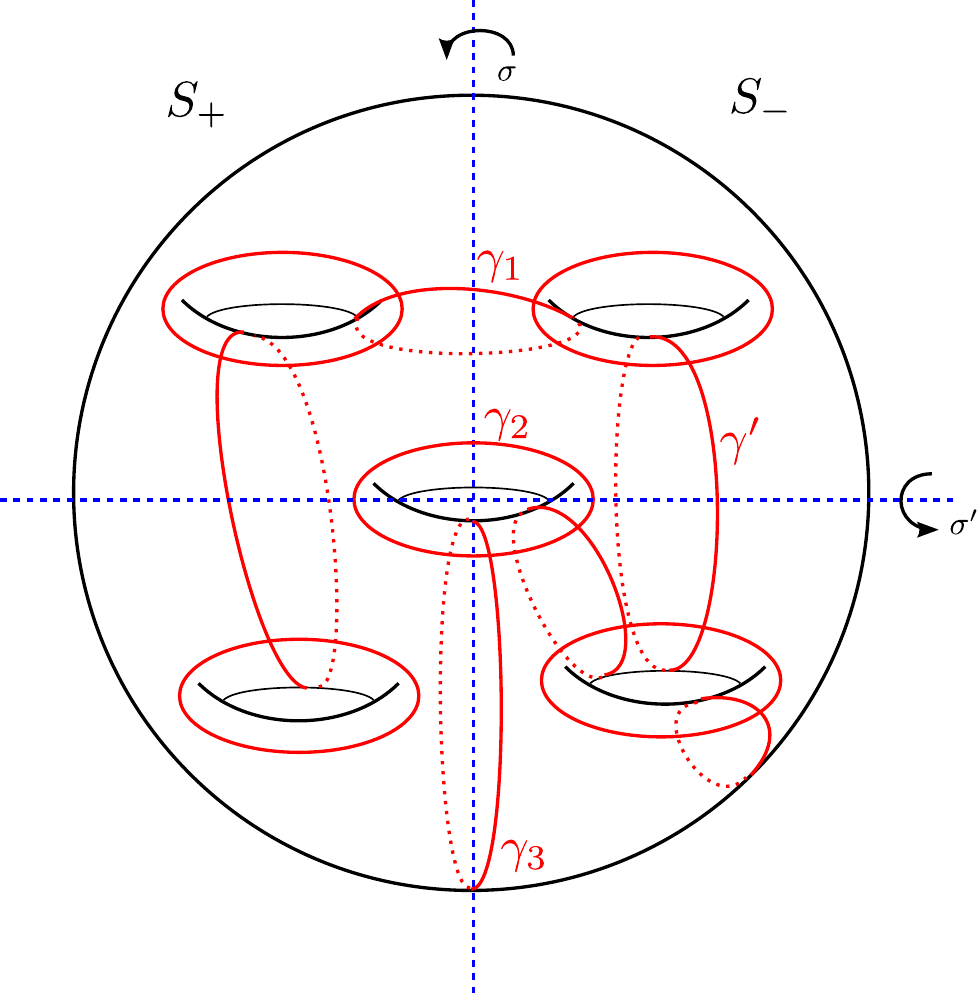}
    \caption{The setup for Proposition~\ref{prop:ls-islands}: $\gamma_1, \gamma_2, \gamma_3$ are the curves which are not in $S^\pm$ and they are invariant under $\sigma$. The curve $\gamma'$ is invariant under $\sigma'$.}
    \label{fig:involutions}
  \end{figure}

  Next, we claim that there are simple closed curves $\alpha_i$ with
  the following properties:
  \begin{enumerate}[a)]
  \item Dehn twists about the $\alpha_i$ form a (Humphries) generating
    set for the mapping class group of $\Sigma_g$.
  \item Each $\alpha_i$ is either contained in one of the $S^\pm$, or
    is invariant under $\sigma$.
  \item If $\alpha_i \subset S^\pm$, then it is nonseparating in that
    subsurface.
  \item There is one $\alpha_{j_0}$ which is contained in $S^-$ and which
    is invariant under $\sigma'$.
  \end{enumerate}
  That such a set of curves exists is an exercise using Figure~\ref{fig:involutions}.

  Now, from property c) we get the following:
  \begin{equation}
    \label{eq:commute-map}
    \forall \alpha_i\mbox{ not invariant under }\sigma \quad\exists\phi_i \in \mathrm{Mcg}(\Sigma_g): [\phi_i, \sigma]=1, \phi_i(\alpha_{j_0})=\alpha_i.
  \end{equation}
  Namely, suppose first that $\alpha_i \subset S^-$. Then, since both $\alpha_i, \alpha_{j_0}$ are nonseparating in $S^-$, there is a mapping class $f$ of $S^-$
  fixing $\partial S^-$ which sends $\alpha_{j_0}$ to $\alpha_i$. Extend $f$ to
  a mapping class $\phi_i$ of $S$ by setting it to be $\sigma f \sigma$
  on $S^+$. This has the desired property. In the case where $\alpha_i \subset S^+$, we start with $f$ which sends $\sigma\alpha_i$ to $\alpha_{j_0}$ as above,
  and let $\phi_i$ be $\sigma f$ on $S^-$ and $f \sigma$ on $S^+$.

  \smallskip
  We claim that for any of the Humphries generators $T=T_{\alpha_i}$
  we can connect $\sigma$ to $T\sigma T^{-1}$ with a path as in ii) of the statement of the Proposition. 

  For twists about curves
  $\alpha_i$ which are invariant under $\sigma$ there is nothing to
  show, as such twists commute with $\sigma$, and therefore the trivial
  path connects $\sigma$ and $T_{\alpha_i}\sigma T_{\alpha_i}^{-1} = \sigma$.
  If $\alpha_i$ is not
  invariant, let $\phi_i$ be the mapping class guaranteed by~(\ref{eq:commute-map}). We claim that
  \[ \sigma_1 = \sigma, \]
  \[ \sigma_2 = \phi_i\sigma'\phi_i^{-1}, \]
  \[ \sigma_3 = T_{\alpha_i}\sigma T_{\alpha_i}^{-1} \] is a path as desired. To begin with, note
  that \[ G_1 = \langle \sigma, \phi_i\sigma'\phi_i^{-1} \rangle =
  \langle \phi_i \sigma \phi_i^{-1}, \phi_i\sigma'\phi_i^{-1} \rangle
  = \phi_i \langle \sigma, \sigma'\rangle \phi_i^{-1},\] since $\phi_i$ commutes with $\sigma$. As
  by assumption $\sigma, \sigma'$ is a good pair, $G_1$ is a group as desired.

  Next, observe that
  \[ \phi_i T_{\alpha_{j_0}} \phi_i^{-1} = T_{\phi_i\alpha_{j_0}} = T_{\alpha_i} \] and therefore
  \[ [T_{\alpha_i}, \phi_i\sigma'\phi_i^{-1}] =
  [\phi_iT_{\alpha_{j_0}}\phi_i^{-1}, \phi_i\sigma'\phi_i^{-1}] =
  \phi_i[T_{\alpha_{j_0}}, \sigma']\phi_i^{-1} = 1, \] 
  since $\sigma'$ preserves $\alpha_{j_0}$ and therefore commutes with the Dehn twist about $\alpha_{j_0}$. 
  As $G_1$ is generated by a good pair, so is 
  \[ T_{\alpha_{j_0}}G_1T_{\alpha_{j_0}}^{-1} = \langle T_{\alpha_{j_0}}\sigma T_{\alpha_{j_0}}^{-1}, T_{\alpha_{j_0}}  \phi_i\sigma'\phi_i^{-1} T_{\alpha_{j_0}}^{-1}\rangle =  \langle T_{\alpha_{j_0}}\sigma T_{\alpha_{j_0}}^{-1}, \phi_i\sigma'\phi_i^{-1} \rangle = \langle \sigma_3, \sigma_2 \rangle. \]
  Hence, $\sigma_1, \sigma_2, \sigma_3$ is indeed a path as desired.
\end{proof}

For the remainder of this section, we fix $\sigma$ to be as in the conclusion of Proposition~\ref{prop:ls-islands}. Say that a foliation $F$ is \emph{lifted torus good}, if $F$ is the lift of a torus good foliation on $\Sigma_g/\hat{\sigma}$ for $\hat{\sigma}$ a conjugate of $\sigma$
in $\Mcg(S)$ (possibly by the identity).

The following will replace Lemma~\ref{lem:island-1}.
\begin{lem}\label{lem:island-2}
  Suppose that $F, F'$ are lifted torus good. Then there are
  \begin{enumerate}
  \item Involutions $\sigma_1,\ldots, \sigma_N$, which are conjugate to $\sigma$,
  \item Abelian differentials $\omega_i, i=1,\ldots, N$ on $\Sigma_g$,
  \end{enumerate}
  so that the following hold:
  \begin{enumerate}[i)]
  \item For any $i$, the group
    $\langle \sigma_i, \sigma_{i+1} \rangle$ is finite and
    $T_i = \Sigma_g / \langle \sigma_i, \sigma_{i+1} \rangle$ is a torus
    with four marked points.
  \item The differential $\omega_i$ is a lift of a torus good differential on the
    torus $T_i$ (with marked points).
  \end{enumerate}
\end{lem}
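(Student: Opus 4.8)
The plan is to follow the proof of Lemma~\ref{lem:island-1}, with Proposition~\ref{prop:ls-islands} taking over the role played there by the connectivity of $\widetilde{\Omega}(S)$, and the density results of Section~\ref{sec:pointpushs} supplying the Abelian differentials. First I would unpack the hypothesis: since $F$ is lifted torus good there is a conjugate $\hat\sigma_F = g\sigma g^{-1}$ of $\sigma$ and a torus good foliation on the genus $\geq 2$ surface $\Sigma_g/\hat\sigma_F$ whose lift to $\Sigma_g$ is $F$, and likewise a conjugate $\hat\sigma_{F'} = h\sigma h^{-1}$ attached to $F'$. These will be the endpoints $\sigma_1 := \hat\sigma_F$ and $\sigma_N := \hat\sigma_{F'}$ of the chain we build.

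Next I would build the chain of involutions. Applying Proposition~\ref{prop:ls-islands}~ii) to the conjugate $(g^{-1}h)\sigma(g^{-1}h)^{-1}$ of $\sigma$ gives a sequence of involutions joining $\sigma$ to it in which every consecutive pair is a good pair; conjugating this sequence by $g$ and using that conjugation by a fixed mapping class carries good pairs to good pairs (as noted in the proof of Proposition~\ref{prop:ls-islands}, since it conjugates the finite group to an isomorphic one with homeomorphic quotient) produces a sequence
\[ \hat\sigma_F = \sigma_1, \sigma_2, \ldots, \sigma_N = \hat\sigma_{F'} \]
of conjugates of $\sigma$ in which each $\langle\sigma_i,\sigma_{i+1}\rangle$ is finite and $T_i := \Sigma_g/\langle\sigma_i,\sigma_{i+1}\rangle$ is a torus with four marked points. (Equivalently, one concatenates the Proposition~\ref{prop:ls-islands} chain from $\hat\sigma_F$ to $\sigma$, reversed, with the one from $\sigma$ to $\hat\sigma_{F'}$.) This yields the involutions in (1) and property i).

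Finally I would produce the differentials. Write $p_i : \Sigma_g \to T_i$ for the branched quotient. Parametrising the Teichm\"uller space of the torus-with-four-marked-points $T_i$ by a flat shape together with the positions of the marked points, the arguments of Section~\ref{sec:pointpushs} apply essentially verbatim: Corollary~\ref{cor:schmidt} places the four marked points so that they are pairwise $B$-badly approximable, and one simultaneously chooses the shape so that the vertical flow is badly approximable, hence the flat torus eventually $\epsilon$-thick (as in the proof of Proposition~\ref{prop:dense-ba}, where badly approximable directions -- which are dense -- are used for exactly this purpose), with $\epsilon$ absolute and $B$ depending only on the number of marked points. This gives an Abelian differential $\bar\omega_i$ on $T_i$ which is eventually $\epsilon$-thick and whose marked points are pairwise $B$-badly approximable; setting $\omega_i := p_i^*\bar\omega_i$ gives object (2), and property ii) holds by construction.

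The substantive mathematical content is entirely in Proposition~\ref{prop:ls-islands}, which constructs the involution $\sigma$ and the ``ring'' of tori; granting that, the remaining work is bookkeeping, and the only points requiring genuine care are the conjugation-invariance of good pairs used to join the two endpoint involutions, and the routine check that the Schmidt-game density statements of Section~\ref{sec:pointpushs} survive when the base is a torus carrying marked points. The one place where I expect an actual obstacle is if one additionally wants the endpoint differentials $\omega_1,\omega_N$ to have vertical foliations equal to $F,F'$ -- in analogy with the first property of Lemma~\ref{lem:island-1}, as one does in order to run a push-and-peak construction between $F$ and $F'$ afterwards -- for then one must arrange that $F$ (resp.\ $F'$) already descends to $T_1$ (resp.\ $T_{N-1}$); I would handle this by choosing the first and last good pairs so that the torus good structure witnessing ``$F$ is lifted torus good'' is built out of the cover $\Sigma_g/\hat\sigma_F \to T_1$ from the start.
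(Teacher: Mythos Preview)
Your argument is essentially the paper's own, just with more of the bookkeeping spelled out. The paper's proof is three sentences: take the conjugates of $\sigma$ attached to $F,F'$, apply Proposition~\ref{prop:ls-islands} directly to that pair of conjugates to get the chain $\sigma_1,\ldots,\sigma_N$ with property~i), let $\omega_1,\omega_N$ be the differentials already defining $F,F'$, and pick the intermediate $\omega_i$ as arbitrary lifts of torus good differentials on $T_i$. Your conjugation detour (apply the proposition to $(g^{-1}h)\sigma(g^{-1}h)^{-1}$, then conjugate back by $g$) is equivalent to the paper's direct application, since Proposition~\ref{prop:ls-islands}~ii) is already stated for an arbitrary conjugate; and your Schmidt-game construction of the $\omega_i$ is exactly what ``arbitrary torus good differential on $T_i$'' unpacks to.

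Your final paragraph is perceptive: the paper does take $\omega_1,\omega_N$ to be the given differentials defining $F,F'$, which are a priori only lifts from $\Sigma_g/\sigma_1$ and $\Sigma_g/\sigma_N$ rather than from $T_1$ and $T_{N-1}$, so property~ii) as literally stated is not quite verified at the endpoints. This is harmless, however, because in the only place the lemma is invoked (the proof of Theorem~\ref{thm:contracted-paths-2}) just the chain of involutions is actually used, and the required differentials are rebuilt there from scratch.
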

\begin{proof}
  Suppose that $F$ is a lift of a foliation on $\Sigma_g/\sigma$ and
  $F'$ is a lift of a foliation on $\Sigma_g/\sigma'$. Apply
  Proposition~\ref{prop:ls-islands} to $\sigma, \sigma'$ to find the
  involutions $\sigma_i$ with property i). The differentials
  $\omega_1, \omega_N$ are chosen to be the ones defining $F, F'$; the other
  $\omega_i$ can be chosen as arbitrary lifts of torus good differentials on $T_i$.
\end{proof}

Finally, the following will replace Theorem~\ref{thm:contracted-paths}. 
\begin{thm}\label{thm:contracted-paths-2}
  Suppose that $(\tau_n)$ is
  a full splitting sequence in the direction of a uniquely ergodic
  foliation $E$, and let $f_m$ be an associated $\Mcg$-sequence.

  Fix an essential type $k$ and let $F, F' \in \mathcal{U}^{(k)}$ be two lifted torus good
  foliations lifted from covers $\Sigma_g/\sigma, \Sigma_g/\sigma'$.
  Assume that
  \begin{description}
  \item[$(\ast)$] $\mathcal{U}^{(k)}$ contains every foliation which is a lift of the cover defined by $\Sigma_g/\sigma, \Sigma_g/\sigma'$.
  \end{description}
  Then for any $n$ there is a number $m_0$ with the following
  property.  Suppose that $m>m_0$ and that $k_m=k$.  Then there is an
  path $\gamma$ connecting $F$ to $F'$, so that $f_m\gamma$ is
  completely contained in $U_n(\tau, E)$, and consists only of
  cobounded foliations. Moreover, given any finite set $S$ we may assume that $f_m\gamma$ does not intersect $S\setminus \{f_mF,f_mF'\}$. 
\end{thm}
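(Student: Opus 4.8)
The plan is to run the proof of Theorem~\ref{thm:contracted-paths} almost verbatim, with Lemma~\ref{lem:island-2} in place of Lemma~\ref{lem:island-1}, and Corollary~\ref{cor:passing-to-branched-covers} in place of the direct construction of contracted peak-and-push paths. First I would apply Lemma~\ref{lem:island-2} to the two lifted torus good foliations of the statement (call them $E, E'$, so that $\gamma$ is to connect $E$ to $E'$), obtaining a chain of involutions $\sigma = \sigma_1, \ldots, \sigma_N = \sigma'$, each conjugate to $\sigma$, and Abelian differentials $\omega_1, \ldots, \omega_N$ on $\Sigma_g$ with $\omega_1, \omega_N$ defining $E, E'$, so that for every $i$ the group $\langle \sigma_i, \sigma_{i+1}\rangle$ is finite with quotient a torus $T_i$ carrying four marked points, and $\omega_i$ is a lift of a torus good differential for $T_i$. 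For each island $i$ one then records the corresponding tower $\Sigma_g \to S_i \to T_i$, where $S_i$ is the genus-$\geq 2$ surface with marked points furnished by Proposition~\ref{prop:ls-islands} (which, as in the construction behind that proposition, is a branched cover over the torus underlying $T_i$, and for which $\Sigma_g \to S_i$ is a properly branched cover whose branch set is exactly the marked points of $S_i$); consecutive islands share the surface $S_{i+1}$, which is what permits the segments living over different islands to be concatenated. With this data each island becomes a situation to which Corollary~\ref{cor:passing-to-branched-covers} applies.

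I would then assemble $\gamma$ as a peak-and-push path
\[ \gamma = \gamma_1^+ \ast \overline{\gamma_2^-} \ast \gamma_2^+ \ast \cdots \ast \overline{\gamma_N^-} \]
following Definition~\ref{def:peak-and-path}, except that every point-push subpath in a segment $\gamma_i^{\pm}$ is taken to be the lift (along $S_i \to T_i$ and then along $\Sigma_g \to S_i$) of a point-push path on $S_i$ supported near the lift of a simple closed curve of the base torus; by Corollary~\ref{cor:passing-to-branched-covers} these lifted subpaths then consist of cobounded foliations on $\Sigma_g$. The extreme segments $\gamma_1^+$ and $\overline{\gamma_N^-}$ lie in $\mathcal{U}^{(k)}$ by hypothesis $(\ast)$, hence for $m > n$ with $k_m = k$ they are carried by $f_m$ into $f_m(\mathcal{U}^{(k)}) = U_m(\tau, F) \subseteq U_n(\tau, F)$. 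For each interior island I would invoke Corollary~\ref{cor:passing-to-branched-covers} in the form it is actually proved (the analogue of the ``without property $(\ast)$'' case of Theorem~\ref{thm:contracted-paths}): apply Proposition~\ref{prop:apply-pseudo} to the peak pseudo-Anosov of that island --- a conjugate of $P_{\alpha_i}P_{\beta_i}^{-1}$, hence a multi-point-push, so the proposition applies --- with $\mathcal{V} = U_n(\tau, F)$, to obtain a threshold $N_i$; then, for $m$ larger than all the $N_i$, choosing the twisting powers $K_i$ large enough sends the $f_m$-images of all translated model point-push paths, and of the stable foliations of the peak maps, into $U_n(\tau, F)$; finally, by continuity of the finitely many maps $f_m(\psi_i^{(K_i)})^j$ with $j \leq J$ together with north-south dynamics of $\psi_i^{(K_i)}$ for $j > J$, one moves the base foliations $F_i$ close enough to the relevant base-torus curves that the actual (rather than model) point-push paths are still sent into $U_n(\tau, F)$. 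Letting $m_0$ be the maximum of the finitely many thresholds produced this way completes the construction.

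The main obstacle I expect is organizational rather than conceptual: one has to keep the branched-cover data coherent along the whole chain --- each island carries its own tower $\Sigma_g \to S_i \to T_i$, Corollary~\ref{cor:passing-to-branched-covers} must be invoked with the correct cover at each step, and the concatenations at the junctions rely on the shared surface $S_{i+1}$ together with twisting pairs adapted to both adjacent islands (the closed-case substitute for the overlaps $U_i \cap U_{i-1}$ used in the proof of Lemma~\ref{lem:island-1}). Checking that Proposition~\ref{prop:ls-islands} actually produces these towers --- in particular that each $\Sigma_g/\sigma_i$ is a branched torus cover with branch set exactly its marked points --- is where the hypothesis $g \geq 5$ is genuinely used. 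Once this is arranged, the ``fix the powers first, then take the base foliations close to the model curves'' argument is word-for-word the one from the proof of Theorem~\ref{thm:contracted-paths} and brings in nothing new; and the two facts that really matter here, namely that coboundedness survives passage to the finite branched cover and that the Gromov-product contraction estimates of Section~\ref{sec:train-track-north-south} transfer through it, are already packaged inside Corollary~\ref{cor:passing-to-branched-covers}.
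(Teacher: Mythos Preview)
Your high-level plan is right: swap in Lemma~\ref{lem:island-2} for Lemma~\ref{lem:island-1}, use Corollary~\ref{cor:passing-to-branched-covers} to lift paths through the branched covers, handle the extreme segments via hypothesis~$(\ast)$, and finish with the same ``choose $K_i$ large, then choose base foliations close to model curves'' contraction argument driven by Proposition~\ref{prop:apply-pseudo}. All of that matches the paper.

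Where your proposal diverges from the paper, and where I think there is a real gap, is in the concatenation scheme. You assemble $\gamma$ as a \emph{single} peak-and-push path in the sense of Definition~\ref{def:peak-and-path}, with one peak pseudo-Anosov per island and segments $\gamma_i^\pm$ each lifted from a single $S_i$. But then $\gamma_i^+$ converges to the lift of the stable foliation of a point-pushing pseudo-Anosov on $S_i$, while $\gamma_{i+1}^-$ converges to the lift of one on $S_{i+1}$; these have no reason to agree, so the path does not close up at the peaks. Even if you reroute so that both $\gamma_i^+$ and $\gamma_{i+1}^-$ are lifted from the shared surface $S_{i+1}$, you are still implicitly assuming that a \emph{single} twisting pair on $S_{i+1}$ serves for both the projection of $F_i$ and the projection of $F_{i+1}$. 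That is exactly what Lemma~\ref{lem:island-1} does \emph{not} give you in one step: connecting two arbitrary torus-good foliations on $S_{i+1}$ generally requires a full peak-and-push path with several intermediate twisting pairs, not one peak. So your ``one peak per island'' structure is too coarse.

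The paper avoids this by working at a different level of granularity. It keeps \emph{both} families of covers, $p_i:\Sigma_g\to\Sigma_g/\sigma_i$ and $t_i:\Sigma_g\to T_i=\Sigma_g/\langle\sigma_i,\sigma_{i+1}\rangle$, and uses that $t_i$ factors through both $p_i$ and $p_{i+1}$. It then chooses two families of intermediate foliations --- $B_i$ (lifted via $p_i$, close to a lifted curve) and $I_j^\pm$ (lifted via $t_j$, close to a lifted curve) --- and builds $\gamma$ as a concatenation with \emph{three} types of segments $\gamma_j^+,\gamma_j^0,\gamma_j^-$. Crucially, each such segment is itself a \emph{complete} application of Corollary~\ref{cor:passing-to-branched-covers} (hence already a full lifted peak-and-push path with its own internal peaks), and the junctions occur at the chosen foliations $B_i$ and $I_j^\pm$, not at stable foliations that would have to be matched across different covers. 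The extra $\gamma_j^0$ segments are precisely what bridge the two torus quotients $T_{j-1}$ and $T_j$ through the shared intermediate surface $\Sigma_g/\sigma_j$. Once you organise the concatenation this way, the rest of your argument (thresholds from Proposition~\ref{prop:apply-pseudo}, choosing $K_i$, then moving the $B_i,I_j^\pm$ close to curves, taking the maximum $m_0$) goes through verbatim.
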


\begin{proof} 
  Suppose that $F, F'$ are given as in the theorem. First, apply
  Lemma \ref{lem:island-2} to obtain a sequence of involutions
  $\sigma_1,...,\sigma_N$. We now have two sequences of covers
  \[ p_i: \Sigma_g \to \Sigma_g/\sigma_i \]
  and
  \[ t_i: \Sigma_g \to \Sigma_g/\langle\sigma_i,\sigma_{i+1}\rangle \]
  which are compatible in the sense that $t_i$ factors through both
  $p_i$ and $p_{i+1}$:
  \begin{center}
  	\begin{tikzcd}
  	& \Sigma_g\arrow[dl, "p_i"']\arrow[dr, "p_{i+1}"]\arrow[dd, "t_i"] & \\
  	\Sigma_g/\sigma_i\arrow[dr, dashed]  & & \Sigma_g/\sigma_{i+1}\arrow[dl, dashed] \\
  	&  \Sigma_g/\langle\sigma_i,\sigma_{i+1}\rangle &
  	\end{tikzcd}
  \end{center}
	Now for each $i$, let $\delta_i$ be a lift of
  a simple closed curve on the four times punctured torus
  $\Sigma_g/\langle\sigma_i,\sigma_{i+1}\rangle$ by the map $t_i$, and
  let $\mu_i$ be a lift of a simple closed curve from $\Sigma_g/\sigma_i$ by
  the map $p_i$. We will next construct lifted torus good foliations $B_i, I_j^+, I_j^-$,
  and the desired path as a concatenation
  \[ \gamma = \gamma_1^+ \ast \overline{\gamma_2^-} \ast \gamma_2^0 \ast \gamma_2^+ \ast \dots
  \ast \overline{\gamma_{N-1}^-} \ast \gamma_{N-1}^0 \ast \gamma_{N-1}^+ \ast \overline{\gamma_N^-} \]
  where $\overline{\cdot}$ denotes the path with opposite orientation, and
  \begin{enumerate}
  \item $\gamma^+_j$ is a path starting in $I_j^+$, and ending in $B_{j+1}$, 
  \item $\gamma^0_j$ is a path starting in $I_j^-$, and ending in $I_j^+$,
  \item $\gamma^-_j$ is a path starting in $I_j^-$, and ending in $B_{j-1}$, 
  \end{enumerate}
  All $\gamma^\ast_j$ will be produced by using
  Corollary~\ref{cor:passing-to-branched-covers}.
  
  Namely, put $I_0^+ = F, I_N^- = F'$. For the remaining 
  $B_i, I_j^+, I_j^-$ choose lifted torus good foliations (for the covers $p_i, t_j, t_j$
  respectively)
  close enough to $\delta_i,
  \mu_j$ so that we can apply Corollary~\ref{cor:passing-to-branched-covers}. 
  (to
  Theorem~\ref{thm:contracted-paths}), in the version without $(\ast)$. Note that this closeness depends on $m\geq m_0$ (and not just $m_0$).
  This can e.g. be achieved by starting with any lifted torus good foliations, and
  Dehn twisting them about the $\delta_i, \mu_j$. Note that we can produce disjoint $B_i,I_j^+,I_j^-$ by twisting different numbers of times, and produce distinct paths as in Corollary~\ref{cor:tg-connected}.
  
  Now, for $\gamma_1^+$ and $\overline{\gamma_N^-}$, we will apply Corollary~\ref{cor:passing-to-branched-covers} with $(\ast)$. Indeed, as both $I_0^+$ and $B_1$ are both given by the same lifting maps $(\ast)$ in the statement of this theorem gives the assumption of $(\ast)$ in Corollary~\ref{cor:passing-to-branched-covers}. (Similarly fo $I_n^-$ and $B_{N-1}$.) Note that by Corollary~\ref{cor:passing-to-branched-covers}, these paths can be chosen to overlap only at $F,F'$.
\end{proof}

With this in place, we can finish the proof of
Theorem~\ref{thm:main-closed} exactly 
as in the case of
Theorem~\ref{thm:main-punctured}. 

In fact, the proof shows something a little bit stronger, which will be
useful to show local path-connectivity.
\begin{cor}\label{cor:good-path-nesting}
  Suppose $\tau$ is a train track carrying a uniquely ergodic
  foliation $E$, and suppose that $\tau_n$ is a splitting
  sequence in the direction of $E$. Then for any $n$ there is a
  $m=m(\tau,n, E)$ with the following property. If $E'$ is any
  uniquely ergodic foliation contained in $U_m(\tau, E)$, then there is
  a path of uniquely ergodic laminations connecting $E'$ to $E$
  completely contained in $U_n(\tau, E)$.
\end{cor}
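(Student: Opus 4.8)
The plan is to connect both $F$ and the given foliation $E$ to a single torus good ``hub'' foliation, keeping each of the two connecting arcs inside $U_n(\tau,F)$; reversing the first arc and concatenating it with the second then produces the desired path. Fix, as in the proof of Theorem~\ref{thm:main-punctured} (or, in the closed case, Theorem~\ref{thm:main-closed}), the splitting sequence $(\tau_s)$ towards $F$, an associated $\Mcg$-sequence $(f_s,k_s)$, and the covers $p_k$ with the torus good foliations $F^{(k)}\in\mathcal{U}^{(k)}$ from Lemma~\ref{lem:initial-island}. Recall that Corollary~\ref{cor:increment-paths} produces, for each $\ell$, a threshold $m_\ell$ and cobounded paths $\gamma_s$ joining $f_s(F^{(k_s)})$ to $f_{s+1}(F^{(k_{s+1})})$ with $\gamma_s\subset U_\ell(\tau,F)$ whenever $s>m_\ell$. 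Hence for any $s>m_n$ the infinite concatenation $\gamma_s\ast\gamma_{s+1}\ast\cdots$ converges, using $\bigcap_\ell U_\ell(\tau,F)=\{F\}$ (Corollary~\ref{cor:u-nest}), to a continuous path inside $U_n(\tau,F)$ joining the hub $H_s:=f_s(F^{(k_s)})$ to $F$, every point of which is cobounded except possibly $F$ itself.

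Now fix $n$ and suppose $E\in U_M(\tau,F)$ is uniquely ergodic, where $M$ will be chosen below. As recorded in the discussion preceding Lemma~\ref{lem:following-sequences}, $E$ is minimal and determines a full splitting sequence starting at $\tau$ whose first $M$ terms agree with those of $F$; therefore $U_s(\tau,E)=U_s(\tau,F)$ for $s\le M$, and we may choose the associated $\Mcg$-sequence of $E$ so that $(f^E_s,k^E_s)=(f_s,k_s)$ for $s\le M$, which makes $f_M(F^{(k_M)})=H_M$ literally the same hub whether built from $F$ or from $E$. Applying the construction behind Corollary~\ref{cor:increment-paths} with $E$ in place of $F$ yields, for the parameter $n$, a threshold $m^E_n$ and cobounded paths $\gamma^E_s$ with $\gamma^E_s\subset U_n(\tau,E)$ for $s>m^E_n$; concatenating the tail $\gamma^E_M\ast\gamma^E_{M+1}\ast\cdots$ (again invoking Corollary~\ref{cor:u-nest} for $E$) produces a continuous path from $H_M$ to $E$ whose points are cobounded except possibly $E$. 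The thresholds output by Corollary~\ref{cor:increment-paths} come ultimately from Propositions~\ref{prop:curves-get-contracted} and~\ref{prop:apply-pseudo} (through Theorem~\ref{thm:contracted-paths}, resp.\ Theorem~\ref{thm:contracted-paths-2}), so Lemma~\ref{lem:following-sequences}, applied to the finitely many pseudo-Anosovs, multicurves and model curves occurring in the relevant peak-and-push paths, shows that $m^E_n$ can be bounded by a constant $\overline{m}_n$ which is uniform over all minimal foliations in a sufficiently small $U_{M_0}(\tau,F)$. Combined with $U_n(\tau,E)=U_n(\tau,F)$, this traps the tail of $E$'s construction inside $U_n(\tau,F)$ as soon as $M$ is large.

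It then suffices to take $M=m(\tau,n,F)$ to be any index exceeding each of $n$, $m_n$, $\overline{m}_n$ and $M_0$. For any uniquely ergodic $E\in U_M(\tau,F)$, the tail $\gamma_M\ast\gamma_{M+1}\ast\cdots$ joins $H_M$ to $F$ inside $U_n(\tau,F)$, the tail $\gamma^E_M\ast\gamma^E_{M+1}\ast\cdots$ joins $H_M$ to $E$ inside $U_n(\tau,E)=U_n(\tau,F)$, and concatenating the reverse of the first with the second yields a path from $F$ to $E$ contained in $U_n(\tau,F)$. Every point of this path other than the endpoints lies on one of the cobounded paths $\gamma_s$ or $\gamma^E_s$, hence is cobounded (in particular the hub $H_M$, being an endpoint of $\gamma_M$), while $F$ and $E$ are uniquely ergodic by hypothesis; since cobounded foliations are uniquely ergodic, the whole path consists of uniquely ergodic foliations, as required. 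I expect the genuine difficulty to be precisely the step where Lemma~\ref{lem:following-sequences} is used: the construction applied to $E$ only follows $F$'s splitting sequence for finitely many steps, and one needs uniform control of the contraction thresholds to keep $E$'s half of the path inside $U_n(\tau,F)$ — which is exactly what that lemma provides, after which the argument is bookkeeping of indices.
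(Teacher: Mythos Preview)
Your proposal is correct and follows essentially the same approach as the paper: both arguments hinge on the observation that the thresholds produced in the proof of Theorem~\ref{thm:main-punctured} (resp.\ Theorem~\ref{thm:main-closed}) arise from Propositions~\ref{prop:curves-get-contracted} and~\ref{prop:apply-pseudo}, and that Lemma~\ref{lem:following-sequences} makes these thresholds uniform over all minimal $E$ in a deep enough $U_M(\tau,F)$. Your write-up is in fact considerably more explicit than the paper's, which compresses the entire argument into three sentences and leaves the ``connect both $F$ and $E$ to a common hub $H_M$'' structure, as well as the verification that $U_n(\tau,E)=U_n(\tau,F)$ for $M\ge n$, to the reader.
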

\begin{proof}
  In the case of a punctured surface, i.e. Theorem~\ref{thm:main-punctured}, all bounds on
  $m$ come from applying Proposition~\ref{prop:curves-get-contracted} or~\ref{prop:apply-pseudo}
  within the proof of Theorem~\ref{thm:contracted-paths}.
  By Lemma~\ref{lem:following-sequences} we can choose these bounds to be independent of 
  the actual foliation guiding the splitting sequence, as long as the foliation is contained in
  $U_k(\tau, E)$ for $k$ large enough. 
  The bounds in Theorem~\ref{thm:main-closed} come from applying Theorem~\ref{thm:contracted-paths} and
  its Corollary~\ref{cor:passing-to-branched-covers}, and so the same is true there.
\end{proof} 

\section{Local Path Connectivity}
In this section, we improve the Theorem from the last section to the
following.

\label{sec:localpath}
\begin{thm}\label{thm:local-path-conn}
  If $g\geq 5$ or $g\geq 2, p\geq 1$, the set of uniquely ergodic foliations on $S_{g,p}$ is locally
  path-connected. 
\end{thm}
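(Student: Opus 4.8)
The plan is to obtain local path-connectivity as a formal consequence of Corollary~\ref{cor:good-path-nesting}, once one knows that for a uniquely ergodic foliation $F$ the sets $U_n(\tau,F)$ attached to a splitting sequence form a neighbourhood basis of $F$ in $\PMF$. Concretely, I would verify the criterion equivalent to local path-connectivity: for every open $\mathcal{O}\subset\PMF$, every path-component of $\mathcal{O}\cap\UE(S)$ is open in $\UE(S)$.

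First I would fix, for each uniquely ergodic $F$, a maximal train track $\tau$ carrying $F$, a full splitting sequence $(\tau_i)$ in the direction of $F$, and an associated $\Mcg$-sequence $(f_i,k_i)$, exactly as in the proofs of Theorems~\ref{thm:main-punctured} and~\ref{thm:main-closed}. By Lemma~\ref{lem:splitting-neighbourhoods} the sets $U_n(\tau,F)$ are nested open neighbourhoods of $F$, and by Corollary~\ref{cor:u-nest} (using unique ergodicity, so that $\Delta(F)=\{F\}$) their intersection is $\{F\}$. I would then check that $\{U_n(\tau,F)\}_n$ is in fact a neighbourhood basis of $F$. The point is that the compact sets $\bigcup_{\sigma\in\mathcal{T}_n(\tau,F)}P(\sigma)$ decrease in $n$ and contain $\overline{U_n(\tau,F)}$, and that a measured foliation lying in all of them is, by Lemma~\ref{lem:diagonal}, carried for every $n$ by a diagonal extension of the track $\eta_n$ that $F$ fills; since $\bigcap_nP(\eta_n)=\Delta(F)$ by Theorem~\ref{thm:mosher}, unique ergodicity of $F$ forces such a foliation to equal $F$, so $\bigcap_n\overline{U_n(\tau,F)}=\{F\}$ --- this is the same fact already used to see that the concatenated path in the proof of Theorem~\ref{thm:main-punctured} is continuous at its endpoint. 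A decreasing sequence of compact sets whose intersection lies in an open set is eventually contained in that set, so for any open $\mathcal{O}\ni F$ there is $n$ with $U_n(\tau,F)\subset\mathcal{O}$.

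The main step is then short. Let $\mathcal{O}\subset\PMF$ be open, let $C$ be a path-component of $\mathcal{O}\cap\UE(S)$, and let $F\in C$; equip $F$ with $\tau$ and $(\tau_i)$ as above. Since $\mathcal{O}$ is an open neighbourhood of $F$, choose $n$ with $U_n(\tau,F)\subset\mathcal{O}$ and apply Corollary~\ref{cor:good-path-nesting} to this $n$, obtaining $m=m(\tau,n,F)$ so that every uniquely ergodic $E\in U_m(\tau,F)$ is joined to $F$ by a path of uniquely ergodic foliations lying in $U_n(\tau,F)\subset\mathcal{O}$. Such a path lies in $\mathcal{O}\cap\UE(S)$ and has $F$ as an endpoint, so $U_m(\tau,F)\cap\UE(S)\subseteq C$; thus $C$ is a neighbourhood in $\UE(S)$ of each of its points, hence open. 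As $\mathcal{O}$ and $F$ were arbitrary this proves that $\UE(S)$ is locally path-connected, and since the paths supplied by Corollary~\ref{cor:good-path-nesting} consist of cobounded foliations away from their endpoints, the identical argument yields local path-connectivity of $\Cob(S)$ as well.

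The only substantive input is Corollary~\ref{cor:good-path-nesting}, and through it the uniform control of the thresholds in Propositions~\ref{prop:curves-get-contracted} and~\ref{prop:apply-pseudo} provided by Lemma~\ref{lem:following-sequences}; everything else is point-set topology. I expect the one step that requires genuine care to be the neighbourhood-basis claim for the $U_n(\tau,F)$ --- equivalently, that their closures shrink to $\{F\}$ --- which is precisely where unique ergodicity rather than mere minimality is used, mirroring the obstruction to treating arational foliations mentioned in the introduction.
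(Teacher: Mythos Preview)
Your argument is correct and takes a genuinely different, more economical route than the paper. The paper constructs explicit open path-connected neighbourhoods by an inductive saturation: starting from $\mathcal{G}_n(\tau,F)=U_{m(\tau,n,F)}(\tau,F)$, it forms sets $N^{(r)}(F,n)$ by repeatedly sweeping out $\mathcal{G}_n(p,\tau)$ over all points $p$ on paths already built, and then proves (Proposition~\ref{prop:inductive-path-construction}) that $\bigcup_r N^{(r)}(F,n)\cap\UE$ is open, path-connected, and contained in $U_n(\tau,F)$. You bypass this machinery entirely by invoking the equivalent criterion that path-components of relatively open sets are open, which Corollary~\ref{cor:good-path-nesting} verifies in one stroke once the neighbourhood-basis property is in hand. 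Both arguments rest on exactly the same substantive input (Corollary~\ref{cor:good-path-nesting} and, behind it, Lemma~\ref{lem:following-sequences}); your version simply packages the point-set topology more efficiently. The paper's construction does give you concrete path-connected neighbourhoods, which could conceivably be useful for finer questions (e.g.\ about local contractibility), but for the bare statement your approach is cleaner.

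One small wrinkle in your neighbourhood-basis paragraph: knowing that a foliation is carried by a diagonal extension of $\eta_n$ does not by itself place it in $P(\eta_n)$, so the appeal to $\bigcap_n P(\eta_n)=\Delta(F)$ via Theorem~\ref{thm:mosher} is not quite the right citation. What you actually need is the stronger fact $\bigcap_n PE(\eta_n)=\Delta(F)$, where $PE(\eta_n)$ denotes the union of the polytopes of all diagonal extensions of $\eta_n$; this is standard (it appears e.g.\ in the Masur--Minsky train-track machinery) and is also what the paper is implicitly using when it asserts at the end of Section~\ref{sec:localpath} that the $U_n(\tau,F)$ form a neighbourhood basis by citing only Corollary~\ref{cor:u-nest}. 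So the gap is shared with the paper and easily closed.
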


Given a uniquely ergodic foliation $F$ and a full 
splitting sequence $(\tau_s)_s$ towards $F$. For any $n$, we let $m(\tau, n,F)$
the number guaranteed by Corollary~\ref{cor:good-path-nesting}.
Define
\[ \mathcal{G}_n(\tau,
F)=U_{m(\tau,n,F)}(\tau, F). \]
Corollary~\ref{cor:good-path-nesting} guarantees that for any $F' \in
\mathcal{G}_n(\tau,F)$ there exists a path $P_{F,F'}$ of cobounded
foliations joining $F$ to $F'$, which is contained in $U_n(\tau,F)$.

Let $\hat{\mathcal{G}}_n(\tau, F)$ be the intersection of
$\mathcal{G}_n(\tau, F)$ with the set of uniquely ergodic
foliations.  For any point $p \in P_{F,F'}$, we can define a neighbourhood
\[\mathcal{G}_n(p,\tau)\]
as above, i.e. with the property that $p$ can be joined to any $p' \in \mathcal{G}_n(p,\tau)$ by a
path of cobounded foliations which is contained in $U_n(\tau, F)$.

Also observe that
\begin{equation}
  \label{eq:compatible-u-nbhds}
  U_i(\tau, p) \subset U_i(\tau, F) 
\end{equation}
for all $i \leq n$.

Define
\[ N^{(1)}(F,n):=\bigcup_{F' \in \hat{\mathcal{G}}_n(\tau, F)}\bigcup_{p \in P_{F,F'}}\mathcal{G}_n(p,\tau).\]
Inductively, put
\[N^{(r+1)}(F,n)=\bigcup_{p \in N^{(r)}(F,n)} N^{(1)}(p,n).\] 

Also observe that we have $N^{(r)}(F,n) \subset U_n(\tau,F)$
by Equation~(\ref{eq:compatible-u-nbhds}), whenever 
$F' \in \mathcal{G}_n(\tau, F)$.

\begin{prop}\label{prop:inductive-path-construction}
  Any point in $N^{(r)}(F,n)$ is connected to $F$ by a
  path of uniquely ergodic foliations, which is contained in in
  $N^{(r+1)}(F,n)$.
\end{prop}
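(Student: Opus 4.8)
The plan is to prove the proposition by induction on $r$, the one extra ingredient being a monotonicity observation: for every $r$, every uniquely ergodic foliation contained in $N^{(r)}(F,n)$ is already contained in $N^{(r+1)}(F,n)$. I read the proposition as a statement about the uniquely ergodic points of $N^{(r)}(F,n)$ --- a path of uniquely ergodic foliations ending at a point forces that point to be uniquely ergodic --- and accordingly the union defining $N^{(r+1)}(F,n)$ is understood to range over the uniquely ergodic points of $N^{(r)}(F,n)$, which are the only ones at which the recipe $N^{(1)}(\cdot,n)$ makes sense.

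For the monotonicity claim I would first check that every uniquely ergodic foliation $E$ satisfies $E\in N^{(1)}(E,n)$: the set $\mathcal{G}_n(\tau,E)$ is a neighbourhood of $E$, so $E\in\hat{\mathcal{G}}_n(\tau,E)$; the constant path at $E$ is an admissible choice of $P_{E,E}$ in Corollary~\ref{cor:good-path-nesting}; and $E\in\mathcal{G}_n(E,\tau)$. Hence $E$ occurs in the double union defining $N^{(1)}(E,n)$. Since $N^{(r+1)}(F,n)=\bigcup_{p\in N^{(r)}(F,n)}N^{(1)}(p,n)$, this immediately gives $E\in N^{(r)}(F,n)\Rightarrow E\in N^{(r+1)}(F,n)$ for uniquely ergodic $E$. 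In particular $F\in N^{(1)}(F,n)$, which handles the base case $r=0$ (with the convention $N^{(0)}(F,n)=\{F\}$): the constant path at $F$ is the required path.

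For the inductive step I take a uniquely ergodic $q\in N^{(r+1)}(F,n)$, so that $q\in N^{(1)}(p_0,n)$ for some uniquely ergodic $p_0\in N^{(r)}(F,n)$, and I build the path from $F$ to $q$ as a concatenation of three pieces. First, the inductive hypothesis applied to $p_0$ gives a path $P$ of uniquely ergodic foliations from $F$ to $p_0$ with $P\subseteq N^{(r+1)}(F,n)$, hence $P\subseteq N^{(r+2)}(F,n)$ by monotonicity. Second, unwinding the membership $q\in N^{(1)}(p_0,n)$ produces a uniquely ergodic $F'\in\hat{\mathcal{G}}_n(\tau,p_0)$ and a point $\tilde p$ on $P_{p_0,F'}$ with $q\in\mathcal{G}_n(\tilde p,\tau)$; the initial segment $P_1$ of $P_{p_0,F'}$ from $p_0$ to $\tilde p$ consists of uniquely ergodic foliations by Corollary~\ref{cor:good-path-nesting}, and since every point $p''$ of $P_{p_0,F'}$ lies in $\mathcal{G}_n(p'',\tau)$ one gets $P_{p_0,F'}\subseteq N^{(1)}(p_0,n)\subseteq N^{(r+1)}(F,n)$, so $P_1\subseteq N^{(r+2)}(F,n)$. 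Third, the defining property of $\mathcal{G}_n(\tilde p,\tau)$ --- i.e. Corollary~\ref{cor:good-path-nesting} applied at $\tilde p$, together with Equation~(\ref{eq:compatible-u-nbhds}) --- gives a path $P_2$ of uniquely ergodic foliations from $\tilde p$ to $q$; since $q$ is uniquely ergodic and lies in $\mathcal{G}_n(\tilde p,\tau)$, this path $P_2=P_{\tilde p,q}$ is one of those occurring in the definition of $N^{(1)}(\tilde p,n)$ and its points lie in the corresponding $\mathcal{G}_n(p'',\tau)$, so $P_2\subseteq N^{(1)}(\tilde p,n)$; and since $\tilde p\in N^{(1)}(p_0,n)\subseteq N^{(r+1)}(F,n)$ is uniquely ergodic, $N^{(1)}(\tilde p,n)$ is among the sets whose union is $N^{(r+2)}(F,n)$, whence $P_2\subseteq N^{(r+2)}(F,n)$. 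The concatenation $P\ast P_1\ast P_2$, whose junctions $p_0$ and $\tilde p$ agree, is then a path of uniquely ergodic foliations from $F$ to $q$ inside $N^{(r+2)}(F,n)$, completing the induction.

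The step I expect to be the main obstacle is entirely bookkeeping: certifying each of the three segments to lie in $N^{(r+2)}(F,n)$ rather than merely in some $N^{(j)}(p_0,n)$ or in $U_n(\tau,p_0)$. The mechanism that upgrades these local memberships to the global one is exactly the monotonicity statement together with the inclusion $N^{(1)}(\tilde p,n)\subseteq N^{(r+2)}(F,n)$ valid whenever $\tilde p$ is a uniquely ergodic point of $N^{(r+1)}(F,n)$; getting these inclusions and the indexing of the $N^{(j)}$ precisely right is the real content. A minor point, which I would dispose of at the outset, is that the sets $N^{(r)}(F,n)$ are open and hence contain non-uniquely-ergodic foliations at which $N^{(1)}(\cdot,n)$ is undefined; one simply restricts both the statement and the recursion to uniquely ergodic points, which affects nothing in the argument.
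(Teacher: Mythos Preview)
Your argument is correct and is essentially the paper's induction on $r$. The only organizational differences are that the paper proves the case $r=1$ first and then invokes it verbatim at the intermediate point in the inductive step, whereas you re-derive that two-segment path $P_1\ast P_2$ inline; and you make explicit the monotonicity $N^{(r)}(F,n)\subseteq N^{(r+1)}(F,n)$, which the paper uses only tacitly when concatenating the piece of the path coming from the inductive hypothesis (lying in $N^{(r)}$) with the piece coming from the base case (lying in $N^{(r+1)}$).
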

\begin{proof}
We prove this by induction.

\textit{Base case}:  If $p \in N^{(1)}(F,n)$ then we can connect it to $F$ by a path in $N^{(2)}(F,n)$. 

\begin{proof} If $p\in P_{F,F'}$ this is
  obvious. Otherwise $p\in \mathcal{G}_n(\hat{p},\tau)$ for some
  $\hat{p} \in P_{F,F'}$ where
  $F' \in \mathcal{G}_n(\tau, F)$. By definition we have
  that there exists a path of cobounded foliations contained in
  $\mathcal{G}_n(\hat{p},\tau)$ connecting $p$ to
  $\hat{p}$. Concatenating this with the segment of
  $P_{F,F'}$ connecting $\hat{p}$ to $F$ connects
  $p$ to $F$.  The first segment of the path is in
  $N^{(1)}({\hat{p}},n)$ and so the whole thing is in
  $N^{(2)}(F,n)$.
\end{proof}

\textit{Inductive step}: Assume $p \in N^{(r)}(F,n)$ and that any point in $N^{(r-1)}(F,n)$ is  connected to $F$ by a path of cobounded foliations in $N^{(r)}(F,n)$. We will now show that $p$ is path connected by cobounded foliations in $N^{(r+1)}(F,n)$ to $F$.

\begin{proof} Because $p \in N^{(r)}(F,n)$ we know (by
  definition of $N^{(r+1)}$) $p \in N^{(1)}({\hat{p}},n)$ for some
  $\hat{p} \in N^{(r-1)}(F,n)$. By the base case of induction applied to $\hat{p}$ it is
  connected to $\hat{p}$ by a path in
  $N^{(2)}({\hat{p}},n)=\cup_{p'\in
    N^{(1)}(\hat{p},n)}N^{(1)}({p'},n).$ This is contained in
  $\cup_{p'\in N^{(r)}(F,n)}N^{(1)}({p'},n)
  =N^{(r+1)}(F,n)$.  To finish linking $p$ to $F$ we use
  our inductive assumption to link $\hat{p}$ to $F$ by a path in
  $N^{(r-1+1)}(F,n)$.
\end{proof}
\end{proof}
\begin{cor}
  For any uniquely ergodic foliation $F$, and any $n$, the set
  \[ \left( \bigcup_{r \geq 1}N^{(r)}(F, n) \right) \cap \UE \]
  is an open neighbourhood of $F$ in $\UE$, which is path-connected and contained in $U_n(\tau, F)$.
\end{cor}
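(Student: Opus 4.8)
Write $\mathcal{N} = \bigl(\bigcup_{r\geq 1} N^{(r)}(F,n)\bigr)\cap\UE$. The plan is to verify the three assertions — that $\mathcal{N}$ is an open neighbourhood of $F$ in $\UE$, that it is path-connected, and that $\mathcal{N}\subset U_n(\tau,F)$ — one at a time. That $F\in\mathcal{N}$ is immediate: taking $F'=F$ in the definition of $N^{(1)}(F,n)$ is legitimate since $F\in\mathcal{G}_n(\tau,F)\cap\UE=\hat{\mathcal{G}}_n(\tau,F)$, the path $P_{F,F}$ passes through $F$, and $F\in\mathcal{G}_n(F,\tau)$; hence $F\in N^{(1)}(F,n)\subset\mathcal{N}$. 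The containment $\mathcal{N}\subset U_n(\tau,F)$ is the observation recorded just before the statement: an induction on $r$ using $P_{F,F'}\subset U_n(\tau,F)$ and the nesting $U_i(\tau,p)\subset U_i(\tau,F)$ of Equation~(\ref{eq:compatible-u-nbhds}) gives $N^{(r)}(F,n)\subset U_n(\tau,F)$ for every $r$.

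For openness, I would take an arbitrary $q\in\mathcal{N}$, so $q$ is uniquely ergodic and $q\in N^{(r)}(F,n)$ for some $r$, and produce an open neighbourhood of $q$ in $\PMF$ contained in $\bigcup_s N^{(s)}(F,n)$. The candidate is $\mathcal{G}_n(q,\tau)=U_{m(\tau,n,q)}(\tau,q)$, which is an open neighbourhood of $q$ in $\PMF$ by Lemma~\ref{lem:splitting-neighbourhoods}. The key point is that $\mathcal{G}_n(q,\tau)\subset N^{(1)}(q,n)$: this is the instance of the definition of $N^{(1)}(q,n)$ obtained by taking $F'=q$ (valid, since $q\in\hat{\mathcal{G}}_n(\tau,q)$) and the point $q\in P_{q,q}$. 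Since $q\in N^{(r)}(F,n)$, the inductive definition of $N^{(r+1)}$ then yields $N^{(1)}(q,n)\subset N^{(r+1)}(F,n)$, so $\mathcal{G}_n(q,\tau)\subset N^{(r+1)}(F,n)\subset\bigcup_s N^{(s)}(F,n)$. Hence $\mathcal{G}_n(q,\tau)\cap\UE$ is an open neighbourhood of $q$ in $\UE$ lying in $\mathcal{N}$; as $q$ was arbitrary, $\mathcal{N}$ is open in $\UE$, and combined with $F\in\mathcal{N}$ it is an open neighbourhood of $F$.

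Path-connectedness I would then read off directly from Proposition~\ref{prop:inductive-path-construction}: any $q\in N^{(r)}(F,n)$ is joined to $F$ by a path of uniquely ergodic foliations lying in $N^{(r+1)}(F,n)\subset\mathcal{N}$, and concatenating two such paths through $F$ joins any two points of $\mathcal{N}$ inside $\mathcal{N}$. The only subtle point — the ``hard part'', such as it is — is bookkeeping: one must check that every foliation $q$ to which $\mathcal{G}_n(\,\cdot\,,\tau)$, $\hat{\mathcal{G}}_n(\tau,\,\cdot\,)$ and Corollary~\ref{cor:good-path-nesting} get applied is uniquely ergodic, carried by $\tau$, and carries the splitting sequence inherited from $\tau$ (so that Equation~(\ref{eq:compatible-u-nbhds}) applies to it) — but this is automatic since every such $q$ lies in $U_n(\tau,F)\cap\UE$.
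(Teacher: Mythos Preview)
Your proof is correct and follows essentially the same approach as the paper, which simply observes that the set is open as a union of open subsets, contained in $U_n(\tau,F)$ by the observation preceding the statement, and path-connected by Proposition~\ref{prop:inductive-path-construction}. Your version is more detailed---in particular, you spell out openness point-by-point rather than just invoking ``union of opens'', and you flag the bookkeeping needed to ensure every $q$ in play is uniquely ergodic and carried by $\tau$---but the underlying argument is the same.
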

\begin{proof}
  The set is open as a union of open subsets. It is contained in
  $U_n(\tau, F)$, since all $N^{(r)}(F, n)$ have this property. It is
  path-connected by
  Proposition~\ref{prop:inductive-path-construction}.
\end{proof}
By Corollary~\ref{cor:u-nest}, the $U_n(\tau, F)$ are a basis for neighbourhoods of $F$ in $\UE$, and
thus this finishes the proof of Theorem~\ref{thm:local-path-conn}.

\bibliographystyle{math}
\bibliography{lampaths}

\end{document}